\documentclass[11pt]{amsart}
\usepackage[margin=1.2in]{geometry}
\usepackage[T1]{fontenc}
\usepackage[utf8]{inputenc}
\usepackage{amsmath,amssymb,amsthm,mathtools,bm,mathrsfs}
\usepackage{enumitem}
\usepackage{microtype}
\usepackage{booktabs,longtable,array}
\usepackage{float}
\usepackage{graphicx}
\usepackage{xcolor}
\usepackage[numbers]{natbib}
\usepackage[colorlinks=true,linkcolor=blue,citecolor=blue,urlcolor=blue]{hyperref}
\usepackage[capitalise,nameinlink]{cleveref}
\hypersetup{
	pdftitle={Nonparametric Inference for Semigroup Blocks of Switching Diffusions},
	pdfauthor={Yuzhong Cheng}
}

\newif\ifshowonlycitedequations
\showonlycitedequationstrue
\ifshowonlycitedequations
\mathtoolsset{showonlyrefs=true}
\fi

\newtheorem{theorem}{Theorem}[section]
\newtheorem{assumption}{Assumption}
\renewcommand{\theassumption}{A\arabic{assumption}}
\newtheorem{proposition}[theorem]{Proposition}
\newtheorem{corollary}[theorem]{Corollary}
\newtheorem{lemma}[theorem]{Lemma}
\theoremstyle{definition}
\newtheorem{definition}[theorem]{Definition}
\theoremstyle{remark}
\newtheorem{remark}[theorem]{Remark}
\crefname{assumption}{Assumption}{Assumptions}
\Crefname{assumption}{Assumption}{Assumptions}
\crefname{definition}{Definition}{Definitions}
\Crefname{definition}{Definition}{Definitions}
\crefname{theorem}{Theorem}{Theorems}
\Crefname{theorem}{Theorem}{Theorems}
\crefname{proposition}{Proposition}{Propositions}
\Crefname{proposition}{Proposition}{Propositions}
\crefname{corollary}{Corollary}{Corollaries}
\Crefname{corollary}{Corollary}{Corollaries}
\crefname{lemma}{Lemma}{Lemmas}
\Crefname{lemma}{Lemma}{Lemmas}
\crefname{remark}{Remark}{Remarks}
\Crefname{remark}{Remark}{Remarks}

\newcommand{\Pp}{\mathbb P}

\title{Nonparametric Inference for Semigroup Blocks of Switching Diffusions}
\author{Yuzhong Cheng}
\address{Institute of Mathematics for Industry, Kyushu University,
744 Motooka, Fukuoka, Japan}
\email{cheng.yuzhong.451@m.kyushu-u.ac.jp}
\keywords{Switching diffusions; semigroup blocks; nonparametric inference;
local polynomials}
\date{July 14, 2026}

\begin{document}

	\begin{abstract}
		{Regime-conditioned transition probabilities and moments are basic inputs for prediction and decision making in hybrid systems, but their short-time infinitesimal structure is not directly observable. We study nonparametric estimation of regime-indexed semigroup blocks for switching diffusions observed together with their regimes. A Dynkin--Taylor expansion identifies the first two block-generator coefficients, and localized probes recover switching intensities, drift, and diffusion. Under shrinking meshes, a common-design difference cancels the localized level and yields a martingale array; nonoverlapping second differences preserve within-block covariance and produce the variance factor two. The resulting first- and second-order estimators are asymptotically normal, admit feasible studentization, and support short-horizon approximation and specification checks for interactions among primitive coefficients. Fixed-mesh local-polynomial theory and a numerical study complement the recovery results.}
	\end{abstract}
	\maketitle

	\section{Introduction}
	Hybrid systems with continuous dynamics and discrete events arise naturally in stochastic control, queueing, signal processing, mathematical finance, and biological regulation. A mathematically flexible class of such models is provided by hybrid switching diffusions, in which a continuous diffusion component evolves together with a finite regime process; see \citet{yinzhu2010} for a systematic treatment.

	Let \((\Omega,\mathcal F,(\mathcal F_t)_{t\ge0},\mathbb P)\) be a filtered probability space satisfying the usual conditions. We study hybrid switching diffusions of the following form:
	\begin{equation}\label{eq:model}
		\begin{aligned}
			dX_t&=b(X_t,\Lambda_{t-})\,dt+\sigma(X_t,\Lambda_{t-})\,dW_t,\\
			d\Lambda_t&=\int_0^\infty \kappa(X_t,\Lambda_{t-},z)\,N(dt,dz),
		\end{aligned}
	\end{equation}
	where \(W\) is an \(r\)-dimensional Wiener process and \(N(dt,dz)\) is an independent Poisson random measure on \([0,\infty)\times[0,\infty)\) with intensity \(dt\,dz\). The regime process \(\Lambda_t\) takes values in a finite set and
	$\kappa(x,i,z)=\sum_{j\ne i}(j-i)\mathbf 1_{\Gamma_{ij}(x)}(z)$.
	The intervals \(\Gamma_{ij}(x)\) are specified in \Cref{sec:model}.

	Regime-indexed sampled quantities appear directly in applications. In the regime-switching jump-diffusion option model of \citet{goswamimanjarekaranjana2018}, prices depend on regime transition probabilities and conditional terminal payoffs, while the ecological management model of \citet{yoshiokayaegashitsujimura2020} uses forecasts under changing environmental regimes. Both require terminal-regime probabilities or conditional moments indexed by the present and future regimes. We observe \(\{(Y_k,\Lambda_{t_k}):0\le k\le n\}\) on the grid \(t_k=k\Delta\), with \(Y_k=X_{t_k}\), and estimate the block switching functional
		\begin{equation}\label{eq:intro-block}
			P_\Delta^{ij}g(x)
			:=\mathbb E\!\left[
			g(X_{t+\Delta})\mathbf 1_{\{\Lambda_{t+\Delta}=j\}}
			\mid X_t=x,\Lambda_t=i
			\right].
		\end{equation}
		For \(g\equiv1\), this reduces to the one-step transition probability from regime \(i\) to regime \(j\). For general \(g\), it gives the numerator of the regime-specific conditional moment; whenever \(P_\Delta^{ij}1(x)>0\), the normalized conditional moment is
		\begin{equation}\label{eq:intro-conditional-moment}
			\mathbb E\!\left[
			g(X_{t+\Delta})\mid X_t=x,\Lambda_t=i,\Lambda_{t+\Delta}=j
			\right]
			=\frac{P_\Delta^{ij}g(x)}{P_\Delta^{ij}1(x)}.
		\end{equation}
		{The short-time expansion
			$P_\Delta^{ij}g(x)
			=\delta_{ij}g(x)+\Delta B_{ij}g(x)
			+\frac{\Delta^2}{2}C_{ij}g(x)+o(\Delta^2)$
		makes precise how finite-horizon blocks encode infinitesimal dynamics. Evaluating the first-order coefficient \(B_{ij}g\) at localized constant, linear, and quadratic probes identifies the primitive switching intensities, drift, and diffusion matrix.}

	Together with \(B_{ij}g\), the second-order coefficient \(C_{ij}g\) gives a second-order approximation of short-horizon block expectations and, by applying the expansion to both \(g\) and \(1\), the corresponding short-horizon correction to the normalized conditional moment above. It also directly targets aggregate drift--switching and diffusion--intensity interactions; for \(i\ne j\), these include \((\nabla q_{ij})^\top a(\cdot,i)\nabla g\), where \(a(x,i)=\sigma(x,i)\sigma(x,i)^\top\). Comparing a direct estimate of \(C_{ij}g\) with the value implied by a separately fitted primitive-coefficient model therefore provides a specification check that is unavailable from first-order point identification alone.

	To our knowledge, nonparametric estimation of observed-regime semigroup blocks \(P_\Delta^{ij}g\) and finite-difference recovery of their generator coefficients have not previously been studied. For ordinary diffusions, \citet{bandiphillips2003} develop fully nonparametric drift and diffusion estimation, \citet{gobethoffmannreiss2004} use spectral methods for low-frequency data, and \citet{comtegenoncatalotrozenholc2007} give penalized estimators with nonasymptotic risk bounds. These works have neither a terminal-regime block nor a switching indicator in the response. The closest observed-regime inference result is \citet{cheng2025statistical}, who use Gaussian quasi likelihood under high-frequency sampling to estimate parametric drift and diffusion coefficients and the generator of the switching chain. Recent adjacent work treats different observation schemes and targets: \citet{stumpffetizonetal2025exact} develop exact MCMC and MCEM methods for diffusion parameters and a latent switching path, while \citet{cheng2026semiparametric} combines truncated Gaussian quasi likelihood with regime-wise kernel smoothing to estimate parametric continuous coefficients and unknown L\'evy densities in switching jump diffusions. None of these papers estimates an observed-regime semigroup block or recovers its generator coefficients by nonparametric finite differences.

	{To address this estimation problem, we combine established semigroup, smoothing, and limit-theorem tools with problem-specific response constructions. Repeated Dynkin formulas applied to \(f_g^{(j)}(x,\ell)=g(x)\mathbf 1_{\{\ell=j\}}\) produce the block-generator hierarchy. The fixed-mesh analysis adapts local-polynomial regression for stationary mixing time series from \citet{robinson1983,masry1996,masryfan1997}; its triangular-array step uses Berbee coupling and absolute-regularity bounds as in \citet{berbee1979random,bradley2007introduction}. Under shrinking meshes, the common-design differences turn the localized responses into martingale-difference arrays, and the central limit argument applies \citet[Theorem~2.3]{mcleish1974}. The common-design cancellation and the nonoverlapping second-difference construction are the problem-specific ingredients: the former removes the localized level before smoothing, while the latter retains the covariance of the two within-block innovations and yields the factor two in the second-order variance.}

	{The fixed-mesh consistency and central limit theorems extend the classical mixing local-polynomial results through the regime-specific design and block response. The shrinking-mesh recovery theorems have a different target from the ordinary-diffusion procedures of \citet{bandiphillips2003,gobethoffmannreiss2004} and the parametric switching-diffusion inference of \citet{cheng2025statistical}: they estimate observed-regime semigroup blocks nonparametrically and recover their first two finite-difference generator coefficients under state-dependent switching. In particular, the results cover both diagonal diffusion-and-exit noise and off-diagonal rare-switch noise, give Gaussian limits at rates \(\sqrt{n\Delta_nh_n^d}\) and \(\sqrt{N_n\Delta_n^3h_n^d}\), and provide feasible studentization for the second-order coefficient, which is not a target of the cited coefficient-estimation theories.}

	{\Cref{sec:model,sec:assumptions} introduce the model and coefficient hierarchy; \Cref{sec:estimation,sec:recovery} give the fixed- and shrinking-mesh theory; and \Cref{sec:examples,sec:discussion} give the numerical study and discussion. The supplement contains arbitrary-order consistency and detailed auxiliary proofs.}

	\section{Model and observation scheme}
	\label{sec:model}
	Throughout, \(a_n\lesssim b_n\) means \(a_n\le Cb_n\) for all large \(n\), with \(C\) independent of \(n\), \(\Delta_n\), and \(h_n\). Subscripts on \(\lesssim\) indicate additional fixed dependence.
	Throughout the paper and online supplement, every estimator or studentized statistic is set equal to zero on the complement of its defining invertibility or positivity event.

	\subsection*{Hybrid switching diffusion}
	Let \(S=\{1,\ldots,m\}\) with \(m\ge 2\), and put \(E:=\mathbb R^d\times S\). {The joint process \((X,\Lambda)\) takes values in \(E\), and \(\mathbb P_{(x,i)}\) denotes its law when initialized at \((x,i)\in E\).} {Expectation and variance under \(\mathbb P_{(x,i)}\) are denoted by \(\mathbb E_{(x,i)}\) and \(\operatorname{Var}_{(x,i)}\), respectively.} We use the filtered probability space, noises, and process \((X_t,\Lambda_t)_{t\ge0}\) introduced in \eqref{eq:model}.
	For rate functions \(q_{ij}:\mathbb R^d\to[0,\infty)\), \(i\ne j\), choose for each \(i\) disjoint left-closed, right-open intervals
	\[
	\Gamma_{ij}(x)
	:=
	\left[
	\sum_{\ell<j,\ \ell\ne i}q_{i\ell}(x),
	\sum_{\ell\le j,\ \ell\ne i}q_{i\ell}(x)
	\right),
	\qquad j\ne i,
	\]
	where empty sums are zero.
	The Poisson representation specifies the switching intensities in the conditional sense: for \(i\ne j\),
	\begin{equation}\label{eq:qij}
		\mathbb P(\Lambda_{t+h}=j\mid\mathcal F_t)
		=q_{ij}(X_t)h+o(h)
		\quad\text{on }\{\Lambda_t=i\},
		\qquad h\downarrow0.
	\end{equation}
	When \(q_{ij}\) are constant in \(x\), one recovers the Markovian switching case treated in much of the classical literature. The state-dependent case is more delicate because the continuous and discrete dynamics are coupled already at the level of the jump intensities; see \citet{yinzhu2010}. Write
	\[
	q_{ii}(x)=-\sum_{j\ne i}q_{ij}(x),\qquad q_i(x)=\sum_{j\ne i}q_{ij}(x),\qquad a(x,i)=\sigma(x,i)\sigma(x,i)^\top.
	\]
	For a smooth function \(g:\mathbb R^d\to\mathbb R\), define the within-regime diffusion operator
	\begin{equation}\label{eq:Ai}
		\mathscr L_i g(x):=b(x,i)^\top\nabla g(x)+\frac12\operatorname{tr}\bigl(a(x,i)\nabla^2 g(x)\bigr).
	\end{equation}
	For bounded measurable \(f:E\to\mathbb R\), write
	$P_t f(x,i):=\mathbb E[f(X_t,\Lambda_t)\mid X_0=x,\Lambda_0=i]$
	for the Markov semigroup of the hybrid process. We use \(\mathcal L\) as the classical differential--jump operator. For a function \(f:E\to\mathbb R\) whose regime components \(f(\cdot,i)\) are twice continuously differentiable, set
	\[
	\mathcal D_0:=\{f:E\to\mathbb R: f(\cdot,i)\in C_c^2(\mathbb R^d)\ \text{for every }i\in S\},
	\]
	and define
	\begin{equation}\label{eq:generator}
		\mathcal L f(x,i)=b(x,i)^\top\nabla_x f(x,i)+\frac12\operatorname{tr}\bigl(a(x,i)\nabla_x^2 f(x,i)\bigr)
		+\sum_{j\ne i}q_{ij}(x)\bigl(f(x,j)-f(x,i)\bigr).
	\end{equation}

	\subsection*{Observation scheme}
	We observe the pair \((X_t,\Lambda_t)\) on the grid
	$t_k=k\Delta$, $Y_k=X_{t_k}$, $k=0,1,\ldots,n$.
	Thus the data consist of
	$\{(Y_k,\Lambda_{t_k}):0\le k\le n\}$.

	\subsection*{Block switching functional}
	{The definition below applies to bounded measurable \(g\). The fixed-mesh theory fixes \(g\in C_b(\mathbb R^d)\), while the model-level order-\(k\) expansion fixes \(g\in C_c^{2k}(\mathbb R^d)\).}

	\begin{definition}
		For \(\Delta>0\), a regime pair \((i,j)\in S^2\), and a bounded measurable function \(g:\mathbb R^d\to\mathbb R\), define
		\begin{equation}\label{eq:block}
			P_\Delta^{ij}g(x)
			:=
			\mathbb E\!\left[
			g(X_{t+\Delta})
			\mathbf 1_{\{\Lambda_{t+\Delta}=j\}}
			\,\middle|\,
			X_t=x,\Lambda_t=i
			\right].
		\end{equation}
		The family \(\{P_\Delta^{ij}g:i,j\in S\}\) is called the block switching functional associated with \(g\).
	\end{definition}

	\section{Assumptions and preliminaries}
	\label{sec:assumptions}
	All statistical results are pointwise: fix a design point \(x\in\mathbb R^d\) and a bounded open neighborhood \(U_x\) such that \(x\in U_x\Subset\mathbb R^d\).

	{\begin{assumption}[Coefficient regularity]\label[assumption]{ass:wellposedness}
			For each regime \(i\in S\), the coefficients \(b(\cdot,i)\) and \(\sigma(\cdot,i)\) belong to \(C^4(\mathbb R^d)\) and have bounded derivatives of orders \(1,\ldots,4\). For \(i\ne j\), the switching intensities satisfy \(q_{ij}\ge0\), belong to \(C^4(\mathbb R^d)\), and have bounded derivatives of orders \(1,\ldots,4\). Moreover, for some \(C_q<\infty\),
			$\sup_{i\in S}\sup_{x\in\mathbb R^d}q_i(x)\le C_q$.
	\end{assumption}}

	The bounded first derivatives imply global Lipschitz continuity and linear-growth estimates. Together with the bounded total switching rate, these conditions give a pathwise unique nonexplosive strong solution and the strong Markov property by the standard state-dependent switching-diffusion results in \citet[Ch.~2]{yinzhu2010}. Under the same coefficient conditions, \citet[Theorem~3.2]{xi2008feller} gives Feller continuity.

	{\begin{assumption}[Exponential ergodicity]\label[assumption]{ass:mixing}
			The joint process \((X_t,\Lambda_t)_{t\ge0}\) admits an invariant probability measure \(\nu\) on \(\mathbb R^d\times S\). Moreover, there exist a measurable function \(V:\mathbb R^d\times S\to[1,\infty)\) with \(\nu(V)<\infty\), and constants \(C_{\rm erg},\lambda_{\rm erg}>0\), such that
			\[
			\|P_t((x,i),\cdot)-\nu\|_V
			\le C_{\rm erg}e^{-\lambda_{\rm erg}t}V(x,i),
			\qquad t\ge0,\quad (x,i)\in\mathbb R^d\times S,
			\]
			where
			$\|\mu\|_V:=\sup_{|f|\le V}|\mu(f)|$
			for every finite signed measure \(\mu\) on \(\mathbb R^d\times S\).
			In addition,
			\[
			\sup_{(y,\ell)\in U_x\times S}V(y,\ell)<\infty.
			\]
			{Throughout the paper, the joint process is initialized under its invariant law: \((X_0,\Lambda_0)\sim\nu\).}
	\end{assumption}}

	{The initialization in Assumption~A2 makes the continuous-time process strictly stationary. For a fixed mesh \(\Delta>0\), put \(Z_k:=(Y_k,\Lambda_{t_k})\), \(k\ge0\). Then the sampled chain is strictly stationary.}
	Its absolute-regularity coefficients are
	\[
	\beta_\Delta(r)
	:=\sup_{k\ge0}
	\mathbb E\!\left[
	\sup_{B\in\mathcal F_{k+r}^\infty}
	\bigl|\mathbb P(B\mid\mathcal F_0^k)-\mathbb P(B)\bigr|
	\right]
	,\qquad r\ge1,
	\]
	where \(\mathcal F_a^b:=\sigma(Z_k:a\le k\le b)\) and
	\(\mathcal F_a^\infty:=\sigma(Z_k:k\ge a)\).
	Since \(E\) is a standard Borel space, the standard transition-kernel representation of the absolute-regularity coefficients of a strictly stationary Markov chain applies; see \citet[Proposition~3.22\textup{(III.5)}]{bradley2007introduction}. Therefore, since \(V\ge1\),
		\[
		\begin{aligned}
			\beta_\Delta(r)
			&=
			\int_E
			\sup_{A\in\mathcal B(E)}
			\left|P_{r\Delta}(z,A)-\nu(A)\right|
			\,\nu(dz)
			\le
			\frac12\int_E
			\left\|P_{r\Delta}(z,\cdot)-\nu\right\|_V
			\,\nu(dz) \\
			&\le
			\frac12 C_{\rm erg}\nu(V)e^{-\lambda_{\rm erg}r\Delta},
			\qquad r\ge1,
		\end{aligned}
		\]
		where the last inequality follows from \Cref{ass:mixing}.
	This is the probabilistic input needed for the law of large numbers and triangular-array central limit theorem arguments for the kernel sums.

	The final standing assumptions record the local design positivity and sampled-block smoothness.

	{\begin{assumption}[Local stationary design and fixed-mesh small-ball regularity]\label[assumption]{ass:local-densities}
			\textup{(i)} For every \(i\in S\), there exists a stationary sublaw density \(\varpi_i\) such that
				$\nu(dy,\{i\})=\varpi_i(y)\,dy$,
				where \(\varpi_i\) is continuous on \(U_x\) and, for constants \(0<c_{i,x}\le C_{i,x}<\infty\),
				\[
				c_{i,x}\le\varpi_i(y)\le C_{i,x},
				\qquad y\in U_x.
				\]

			\textup{(ii)} For every fixed sampling mesh \(\Delta>0\) and every bounded Borel set \(D\subset\mathbb R^d\), there exist \(C_{\Delta,D,x}<\infty\) and \(h_{\Delta,D,x}>0\) such that
				\[
				\sup_{r\ge1}\sup_{a,b\in S}
				\mathbb P\!\left(
				Y_0\in x+hD,\ Y_r\in x+hD,\
				\Lambda_{t_0}=a,\ \Lambda_{t_r}=b
				\right)
				\le C_{\Delta,D,x}h^{2d},
				\qquad 0<h\le h_{\Delta,D,x}.
				\]
	\end{assumption}}

	{\begin{remark}\label[remark]{rem:mixing-density-distinct}
			Assumption~A3\textup{(ii)} is stated in the local form used by the fixed-mesh central limit theorem. A convenient sufficient condition is that, for every fixed \(\Delta>0\), the two-point stationary sublaws admit densities \(p_{r,\Delta}^{ab}\) on \(U_x\times U_x\) satisfying
			\[
			\sup_{r\ge1}\sup_{a,b\in S}\sup_{(y,z)\in U_x\times U_x}
			p_{r,\Delta}^{ab}(y,z)<\infty.
			\]
	\end{remark}}

	{\begin{assumption}[Fixed-mesh block regularity]\label[assumption]{ass:fixedmesh-smoothness}
			Fix a sampling mesh \(\Delta>0\), a polynomial degree \(p\ge0\), a bounded test function \(g\in C_b(\mathbb R^d)\), and a regime pair \((i,j)\). The sampled-block regression function
			$m_{\Delta,g}^{ij}(y):=P_\Delta^{ij}g(y)$
			satisfies the following conditions, invoked separately below:
			\begin{enumerate}[label=\textup{(\roman*)}]
				\item \(m_{\Delta,g}^{ij}\) is continuous at \(x\).
				\item \(m_{\Delta,g}^{ij}\in C^{p+1}(U_x)\), and
				$\max_{|\alpha|\le p+1}\sup_{y\in U_x}
				\left|\partial^\alpha m_{\Delta,g}^{ij}(y)\right|<\infty$.
				\item The conditional second-moment function
				$s_{\Delta,g}^{ij}(y):=P_\Delta^{ij}(g^2)(y)$
				is continuous at \(x\).
				\item The conditional variance is positive at the design point:
				\[
				v_{\Delta,g}^{ij}(x)
				:=s_{\Delta,g}^{ij}(x)
				-\bigl\{m_{\Delta,g}^{ij}(x)\bigr\}^2>0.
				\]
			\end{enumerate}
	\end{assumption}}

	{\Cref{ass:fixedmesh-smoothness} may be verified by a model-specific semigroup regularity theorem. The ergodicity condition follows, for example, from a Foster--Lyapunov drift condition together with irreducibility and minorization; see \citet{xi2008feller,xiyin2011uniform,tongmajda2016ergodic,cloezhairer2015,lixi2022tv}. Wasserstein contraction alone does not imply the required \(V\)-norm or \(\beta\)-mixing bounds.}

	\subsection*{Block-generator coefficient hierarchy}
	Finally, we record the block-generator coefficient hierarchy used by the shrinking-mesh recovery theory. The block semigroup \(P_\Delta^{ij}g\) admits block-generator coefficients indexed by the expansion order, and the later statistical target at order \(k\) is the \(k\)-th coefficient in this hierarchy.

		Let \(g:\mathbb R^d\to\mathbb R\) be a test function, let \(j\in S\), and define
		\(f_g^{(j)}(y,\ell):=g(y)\mathbf 1_{\{\ell=j\}}\).
		Set \(\mathcal L^0 f_g^{(j)}:=f_g^{(j)}\). Whenever the formal iterates \(\mathcal L^\ell f_g^{(j)}\), \(\ell=1,\ldots,m\), are well defined, set
		\[
		A_m^{ij}g(x):=(\mathcal L^m f_g^{(j)})(x,i),\qquad i,j\in S.
		\]
		In particular,
		$A_0^{ij}g(x)=f_g^{(j)}(x,i)=\delta_{ij}g(x)$.

	{We first verify that the first two block-generator coefficients are well defined. The subsequent proposition establishes the general order-\(k\) expansion under the corresponding higher-order regularity.}
	\begin{proposition}\label[proposition]{prop:block-domain}
		{Under \Cref{ass:wellposedness}, fix \(g\in C_c^4(\mathbb R^d)\) and \(j\in S\). Then the block test \(f_g^{(j)}\) belongs to \(\mathcal D_0\), \(\mathcal Lf_g^{(j)}\in\mathcal D_0\), and \(\mathcal L^2f_g^{(j)}\) is bounded and continuous with compact support. Consequently, \(A_1^{ij}g\) and \(A_2^{ij}g\) are well defined for all \(i,j\in S\).}
	\end{proposition}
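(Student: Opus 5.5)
The plan is a direct computation of the two iterates, tracking smoothness and support at each stage with the $C^4$ bounds of \Cref{ass:wellposedness}.

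\emph{Step 1: $f_g^{(j)}\in\mathcal D_0$.} Each regime component of $f_g^{(j)}$ is either $g$ (in regime $j$) or $0$. Both lie in $C_c^4\subset C_c^2$, so $f_g^{(j)}\in\mathcal D_0$.

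\emph{Step 2: $\mathcal Lf_g^{(j)}\in\mathcal D_0$.} Let $K=\operatorname{supp}g$. Applying \eqref{eq:generator} gives
\[
\mathcal Lf_g^{(j)}(y,i)=\delta_{ij}\,\mathscr L_j g(y)+q_{ij}(y)g(y)\quad(i\ne j),
\]
\[
\mathcal Lf_g^{(j)}(y,j)=\mathscr L_j g(y)+q_{jj}(y)g(y).
\]
Here $\mathscr L_j$ is the operator in \eqref{eq:Ai}. The coefficients $b(\cdot,j)$ and $a(\cdot,j)=\sigma\sigma^\top$ are $C^4$, and $g\in C^4$. Hence $\mathscr L_jg$ is $C^2$, since it involves $\nabla^2 g\in C^2$ multiplied by $C^4$ coefficients. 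The products $q_{ij}g$ and $q_{jj}g$ are $C^4$. Every term contains a derivative of $g$ or $g$ itself, so it vanishes off $K$. Therefore every regime component lies in $C_c^2$, which gives $\mathcal Lf_g^{(j)}\in\mathcal D_0$.

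\emph{Step 3: $\mathcal L^2f_g^{(j)}$.} Put $F:=\mathcal Lf_g^{(j)}$ and apply $\mathcal L$ once more. The result is
\[
\mathcal L^2f_g^{(j)}(y,i)=b(y,i)^\top\nabla F(y,i)+\tfrac12\operatorname{tr}\bigl(a(y,i)\nabla^2F(y,i)\bigr)+\sum_{\ell\ne i}q_{i\ell}(y)\bigl(F(y,\ell)-F(y,i)\bigr).
\]
The regime components of $F$ are $C_c^2$ with support in $K$. So $\nabla F$ and $\nabla^2F$ are continuous and supported in $K$. The coefficients $b$, $a$ and $q_{i\ell}$ are continuous. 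Each term is therefore continuous with support in $K$, and a continuous function with compact support is bounded. This shows $\mathcal L^2f_g^{(j)}$ is bounded and continuous with compact support, so $A_1^{ij}g$ and $A_2^{ij}g$ are well defined pointwise for all $i,j$.

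The only delicate point is Step 2: checking that $F$ really has two continuous derivatives. This requires third and fourth derivatives of $g$, which is exactly why $g\in C_c^4$ is assumed. It also requires $a(\cdot,i)\in C^2$ at least, which follows from $\sigma\in C^4$. Boundedness of the derivatives of the coefficients is not needed here, because compact support already localizes everything. Finally, the off-diagonal components of $F$ equal $q_{ij}g$, which lies in $C^4$ with compact support; this is more than the $C_c^2$ that $\mathcal D_0$ requires.
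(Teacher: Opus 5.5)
Your proof is correct and follows the paper's argument. Both compute $\mathcal L f_g^{(j)}$ componentwise as $\mathscr L_j g-q_j g$ in regime $j$ and $q_{ij}g$ in regime $i\ne j$, check that each component lies in $C_c^2$ using $g\in C_c^4$ and the $C^4$ coefficients, and then note that one more application of $\mathcal L$ gives a continuous function with compact support, hence a bounded one. Your additional bookkeeping (supports contained in $\operatorname{supp} g$, and the remark that bounded coefficient derivatives are not needed here) only makes explicit what the paper leaves implicit.
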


	\begin{proof}
		Since \(g\in C_c^4(\mathbb R^d)\), \(f_g^{(j)}\in\mathcal D_0\). By the operator formula \eqref{eq:generator},
		\[
		(\mathcal L f_g^{(j)})(x,\ell)
		=
		\begin{cases}
			\mathscr L_j g(x)-q_j(x)g(x),& \ell=j,\\
			q_{\ell j}(x)g(x),& \ell\ne j.
		\end{cases}
		\]
		Each component is \(C_c^2(\mathbb R^d)\). Indeed, \(g\) is compactly supported and has four continuous derivatives, while \(b\), \(\sigma\), and the switching rates have the derivatives required in \Cref{ass:wellposedness}. Hence \(\mathcal Lf_g^{(j)}\in\mathcal D_0\). Applying \eqref{eq:generator} once more gives a bounded continuous function with compact support, because \(\mathcal Lf_g^{(j)}\) has compact support and two continuous derivatives. This proves the claim.
	\end{proof}

	For functions \(F_t,F:E\to\mathbb R\), we write \(F_t\to F\) compact-uniformly as \(t\downarrow0\) when, for every compact \(K\subset\mathbb R^d\),
	\[
	\lim_{\delta\downarrow0}
	\sup_{0\le t\le\delta}\max_{i\in S}\sup_{x\in K}|F_t(x,i)-F(x,i)|=0.
	\]
	{\Cref{prop:kth} is a model-level sufficient condition for the sampled-block expansion; the recovery theorem later assumes that local expansion directly.}
	\begin{proposition}[Block expansion criterion]\label[proposition]{prop:kth}
		Fix \(k\ge1\), \(j\in S\), and \(g\in C_c^{2k}(\mathbb R^d)\). Suppose \Cref{ass:wellposedness} holds. If \(k>2\), assume in addition that, for every regime pair, \(b(\cdot,i)\), \(\sigma(\cdot,i)\), and \(q_{ij}\) belong to \(C^{2k}(\mathbb R^d)\) with bounded derivatives of orders \(1,\ldots,2k\). Then, for every \(i\in S\),
		\begin{equation}\label{eq:kth-expansion}
			P_\Delta^{ij}g(x)=\sum_{m=0}^k\frac{\Delta^m}{m!}A_m^{ij}g(x)+o(\Delta^k),\qquad \Delta\downarrow 0,
		\end{equation}
		uniformly on compact subsets of \(\mathbb R^d\).
	\end{proposition}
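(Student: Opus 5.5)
The plan is to write the block functional as an ordinary semigroup evaluation and iterate Dynkin's formula \(k\) times, keeping the remainder in integral form. By the Markov property and stationarity,
\[
P_\Delta^{ij}g(x)=\mathbb E_{(x,i)}\bigl[f_g^{(j)}(X_\Delta,\Lambda_\Delta)\bigr]=P_\Delta f_g^{(j)}(x,i).
\]
So it suffices to expand \(P_\Delta f\) for \(f=f_g^{(j)}\).

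\emph{Step 1 (domain).} First I extend \Cref{prop:block-domain}: \(\mathcal L^mf\in\mathcal D_0\) for \(0\le m\le k-1\), and \(\mathcal L^kf\) is bounded and continuous with compact support. Each application of \eqref{eq:generator} preserves compact support, because the support in \(x\) never grows. It costs at most two derivatives of the function and involves only up to the current derivative level of \(b\), \(a\) and \(q_{\ell j}\). By induction, the regime components of \(\mathcal L^mf\) lie in \(C_c^{2k-2m}\), provided the coefficients lie in \(C^{2k-2}\). For \(k\le2\) this is exactly what \Cref{ass:wellposedness} and \(g\in C_c^4\) provide; for \(k>2\) it is the extra hypothesis. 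Only the derivative counting needs care here.

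\emph{Step 2 (Dynkin and iteration).} For \(F\in\mathcal D_0\), Itô's formula for the switching diffusion \eqref{eq:model} gives a local martingale
\[
F(X_t,\Lambda_t)-F(X_0,\Lambda_0)-\int_0^t\mathcal LF(X_s,\Lambda_s)\,ds .
\]
Its Brownian part has a bounded integrand, since \(\nabla F\) has compact support and \(\sigma\) is continuous. Its compensated Poisson part has a bounded integrand, since the rates are bounded by \(C_q\). Hence it is a true martingale, and
\[
P_tF=F+\int_0^tP_s\mathcal LF\,ds .
\]
Applying this successively to \(f,\mathcal Lf,\ldots,\mathcal L^{k-1}f\) and substituting gives the Taylor form
\[
P_\Delta f(x,i)=\sum_{m=0}^{k-1}\frac{\Delta^m}{m!}\mathcal L^mf(x,i)+\frac{\Delta^k}{(k-1)!}\int_0^1(1-u)^{k-1}P_{u\Delta}\mathcal L^kf(x,i)\,du .
\]
Adding and subtracting \(\mathcal L^kf(x,i)\) inside the last integral shows that the remainder beyond \(\frac{\Delta^k}{k!}A_k^{ij}g(x)\) is bounded by
\[
\frac{\Delta^k}{k!}\sup_{0\le t\le\Delta}\bigl|P_t\mathcal L^kf(x,i)-\mathcal L^kf(x,i)\bigr|.
\]

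\emph{Step 3 (compact-uniform continuity at \(t=0\); main obstacle).} The remaining task is to show \(P_tF\to F\) compact-uniformly for \(F=\mathcal L^kf\), which is bounded with regime components continuous and compactly supported, hence uniformly continuous. I would not use the Feller property abstractly but estimate directly:
\[
|P_tF(x,i)-F(x,i)|\le 2\|F\|_\infty\,\mathbb P_{(x,i)}(\Lambda_t\ne i)+\mathbb E_{(x,i)}|F(X_t,i)-F(x,i)| .
\]
The first term is at most \(2\|F\|_\infty C_qt\), uniformly in \(x\), since switches occur at rate at most \(C_q\). For the second term, split on \(\{|X_t-x|\le\eta\}\). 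This gives the bound
\[
\omega_F(\eta)+2\|F\|_\infty\eta^{-2}\mathbb E_{(x,i)}|X_t-x|^2,
\]
where \(\omega_F\) is the modulus of continuity of \(F\). The linear growth from \Cref{ass:wellposedness}, applied via Itô and Gronwall, gives \(\mathbb E_{(x,i)}|X_t-x|^2\le C(1+|x|^2)t\) for \(t\le1\), regardless of switching. Taking \(\sup_{x\in K}\) and then letting \(t\downarrow0\) followed by \(\eta\downarrow0\) yields the claim, and hence the \(o(\Delta^k)\) remainder uniformly on compact \(K\). This proves \eqref{eq:kth-expansion}.
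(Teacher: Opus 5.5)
Your proposal is correct and follows essentially the same route as the paper: the same induction showing that $\mathcal L^m f_g^{(j)}$ has $C_c^{2(k-m)}$ regime components, iterated Dynkin identities giving the integral Taylor remainder, and the same modulus-of-continuity argument. That last argument combines the $O(t)$ switching probability with a second-moment bound on $X_t-x$ to obtain the compact-uniform convergence $P_t\mathcal L^k f_g^{(j)}\to\mathcal L^k f_g^{(j)}$. The only cosmetic difference is in the true-martingale step: you argue through bounded integrands of the Brownian and compensated Poisson parts, whereas the paper notes directly that the local martingale is bounded on $[0,T]$.
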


	\begin{proof}
		Let \(u_m:=\mathcal L^m f_g^{(j)}\), \(m=0,\ldots,k\). We first verify that these formal iterates are well defined. By the stated regularity, induction in \(m\) gives
		$u_m(\cdot,\ell)\in C_c^{2(k-m)}(\mathbb R^d)$ for
		 $m=0,\ldots,k,\quad \ell\in S$.
		Indeed, applying \(\mathcal L\) differentiates the current component at most twice and multiplies it by coefficients and switching rates with the required differentiability. The compact support is preserved because each term in \(\mathcal L u_m\) contains either \(u_m\), one of its derivatives, or a difference of its regime components. In particular, \(u_0,\ldots,u_{k-1}\in\mathcal D_0\), while \(u_k\) is bounded and continuous with compact support.

		By Itô's formula for jump diffusions, for \(m=0,\ldots,k-1\), the process
		\[
		M_t^{(m)}
		:=
		u_m(X_t,\Lambda_t)-u_m(X_0,\Lambda_0)
		-\int_0^t u_{m+1}(X_s,\Lambda_s)\,ds
		\]
		is a local martingale. For every \(T<\infty\),
		$\sup_{0\le t\le T}|M_t^{(m)}|
		\le
		2\|u_m\|_\infty+T\|u_{m+1}\|_\infty$.
		Hence \(M^{(m)}\) is bounded on \([0,T]\) and is therefore a uniformly integrable martingale on that interval. Taking expectation under \(\mathbb P_z\) yields the Dynkin identity
		\[
		P_tu_m(z)-u_m(z)=\int_0^t P_su_{m+1}(z)\,ds,
		\qquad z\in E,\quad t\ge0.
		\]
		Iterating these identities gives
		\[
		P_\Delta f_g^{(j)}
		=\sum_{m=0}^{k-1}\frac{\Delta^m}{m!}\mathcal L^m f_g^{(j)}
		+\int_0^\Delta \frac{(\Delta-s)^{k-1}}{(k-1)!}
		P_s\mathcal L^k f_g^{(j)}\,ds.
		\]
		Since
		\[
		\frac{\Delta^k}{k!}\mathcal L^k f_g^{(j)}
		=\int_0^\Delta \frac{(\Delta-s)^{k-1}}{(k-1)!}
		\mathcal L^k f_g^{(j)}\,ds,
		\]
		the remainder after subtracting the \(m=k\) term is
		\[
		\int_0^\Delta \frac{(\Delta-s)^{k-1}}{(k-1)!}
		\left(P_s\mathcal L^k f_g^{(j)}-\mathcal L^k f_g^{(j)}\right)\,ds.
		\]
		It remains to prove \(P_tu_k\to u_k\) compact-uniformly. Let \(K\subset\mathbb R^d\) be compact. By the linear growth of \(b,\sigma\), the Burkholder--Davis--Gundy inequality and Gronwall's inequality give, for \(0<t\le1\),
		\[
		\sup_{(x,i)\in K\times S}
		\mathbb E_{(x,i)}
		\left[
		\sup_{0\le s\le t}\|X_s-x\|^2
		\right]
		\lesssim_K t.
		\]
		Moreover, the uniform bound on the total switching rate implies
		\[
		\sup_{(x,i)\in K\times S}
		\mathbb P_{(x,i)}(\Lambda_t\ne i)
		\le \left(\max_{\ell\in S}\sup_y q_\ell(y)\right)t.
		\]
		Since \(u_k\) is bounded and uniformly continuous on \(\mathbb R^d\times S\), with modulus of continuity \(\omega_k(\delta)\), we have
		\[
		\sup_{(x,i)\in K\times S}|P_tu_k(x,i)-u_k(x,i)|
		\le
		\omega_k(\delta)
		+2\|u_k\|_\infty
		\left(
		\delta^{-2}C_Kt+
		\left(\max_{\ell\in S}\sup_y q_\ell(y)\right)t
		\right).
		\]
		Letting first \(t\downarrow0\) and then \(\delta\downarrow0\) proves compact-uniform convergence. The displayed remainder is therefore \(o(\Delta^k)\) uniformly on compact subsets of \(E\). Evaluating at \((x,i)\) gives \eqref{eq:kth-expansion}.
	\end{proof}

	{\subsection*{First two block-generator coefficients}
		For every test function for which the displayed expressions are defined, put
		\begin{equation}\label{eq:Bops}
			B_{ij}g(x):=
			\begin{cases}
				(\mathscr L_i-q_i)g(x), & i=j,\\
				q_{ij}(x)g(x), & i\ne j.
			\end{cases}
		\end{equation}
		Applying \(\mathcal L\) once more to \(f_g^{(j)}\) gives, for \(i\ne j\),
		\begin{equation}\label{eq:Coff}
			C_{ij}g:=\mathscr L_i(q_{ij}g)+q_{ij}\mathscr L_j g
			+\sum_{\ell\ne i,j}q_{i\ell}q_{\ell j}g-q_{ij}(q_i+q_j)g,
		\end{equation}
		and
		\begin{equation}\label{eq:Cdiag}
			C_{ii}g:=\mathscr L_i^2 g-\mathscr L_i(q_i g)-q_i\mathscr L_i g
			+q_i^2g+\sum_{\ell\ne i}q_{i\ell}q_{\ell i}g.
		\end{equation}
		The term \(\mathscr L_i(q_{ij}g)\) contains the cross-gradient interaction
		\((\nabla q_{ij})^\top a(\cdot,i)\nabla g\). Under \Cref{ass:wellposedness}, \Cref{prop:block-domain} yields, for every fixed \(g\in C_c^4(\mathbb R^d)\),
		\begin{equation}\label{eq:coeff-through-two}
			A_1^{ij}g(x)=B_{ij}g(x),
			\qquad
			A_2^{ij}g(x)=C_{ij}g(x).
		\end{equation}
		Consequently, whenever the order-two expansion holds locally,
		\begin{equation}\label{eq:second-order}
			P_\Delta^{ij}g(x)
			=\delta_{ij}g(x)+\Delta B_{ij}g(x)
			+\frac{\Delta^2}{2}C_{ij}g(x)+o(\Delta^2),
			\qquad \Delta\downarrow0,
		\end{equation}
		{uniformly on compact sets.}}

	The transition-probability target corresponds to the non-compactly supported test function $g\equiv1$. The following cutoff lemma localizes $g\equiv1$ near a fixed compact set and transfers the expansion to regime transition probabilities.
	\begin{lemma}[Cutoff localization]\label[lemma]{lem:cutoff-localization}
		Fix \(k\ge1\) and a compact set \(K\subset\mathbb R^d\). Assume the coefficient hypotheses of \Cref{prop:kth} through order \(k\). Let
		$\chi\in C_c^{2k}(\mathbb R^d)$,
		$0\le\chi\le1$,
		and suppose that \(\chi=1\) on an open neighborhood of \(K\).
		Then, for every integer \(M\ge1\), there exists \(C_{K,M,\chi}<\infty\) such that
		\begin{equation}\label{eq:cutoff-small-time}
			\max_{i,j\in S}
			\sup_{x\in K}
			\left|
			P_t^{ij}1(x)-P_t^{ij}\chi(x)
			\right|
			\le
			C_{K,M,\chi}t^M,
			\qquad 0<t\le1.
		\end{equation}
		{Moreover, for each \(i,j\in S\) and \(m=0,\ldots,k\), the definition
			$A_m^{ij}1(x):=A_m^{ij}\chi(x)$, $x\in K$,
			is independent of the choice of \(\chi\in C_c^{2k}(\mathbb R^d)\) satisfying \(\chi=1\) on an open neighborhood of \(K\). The coefficients \(A_m^{ij}1\) satisfy
			\begin{equation}\label{eq:transition-probability-expansion}
				P_t^{ij}1(x)
				=
				\sum_{m=0}^k
				\frac{t^m}{m!}A_m^{ij}1(x)
				+
				o(t^k)
			\end{equation}
			uniformly for \(x\in K\).}
	\end{lemma}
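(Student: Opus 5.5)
The proof has three parts: a small-time exit bound giving \eqref{eq:cutoff-small-time}, a locality argument showing the coefficients \(A_m^{ij}\chi\) do not depend on \(\chi\) on \(K\), and a combination of these with \Cref{prop:kth}.

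\emph{Step 1: the exit bound.} Since \(0\le 1-\chi\le 1\) and \(1-\chi\) vanishes on an open neighbourhood \(O\supset K\), we have
\[
|P_t^{ij}1(x)-P_t^{ij}\chi(x)|
=\mathbb E_{(x,i)}\bigl[(1-\chi)(X_t)\mathbf 1_{\{\Lambda_t=j\}}\bigr]
\le \mathbb P_{(x,i)}(X_t\notin O).
\]
Choose \(\eta>0\) with \(K^\eta:=\{y:\operatorname{dist}(y,K)\le\eta\}\subset O\). Then for \(x\in K\) it suffices to bound \(\mathbb P_{(x,i)}(\sup_{s\le t}\|X_s-x\|\ge\eta)\). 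By Assumption~\ref{ass:wellposedness} the coefficients have linear growth uniformly in the regime. The BDG and Gronwall argument already used in the proof of \Cref{prop:kth} then extends to every even moment: for each \(p\ge1\),
\[
\sup_{(x,i)\in K\times S}\mathbb E_{(x,i)}\Bigl[\sup_{s\le t}\|X_s-x\|^{2p}\Bigr]\lesssim_{K,p} t^{p},
\qquad 0<t\le1.
\]
The switching of \(\Lambda\) does not disturb this bound, because the drift and diffusion coefficients are bounded in linear growth uniformly over the finite set \(S\). Markov's inequality with \(p=M\) gives \(\mathbb P_{(x,i)}(X_t\notin O)\le \eta^{-2M}C_{K,M}t^M\), which is \eqref{eq:cutoff-small-time}. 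This moment estimate is the only quantitative ingredient.

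\emph{Step 2: independence of \(\chi\).} Let \(\chi_1,\chi_2\) be two admissible cutoffs, and let \(O\) be an open neighbourhood of \(K\) on which both equal \(1\). Then \(\psi:=\chi_1-\chi_2\in C_c^{2k}\) vanishes identically on \(O\). The operator \(\mathcal L\) in \eqref{eq:generator} is local: it involves only derivatives of the function and pointwise differences of its regime components at the same \(x\). Hence \(\mathcal L^m f_\psi^{(j)}\) vanishes on \(O\times S\) for every \(m\le k\), the iterates being well defined by the induction in \Cref{prop:kth}. Therefore \(A_m^{ij}\chi_1=A_m^{ij}\chi_2\) on \(K\).

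Alternatively, one can argue without locality. Apply \Cref{prop:kth} to \(\psi\), and use Step 1 for both \(\chi_1\) and \(\chi_2\) with \(M=k+1\). This gives \(\sum_{m\le k}\frac{t^m}{m!}A_m^{ij}\psi(x)=o(t^k)\) uniformly on \(K\), and uniqueness of the coefficients of a polynomial expansion forces each \(A_m^{ij}\psi=0\).

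\emph{Step 3: the expansion.} Fix an admissible \(\chi\). By \Cref{prop:kth} applied to \(g=\chi\in C_c^{2k}\),
\[
P_t^{ij}\chi(x)=\sum_{m=0}^k\frac{t^m}{m!}A_m^{ij}\chi(x)+o(t^k)
\]
uniformly on \(K\). Step 1 with \(M=k+1\) replaces \(P_t^{ij}\chi\) by \(P_t^{ij}1\) at a cost of \(O(t^{k+1})=o(t^k)\) uniformly on \(K\). Using the definition \(A_m^{ij}1:=A_m^{ij}\chi\), justified by Step 2, this yields \eqref{eq:transition-probability-expansion}.

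The main obstacle is making the higher-moment bound in Step 1 uniform over starting points in \(K\) and over all regimes. The plan is to localize by stopping at the first exit of \(X\) from a large ball containing \(K^\eta\). On that ball the coefficients are bounded, so the BDG estimate gives the \(t^{p}\) rate directly. Before the stopping time, the stopped process coincides with \(X\), so the exit probability is controlled by the same bound.
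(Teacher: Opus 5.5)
Your proposal is correct and follows the paper's proof essentially step for step: a BDG/Gronwall higher-moment bound with Markov's inequality controls the exit probability. Locality of $\mathcal L$ then gives independence of the cutoff, since its continuous part is local and its jump part preserves the continuous coordinate. Finally, the $O(t^{k+1})$ cutoff error is absorbed into the $o(t^k)$ remainder from \Cref{prop:kth}. Your alternative uniqueness-of-coefficients argument for Step 2 and the stopping-time localization you sketch for the moment bound are both valid, but they are optional refinements that the paper does not need.
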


	\begin{proof}
			Choose \(\rho>0\) so that \(\chi=1\) on the \(\rho\)-neighborhood of \(K\). The standard estimate obtained from the Burkholder--Davis--Gundy inequality and Gronwall's inequality gives, for every \(M\ge1\),
			\[
			\sup_{(x,i)\in K\times S}
			\mathbb E_{(x,i)}\!\left[\sup_{0\le s\le t}\|X_s-x\|^{2M}\right]
			\le C_{K,M}t^M,
			\qquad 0<t\le1.
			\]
			Consequently,
			\[
			0\le P_t^{ij}1(x)-P_t^{ij}\chi(x)
			\le \mathbb P_{(x,i)}\!\left(\sup_{s\le t}\|X_s-x\|\ge\rho\right)
			\le C_{K,M,\chi}t^M,
			\]
			which proves \eqref{eq:cutoff-small-time}. If two cutoffs equal one near \(K\), their difference and all its derivatives vanish there. Locality of the continuous part of \(\mathcal L\), together with the fact that its jump part preserves the continuous state, gives equality of their coefficients by induction on \(m\). Finally choose \(M>k\), apply \Cref{prop:kth} to either cutoff, and absorb the \(O(t^M)\) cutoff error into \(o(t^k)\).
		\end{proof}
		\paragraph{Constant probe convention.}
	{The fixed-mesh theory treats \(g\equiv1\) directly. Throughout the model-level and shrinking-mesh theory, every statement for \(g\equiv1\) that invokes a short-time expansion is understood through the cutoff localization in \Cref{lem:cutoff-localization}. We use this convention without repeating it in individual result statements.}

	\begingroup
	\subsection*{Primitive coefficients encoded by the first block-generator family}
	Fix \(x\in\mathbb R^d\), and choose \(\chi_x\in C_c^\infty(\mathbb R^d)\) that equals one on an open neighborhood of \(x\). For \(r,s\in\{1,\ldots,d\}\), define the localized coordinate and quadratic probes
	\[
	\begin{aligned}
	g_{r,x}(y)&:=\chi_x(y)y_r,
	&g_{rs,x}(y)&:=\chi_x(y)y_ry_s,\\
	\widetilde g_{r,x}(y)&:=\chi_x(y)(y_r-x_r),
	&\widetilde g_{rs,x}(y)&:=\chi_x(y)(y_r-x_r)(y_s-x_s).
	\end{aligned}
	\]

	\begin{proposition}[Primitive-coefficient identification]\label[proposition]{prop:primitive-identification}
		Under \Cref{ass:wellposedness}, the first block-generator coefficients identify the switching intensities, drift, and diffusion matrix at \(x\). More precisely, for \(i\ne j\),
		\[
		q_{ij}(x)=A_1^{ij}1(x),
		\qquad
		q_i(x)=-A_1^{ii}1(x),
		\]
		and, for \(r,s\in\{1,\ldots,d\}\),
		\begin{align*}
		b_r(x,i)
		&=A_1^{ii}g_{r,x}(x)+q_i(x)x_r
		=A_1^{ii}\widetilde g_{r,x}(x),\\
		a_{rs}(x,i)
		&=A_1^{ii}g_{rs,x}(x)+q_i(x)x_rx_s
		-x_sb_r(x,i)-x_rb_s(x,i)\\
		&=A_1^{ii}\widetilde g_{rs,x}(x).
		\end{align*}
		These identities are independent of the choice of \(\chi_x\). Consequently, the family of first block-generator coefficients generated by locally constant, linear, and quadratic probes point-identifies the primitive coefficient vector
		\[
		\left(
		\{q_{ij}(x):i\ne j\},
		\{b(x,i):i\in S\},
		\{a(x,i):i\in S\}
		\right).
		\]
	\end{proposition}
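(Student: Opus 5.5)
The plan is to compute \(A_1^{ij}g(x)=(\mathcal L f_g^{(j)})(x,i)\) explicitly for each probe and read off the primitive coefficients. Every probe is of the form \(g=\chi_x\phi\) with \(\phi\) a polynomial of degree at most two. Because \(\chi_x\in C_c^\infty\), each such \(g\) lies in \(C_c^4(\mathbb R^d)\), so \Cref{prop:block-domain} makes \(A_1^{ij}g\) well defined. By the computation in that proof together with \eqref{eq:coeff-through-two}, \(A_1^{ij}g=B_{ij}g\) as given in \eqref{eq:Bops}. For \(g\equiv1\), the constant probe convention sets \(A_1^{ij}1(x)=A_1^{ij}\chi(x)\) for a cutoff \(\chi\) equal to one near \(K=\{x\}\). \Cref{lem:cutoff-localization} shows that this value does not depend on the cutoff, so we may take \(\chi=\chi_x\).

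The key structural fact is locality. On the open neighbourhood \(O\) of \(x\) where \(\chi_x=1\), every derivative of \(\chi_x\) vanishes and \(g=\phi\). Since \(\mathscr L_i\) is a second-order differential operator and \(q_i g\) is a pointwise product, \(B_{ij}g(x)\) therefore equals the same expression with \(g\) replaced by \(\phi\), evaluated at \(x\). This observation also settles independence from \(\chi_x\): any two admissible cutoffs agree with \(\phi\) near \(x\), so they give the same value. Only first and second derivatives at \(x\) enter.

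The computations then proceed as follows.
\begin{enumerate}[label=\textup{(\alph*)}]
\item \emph{Constant probe.} Take \(\phi=1\). For \(i\ne j\), \(B_{ij}\chi_x(x)=q_{ij}(x)\chi_x(x)=q_{ij}(x)\). For \(i=j\), \(B_{ii}\chi_x(x)=\mathscr L_i 1(x)-q_i(x)=-q_i(x)\).
\item \emph{Coordinate probe.} Take \(\phi=y_r\). Then \(\nabla\phi=e_r\) and \(\nabla^2\phi=0\), so \(A_1^{ii}g_{r,x}(x)=b_r(x,i)-q_i(x)x_r\). For the centred probe \(\phi=y_r-x_r\), the value \(\phi(x)=0\) kills the switching term, and \(A_1^{ii}\widetilde g_{r,x}(x)=b_r(x,i)\).
\item \emph{Quadratic probe.} Take \(\phi=y_ry_s\). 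Then \(\nabla\phi(x)=x_se_r+x_re_s\) and \(\nabla^2\phi=e_re_s^\top+e_se_r^\top\). Using the symmetry of \(a\), \(\tfrac12\operatorname{tr}(a\nabla^2\phi)=a_{rs}\). Hence \(A_1^{ii}g_{rs,x}(x)=x_sb_r+x_rb_s+a_{rs}-q_ix_rx_s\), which rearranges to the stated formula. For the centred probe \(\phi=(y_r-x_r)(y_s-x_s)\), both \(\phi\) and \(\nabla\phi\) vanish at \(x\), so only \(a_{rs}(x,i)\) survives. The diagonal case \(r=s\) is consistent with this, since then \(\nabla^2\phi=2e_re_r^\top\).
\end{enumerate}

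Point identification follows directly. The values \(q_{ij}(x)\), \(b(x,i)\) and \(a(x,i)\) are each given explicitly as finite functions of first-order coefficients, obtained successively as \(q\), then \(b\), then \(a\).

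The main subtlety is not computational but the handling of \(g\equiv1\), which is not compactly supported. This is resolved by \Cref{lem:cutoff-localization}, whose hypotheses through order \(k=1\) are covered by \Cref{ass:wellposedness}. The rest is the locality argument above.
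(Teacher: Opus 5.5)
Your proposal is correct and follows essentially the same route as the paper. You reduce $A_1^{ij}g$ to $B_{ij}g$ and use that $\chi_x\equiv1$ near $x$, so only $\phi(x)$, $\nabla\phi(x)$ and $\nabla^2\phi(x)$ enter; this gives both the explicit identities and independence from $\chi_x$, and you handle $g\equiv1$ through the cutoff lemma, just as the paper's proof does.
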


	\begin{proof}
		By \eqref{eq:Bops} and the cutoff construction in \Cref{lem:cutoff-localization},
		\[
		A_1^{ij}1(x)=q_{ij}(x),\quad i\ne j,
		\qquad
		A_1^{ii}1(x)=\mathscr L_i1(x)-q_i(x)=-q_i(x).
		\]
		Because \(\chi_x=1\) near \(x\), its derivatives do not contribute at \(x\). Hence
		\[
		\mathscr L_i g_{r,x}(x)=b_r(x,i),
		\qquad
		\mathscr L_i g_{rs,x}(x)
		=x_sb_r(x,i)+x_rb_s(x,i)+a_{rs}(x,i).
		\]
		Substitution in \(A_1^{ii}g=\mathscr L_i g-q_i g\) gives the two uncentered identities. For the centered probes,
		\[
		\widetilde g_{r,x}(x)=0,
		\quad
		\nabla\widetilde g_{r,x}(x)=e_r,
		\quad
		\widetilde g_{rs,x}(x)=0,
		\quad
		\nabla\widetilde g_{rs,x}(x)=0.
		\]
		Their local Hessians therefore give
		\(\mathscr L_i\widetilde g_{r,x}(x)=b_r(x,i)\)
		and
		\(\mathscr L_i\widetilde g_{rs,x}(x)=a_{rs}(x,i)\),
		proving the centered identities. Since the generator is local in the continuous coordinate and regime switching does not change that coordinate, every displayed value depends only on the probes in a neighborhood of \(x\). This also proves independence of \(\chi_x\).
	\end{proof}
	\endgroup

	\section{Fixed-mesh local-polynomial estimation}
	\label{sec:estimation}
	\begingroup
	{Fix \(\Delta>0\). We estimate \(P_\Delta^{ij}g(x)\) by degree-\(p\) local-polynomial regression with a regime-specific response. Let \(K:\mathbb R^d\to[0,\infty)\) be bounded, measurable, compactly supported, and symmetric. Assume that it integrates to one and is strictly positive on a nonempty open ball \(B_K\). Throughout this section, \((h_n)\) is a deterministic bandwidth sequence satisfying
	\begin{equation}\label{eq:fixed-mesh-bandwidth}
		h_n\to0,\qquad nh_n^d\to\infty,
	\end{equation}
	and \(K_{h_n}(z):=h_n^{-d}K(z/h_n)\). Fix a polynomial degree \(p\ge0\), write
	\[
	\mathcal A_p:=\{\alpha\in\mathbb N_0^d:|\alpha|\le p\},\qquad q_p:=|\mathcal A_p|=\binom{p+d}{d},
	\]
	and let \(\psi_p(u):=(u^\alpha)_{\alpha\in\mathcal A_p}\in\mathbb R^{q_p}\), \(u^\alpha:=\prod_{r=1}^d u_r^{\alpha_r}\). We order the multi-indices so that the constant monomial appears first, and denote by \(e_0\in\mathbb R^{q_p}\) the first canonical basis vector. For \(k=1,\ldots,n\), define \(R_k^{j,g}:=g(Y_k)\mathbf 1_{\{\Lambda_{t_k}=j\}}\).
	The local-polynomial fit is built from the localized design and response moments
	\begin{equation}\label{eq:lp-S}
		\widehat S_{n,h_n}^{i,(p)}(x):=\frac{1}{nh_n^d}\sum_{k=1}^n
		K\!\left(\frac{Y_{k-1}-x}{h_n}\right)\mathbf 1_{\{\Lambda_{t_{k-1}}=i\}}
		\psi_p\!\left(\frac{Y_{k-1}-x}{h_n}\right)
		\psi_p\!\left(\frac{Y_{k-1}-x}{h_n}\right)^\top,
	\end{equation}
	\begin{equation}\label{eq:lp-T}
		\widehat T_{n,\Delta,h_n}^{ij,g,(p)}(x):=\frac{1}{nh_n^d}\sum_{k=1}^n
		K\!\left(\frac{Y_{k-1}-x}{h_n}\right)
		\mathbf 1_{\{\Lambda_{t_{k-1}}=i\}}
		R_k^{j,g}
		\psi_p\!\left(\frac{Y_{k-1}-x}{h_n}\right).
	\end{equation}
	\begin{definition}[Fixed-mesh local-polynomial estimator]
		On the event that \(\widehat S_{n,h_n}^{i,(p)}(x)\) is invertible, define
		\begin{equation}\label{eq:lp-est}
			\widehat P_{\Delta,h_n}^{ij}g(x)
			:=
			e_0^\top\bigl(\widehat S_{n,h_n}^{i,(p)}(x)\bigr)^{-1}
			\widehat T_{n,\Delta,h_n}^{ij,g,(p)}(x).
		\end{equation}
	\end{definition}
	The identifying conditional-mean relation is
	\begin{equation}\label{eq:lp-identification}
		\mathbb E\!\left[
		R_k^{j,g}
		\,\middle|\,
		Y_{k-1}=x,\Lambda_{t_{k-1}}=i
		\right]
		=
		P_\Delta^{ij}g(x).
	\end{equation}
	Thus the estimator targets the block functional itself, not the product of the block functional with the stationary regime probability.
	}

	The current-regime indicator in \eqref{eq:lp-S}--\eqref{eq:lp-T} is essential. Without it, the regression function is
	\(
	\pi_i(x)P_\Delta^{ij}g(x)
	\), where
	\(
	\pi_i(x)=\varpi_i(x)/\sum_{\ell\in S}\varpi_\ell(x)
	\), rather than the block functional itself. Apart from this regime-specific design and the block response, the estimator is an ordinary local-polynomial regression estimator for a stationary mixing sequence; compare \citet{robinson1983,masry1996,masryfan1997}.

	{The dependence on \(p\) is suppressed in the estimator notation. For the stochastic expansion it is convenient to introduce the deterministic surrogate
	\begin{equation}\label{eq:lp-surrogate}
		R_{\Delta,h_n,p}^{ij}g(x):=e_0^\top\bigl(\mathbb E[\widehat S_{n,h_n}^{i,(p)}(x)]\bigr)^{-1}
		\mathbb E[\widehat T_{n,\Delta,h_n}^{ij,g,(p)}(x)].
	\end{equation}}

	Write
	\[
	M_p(K):=\int_{\mathbb R^d}K(u)\psi_p(u)\psi_p(u)^\top\,du,\qquad
	\eta_p(K):=\int_{\mathbb R^d}K(u)\psi_p(u)\,du.
	\]
	The kernel conditions imply that \(M_p(K)\) is positive definite. Thus the moment matrix is invertible for every polynomial degree \(p\).
	Since  \(\psi_p(u)^\top e_0=1\), we have
	$M_p(K)e_0
	=\eta_p(K)$.
	{Define the squared-kernel moment matrix
		\[
		Q_p(K):=\int_{\mathbb R^d}K(u)^2\psi_p(u)\psi_p(u)^\top\,du.
		\]
		With the conditional variance \(v_{\Delta,g}^{ij}\) from \Cref{ass:fixedmesh-smoothness}, the scalar asymptotic variance is
		\begin{equation}\label{eq:fixedmesh-scalar-variance}
			\sigma_{ij,\Delta,p}^{2,g}(x)
			:=
			\frac{v_{\Delta,g}^{ij}(x)}{\varpi_i(x)}
			e_0^\top M_p(K)^{-1}Q_p(K)M_p(K)^{-1}e_0.
	\end{equation}}

	\begingroup
	\begin{theorem}[Fixed-mesh consistency]\label{thm:lp-fixedmesh-consistency}
		Fix \(\Delta>0\), \(p\ge0\), \(g\in C_b(\mathbb R^d)\), a regime pair \((i,j)\), and a design point \(x\in\mathbb R^d\). Suppose \Cref{ass:wellposedness,ass:mixing}, part~\textup{(i)} of \Cref{ass:local-densities}, and part~\textup{(i)} of \Cref{ass:fixedmesh-smoothness} hold on \(U_x\). Then
		$\mathbb P\!\left(\widehat S_{n,h_n}^{i,(p)}(x)\ \text{is invertible}\right)\to1$
		and
		$\widehat P_{\Delta,h_n}^{ij}g(x)
		\xrightarrow{\mathbb P}P_\Delta^{ij}g(x)$.
	\end{theorem}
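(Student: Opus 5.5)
The plan is the standard bias--variance decomposition for local-polynomial regression, applied to the regime-specific design and the block response. I would establish two limits:
\[
\widehat S_{n,h_n}^{i,(p)}(x)\xrightarrow{\mathbb P}\varpi_i(x)M_p(K),
\qquad
\widehat T_{n,\Delta,h_n}^{ij,g,(p)}(x)\xrightarrow{\mathbb P}\varpi_i(x)P_\Delta^{ij}g(x)\,\eta_p(K).
\]
Since \(\varpi_i(x)\ge c_{i,x}>0\) and \(M_p(K)\) is positive definite, the first limit gives invertibility with probability tending to one, because the set of invertible matrices is open. The continuous mapping theorem then gives
\[
\widehat P_{\Delta,h_n}^{ij}g(x)\xrightarrow{\mathbb P} e_0^\top M_p(K)^{-1}\eta_p(K)\,P_\Delta^{ij}g(x)=P_\Delta^{ij}g(x),
\]
using \(M_p(K)e_0=\eta_p(K)\).

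\emph{Means.} By stationarity,
\[
\mathbb E\widehat S_{n,h_n}^{i,(p)}(x)=\int K(u)\psi_p(u)\psi_p(u)^\top\varpi_i(x+h_nu)\,du .
\]
Because \(K\) has compact support, \(x+h_n\operatorname{supp}K\subset U_x\) for large \(n\). Continuity and boundedness of \(\varpi_i\) on \(U_x\) (\Cref{ass:local-densities}(i)) together with dominated convergence then give the limit \(\varpi_i(x)M_p(K)\). For \(\widehat T\), I condition on \(\mathcal F_{t_{k-1}}\). The Markov property and \eqref{eq:lp-identification} give
\[
\mathbb E\bigl[R_k^{j,g}\mid Y_{k-1},\Lambda_{t_{k-1}}=i\bigr]=m_{\Delta,g}^{ij}(Y_{k-1}),
\]
so
\[
\mathbb E\widehat T=\int K(u)\psi_p(u)\,m_{\Delta,g}^{ij}(x+h_nu)\varpi_i(x+h_nu)\,du .
\]
Since \(m_{\Delta,g}^{ij}\) is bounded by \(\|g\|_\infty\) and continuous at \(x\) (\Cref{ass:fixedmesh-smoothness}(i)), dominated convergence gives the stated limit.

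\emph{Variances.} It suffices to show that each entry of \(\widehat S-\mathbb E\widehat S\) and of \(\widehat T-\mathbb E\widehat T\) is \(o_{\mathbb P}(1)\). Each summand has the form \(h_n^{-d}\xi_k\), where \(\xi_k\) is a bounded function of \((Z_{k-1},Z_k)\) supported on \(\{Y_{k-1}\in x+h_n\operatorname{supp}K\}\). The diagonal part of the variance is therefore
\[
\frac{1}{n^2h_n^{2d}}\sum_k\mathbb E\xi_k^2\lesssim\frac{1}{nh_n^d},
\]
again using boundedness of \(\varpi_i\). For the covariances I avoid part (ii) of \Cref{ass:local-densities}, which is not assumed here. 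The pairs \((Z_{k-1},Z_k)\) form a stationary sequence whose \(\beta\)-coefficients at lag \(r\) are bounded by \(\beta_\Delta(r-1)\), and these decay geometrically by the bound derived after \Cref{ass:mixing}. Davydov's inequality for \(\beta\)-mixing (hence \(\alpha\)-mixing) sequences with bounded variables gives \(|\operatorname{Cov}(\xi_k,\xi_{k+r})|\lesssim \beta_\Delta(r-1)\). Alternatively, I would use the cruder bound \(|\operatorname{Cov}|\le\mathbb E|\xi_k|\,\|\xi\|_\infty\lesssim h_n^d\), which holds because of the local support.

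Splitting the lags at a cutoff \(r_n\), the covariance sum divided by \(n^2h_n^{2d}\) is bounded by
\[
\frac{1}{nh_n^{2d}}\Bigl(r_nh_n^d+\sum_{r>r_n}e^{-\lambda_{\rm erg}r\Delta}\Bigr).
\]
I would choose \(r_n\) of order \(C\log(1/h_n)\), with \(C\) large enough that the geometric tail is at most \(h_n^{d}\). The bound then becomes \(O(\log(1/h_n)/(nh_n^d))\).

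This is the step I expect to be the main obstacle: \eqref{eq:fixed-mesh-bandwidth} gives only \(nh_n^d\to\infty\), with no logarithmic slack. The fix I would try is a sharper small-lag bound. Conditioning on \(Z_{k}\) and using Feller continuity, one can write \(\operatorname{Cov}(\xi_k,\xi_{k+r})=\operatorname{Cov}(\xi_k,\phi_r(Z_{k+r-1}))\) with bounded \(\phi_r\). The \(V\)-norm ergodicity of \Cref{ass:mixing}, applied from the starting points in \(U_x\), where \(V\) is bounded, then gives
\[
|\operatorname{Cov}|\lesssim \mathbb E|\xi_k|\,e^{-\lambda_{\rm erg}(r-1)\Delta}\lesssim h_n^de^{-\lambda_{\rm erg}(r-1)\Delta}.
\]
Summed over \(r\), this removes the logarithm and gives total variance \(O(1/(nh_n^d))\to0\). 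Chebyshev's inequality then completes the proof.
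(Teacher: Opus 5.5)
Your proposal is correct and takes essentially the paper's own route. You use kernel change-of-variables limits for \(\mathbb E\widehat S_{n,h_n}^{i,(p)}(x)\) and \(\mathbb E\widehat T_{n,\Delta,h_n}^{ij,g,(p)}(x)\), then a localized law of large numbers in which Markov conditioning and the \(V\)-norm bound of \Cref{ass:mixing} give lag-\(r\) covariances \(\lesssim h_n^d e^{-\lambda_{\rm erg}(r-1)\Delta}\), hence variance \(O((nh_n^d)^{-1})\) with no logarithmic loss, and you finish with the continuous mapping theorem and \(M_p(K)e_0=\eta_p(K)\). One detail to tighten: after conditioning, the weight \(V\) is evaluated at \(Z_k\) rather than at the localized \(Z_{k-1}\), so you need \(P_\Delta V\lesssim V\) (also a consequence of \Cref{ass:mixing}) to get \(\mathbb E[|\xi_k-\mathbb E\xi_k|\,V(Z_k)]\lesssim h_n^d\); Feller continuity plays no role, only the Markov property.
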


	\begin{theorem}[Fixed-mesh asymptotic normality]\label{thm:lp-fixedmesh-clt}
		Fix \(\Delta>0\), \(p\ge0\), \(g\in C_b(\mathbb R^d)\), a regime pair \((i,j)\), and a design point \(x\in\mathbb R^d\). Suppose \Cref{ass:wellposedness,ass:mixing}, parts~\textup{(i)}--\textup{(ii)} of \Cref{ass:local-densities}, and parts~\textup{(ii)}--\textup{(iv)} of \Cref{ass:fixedmesh-smoothness} hold on \(U_x\). If
		$\frac{(\log n)^2}{nh_n^d}\to0$, and
		$\sqrt{nh_n^d}\,h_n^{p+1}\to0$,
		then
		\[
		\sqrt{n h_n^d}
		\left(\widehat P_{\Delta,h_n}^{ij}g(x)-P_\Delta^{ij}g(x)\right)
		\xrightarrow{d}
		N\!\left(0,\sigma_{ij,\Delta,p}^{2,g}(x)\right).
		\]
	\end{theorem}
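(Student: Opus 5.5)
The plan is the standard local-polynomial decomposition into a design-consistency step, a bias step and a CLT for a kernel-weighted martingale-difference sum.

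\emph{Design and bias.} Put \(u_{k}:=(Y_{k-1}-x)/h_n\) and \(w_k:=K(u_k)\mathbf 1_{\{\Lambda_{t_{k-1}}=i\}}\). By stationarity,
\[
\mathbb E\widehat S_{n,h_n}^{i,(p)}(x)=\int K(u)\psi_p(u)\psi_p(u)^\top\varpi_i(x+h_nu)\,du\to\varpi_i(x)M_p(K),
\]
using continuity of \(\varpi_i\) on \(U_x\) from \Cref{ass:local-densities}(i). The variance of each entry is \(O((nh_n^d)^{-1})\). This follows from the geometric bound on \(\beta_\Delta(r)\) and the two-point bound in \Cref{ass:local-densities}(ii), through the usual split of the covariance sum into lags \(r\le r_n\) and lags \(r>r_n\), with \(r_n\to\infty\), \(r_nh_n^d\to0\). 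Hence \(\widehat S\to\varpi_i(x)M_p(K)\) in probability, and the limit is invertible.

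For the bias, write \(R_k^{j,g}=m(Y_{k-1})+\varepsilon_k\) on \(\{\Lambda_{t_{k-1}}=i\}\), where \(m:=m_{\Delta,g}^{ij}\). By the Markov property and \eqref{eq:lp-identification},
\[
\mathbb E[\varepsilon_k\mid\mathcal F_{t_{k-1}}]=0 .
\]
Taylor-expand \(m\) to order \(p+1\) around \(x\) using \Cref{ass:fixedmesh-smoothness}(ii). The polynomial part is reproduced exactly by the local fit, because \(e_0^\top\widehat S^{-1}\) applied to \(\psi_p\)-moments recovers the coefficients. So
\[
\widehat P-m(x)=e_0^\top\widehat S^{-1}\bigl(\widehat T^{\varepsilon}+\widehat T^{\rm rem}\bigr),
\]
with the Taylor remainder term satisfying \(|\widehat T^{\rm rem}|\lesssim h_n^{p+1}\|\widehat S\text{-type average}\|=O_{\mathbb P}(h_n^{p+1})\). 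The condition \(\sqrt{nh_n^d}h_n^{p+1}\to0\) then kills this term.

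\emph{Martingale CLT.} The main object is
\[
\sqrt{nh_n^d}\,\widehat T^{\varepsilon}=\sum_{k=1}^n\xi_{n,k},\qquad \xi_{n,k}:=(nh_n^d)^{-1/2}w_k\varepsilon_k\psi_p(u_k).
\]
This is a martingale-difference array with respect to \(\mathcal F_{t_k}\), since \(w_k\) and \(u_k\) are \(\mathcal F_{t_{k-1}}\)-measurable. I would apply \citet[Theorem~2.3]{mcleish1974} through the Cramér--Wold device with a fixed vector \(c\).

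The Lindeberg condition is immediate, since \(g\) and \(K\) are bounded: \(|\xi_{n,k}|\lesssim(nh_n^d)^{-1/2}\to0\) uniformly.

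The conditional variance sum is
\[
\frac{1}{nh_n^d}\sum_k w_k^2\bigl(c^\top\psi_p(u_k)\bigr)^2v(Y_{k-1}),
\]
where
\[
\mathbb E[\varepsilon_k^2\mid\mathcal F_{t_{k-1}}]=s_{\Delta,g}^{ij}(Y_{k-1})-m(Y_{k-1})^2=:v(Y_{k-1}).
\]
Its mean tends to \(\varpi_i(x)v(x)\,c^\top Q_p(K)c\), by continuity of \(s\) and \(m\) at \(x\) (parts (ii)–(iii)). Its variance vanishes by the same mixing and small-ball covariance argument as in the design step.

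\emph{Conclusion and main obstacle.} Combining the two steps with Slutsky gives the limit \(N(0,\sigma^2)\), where
\[
\sigma^2=\varpi_i(x)^{-2}\,e_0^\top M_p^{-1}\,\varpi_i(x)vQ_p\,M_p^{-1}e_0,
\]
which matches \eqref{eq:fixedmesh-scalar-variance}. Part (iv) ensures that the limit is nondegenerate.

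The main obstacle is the law of large numbers for the kernel-weighted averages, i.e.\ for \(\widehat S\) and for the conditional variance sum. There I would use the Berbee-coupling/blocking argument with block length \(\asymp\log n\), which makes the total \(\beta\)-error \(n\beta_\Delta(\log n)\) small. This is where the condition \((\log n)^2/(nh_n^d)\to0\) enters: it controls the within-block sums. The two-point small-ball bound handles the near-diagonal covariances, and Davydov's inequality handles the far lags.
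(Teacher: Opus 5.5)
Your proof is correct, but it takes a different route from the paper's.

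\textbf{How the paper argues.} The paper centers the response at the design-point value: its array $\zeta_{k,n}^{ij,g,(p)}(x)$ contains $R_k^{j,g}-m_{\Delta,g}^{ij}(x)$. That array is not a martingale difference, so its CLT comes from the generic Berbee-coupling/blocking result, Lemma~\ref{lem:appendix-kernel-clt}. This is where the two-point small-ball bound A3(ii) is used, to make the summed nonzero-lag covariances $o(h_n^d)$. It is also where the rate $(\log n)^2/(nh_n^d)\to0$ is used, to size the blocks. The bias is handled deterministically, through the surrogate $R_{\Delta,h_n,p}^{ij}g(x)$ and an exact inverse-matrix identity.

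\textbf{How you argue.} You center at the conditional mean $m(Y_{k-1})$. By the Markov property, $w_k\varepsilon_k\psi_p(u_k)$ is then a bounded martingale-difference array, and the Taylor remainder becomes a stochastic $O_{\mathbb P}(h_n^{p+1})$ term. This is the device the paper reserves for its shrinking-mesh recovery theorems.

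\textbf{What each route buys.} Your route gives a shorter proof under weaker hypotheses. The CLT reduces to a law of large numbers for a bounded localized function of $Z_{k-1}$. That follows from Chebyshev and the variance bound you already derived, or from the $V$-norm covariance argument of Lemma~\ref{lem:lp-fixedmesh-lln}, which does not need A3(ii) at all. Your final paragraph therefore overstates the difficulty. No Berbee coupling is needed, and the condition $(\log n)^2/(nh_n^d)\to0$ is never used in your argument; $nh_n^d\to\infty$ suffices. The paper's route, in exchange, does not depend on the martingale structure of the response and stays within the classical mixing local-polynomial framework.

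\textbf{Two precision points.} First, McLeish's Theorem~2.3 is stated with realized squares $\sum_k(c^\top\xi_{n,k})^2$, not conditional variances. Either cite a conditional-variance form of the martingale CLT, or add the one-line comparison
$\mathbb E[\{\sum_k((c^\top\xi_{n,k})^2-\mathbb E[(c^\top\xi_{n,k})^2\mid\mathcal F_{t_{k-1}}])\}^2]\le\sum_k\mathbb E(c^\top\xi_{n,k})^4\lesssim(nh_n^d)^{-1}$,
as the paper does in Supplementary Section~S.4. Second, in your covariance split, $r_n$ must grow at least like $|\log h_n|$, so that the far-lag contribution $e^{-cr_n}$ is $O(h_n^d)$. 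The conditions $r_n\to\infty$ and $r_nh_n^d\to0$ alone do not give your claimed $O((nh_n^d)^{-1})$ variance bound.
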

	\endgroup

	{Because the fixed-mesh proof does not involve the shrinking-mesh cancellations, we state its probabilistic input as Lemma~\ref{lem:appendix-kernel-clt} and place the proof of that lemma and the complete fixed-mesh calculations in Sections~S.1--S.2 of the online supplement. At fixed \(\Delta\), the sampled one-step blocks form a stationary absolutely regular sequence with exponentially decaying coefficients. After verifying the required moment and covariance bounds, the argument follows the standard localized law of large numbers, triangular-array central limit theorem, and smoothing-bias analysis of \citet{robinson1983,masry1996,masryfan1997}.} For the main stochastic expansion, put
		\[
		S_{h_n,p}^{i}(x):=\mathbb E[\widehat S_{n,h_n}^{i,(p)}(x)],
		\qquad
		m_{\Delta,g}^{ij}(x):=P_\Delta^{ij}g(x),
		\]
		and define
		\[
		\begin{aligned}
			\zeta_{k,n}^{ij,g,(p)}(x)
			&:=
			K\!\left(\frac{Y_{k-1}-x}{h_n}\right)
			\mathbf 1_{\{\Lambda_{t_{k-1}}=i\}}
			e_0^\top\bigl(S_{h_n,p}^{i}(x)\bigr)^{-1}
			\psi_p\!\left(\frac{Y_{k-1}-x}{h_n}\right)\\
			&\qquad\times
			\left[
			R_k^{j,g}-m_{\Delta,g}^{ij}(x)
			\right].
		\end{aligned}
		\]

	\begin{proof}[Proof sketch of \Cref{thm:lp-fixedmesh-consistency}]
		By stationarity and the kernel change of variables,
		\[
		\mathbb E[\widehat S_{n,h_n}^{i,(p)}(x)]
		\to \varpi_i(x)M_p(K),
		\qquad
		\mathbb E[\widehat T_{n,\Delta,h_n}^{ij,g,(p)}(x)]
		\to
		\varpi_i(x)P_\Delta^{ij}g(x)\eta_p(K).
		\]
		The localized law of large numbers proved in Section~S.2.1 of the supplement makes both centered empirical moments \(O_{\mathbb P}((nh_n^d)^{-1/2})=o_{\mathbb P}(1)\). Since \(\varpi_i(x)M_p(K)\) is positive definite and \(M_p(K)e_0=\eta_p(K)\), the continuous mapping theorem gives invertibility with probability tending to one and the asserted consistency.
	\end{proof}

		\begin{proof}[Proof sketch of \Cref{thm:lp-fixedmesh-clt}]
			\Cref{lem:appendix-kernel-clt} applies to the centered array
			\[
			\zeta_{k,n}^{ij,g,(p)}(x)
			-\mathbb E[\zeta_{k,n}^{ij,g,(p)}(x)].
			\]
			Supplementary Section~S.2.2 verifies its hypotheses, including the near-lag small-ball estimate and the far-lag mixing bound. The asserted scalar convergence follows. The inverse-matrix linearization in Supplementary Section~S.2.4 then yields
		\begin{equation}\label{eq:fixedmesh-main-stochastic-expansion}
		\begin{aligned}
		\widehat P_{\Delta,h_n}^{ij}g(x)
		-R_{\Delta,h_n,p}^{ij}g(x)
		=
		\frac{1}{nh_n^d}
		\sum_{k=1}^n
		\left\{
		\zeta_{k,n}^{ij,g,(p)}(x)
		-\mathbb E[\zeta_{k,n}^{ij,g,(p)}(x)]
		\right\}
		+o_{\mathbb P}\!\left((nh_n^d)^{-1/2}\right).
		\end{aligned}
		\end{equation}
		The zero-lag conditional variance in this array converges to \eqref{eq:fixedmesh-scalar-variance}, whereas the summed nonzero-lag covariances are \(o(h_n^d)\). Finally, Section~S.2.3 proves
		\(
		R_{\Delta,h_n,p}^{ij}g(x)-P_\Delta^{ij}g(x)=O(h_n^{p+1})
		\).
		The undersmoothing condition and Slutsky's theorem therefore give the stated limit. Section~S.2.4 contains the complete matrix and remainder calculation.
	\end{proof}

	The normalized conditional moment follows from a joint two-response limit. Put
	\[
	\pi_{\Delta}^{ij}(x):=P_\Delta^{ij}1(x),
	\qquad
	\mu_{\Delta,g}^{ij}(x):=P_\Delta^{ij}g(x),
	\qquad
	M_\Delta^{ij}g(x):=\frac{\mu_{\Delta,g}^{ij}(x)}{\pi_\Delta^{ij}(x)},
	\]
	and define
	\[
	\kappa_p(K):=e_0^\top M_p(K)^{-1}Q_p(K)M_p(K)^{-1}e_0.
	\]
	For \(\pi_\Delta^{ij}(x)>0\), set
	\[
	\Omega_{\Delta,g}^{ij}(x)
	:=
	\begin{pmatrix}
	P_\Delta^{ij}(g^2)(x)-\{\mu_{\Delta,g}^{ij}(x)\}^2
	& \mu_{\Delta,g}^{ij}(x)\{1-\pi_\Delta^{ij}(x)\}\\
	\mu_{\Delta,g}^{ij}(x)\{1-\pi_\Delta^{ij}(x)\}
	& \pi_\Delta^{ij}(x)\{1-\pi_\Delta^{ij}(x)\}
	\end{pmatrix},
	\]
	\[
	\Sigma_{\Delta,g}^{ij}(x)
	:=\frac{\kappa_p(K)}{\varpi_i(x)}\Omega_{\Delta,g}^{ij}(x),
	\qquad
	\tau_{ij,\Delta,g}^2(x)
	:=
	\frac{\kappa_p(K)}{\varpi_i(x)\{\pi_\Delta^{ij}(x)\}^2}
	P_\Delta^{ij}\!\left[\{g-M_\Delta^{ij}g(x)\}^2\right](x).
	\]
	On the event that the common design matrix is invertible and
	\(\widehat P_{\Delta,h_n}^{ij}1(x)>0\), define
	\begin{equation}\label{eq:fixedmesh-normalized-estimator}
	\widehat M_{\Delta,h_n}^{ij}g(x)
	:=
	\frac{\widehat P_{\Delta,h_n}^{ij}g(x)}
	{\widehat P_{\Delta,h_n}^{ij}1(x)}.
	\end{equation}

	\begin{theorem}[Fixed-mesh normalized conditional moment]\label{thm:fixedmesh-normalized-moment}
		Fix \(\Delta>0\), \(p\ge0\), \(g\in C_b(\mathbb R^d)\), a regime pair \((i,j)\), and a design point \(x\). Suppose \Cref{ass:wellposedness,ass:mixing} and parts~\textup{(i)}--\textup{(ii)} of \Cref{ass:local-densities} hold on \(U_x\). Suppose parts~\textup{(ii)}--\textup{(iii)} of \Cref{ass:fixedmesh-smoothness} hold for \(g\), and part~\textup{(ii)} holds for \(1\). Assume \(\pi_\Delta^{ij}(x)>0\), \(\tau_{ij,\Delta,g}^2(x)>0\), and
		$\frac{(\log n)^2}{nh_n^d}\to0$,
		$\sqrt{nh_n^d}\,h_n^{p+1}\to0$.
		Then
		\[
		\sqrt{nh_n^d}
		\left[
		\begin{pmatrix}
		\widehat P_{\Delta,h_n}^{ij}g(x)\\
		\widehat P_{\Delta,h_n}^{ij}1(x)
		\end{pmatrix}
		-
		\begin{pmatrix}
		\mu_{\Delta,g}^{ij}(x)\\
		\pi_\Delta^{ij}(x)
		\end{pmatrix}
		\right]
		\xrightarrow{d}
		N_2\!\left(0,\Sigma_{\Delta,g}^{ij}(x)\right).
		\]
		Moreover,
		\[
		\sqrt{nh_n^d}
		\left\{\widehat M_{\Delta,h_n}^{ij}g(x)-M_\Delta^{ij}g(x)\right\}
		\xrightarrow{d}
		N\!\left(0,\tau_{ij,\Delta,g}^2(x)\right).
		\]
	\end{theorem}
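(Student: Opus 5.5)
The proof reduces the joint statement to the scalar \Cref{thm:lp-fixedmesh-clt} by the Cramér--Wold device, and then obtains the normalized statement from the delta method. The key structural point is that both estimators in the vector use the \emph{same} design matrix \(\widehat S_{n,h_n}^{i,(p)}(x)\), and the moment vector \(\widehat T\) in \eqref{eq:lp-T} is linear in the response. Fix \((c_1,c_2)\in\mathbb R^2\) and put \(h:=c_1g+c_2\in C_b(\mathbb R^d)\). Then \(c_1R_k^{j,g}+c_2R_k^{j,1}=R_k^{j,h}\), and on the invertibility event
\[
c_1\widehat P_{\Delta,h_n}^{ij}g(x)+c_2\widehat P_{\Delta,h_n}^{ij}1(x)=\widehat P_{\Delta,h_n}^{ij}h(x).
\]
The same linearity holds for \(P_\Delta^{ij}\). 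It therefore suffices to show that
\[
\sqrt{nh_n^d}\,\bigl(\widehat P_{\Delta,h_n}^{ij}h(x)-P_\Delta^{ij}h(x)\bigr)\xrightarrow{d}N\bigl(0,(c_1,c_2)\Sigma_{\Delta,g}^{ij}(x)(c_1,c_2)^\top\bigr).
\]

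Next I check the hypotheses of \Cref{ass:fixedmesh-smoothness} for \(h\). Part (ii) holds for \(h\) because it holds for \(g\) and for \(1\), and \(m_{\Delta,h}^{ij}=c_1m_{\Delta,g}^{ij}+c_2m_{\Delta,1}^{ij}\). Part (iii) holds because \(s_{\Delta,h}^{ij}=c_1^2P_\Delta^{ij}(g^2)+2c_1c_2P_\Delta^{ij}g+c_2^2P_\Delta^{ij}1\). The first term is continuous by (iii) for \(g\), and the other two are continuous by (ii). Expanding the conditional variance gives
\[
v_{\Delta,h}^{ij}(x)=c_1^2\bigl(P_\Delta^{ij}(g^2)-\mu^2\bigr)+2c_1c_2\bigl(\mu-\mu\pi\bigr)+c_2^2\bigl(\pi-\pi^2\bigr)=(c_1,c_2)\,\Omega_{\Delta,g}^{ij}(x)\,(c_1,c_2)^\top,
\]
where \(\mu=\mu_{\Delta,g}^{ij}(x)\) and \(\pi=\pi_\Delta^{ij}(x)\). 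Multiplying by \(\kappa_p(K)/\varpi_i(x)\), the variance \eqref{eq:fixedmesh-scalar-variance} for \(h\) is exactly the quadratic form of \(\Sigma_{\Delta,g}^{ij}(x)\). Whenever this quantity is positive, \Cref{thm:lp-fixedmesh-clt} applies directly.

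The main obstacle is the set of directions \((c_1,c_2)\) where \(v_{\Delta,h}^{ij}(x)=0\). There part (iv) fails, so \Cref{thm:lp-fixedmesh-clt} cannot be quoted, and the required limit is the point mass at \(0\). For these directions I would return to the stochastic expansion \eqref{eq:fixedmesh-main-stochastic-expansion}, which does not use (iv). Its derivation uses only the inverse-matrix linearization and the covariance bounds of Supplementary Section S.2.2.

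In that expansion, the zero-lag variance of the normalized leading sum converges to \(\kappa_p(K)v_{\Delta,h}^{ij}(x)/\varpi_i(x)=0\), and the summed nonzero-lag covariances are \(o(h_n^d)\). Chebyshev's inequality then makes the leading term \(o_{\mathbb P}((nh_n^d)^{-1/2})\). The bias \(R_{\Delta,h_n,p}^{ij}h(x)-P_\Delta^{ij}h(x)=O(h_n^{p+1})\) is negligible by undersmoothing. Thus the scalar limit is \(N(0,0)\), and Cramér--Wold yields the bivariate convergence to \(N_2(0,\Sigma_{\Delta,g}^{ij}(x))\).

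For the ratio, \(\pi_\Delta^{ij}(x)>0\) and consistency (\Cref{thm:lp-fixedmesh-consistency} applied to \(g\equiv1\)) give \(\mathbb P(\widehat P_{\Delta,h_n}^{ij}1(x)>0)\to1\). Hence the zero convention on the complement event does not affect the limit. Apply the delta method to \(\phi(u,v)=u/v\) at \((\mu,\pi)\), whose gradient is \((1/\pi,-\mu/\pi^2)\). The limiting variance is
\[
\frac{\kappa_p(K)}{\varpi_i(x)}\Bigl[\frac{P_\Delta^{ij}(g^2)-\mu^2}{\pi^2}-\frac{2\mu^2(1-\pi)}{\pi^3}+\frac{\mu^2(1-\pi)}{\pi^3}\Bigr]=\frac{\kappa_p(K)}{\varpi_i(x)\pi^2}\Bigl(P_\Delta^{ij}(g^2)-\frac{\mu^2}{\pi}\Bigr).
\]
Expanding the square in \(P_\Delta^{ij}[\{g-M_\Delta^{ij}g(x)\}^2](x)\) with \(M_\Delta^{ij}g(x)=\mu/\pi\) shows this equals \(\tau_{ij,\Delta,g}^2(x)\). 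The assumption \(\tau^2_{ij,\Delta,g}(x)>0\) guarantees that this limit is nondegenerate.
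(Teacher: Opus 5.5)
Your proposal is correct and follows essentially the same route as the paper. Both arguments apply Cram\'er--Wold to the combined probe \(c_1g+c_2\), using the common design matrix and linearity of the response, check the smoothness and continuity conditions for that probe, identify its variance as the quadratic form of \(\Sigma_{\Delta,g}^{ij}(x)\), and finish with the delta method for \((u,v)\mapsto u/v\); your explicit handling of directions with \(v_{\Delta,c_1g+c_2}^{ij}(x)=0\), via the stochastic expansion and Chebyshev's inequality, matches the paper's remark that the scalar linearization also covers zero limiting variance.
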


	\begin{proof}[Proof sketch]
		For \(c=(c_1,c_2)^\top\), the two-response projection equals
		\(c_1R_k^{j,g}+c_2R_k^{j,1}=R_k^{j,c_1g+c_2}\). Applying \eqref{eq:fixedmesh-main-stochastic-expansion} to the combined probe and then the Cram\'er--Wold device gives the joint limit; direct conditional covariance calculation gives \(\Omega_{\Delta,g}^{ij}(x)\). Consistency for the probe \(1\) and \(\pi_\Delta^{ij}(x)>0\) make the ratio well defined with probability tending to one. The multivariate delta method for \((u,v)\mapsto u/v\) gives the last display. Section~S.2.4 of the supplement contains the full projection, covariance, and delta-method calculation.
	\end{proof}
	\endgroup

	\section{Shrinking-mesh recovery of block-generator coefficients}
	\label{sec:recovery}
	{The inferential focus of this section is recovery of the first two block-generator coefficients under \(\Delta_n\downarrow0\). The section proceeds directly to the first- and second-order central limit theorems in \Cref{thm:first-order-recovery-clt,thm:second-order-recovery-clt}; the preliminary arbitrary-order separate-lag theory is placed in the online supplement.} {When the polynomial degree must be emphasized, we write
	$\widehat P_{\Delta,h_n}^{ij,(p)}g(x)$
	for the estimator in \eqref{eq:lp-est} constructed from the basis \(\psi_p\).} Throughout this section, the observations form a triangular array:
	\(t_{k,n}=k\Delta_n\),
	\(Z_{k,n}:=(Y_{k,n},\Lambda_{t_{k,n}})\), \(k=0,\ldots,n\).
	The shrinking mesh and bandwidth satisfy
	\begin{equation}\label{eq:recovery-mesh-bandwidth}
		\Delta_n\to0,\qquad h_n\to0,\qquad
		n\Delta_n\to\infty,\qquad n\Delta_n h_n^d\to\infty.
	\end{equation}
	\begingroup
	We first record the localization consequence of exponential \(V\)-ergodicity that is used throughout the shrinking-mesh proofs. It supplies the factor \(h_n^d\) that cannot be obtained from an unweighted mixing inequality alone.
	\begin{lemma}[Localized \(V\)-norm covariance bound]\label[lemma]{lem:shrinking-localization-vnorm}
		Suppose \Cref{ass:mixing} and part~\textup{(i)} of \Cref{ass:local-densities} hold on \(U_x\). Let \(\phi_n:E\to\mathbb R\) satisfy \(|\phi_n|\le C\), with a constant independent of \(n\), and
		\(\phi_n(y,\ell)=0\) unless
		\(\ell=i\) and \(y\in x+h_nD\),
		where \(D\subset\mathbb R^d\) is a fixed compact set. Then, for every \(m\ge1\),
		\[
		\left|\operatorname{Cov}\bigl(\phi_n(Z_{0,n}),\phi_n(Z_{m,n})\bigr)\right|
		\lesssim h_n^d e^{-\lambda_{\rm erg}m\Delta_n}.
		\]
		Consequently,
		\[
		\operatorname{Var}\!\left(
		\frac{1}{nh_n^d}\sum_{s=0}^{n-1}\phi_n(Z_{s,n})
		\right)
		\lesssim \frac{1}{n\Delta_n h_n^d}.
		\]
	\end{lemma}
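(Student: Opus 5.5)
We use the Markov property at lag \(m\) and apply the \(V\)-norm ergodic bound to the conditional expectation of the second factor. Put \(\bar\phi_n:=\nu(\phi_n)=\mathbb E[\phi_n(Z_{0,n})]\). By stationarity and the Markov property,
\[
\operatorname{Cov}\bigl(\phi_n(Z_{0,n}),\phi_n(Z_{m,n})\bigr)
=\mathbb E\Bigl[\phi_n(Z_{0,n})\bigl\{P_{m\Delta_n}\phi_n(Z_{0,n})-\nu(\phi_n)\bigr\}\Bigr].
\]
Since \(|\phi_n|\le C\le CV\), \Cref{ass:mixing} gives, for every \(z\in E\),
\[
|P_{m\Delta_n}\phi_n(z)-\nu(\phi_n)|\le C\,\|P_{m\Delta_n}(z,\cdot)-\nu\|_V\le C\,C_{\rm erg}e^{-\lambda_{\rm erg}m\Delta_n}V(z).
\]
The first factor \(\phi_n(Z_{0,n})\) vanishes unless \(Z_{0,n}\in(x+h_nD)\times\{i\}\). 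Hence
\[
|\operatorname{Cov}|\le C^2C_{\rm erg}e^{-\lambda_{\rm erg}m\Delta_n}\,\mathbb E\bigl[V(Z_{0,n})\mathbf 1\{Z_{0,n}\in(x+h_nD)\times\{i\}\}\bigr].
\]

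To bound the localized expectation, note that for \(n\) large \(x+h_nD\subset U_x\), because \(D\) is compact, \(h_n\to0\) and \(U_x\) is an open neighbourhood of \(x\). On \(U_x\times S\) we have \(V\le \sup_{U_x\times S}V<\infty\) by \Cref{ass:mixing}. By part~(i) of \Cref{ass:local-densities}, the stationary mass is
\[
\nu\bigl((x+h_nD)\times\{i\}\bigr)=\int_{x+h_nD}\varpi_i\le C_{i,x}\,\mathrm{Leb}(D)\,h_n^d.
\]
This yields the covariance bound \(\lesssim h_n^de^{-\lambda_{\rm erg}m\Delta_n}\). This localization step is the only point requiring care. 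A bare \(\beta\)-mixing bound would give \(e^{-\lambda m\Delta_n}\) without the factor \(h_n^d\). Recovering that factor is precisely why we condition on the first variable and keep its indicator, rather than using the symmetric mixing inequality.

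For the variance, expand it as
\[
(nh_n^d)^{-2}\Bigl[n\operatorname{Var}\phi_n(Z_{0,n})+2\sum_{m=1}^{n-1}(n-m)\operatorname{Cov}\bigl(\phi_n(Z_{0,n}),\phi_n(Z_{m,n})\bigr)\Bigr].
\]
The zero-lag term is bounded by \(\mathbb E\phi_n^2\le C^2\nu((x+h_nD)\times\{i\})\lesssim h_n^d\). The lag terms are bounded by
\[
nh_n^d\sum_{m\ge1}e^{-\lambda_{\rm erg}m\Delta_n}\le \frac{nh_n^d}{e^{\lambda_{\rm erg}\Delta_n}-1}\lesssim \frac{nh_n^d}{\Delta_n}
\]
for \(\Delta_n\le1\). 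Therefore
\[
\operatorname{Var}\lesssim (nh_n^d)^{-2}\bigl(nh_n^d+nh_n^d\Delta_n^{-1}\bigr)\lesssim\frac{1}{n\Delta_nh_n^d},
\]
using \(\Delta_n\to0\). The constants depend only on \(C\), \(D\), \(C_{\rm erg}\), \(\lambda_{\rm erg}\), \(C_{i,x}\) and \(\sup_{U_x\times S}V\), and not on \(n\), \(\Delta_n\) or \(h_n\), as required by the paper's convention for \(\lesssim\).
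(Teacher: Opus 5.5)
Your proof is correct and follows the paper's argument. You condition on $Z_{0,n}$ at lag $m$, apply the $V$-norm ergodic bound to $P_{m\Delta_n}\phi_n-\nu(\phi_n)$, localize the first factor through the local bound on $V$ and the stationary density $\varpi_i$, and finish with the stationary variance decomposition using $\sum_{m\ge1}e^{-\lambda_{\rm erg}m\Delta_n}\lesssim\Delta_n^{-1}$.

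The only difference is that you leave the first factor uncentered. This is legitimate because $\mathbb E[\phi_n(Z_{m,n})]=\nu(\phi_n)$, and it removes the paper's extra step, which uses $|\nu(\phi_n)|\lesssim h_n^d$ and $\nu(V)<\infty$ to handle $\bar\phi_n=\phi_n-\nu(\phi_n)$.
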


	\begin{proof}
		Write \(\bar\phi_n:=\phi_n-\nu(\phi_n)\). By the Markov property and stationarity,
		\[
		\operatorname{Cov}\bigl(\phi_n(Z_{0,n}),\phi_n(Z_{m,n})\bigr)
		=
		\mathbb E\!\left[
		\bar\phi_n(Z_{0,n})
		\{P_{m\Delta_n}\phi_n(Z_{0,n})-\nu(\phi_n)\}
		\right].
		\]
		Since \(V\ge1\), \(\|\phi_n\|_{V,\infty}:=\sup_z|\phi_n(z)|/V(z)\le C\). The definition of the \(V\)-norm and \Cref{ass:mixing} therefore give
		\[
		\left|P_{m\Delta_n}\phi_n(z)-\nu(\phi_n)\right|
		\le
		\|\phi_n\|_{V,\infty}
		\|P_{m\Delta_n}(z,\cdot)-\nu\|_V
		\lesssim e^{-\lambda_{\rm erg}m\Delta_n}V(z).
		\]
		Moreover, the support restriction, the local boundedness of \(V\) in \Cref{ass:mixing}, and {the stationary density in part~\textup{(i)} of \Cref{ass:local-densities}} imply
		\[
		\mathbb E\!\left[|\phi_n(Z_{0,n})|V(Z_{0,n})\right]
		\lesssim
		\int_{x+h_nD}\varpi_i(y)V(y,i)\,dy
		\lesssim h_n^d.
		\]
		Also \(|\nu(\phi_n)|\lesssim h_n^d\) and \(\nu(V)<\infty\), so
		\(\mathbb E[|\bar\phi_n(Z_{0,n})|V(Z_{0,n})]\lesssim h_n^d\).
		Combining the preceding estimates proves the covariance bound. The zero-lag variance is \(O(h_n^d)\); hence the stationary variance decomposition and
		\[
		\sum_{m\ge1}e^{-\lambda_{\rm erg}m\Delta_n}\lesssim\Delta_n^{-1}
		\]
		yield the stated variance estimate.
	\end{proof}
	\endgroup

	{For an arbitrary fixed order \(k\), let \(\widehat P_{r\Delta_n,h_n}^{ij,(p)}g(x)\) denote the degree-\(p\) local-polynomial estimator formed from the separately sampled \(r\)-step blocks, \(1\le r\le k\), and set \(\widehat P_{0,h_n}^{ij,(p)}g(x):=\delta_{ij}g(x)\). With \(c_{k,r}:=(-1)^{k-r}\binom{k}{r}\), define
	\[
	\widehat A_{k,n}^{(p),ij}g(x)
	:=
	\frac{1}{\Delta_n^k}
	\sum_{r=0}^k c_{k,r}\widehat P_{r\Delta_n,h_n}^{ij,(p)}g(x).
	\]
	Under sufficient conditions stated in Section~S.3 of the online supplement, this estimator is consistent:
	\[
	\widehat A_{k,n}^{(p),ij}g(x)
	\xrightarrow{\mathbb P}
	A_k^{ij}g(x).
	\]
	The precise construction and proof are in Supplementary Section~S.3. The main paper now focuses on the first two orders and their common-design central limit theorems.}

		\subsection{First-order recovery and studentization}

		Throughout this subsection, let \(\widehat S_n^{i,(p)}(x)\) denote the design matrix in \eqref{eq:lp-S}, evaluated on the triangular observations \(Z_{s,n}\), \(s=0,\ldots,n-1\).

		For \(\Delta>0\), define the normalized first-order block
		\[
		b_{\Delta,g}^{ij}(y)
		:=
		\frac{P_\Delta^{ij}g(y)-\delta_{ij}g(y)}{\Delta},
		\]
		and define the local infinitesimal variance
		\begin{equation}\label{eq:first-order-gamma}
		\gamma_{ij,g}(y)
		:=
		\begin{cases}
		\nabla g(y)^\top a(y,i)\nabla g(y)+q_i(y)g(y)^2, & i=j,\\[1mm]
		q_{ij}(y)g(y)^2, & i\ne j.
		\end{cases}
		\end{equation}
		This is the carré-du-champ coefficient
		\(\mathcal L\{(f_g^{(j)})^2\}(y,i)-2f_g^{(j)}(y,i)\mathcal Lf_g^{(j)}(y,i)\).

		\begin{assumption}[First-order local smoothness]\label[assumption]{ass:first-order-smoothness}
			For the polynomial degree \(p\), regime pair \((i,j)\), test function \(g\), and neighborhood \(U_x\),
			\[
			b_{\Delta_n,g}^{ij}\in C^{p+1}(U_x),
			\qquad
			\sup_{n\ge1}\left\|b_{\Delta_n,g}^{ij}\right\|_{C^{p+1}(U_x)}<\infty.
			\]
		\end{assumption}

		\begin{lemma}[First-order short-time moments]\label[lemma]{lem:first-order-short-time-moments}
			Suppose \Cref{ass:wellposedness} holds and either \(g\in C_c^4(\mathbb R^d)\) or \(g\equiv1\). Then, uniformly on \(U_x\),
			\begin{align}
			b_{\Delta,g}^{ij}(y)-B_{ij}g(y)&=O(\Delta),\label{eq:first-order-temporal-bias}\\
			\frac{P_\Delta^{ij}(g^2)(y)-\{P_\Delta^{ij}g(y)\}^2}{\Delta}
			&\to \gamma_{ij,g}(y),\label{eq:first-order-variance-limit}\\
			\kappa_{\Delta,g}^{ij}&\to\gamma_{ij,g}, \notag
			\end{align}
			where
			\[
			\kappa_{\Delta,g}^{ij}(y)
			:=
			\frac{1}{\Delta}
			\mathbb E_{(y,i)}\!\left[
			\left\{f_g^{(j)}(Z_\Delta)-f_g^{(j)}(y,i)\right\}^2
			\right].
			\]
		\end{lemma}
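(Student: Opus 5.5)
All three statements will be derived from the Dynkin identity in the proof of \Cref{prop:kth}, applied to the block tests \(f_g^{(j)}\) and \(f_{g^2}^{(j)}=(f_g^{(j)})^2\). For \(g\equiv1\) we replace \(g\) by a cutoff \(\chi\) equal to one near \(\overline{U_x}\), as in \Cref{lem:cutoff-localization}. The error in every quantity below is then \(O(\Delta^M)\) uniformly on \(U_x\), so that case reduces to \(g=\chi\in C_c^\infty\). We assume \(\overline{U_x}\) is compact, which holds because \(U_x\Subset\mathbb R^d\).

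\emph{Step 1 (temporal bias \eqref{eq:first-order-temporal-bias}).} By \Cref{prop:block-domain}, \(u_0=f_g^{(j)}\) and \(u_1=\mathcal Lf_g^{(j)}\) lie in \(\mathcal D_0\), and \(u_2=\mathcal L^2f_g^{(j)}\) is bounded. The Dynkin identity used twice gives
\[
P_\Delta f_g^{(j)}-f_g^{(j)}-\Delta\mathcal Lf_g^{(j)}=\int_0^\Delta(\Delta-s)P_su_2\,ds .
\]
Its modulus is at most \(\tfrac12\Delta^2\|u_2\|_\infty\). Evaluating at \((y,i)\), dividing by \(\Delta\), and using \(A_1^{ij}g=B_{ij}g\) from \eqref{eq:coeff-through-two} yields \(O(\Delta)\) uniformly in \(y\), not merely on compacts. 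Only order-one regularity is needed here, so \(g\in C_c^4\) suffices.

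\emph{Step 2 (limit of \(\kappa_\Delta\)).} Expanding the square gives
\[
\Delta\kappa_{\Delta,g}^{ij}(y)=P_\Delta(u_0^2)(y,i)-2u_0(y,i)P_\Delta u_0(y,i)+u_0(y,i)^2 .
\]
Since \(u_0^2=f_{g^2}^{(j)}\) and \(g^2\in C_c^4\), \Cref{prop:block-domain} applies to it. A single Dynkin step gives \(P_\Delta w-w=\int_0^\Delta P_s\mathcal Lw\,ds\) for \(w=u_0\) and for \(w=u_0^2\). The terms \(u_0^2\) cancel, leaving
\[
\kappa_{\Delta,g}^{ij}(y)=\frac1\Delta\int_0^\Delta\bigl[P_s\mathcal L(u_0^2)-2u_0(y,i)P_s\mathcal Lu_0\bigr](y,i)\,ds .
\]
We then use compact-uniform convergence \(P_sF\to F\) for bounded uniformly continuous \(F\), exactly as in the end of the proof of \Cref{prop:kth}. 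With \(F=\mathcal L(u_0^2)\) and \(F=\mathcal Lu_0\), both continuous with compact support, the average converges uniformly on \(\overline{U_x}\) to \(\mathcal L(u_0^2)-2u_0\mathcal Lu_0\) at \((y,i)\). It remains to evaluate this carré-du-champ. The diffusion part gives \(\nabla g^\top a\nabla g\) when \(i=j\), and zero when \(i\ne j\) because \(u_0(\cdot,i)\equiv0\). The jump part is \(\sum_{\ell\ne i}q_{i\ell}(u_0(y,\ell)-u_0(y,i))^2\). This equals \(q_ig^2\) if \(i=j\) and \(q_{ij}g^2\) if \(i\ne j\), which is \(\gamma_{ij,g}\).

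\emph{Step 3 (variance limit \eqref{eq:first-order-variance-limit}).} The key identity is
\[
P_\Delta^{ij}(g^2)-(P_\Delta^{ij}g)^2=\Delta\kappa_{\Delta,g}^{ij}-\bigl(P_\Delta^{ij}g-\delta_{ij}g\bigr)^2 .
\]
To verify it, note that \(P^{ij}_\Delta(g^2)=P_\Delta(u_0^2)\), expand \((P_\Delta u_0-u_0)^2\), and compare with the expression in Step 2. By Step 1, \((P_\Delta^{ij}g-\delta_{ij}g)^2=O(\Delta^2)\) uniformly. Dividing by \(\Delta\) and applying Step 2 gives the limit \(\gamma_{ij,g}\).

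The main obstacle is the uniform-in-\(y\) control in Step 2, namely continuity of \(s\mapsto P_sF\) at \(s=0\) uniformly on \(\overline{U_x}\). We handle it with the moment and switching-probability bounds already displayed in the proof of \Cref{prop:kth}, taking \(K=\overline{U_x}\). The remaining work is careful bookkeeping of the cutoff in the case \(g\equiv1\). There, \(\nabla\chi=0\) on \(U_x\), so \(\gamma_{ij,1}\) comes out correctly with no gradient term.
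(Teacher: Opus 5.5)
Your proof is correct and uses the same ingredients as the paper's proof: Dynkin's formula for $f_g^{(j)}$ and $f_{g^2}^{(j)}$ (with the cutoff reduction when $g\equiv1$), the carr\'e-du-champ evaluation of $\mathcal L(f^2)-2f\mathcal Lf$, and the identity $\Delta\kappa_{\Delta,g}^{ij}=P_\Delta^{ij}(g^2)-(P_\Delta^{ij}g)^2+(P_\Delta^{ij}g-\delta_{ij}g)^2$. The differences are bookkeeping only: the paper derives the variance limit first and then deduces the $\kappa$ limit, while you go in the opposite order; and your integral remainder in Step 1 gives the explicit bound $\tfrac12\Delta\|\mathcal L^2f_g^{(j)}\|_\infty$ rather than invoking the order-two expansion with an $o(\Delta^2)$ remainder.
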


		\begin{proof}
			For \(g\in C_c^4(\mathbb R^d)\), apply \Cref{prop:kth} with \(k=2\). For \(g\equiv1\), apply the cutoff argument in \Cref{lem:cutoff-localization} through order two. In either case, uniformly on \(\overline U_x\),
			\[
			P_\Delta^{ij}g
			=\delta_{ij}g+\Delta B_{ij}g+\frac{\Delta^2}{2}C_{ij}g+o(\Delta^2).
			\]
			Since \(C_{ij}g\) is bounded on \(\overline U_x\), division by \(\Delta\) proves \eqref{eq:first-order-temporal-bias}.

			Apply the corresponding order-one expansion to \(g^2\), using \Cref{prop:kth} for compactly supported \(g\) and \Cref{lem:cutoff-localization} for \(g\equiv1\). If \(i=j\), then
			\[
			\begin{aligned}
			\frac{P_\Delta^{ii}(g^2)-\{P_\Delta^{ii}g\}^2}{\Delta}
			\to
			B_{ii}(g^2)-2gB_{ii}g
			=\nabla g^\top a(\cdot,i)\nabla g+q_i g^2.
			\end{aligned}
			\]
			If \(i\ne j\), then
			\[
			P_\Delta^{ij}(g^2)=\Delta q_{ij}g^2+o(\Delta),
			\qquad
			\{P_\Delta^{ij}g\}^2=O(\Delta^2),
			\]
			which gives the second line of \eqref{eq:first-order-gamma}. This proves \eqref{eq:first-order-variance-limit}.

			Let \(J_\Delta\) be the event that at least one regime switch occurs on \((0,\Delta]\). The bounded total switching rate gives
			\(\sup_{y\in U_x}\mathbb P_{(y,i)}(J_\Delta)\lesssim\Delta\). On \(J_\Delta\), the fourth power of the block increment is bounded by \(16\|g\|_\infty^4\). On \(J_\Delta^c\), the increment vanishes when \(i\ne j\). When \(i=j\), couple \(X\) with the fixed-regime-\(i\) diffusion driven by the same Wiener process; the two paths agree up to the first switch. Itô's formula for the fixed-regime diffusion, the boundedness of \(\mathscr L_i g\) and \(\nabla g^\top\sigma(\cdot,i)\), and the Burkholder--Davis--Gundy inequality give
			\[
			\sup_{y\in U_x}
			\mathbb E_{(y,i)}\!\left[
			|g(X_\Delta)-g(y)|^4\mathbf 1_{J_\Delta^c}
			\right]
			\lesssim\Delta^2.
			\]
			Combining the switching and no-switch contributions proves the following fourth-moment bound: with \(z=(y,i)\),
			\[
			\sup_{y\in U_x}
			\mathbb E_z\!\left[
			\left|f_g^{(j)}(Z_\Delta)-f_g^{(j)}(z)\right|^4
			\right]
			\lesssim\Delta.
			\]
			Finally,
			\[
			\kappa_{\Delta,g}^{ij}
			=
			\frac{P_\Delta^{ij}(g^2)-\{P_\Delta^{ij}g\}^2}{\Delta}
			+\Delta\{b_{\Delta,g}^{ij}\}^2.
			\]
			The first term converges uniformly to \(\gamma_{ij,g}\), while the second is \(O(\Delta)\) by \eqref{eq:first-order-temporal-bias}. This proves the last assertion.
		\end{proof}

		For \(s=0,\ldots,n-1\), set
		$D_{s,n}^{ij,g}
		:=
		f_g^{(j)}(Z_{s+1,n})-f_g^{(j)}(Z_{s,n})$.
		On \(\{\Lambda_{t_{s,n}}=i\}\), this common-design response is exactly
		\[
		D_{s,n}^{ij,g}
		=
		g(Y_{s+1,n})\mathbf 1_{\{\Lambda_{t_{s+1,n}}=j\}}
		-\delta_{ij}g(Y_{s,n}),
		\]
		which is the first forward difference of the terminal-regime block response.
		\begingroup
		Define
		\begin{equation}\label{eq:first-order-weighted-innovation}
		\begin{aligned}
		\xi_{s,n}^{ij,g}
		&:=
		\mathbf 1_{\{\Lambda_{t_{s,n}}=i\}}
		\left[
		f_g^{(j)}(Z_{s+1,n})
		-P_{\Delta_n}f_g^{(j)}(Z_{s,n})
		\right]\\
		&=
		\mathbf 1_{\{\Lambda_{t_{s,n}}=i\}}
		\left[
		g(Y_{s+1,n})\mathbf 1_{\{\Lambda_{t_{s+1,n}}=j\}}
		-P_{\Delta_n}^{ij}g(Y_{s,n})
		\right].
		\end{aligned}
		\end{equation}
		Then \(\xi_{s,n}^{ij,g}\) is \(\mathcal F_{t_{s+1,n}}\)-measurable and
		\(\mathbb E[\xi_{s,n}^{ij,g}\mid\mathcal F_{t_{s,n}}]=0\). Moreover,
		\begin{equation}\label{eq:first-order-weighted-decomposition}
		\mathbf 1_{\{\Lambda_{t_{s,n}}=i\}}D_{s,n}^{ij,g}
		=
		\Delta_n\mathbf 1_{\{\Lambda_{t_{s,n}}=i\}}
		b_{\Delta_n,g}^{ij}(Y_{s,n})
		+\xi_{s,n}^{ij,g}.
		\end{equation}
		\endgroup
		On the event that \(\widehat S_n^{i,(p)}(x)\) is invertible, define the first-order estimator
		\begin{equation}\label{eq:first-order-estimator}
		\widehat B_n^{ij,(p)}g(x)
		:=
		e_0^\top\left(\widehat S_n^{i,(p)}(x)\right)^{-1}
		\frac{1}{nh_n^d\Delta_n}
		\sum_{s=0}^{n-1}
		K\!\left(\frac{Y_{s,n}-x}{h_n}\right)
		\mathbf 1_{\{\Lambda_{t_{s,n}}=i\}}
		D_{s,n}^{ij,g}
		\psi_p\!\left(\frac{Y_{s,n}-x}{h_n}\right).
		\end{equation}
			Thus \eqref{eq:first-order-estimator} is the ordinary first forward difference after applying one common local-polynomial fit to the lag-zero and lag-one responses.

		Define
		\begin{equation}\label{eq:first-order-asymptotic-variance}
		\sigma_{B,ij,p}^{2,g}(x)
		:=
		\frac{\gamma_{ij,g}(x)}{\varpi_i(x)}
		e_0^\top M_p(K)^{-1}Q_p(K)M_p(K)^{-1}e_0.
		\end{equation}

		\begin{theorem}[First-order recovery central limit theorem]\label{thm:first-order-recovery-clt}
			Fix \(p\ge0\), a regime pair \((i,j)\), a test function \(g\in C_c^4(\mathbb R^d)\) or \(g\equiv1\), and a design point \(x\). Suppose \Cref{ass:wellposedness,ass:mixing}, part~\textup{(i)} of \Cref{ass:local-densities}, and \Cref{ass:first-order-smoothness} hold on \(U_x\). {Under the standing mesh and bandwidth conditions \eqref{eq:recovery-mesh-bandwidth},}
			$\widehat B_n^{ij,(p)}g(x)\xrightarrow{\mathbb P}B_{ij}g(x)$.
			If, in addition,
			\begin{equation}\label{eq:first-order-clt-rates}
			\sqrt{n\Delta_nh_n^d}\bigl(\Delta_n+h_n^{p+1}\bigr)\to0,
			\end{equation}
			then
			\[
			\sqrt{n\Delta_nh_n^d}
			\left\{\widehat B_n^{ij,(p)}g(x)-B_{ij}g(x)\right\}
			\xrightarrow{d}
			N\!\left(0,\sigma_{B,ij,p}^{2,g}(x)\right).
			\]
		\end{theorem}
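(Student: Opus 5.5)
Using the decomposition \eqref{eq:first-order-weighted-decomposition}, I split the estimator into a design part, a smoothing-plus-temporal bias, and a martingale part. Write \(U_s:=(Y_{s,n}-x)/h_n\) and \(w_s:=K(U_s)\mathbf 1_{\{\Lambda_{t_{s,n}}=i\}}\psi_p(U_s)\). Then
\[
\widehat B_n^{ij,(p)}g(x)=e_0^\top\bigl(\widehat S_n^{i,(p)}(x)\bigr)^{-1}\Bigl\{\frac{1}{nh_n^d}\sum_{s}w_s\,b_{\Delta_n,g}^{ij}(Y_{s,n})+\frac{1}{nh_n^d\Delta_n}\sum_s w_s\,\xi_{s,n}^{ij,g}\Bigr\}.
\]
This is the common-design cancellation: the lag-zero level \(\delta_{ij}g\) is removed before smoothing, so only the \(O(1)\) function \(b_{\Delta_n,g}^{ij}\) is smoothed and no \(\Delta_n^{-1}\)-sized level survives.

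\emph{Step 1 (design).} Stationarity and part (i) of \Cref{ass:local-densities} give \(\mathbb E\widehat S_n^{i,(p)}(x)\to\varpi_i(x)M_p(K)\). Applying \Cref{lem:shrinking-localization-vnorm} entrywise, with \(\phi_n\) equal to each kernel-weighted monomial, gives variance \(O((n\Delta_nh_n^d)^{-1})\to0\). Hence the matrix is invertible with probability tending to one and \(\widehat S_n^{i,(p)}(x)^{-1}\to(\varpi_i(x)M_p(K))^{-1}\) in probability.

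\emph{Step 2 (bias).} Taylor-expand \(b_{\Delta_n,g}^{ij}\) to order \(p\) around \(x\) on \(U_x\), using \Cref{ass:first-order-smoothness} with uniform \(C^{p+1}\) bounds. The degree-\(p\) polynomial part is reproduced exactly by the local-polynomial weights, since \(e_0^\top\widehat S^{-1}\widehat S\,(\text{coefficients})=b_{\Delta_n,g}^{ij}(x)\). The remainder is \(O(h_n^{p+1})\) uniformly, because \(K\) has compact support. Then \Cref{lem:first-order-short-time-moments} gives \(b_{\Delta_n,g}^{ij}(x)-B_{ij}g(x)=O(\Delta_n)\). So the first term equals \(B_{ij}g(x)+O_{\mathbb P}(\Delta_n+h_n^{p+1})\), which is \(o_{\mathbb P}((n\Delta_nh_n^d)^{-1/2})\) under \eqref{eq:first-order-clt-rates}.

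\emph{Step 3 (martingale CLT).} Fix \(c\in\mathbb R^{q_p}\) and set \(\eta_{s,n}:=(n\Delta_nh_n^d)^{-1/2}c^\top w_s\,\xi_{s,n}^{ij,g}\). This is a martingale difference array with respect to \(\mathcal F_{t_{s+1,n}}\). Its conditional variance is
\[
\sum_s\mathbb E[\eta_{s,n}^2\mid\mathcal F_{t_{s,n}}]=\frac{1}{nh_n^d}\sum_s (c^\top w_s)^2\,\frac{P_{\Delta_n}^{ij}(g^2)-\{P_{\Delta_n}^{ij}g\}^2}{\Delta_n}(Y_{s,n}).
\]
By \eqref{eq:first-order-variance-limit}, which holds uniformly on \(U_x\), together with continuity of \(\gamma_{ij,g}\) and \(\varpi_i\), its mean tends to \(\gamma_{ij,g}(x)\varpi_i(x)c^\top Q_p(K)c\). \Cref{lem:shrinking-localization-vnorm} applied to \(\phi_n=(c^\top w)^2\) times the bounded variance ratio makes the fluctuation \(O_{\mathbb P}((n\Delta_nh_n^d)^{-1/2})\).

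For the Lindeberg condition I will use fourth moments. The fourth-moment bound in the proof of \Cref{lem:first-order-short-time-moments} gives \(\mathbb E[\xi^4\mid\mathcal F_{t_{s,n}}]\lesssim\Delta_n\) on the kernel support. Hence \(\sum_s\mathbb E\eta_{s,n}^4\lesssim n\,h_n^d\Delta_n/(n\Delta_nh_n^d)^2=(n\Delta_nh_n^d)^{-1}\to0\). \citet[Theorem~2.3]{mcleish1974} and Cram\'er--Wold then give \(\sqrt{n\Delta_nh_n^d}\,(nh_n^d\Delta_n)^{-1}\sum_s w_s\xi_{s,n}\xrightarrow{d}N(0,\gamma_{ij,g}(x)\varpi_i(x)Q_p(K))\). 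Slutsky's theorem with Step 1 yields the variance \(\gamma_{ij,g}\varpi_i^{-1}e_0^\top M_p^{-1}Q_pM_p^{-1}e_0=\sigma_{B,ij,p}^{2,g}(x)\).

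\emph{Consistency.} Without the rate condition, the same decomposition applies: the martingale term has second moment \(O((n\Delta_nh_n^d)^{-1})\to0\) and the bias is \(o(1)\).

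\emph{Main obstacle.} The hardest step is the conditional-variance law of large numbers in Step 3. It requires the localized \(V\)-norm covariance bound with dependence on \(\Delta_n\), applied to functions that themselves depend on \(n\) through \(\Delta_n\). It also requires uniform, not merely pointwise, convergence in \eqref{eq:first-order-variance-limit} on the shrinking support. I would first check that \Cref{lem:first-order-short-time-moments} supplies this uniformity through the compact-uniform expansions, and use the bound \(|\phi_n|\le C\) uniformly in \(n\).
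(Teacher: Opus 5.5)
Your proposal is correct and follows essentially the same route as the paper: the common-design decomposition, the localized $V$-norm bound for the design matrix and the predictable quadratic variation, and Taylor plus temporal bias control via \Cref{ass:first-order-smoothness} and \Cref{lem:first-order-short-time-moments}. It then concludes, as the paper does, with a Cram\'er--Wold martingale-array CLT and Slutsky. The one nuance is that \citet[Theorem~2.3]{mcleish1974} is the realized-square form. So your fourth-moment bound should also be used, as in Supplementary Section~S.4, to show $\sum_s\bigl(\eta_{s,n}^2-\mathbb E[\eta_{s,n}^2\mid\mathcal F_{t_{s,n}}]\bigr)\to0$ in $L^2$ by martingale orthogonality; alternatively, cite a conditional-variance/Lindeberg version of the martingale CLT.
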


		\begin{proof}
			Write \(\widehat S_n:=\widehat S_n^{i,(p)}(x)\). By the kernel change of variables,
			\[
			\mathbb E[\widehat S_n]\to\varpi_i(x)M_p(K).
			\]
			Applying \Cref{lem:shrinking-localization-vnorm} coordinatewise gives
			\(\widehat S_n-\mathbb E[\widehat S_n]=O_{\mathbb P}((n\Delta_nh_n^d)^{-1/2})\). Hence \(\widehat S_n\) is invertible with probability tending to one and
			\begin{equation}\label{eq:first-order-design-limit}
			\widehat S_n^{-1}\xrightarrow{\mathbb P}
			\varpi_i(x)^{-1}M_p(K)^{-1}.
			\end{equation}

			Taylor's theorem and \Cref{ass:first-order-smoothness} give, uniformly for \(u\in\operatorname{supp}(K)\),
			\[
			b_{\Delta_n,g}^{ij}(x+h_nu)
			=\psi_p(u)^\top\beta_n+b_{n}^{\rm rem}(u),
			\qquad
			\sup_{u\in\operatorname{supp}(K)}|b_n^{\rm rem}(u)|\lesssim h_n^{p+1},
			\]
			where the intercept of \(\beta_n\) is \(b_{\Delta_n,g}^{ij}(x)\). Since \(\widehat S_n^{-1}=O_{\mathbb P}(1)\), substitution of \eqref{eq:first-order-weighted-decomposition} into \eqref{eq:first-order-estimator} yields
			\begin{equation}\label{eq:first-order-linearization}
			\begin{aligned}
			\widehat B_n^{ij,(p)}g(x)-B_{ij}g(x)
			&=
			e_0^\top\widehat S_n^{-1}
			\frac{1}{nh_n^d\Delta_n}
			\sum_{s=0}^{n-1}
			K\!\left(\frac{Y_{s,n}-x}{h_n}\right)
			\psi_p\!\left(\frac{Y_{s,n}-x}{h_n}\right)
			\xi_{s,n}^{ij,g}\\
			&\quad+O_{\mathbb P}(h_n^{p+1}+\Delta_n),
			\end{aligned}
			\end{equation}
			where \eqref{eq:first-order-temporal-bias} controls the temporal approximation.

			By \eqref{eq:first-order-variance-limit},
			\[
			\mathbb E\!\left[
			(\xi_{s,n}^{ij,g})^2
			\,\middle|\,\mathcal F_{t_{s,n}}
			\right]
			=
			\mathbf 1_{\{\Lambda_{t_{s,n}}=i\}}
			v_{\Delta_n,g}^{ij}(Y_{s,n})
			=O(\Delta_n)
			\]
			uniformly on the kernel support. Localization and martingale orthogonality therefore give
			\[
			\mathbb E\!\left[
			\left\|
			\frac{1}{nh_n^d\Delta_n}
			\sum_{s=0}^{n-1}
			K\!\left(\frac{Y_{s,n}-x}{h_n}\right)
			\psi_p\!\left(\frac{Y_{s,n}-x}{h_n}\right)
			\xi_{s,n}^{ij,g}
			\right\|^2
			\right]
			\lesssim\frac{1}{n\Delta_nh_n^d}.
			\]
			Hence the first term in \eqref{eq:first-order-linearization} is
			\(O_{\mathbb P}((n\Delta_nh_n^d)^{-1/2})\). This proves consistency.

			For the distributional limit, set
			\[
			\mathcal M_n
			:=
			\frac{1}{\sqrt{n\Delta_nh_n^d}}
			\sum_{s=0}^{n-1}
			K\!\left(\frac{Y_{s,n}-x}{h_n}\right)
			\psi_p\!\left(\frac{Y_{s,n}-x}{h_n}\right)
			\xi_{s,n}^{ij,g}.
			\]
			Its predictable quadratic-variation matrix is
			\[
			\frac{1}{nh_n^d}
			\sum_{s=0}^{n-1}
			K\!\left(\frac{Y_{s,n}-x}{h_n}\right)^2
			\mathbf 1_{\{\Lambda_{t_{s,n}}=i\}}
			\psi_p\!\left(\frac{Y_{s,n}-x}{h_n}\right)
			\psi_p\!\left(\frac{Y_{s,n}-x}{h_n}\right)^\top
			\frac{v_{\Delta_n,g}^{ij}(Y_{s,n})}{\Delta_n},
			\]
			where \(v_{\Delta,g}^{ij}:=P_\Delta^{ij}(g^2)-\{P_\Delta^{ij}g\}^2\). By \eqref{eq:first-order-variance-limit}, its entries are uniformly bounded on the kernel support. The change-of-variables formula identifies their expectations, while \Cref{lem:shrinking-localization-vnorm} makes their centered empirical averages negligible. Hence the matrix converges in probability to $\varpi_i(x)\gamma_{ij,g}(x)Q_p(K)$.

			Fix \(a\in\mathbb R^{q_p}\), and let \(X_{s,n}(a)\) denote the \(s\)-th summand of \(a^\top\mathcal M_n\). Supplementary Section~S.4 verifies, for these projected summands, the shifted-filtration, maximum-increment, realized-square, and zero-variance conditions, and also records the conditional Lindeberg check. Lemma~\ref{lem:appendix-martingale-clt} yields the limit for each Cramér--Wold projection, and the Cramér--Wold device gives
			\[
			\mathcal M_n\xrightarrow{d}
			N\!\left(0,\varpi_i(x)\gamma_{ij,g}(x)Q_p(K)\right).
			\]
			Combining this limit with \eqref{eq:first-order-design-limit}, \eqref{eq:first-order-linearization}, and \eqref{eq:first-order-clt-rates} and applying Slutsky's theorem proves the claimed central limit theorem.
		\end{proof}

		\begingroup
		For \(i\ne j\), define the direct switching-intensity estimator
		\begin{equation}\label{eq:offdiagonal-intensity-estimator}
		\widehat q_{ij,n}^{(p)}(x)
		:=\widehat B_n^{ij,(p)}1(x)
		=\frac{\widehat P_{\Delta_n,h_n}^{ij,(p)}1(x)}{\Delta_n}.
		\end{equation}
		The equality holds because the lag-zero response is identically zero on the regime-\(i\) design when \(i\ne j\).

		\begin{corollary}[Off-diagonal switching-intensity inference]\label[corollary]{cor:offdiagonal-intensity}
			Fix \(p\ge0\), \(i\ne j\), and \(x\in\mathbb R^d\). Suppose \Cref{ass:wellposedness,ass:mixing}, part~\textup{(i)} of \Cref{ass:local-densities}, and \Cref{ass:first-order-smoothness} for \(g\equiv1\) hold on \(U_x\). {Under \eqref{eq:recovery-mesh-bandwidth},}
			\[
			\widehat q_{ij,n}^{(p)}(x)\xrightarrow{\mathbb P}q_{ij}(x).
			\]
			If, in addition, \eqref{eq:first-order-clt-rates} holds and \(q_{ij}(x)>0\), then
			\[
			\sqrt{n\Delta_nh_n^d}
			\left\{\widehat q_{ij,n}^{(p)}(x)-q_{ij}(x)\right\}
			\xrightarrow{d}
			N\!\left(
			0,
			\frac{q_{ij}(x)}{\varpi_i(x)}\kappa_p(K)
			\right).
			\]
		\end{corollary}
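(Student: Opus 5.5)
The corollary is the specialization of \Cref{thm:first-order-recovery-clt} to \(g\equiv1\) with \(i\ne j\), so the plan is to check that specialization and then read off the variance. First I verify the identity \eqref{eq:offdiagonal-intensity-estimator}. For \(i\ne j\), on \(\{\Lambda_{t_{s,n}}=i\}\) the common-design response is
\[
D_{s,n}^{ij,1}=\mathbf 1_{\{\Lambda_{t_{s+1,n}}=j\}}-\delta_{ij}=R_{s+1}^{j,1}.
\]
Substituting this into \eqref{eq:first-order-estimator} gives the degree-\(p\) local-polynomial estimator \eqref{eq:lp-est} with mesh \(\Delta_n\), divided by \(\Delta_n\). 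The estimator therefore coincides with \(\widehat B_n^{ij,(p)}1(x)\) exactly, and the event on which it is defined is the same invertibility event.

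Next I apply \Cref{thm:first-order-recovery-clt} with \(g\equiv1\). This case is explicitly allowed there, under the constant-probe convention and \Cref{lem:cutoff-localization}. Its hypotheses are exactly those listed in the corollary: \Cref{ass:wellposedness,ass:mixing}, part~(i) of \Cref{ass:local-densities}, \Cref{ass:first-order-smoothness} for \(g\equiv1\), \eqref{eq:recovery-mesh-bandwidth}, and \eqref{eq:first-order-clt-rates} for the limit law. The target is \(B_{ij}1(x)=q_{ij}(x)\cdot 1\) by \eqref{eq:Bops}, which agrees with \Cref{prop:primitive-identification}. Consistency follows immediately.

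For the variance, \eqref{eq:first-order-gamma} gives \(\gamma_{ij,1}(x)=q_{ij}(x)\) when \(i\ne j\). Plugging this into \eqref{eq:first-order-asymptotic-variance} yields
\[
\sigma_{B,ij,p}^{2,1}(x)=\frac{q_{ij}(x)}{\varpi_i(x)}\,\kappa_p(K).
\]

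The only point needing care, and the reason the corollary assumes \(q_{ij}(x)>0\), is that the limit must be a nondegenerate normal. This matters for the zero-variance condition in the martingale central limit theorem invoked in the theorem's proof. If \(q_{ij}(x)=0\), the predictable quadratic variation limit \(\varpi_i(x)\gamma_{ij,1}(x)Q_p(K)\) vanishes. The statement would still formally hold with a degenerate limit, but it is cleaner to exclude that case. With \(q_{ij}(x)>0\) and \(Q_p(K)\) positive definite (since \(K>0\) on \(B_K\)), the limit matrix is positive definite. So no new argument beyond \Cref{thm:first-order-recovery-clt} is required, and I expect no genuine obstacle.
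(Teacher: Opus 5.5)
Your proposal is correct and is essentially the paper's proof. The paper applies \Cref{thm:first-order-recovery-clt} with \(g\equiv1\) and substitutes \(B_{ij}1=q_{ij}\) and \(\gamma_{ij,1}=q_{ij}\) for \(i\ne j\); your check that \(\widehat q_{ij,n}^{(p)}(x)=\widehat B_n^{ij,(p)}1(x)\) corresponds to the remark the paper makes just before the corollary, and you correctly note that \(q_{ij}(x)>0\) serves only to make the limit nondegenerate.
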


		\begin{proof}
			Apply \Cref{thm:first-order-recovery-clt} with \(g\equiv1\) and use \(B_{ij}1=q_{ij}\) and \(\gamma_{ij,1}=q_{ij}\) for \(i\ne j\).
		\end{proof}
		\endgroup

		For a fixed regime \(i\), retain \(\widehat q_{ij,n}^{(p)}\) from \eqref{eq:offdiagonal-intensity-estimator} and define the remaining primitive-coefficient estimators
		\[
		\begin{aligned}
		\widehat q_{i,n}^{(p)}(x)&:=-\widehat B_n^{ii,(p)}1(x),\\
		\widehat b_{r,n}^{(p)}(x,i)&:=\widehat B_n^{ii,(p)}\widetilde g_{r,x}(x),
		&&1\le r\le d,\\
		\widehat a_{rs,n}^{(p)}(x,i)&:=\widehat B_n^{ii,(p)}\widetilde g_{rs,x}(x),
		&&1\le r,s\le d,
		\end{aligned}
		\]
		where the localized centered probes are defined before \Cref{prop:primitive-identification}.

		\begin{corollary}[Consistent recovery of primitive coefficients]\label[corollary]{cor:primitive-recovery}
			Fix \(i\in S\), \(p\ge0\), and \(x\in\mathbb R^d\). Suppose \Cref{ass:wellposedness,ass:mixing} and part~\textup{(i)} of \Cref{ass:local-densities} hold on \(U_x\). Suppose also that \Cref{ass:first-order-smoothness} holds on \(U_x\) for \((i,j,g)=(i,j,1)\), \(j\in S\), and for \((i,i,\widetilde g_{r,x})\) and \((i,i,\widetilde g_{rs,x})\), \(1\le r,s\le d\). {Under \eqref{eq:recovery-mesh-bandwidth},} jointly over the finite collection of indices,
			\[
			\begin{aligned}
			\widehat q_{ij,n}^{(p)}(x)&\xrightarrow{\mathbb P}q_{ij}(x), &&j\ne i,\\
			\widehat q_{i,n}^{(p)}(x)&\xrightarrow{\mathbb P}q_i(x),\\
			\widehat b_{r,n}^{(p)}(x,i)&\xrightarrow{\mathbb P}b_r(x,i),
			&&1\le r\le d,\\
			\widehat a_{rs,n}^{(p)}(x,i)&\xrightarrow{\mathbb P}a_{rs}(x,i),
			&&1\le r,s\le d.
			\end{aligned}
			\]
		\end{corollary}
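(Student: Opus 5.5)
The plan is to apply the consistency half of \Cref{thm:first-order-recovery-clt} to each index in the finite collection, and then translate the block-generator limits into primitive coefficients with \Cref{prop:primitive-identification}.

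First I check that each probe satisfies the test-function hypothesis of \Cref{thm:first-order-recovery-clt}. The constant probe \(g\equiv1\) is allowed explicitly, under the constant probe convention and \Cref{lem:cutoff-localization}. The centered probes \(\widetilde g_{r,x}=\chi_x(\cdot)(\cdot_r-x_r)\) and \(\widetilde g_{rs,x}=\chi_x(\cdot)(\cdot_r-x_r)(\cdot_s-x_s)\) are products of \(\chi_x\in C_c^\infty\) with polynomials, so they lie in \(C_c^\infty\subset C_c^4(\mathbb R^d)\). Consistency in that theorem needs only \Cref{ass:wellposedness,ass:mixing}, part~(i) of \Cref{ass:local-densities}, \Cref{ass:first-order-smoothness} for the relevant triple, and \eqref{eq:recovery-mesh-bandwidth}. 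All of these are hypotheses of the corollary, and the rate condition \eqref{eq:first-order-clt-rates} is not required. We therefore obtain
\[
\widehat B_n^{ij,(p)}1(x)\xrightarrow{\mathbb P}B_{ij}1(x),\quad
\widehat B_n^{ii,(p)}\widetilde g_{r,x}(x)\xrightarrow{\mathbb P}B_{ii}\widetilde g_{r,x}(x),\quad
\widehat B_n^{ii,(p)}\widetilde g_{rs,x}(x)\xrightarrow{\mathbb P}B_{ii}\widetilde g_{rs,x}(x)
\]
for every \(j\in S\) and \(1\le r,s\le d\).

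Next I identify the limits. By \eqref{eq:coeff-through-two} and \Cref{lem:cutoff-localization}, \(B_{ij}g=A_1^{ij}g\) for these probes. \Cref{prop:primitive-identification} then gives \(A_1^{ij}1(x)=q_{ij}(x)\) for \(j\ne i\) and \(-A_1^{ii}1(x)=q_i(x)\). It also gives \(A_1^{ii}\widetilde g_{r,x}(x)=b_r(x,i)\) and \(A_1^{ii}\widetilde g_{rs,x}(x)=a_{rs}(x,i)\). The estimator \(\widehat q_{ij,n}^{(p)}(x)\) coincides with \(\widehat B_n^{ij,(p)}1(x)\) by \eqref{eq:offdiagonal-intensity-estimator}, and \(\widehat q_{i,n}^{(p)}\) is \(-\widehat B_n^{ii,(p)}1\). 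Each displayed convergence therefore follows, with the sign flip handled by the continuous mapping theorem.

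Finally, joint convergence comes for free. Each estimator converges in probability to a deterministic constant, and the collection is finite. Convergence in probability of each coordinate to a constant is equivalent to convergence in probability of the vector, since \(\mathbb P(\|\widehat\theta_n-\theta\|>\varepsilon)\le\sum_\ell\mathbb P(|\widehat\theta_{n,\ell}-\theta_\ell|>\varepsilon/\sqrt L)\).

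There is no substantive obstacle. The only point needing care is the convention that every estimator is set to zero off the invertibility event of \(\widehat S_n^{i,(p)}(x)\). This event is common to all probes for fixed \(i\), because they share one design matrix, and its probability tends to one by the proof of \Cref{thm:first-order-recovery-clt}. The convention therefore does not affect the limits.
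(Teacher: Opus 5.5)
Your proposal is correct and follows the paper's proof. The paper likewise applies the consistency half of \Cref{thm:first-order-recovery-clt} to the constant, centered-linear and centered-quadratic probes, converts the limits into \(q_{ij},q_i,b_r,a_{rs}\) through \Cref{prop:primitive-identification}, and gets joint convergence from the finiteness of the collection; your remarks on the shared invertibility event of \(\widehat S_n^{i,(p)}(x)\) and on the union bound only make explicit what the paper leaves implicit.
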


		\begin{proof}
			Apply the consistency assertion of \Cref{thm:first-order-recovery-clt} to \(g\equiv1\), \(\widetilde g_{r,x}\), and \(\widetilde g_{rs,x}\), and then use \Cref{prop:primitive-identification}. Joint convergence follows because the displayed collection is finite.
		\end{proof}

		Recall that
		\(\widetilde g_{r,x}(y)=\chi_x(y)(y_r-x_r)\) and
		\(\widetilde g_{rs,x}(y)=\chi_x(y)(y_r-x_r)(y_s-x_s)\)
		denote the centered linear and centered quadratic probes, respectively. For \(g\equiv1\) and \(g=\widetilde g_{r,x}\), \Cref{thm:first-order-recovery-clt} gives the corresponding scalar limits whenever the variance in \eqref{eq:first-order-gamma} is positive. For \(g=\widetilde g_{rs,x}\), one has \(g(x)=0\) and \(\nabla g(x)=0\), so \(\gamma_{ii,g}(x)=0\); a nondegenerate diffusion-matrix limit requires the next short-time variance term and a faster normalization.

		The variance formula displays the diagonal/off-diagonal distinction explicitly. For \(i=j\), the infinitesimal noise consists of the diffusion increment of \(g(X)\) and the loss of \(g(X)\) when the process exits regime \(i\). For \(i\ne j\), it is generated by the rare \(i\to j\) transition. Both mechanisms produce the first-order rate \(\sqrt{n\Delta_nh_n^d}\).
		{For the transition-probability probe \(g\equiv1\), the theorem directly estimates \(q_{ij}(x)\) when \(i\ne j\) and \(-q_i(x)\) when \(i=j\). In this case \(\gamma_{ij,1}(x)=q_{ij}(x)\) off the diagonal and \(\gamma_{ii,1}(x)=q_i(x)\) on the diagonal.}
			If \(\gamma_{ij,g}(x)=0\), the conclusion of \Cref{thm:first-order-recovery-clt} is interpreted as the degenerate limit \(N(0,0)\). In particular, for \(g\equiv1\), an off-diagonal point satisfying \(q_{ij}(x)=0\) has no nondegenerate studentized limit at the first-order normalization.

		Write \(\widehat S_n:=\widehat S_n^{i,(p)}(x)\). Define the local density and infinitesimal-variance estimators
		\[
		\begin{aligned}
		\widehat\varpi_{i,n}(x)
		&:=
		\frac{1}{nh_n^d}\sum_{s=0}^{n-1}
		K\!\left(\frac{Y_{s,n}-x}{h_n}\right)
		\mathbf 1_{\{\Lambda_{t_{s,n}}=i\}},\\
		\widehat\gamma_{ij,g,n}(x)
		&:=
		e_0^\top\widehat S_n^{-1}
		\frac{1}{nh_n^d\Delta_n}\sum_{s=0}^{n-1}
		K\!\left(\frac{Y_{s,n}-x}{h_n}\right)
		\mathbf 1_{\{\Lambda_{t_{s,n}}=i\}}
		\bigl(D_{s,n}^{ij,g}\bigr)^2
		\psi_p\!\left(\frac{Y_{s,n}-x}{h_n}\right),\\
		\widehat\sigma_{B,ij,p}^{2,g}(x)
		&:=
		\frac{\widehat\gamma_{ij,g,n}(x)}{\widehat\varpi_{i,n}(x)}
		e_0^\top M_p(K)^{-1}Q_p(K)M_p(K)^{-1}e_0.
		\end{aligned}
		\]
		When \(\widehat\sigma_{B,ij,p}^{2,g}(x)>0\), write
		\(\widehat\sigma_{B,ij,p}^{g}(x)
		:=\{\widehat\sigma_{B,ij,p}^{2,g}(x)\}^{1/2}\).

		\begin{corollary}[First-order studentization]\label[corollary]{cor:first-order-studentization}
			Under the assumptions of \Cref{thm:first-order-recovery-clt},
			\[
			\widehat\varpi_{i,n}(x)\xrightarrow{\mathbb P}\varpi_i(x),
			\qquad
			\widehat\gamma_{ij,g,n}(x)\xrightarrow{\mathbb P}\gamma_{ij,g}(x).
			\]
			If \(\gamma_{ij,g}(x)>0\) and \eqref{eq:first-order-clt-rates} holds, then
			\[
			\frac{\sqrt{n\Delta_nh_n^d}
			\{\widehat B_n^{ij,(p)}g(x)-B_{ij}g(x)\}}
			{\widehat\sigma_{B,ij,p}^{g}(x)}
			\xrightarrow{d}N(0,1).
			\]
		\end{corollary}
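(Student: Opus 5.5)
The studentization corollary reduces to two consistency statements followed by Slutsky's theorem. The first, \(\widehat\varpi_{i,n}(x)\to\varpi_i(x)\) in probability, is immediate. By stationarity and the kernel change of variables, \(\mathbb E[\widehat\varpi_{i,n}(x)]=\int K(u)\varpi_i(x+h_nu)\,du\to\varpi_i(x)\), using continuity of \(\varpi_i\) on \(U_x\) from part~\textup{(i)} of \Cref{ass:local-densities}. \Cref{lem:shrinking-localization-vnorm}, applied with \(\phi_n(y,\ell)=K((y-x)/h_n)\mathbf 1_{\{\ell=i\}}\), bounds the variance by \((n\Delta_nh_n^d)^{-1}\to0\).

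The main work is \(\widehat\gamma_{ij,g,n}(x)\to\gamma_{ij,g}(x)\). It has the same common-design structure as \(\widehat B_n^{ij,(p)}g(x)\), with response \((D_{s,n}^{ij,g})^2\) in place of \(D_{s,n}^{ij,g}\). I would decompose, on \(\{\Lambda_{t_{s,n}}=i\}\),
\[
\bigl(D_{s,n}^{ij,g}\bigr)^2=\Delta_n\kappa_{\Delta_n,g}^{ij}(Y_{s,n})+\eta_{s,n},
\qquad
\eta_{s,n}:=\mathbf 1_{\{\Lambda_{t_{s,n}}=i\}}\Bigl[\bigl(D_{s,n}^{ij,g}\bigr)^2-\Delta_n\kappa_{\Delta_n,g}^{ij}(Y_{s,n})\Bigr].
\]
By the Markov property, \(\eta_{s,n}\) is a martingale difference with respect to \(\mathcal F_{t_{s+1,n}}\).

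The conditional-mean part is
\[
e_0^\top\widehat S_n^{-1}\frac{1}{nh_n^d}\sum_s K\,\mathbf 1_{\{\Lambda=i\}}\,\kappa_{\Delta_n,g}^{ij}(Y_{s,n})\,\psi_p .
\]
By \Cref{lem:first-order-short-time-moments}, \(\kappa_{\Delta_n,g}^{ij}\to\gamma_{ij,g}\) uniformly on \(U_x\). The limit \(\gamma_{ij,g}\) is continuous, since it is built from \(g\), \(\nabla g\), \(a\) and \(q\). It is therefore enough to replace \(\kappa_{\Delta_n,g}^{ij}(Y_{s,n})\) by \(\gamma_{ij,g}(x)+o(1)\) uniformly on the kernel support. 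No \(C^{p+1}\) smoothness is needed here, since we only need consistency and not a rate. Two facts then finish this part:
\begin{itemize}
\item the factor becomes \(\gamma_{ij,g}(x)\,\widehat S_n\) up to an \(o_{\mathbb P}(1)\) matrix error, and \(e_0^\top\widehat S_n^{-1}\widehat S_ne_0=1\);
\item the matrix error is harmless because \(\widehat S_n^{-1}=O_{\mathbb P}(1)\) by \eqref{eq:first-order-design-limit}.
\end{itemize}

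The martingale part is where I expect the main obstacle, namely controlling the conditional second moment of \(\eta_{s,n}\). Here I would use the fourth-moment bound proved inside \Cref{lem:first-order-short-time-moments}, \(\sup_{y\in U_x}\mathbb E_{(y,i)}|f_g^{(j)}(Z_{\Delta})-f_g^{(j)}(y,i)|^4\lesssim\Delta\). This gives \(\mathbb E[\eta_{s,n}^2\mid\mathcal F_{t_{s,n}}]\lesssim\Delta_n\) on the kernel support. Martingale orthogonality and localization then give
\[
\mathbb E\Bigl\|\frac{1}{nh_n^d\Delta_n}\sum_s K\psi_p\,\eta_{s,n}\Bigr\|^2
\lesssim\frac{n h_n^d\Delta_n}{(nh_n^d\Delta_n)^2}
=\frac{1}{n\Delta_nh_n^d}\to0,
\]
exactly parallel to the consistency step in \Cref{thm:first-order-recovery-clt}.

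For the case \(g\equiv1\), the bound is even simpler: the increment is nonzero only on a switch, so the fourth moment is at most the switch probability, which is \(\lesssim\Delta\).

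Finally, \(\widehat\sigma^{2,g}_{B,ij,p}(x)\to\sigma^{2,g}_{B,ij,p}(x)>0\) in probability by the continuous mapping theorem, because \(\varpi_i(x)>0\). Hence the event \(\{\widehat\sigma^{2,g}_{B,ij,p}(x)>0\}\) has probability tending to one. Combining this with the central limit theorem of \Cref{thm:first-order-recovery-clt} through Slutsky's theorem yields the \(N(0,1)\) limit.
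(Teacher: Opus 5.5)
Your proposal is correct and follows the paper's proof essentially step for step. The paper also splits $(D_{s,n}^{ij,g})^2$ into $\Delta_n\kappa_{\Delta_n,g}^{ij}(Y_{s,n})$ plus a martingale difference; it only normalizes $\eta_{s,n}$ by $\Delta_n^{-1}$, so its conditional second moment is $\lesssim\Delta_n^{-1}$ rather than your $\lesssim\Delta_n$. It then controls that part with the fourth-moment bound of \Cref{lem:first-order-short-time-moments}, treats the mean part through uniform convergence of $\kappa_{\Delta_n,g}^{ij}$ to $\gamma_{ij,g}$ together with the fact that $\widehat S_ne_0$ is the local response vector for the constant $1$ (so the mean term reduces to the vector $\gamma_{ij,g}(x)\widehat S_ne_0$), and concludes with Slutsky and \Cref{thm:first-order-recovery-clt}, as you propose.
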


		\begin{proof}
			The change-of-variables formula gives
			\(\mathbb E[\widehat\varpi_{i,n}(x)]\to\varpi_i(x)\); \Cref{lem:shrinking-localization-vnorm} makes the centered term \(o_{\mathbb P}(1)\).
			\begingroup
			For the second estimator, define the regime-weighted innovation
			\[
			\eta_{s,n}
			:=
			\mathbf 1_{\{\Lambda_{t_{s,n}}=i\}}
			\left\{
			\frac{(D_{s,n}^{ij,g})^2}{\Delta_n}
			-\kappa_{\Delta_n,g}^{ij}(Y_{s,n})
			\right\}.
			\]
			By the Markov property and the definition of \(\kappa_{\Delta_n,g}^{ij}\),
			\(\eta_{s,n}\) is \(\mathcal F_{t_{s+1,n}}\)-measurable and
			\(\mathbb E[\eta_{s,n}\mid\mathcal F_{t_{s,n}}]=0\). Thus martingale orthogonality applies with the one-step shift from \(\mathcal F_{t_{s,n}}\) to \(\mathcal F_{t_{s+1,n}}\).
			\endgroup
			By the fourth-moment bound in \Cref{lem:first-order-short-time-moments},
			$\mathbb E\!\left[\eta_{s,n}^2\mid\mathcal F_{t_{s,n}}\right]
			\lesssim\Delta_n^{-1}$
			on the kernel support. Martingale orthogonality and localization therefore give
			\[
			\left\|
			\frac{1}{nh_n^d}\sum_{s=0}^{n-1}
			K\!\left(\frac{Y_{s,n}-x}{h_n}\right)
			\psi_p\!\left(\frac{Y_{s,n}-x}{h_n}\right)\eta_{s,n}
			\right\|
			=O_{\mathbb P}\!\left((n\Delta_nh_n^d)^{-1/2}\right).
			\]
			By the last assertion of \Cref{lem:first-order-short-time-moments} and the continuity of \(\gamma_{ij,g}\),
			\[
			\sup_{u\in\operatorname{supp}(K)}
			\left|\kappa_{\Delta_n,g}^{ij}(x+h_nu)-\gamma_{ij,g}(x)\right|\to0.
			\]
			Since \(\widehat S_ne_0\) is the local response vector for the constant function \(1\), the preceding two displays and \eqref{eq:first-order-design-limit} imply
			\(\widehat\gamma_{ij,g,n}(x)\to\gamma_{ij,g}(x)\) in probability. The consistency of \(\widehat\sigma_{B,ij,p}^{2,g}(x)\) follows. Slutsky's theorem and \Cref{thm:first-order-recovery-clt} prove the studentized limit.
		\end{proof}

	\begingroup
	\subsection{Second-order recovery and studentization}
	{Using every starting index makes adjacent second-difference responses share a one-step increment, so their leading covariances cancel and the first-order martingale proof cannot be repeated. We instead use nonoverlapping two-step blocks, retaining the lag-one/lag-two covariance within each response while producing a martingale-difference array.}

	Put \(N_n:=\lfloor n/2\rfloor\). For \(\Delta>0\), define the normalized second-order block
	\[
	c_{\Delta,g}^{ij}(y)
	:=
	\frac{
	P_{2\Delta}^{ij}g(y)-2P_\Delta^{ij}g(y)+\delta_{ij}g(y)
	}{\Delta^2}.
	\]

	\begin{assumption}[Second-order local smoothness]\label[assumption]{ass:second-order-smoothness}
		For the polynomial degree \(p\), regime pair \((i,j)\), test function \(g\), and neighborhood \(U_x\),
		\[
		c_{\Delta_n,g}^{ij}\in C^{p+1}(U_x),
		\qquad
		\sup_{n\ge1}
		\left\|c_{\Delta_n,g}^{ij}\right\|_{C^{p+1}(U_x)}
		<\infty.
		\]
	\end{assumption}

	{A model-level sufficient condition for Assumptions~A5--A6 is given in Proposition~\ref{prop:appendix-normalized-smoothness}; its proof and the numerical-model verification are given in Supplementary Sections~S.5--S.6.}

	\begin{lemma}[Second-order short-time moments]\label[lemma]{lem:second-order-short-time-moments}
		Suppose the coefficient hypotheses of \Cref{prop:kth} hold through order three and either \(g\in C_c^6(\mathbb R^d)\) or \(g\equiv1\). For \(z=(y,i)\), set
		\[
		\mathcal D_{\Delta,g}^{(j)}
		:=
		f_g^{(j)}(Z_{2\Delta})
		-2f_g^{(j)}(Z_\Delta)
		+f_g^{(j)}(Z_0),
		\qquad Z_0=z.
		\]
		Then, uniformly for \(y\in U_x\),
		\begin{align}
		&c_{\Delta,g}^{ij}(y)
		=
		C_{ij}g(y)+\Delta A_3^{ij}g(y)+o(\Delta),
		\label{eq:second-order-temporal-bias}\\
		\frac{1}{\Delta}
		&\operatorname{Var}_{(y,i)}
		\left(\mathcal D_{\Delta,g}^{(j)}\right)
		\to 2\gamma_{ij,g}(y),
		\label{eq:second-order-variance-limit}\\
		&\mathbb E_{(y,i)}
		\left[\left|\mathcal D_{\Delta,g}^{(j)}\right|^4\right]
		\lesssim\Delta.
		\label{eq:second-order-fourth-moment}\\
		&\kappa_{\Delta,g}^{ij,[2]}(y)
		:=
		\frac{1}{\Delta}
		\mathbb E_{(y,i)}
		\left[\left(\mathcal D_{\Delta,g}^{(j)}\right)^2\right]
		\to 2\gamma_{ij,g}(y).\notag
		\end{align}
	\end{lemma}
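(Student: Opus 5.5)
The plan is to write the second difference as a difference of two consecutive one-step increments,
\[
\mathcal D_{\Delta,g}^{(j)}=I_2-I_1,\qquad I_1:=f_g^{(j)}(Z_\Delta)-f_g^{(j)}(Z_0),\quad I_2:=f_g^{(j)}(Z_{2\Delta})-f_g^{(j)}(Z_\Delta).
\]
Each of the four assertions is then reduced either to the order-three expansion of \Cref{prop:kth} or to the one-step estimates of \Cref{lem:first-order-short-time-moments}, combined with the Markov property at time \(\Delta\). For \(g\equiv1\) we use the cutoff convention of \Cref{lem:cutoff-localization} throughout. Its \(O(t^M)\) error, with \(M>3\), is absorbed into every remainder below.

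\textbf{Temporal bias.} Apply \Cref{prop:kth} with \(k=3\), which needs \(g\in C_c^6\) and the order-three coefficient hypotheses. Evaluate it at \(2\Delta\) and at \(\Delta\), uniformly on \(\overline U_x\):
\[
P_{2\Delta}^{ij}g-2P_\Delta^{ij}g+\delta_{ij}g=(1-2+1)\delta_{ij}g+(2-2)\Delta B_{ij}g+(2-1)\Delta^2C_{ij}g+\Bigl(\tfrac86-\tfrac26\Bigr)\Delta^3A_3^{ij}g+o(\Delta^3).
\]
Here we use \(A_1=B_{ij}\) and \(A_2=C_{ij}\) from \eqref{eq:coeff-through-two}. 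Dividing by \(\Delta^2\) gives \eqref{eq:second-order-temporal-bias}.

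\textbf{Fourth moment.} First upgrade the fourth-moment bound from the proof of \Cref{lem:first-order-short-time-moments} to a bound uniform over all starting states \(z'\in E\):
\[
\sup_{z'\in E}\mathbb E_{z'}\bigl[|f_g^{(j)}(Z_\Delta)-f_g^{(j)}(z')|^4\bigr]\lesssim\Delta.
\]
The switching event has probability at most \(C_q\Delta\) from any state, and on it the increment is bounded by \(2\|g\|_\infty\). On the no-switch event, couple with the fixed-regime-\(\ell\) diffusion. By Itô's formula and Burkholder--Davis--Gundy we get \(\mathbb E|g(X_\Delta)-g(y')|^4\lesssim\Delta^2\). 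This holds globally because \(\mathscr L_\ell g\) and \(\nabla g^\top\sigma(\cdot,\ell)\) are bounded, since \(g\) has compact support and the coefficients are continuous. (For \(g\equiv1\) the no-switch increment is zero.) The Markov property at time \(\Delta\) then gives \(\mathbb E|I_2|^4\lesssim\Delta\). Together with \(|\mathcal D|^4\le8(|I_1|^4+|I_2|^4)\), this proves \eqref{eq:second-order-fourth-moment}.

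\textbf{Variance limit.} Expand
\[
\operatorname{Var}(\mathcal D)=\operatorname{Var}(I_1)+\operatorname{Var}(I_2)-2\operatorname{Cov}(I_1,I_2).
\]
By \Cref{lem:first-order-short-time-moments}, \(\operatorname{Var}(I_1)/\Delta=\kappa_{\Delta,g}^{ij}(y)-\Delta\{b_{\Delta,g}^{ij}\}^2\to\gamma_{ij,g}(y)\) uniformly on \(U_x\).

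For the cross term, write \(\mathbb E[I_2\mid\mathcal F_\Delta]=(P_\Delta-I)f_g^{(j)}(Z_\Delta)\). This is \(O(\Delta)\) uniformly on \(E\), because \(\|P_\Delta u-u\|_\infty\le\Delta\|\mathcal Lu\|_\infty\) by Dynkin's identity with \(\mathcal Lf_g^{(j)}\) bounded. Hence
\[
|\operatorname{Cov}(I_1,I_2)|\lesssim\Delta\,\mathbb E|I_1|+|\mathbb E I_1|\,|\mathbb E I_2|=O(\Delta^{3/2}),
\]
and the cross term is \(o(\Delta)\).

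The step I expect to be the main obstacle is \(\operatorname{Var}(I_2)\), since \(I_2\) starts from the random state \(Z_\Delta\), which may lie in any regime. Write
\[
\mathbb E[I_2^2]=\Delta\,\mathbb E_{(y,i)}\bigl[\Gamma_\Delta(Z_\Delta)\bigr],\qquad \Gamma_\Delta(z'):=\Delta^{-1}\mathbb E_{z'}\bigl[(f_g^{(j)}(Z_\Delta)-f_g^{(j)}(z'))^2\bigr].
\]
Observe that \(\Gamma_\Delta=\Delta^{-1}\{P_\Delta(f^2)-2fP_\Delta f+f^2\}\) with \(f=f_g^{(j)}\) and \(f^2=f_{g^2}^{(j)}\). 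Applying \Cref{prop:kth} with \(k=1\) to both \(g\) and \(g^2\) gives \(\Gamma_\Delta\to\Gamma:=\mathcal L(f^2)-2f\mathcal Lf\) compact-uniformly on \(E\), and \(\Gamma(y,i)=\gamma_{ij,g}(y)\). In addition, \(\sup_\Delta\|\Gamma_\Delta\|_\infty<\infty\) by the global second-moment version of the bound above. Now split on the events \(\{\Lambda_\Delta\ne i\}\) (probability \(O(\Delta)\)), \(\{|X_\Delta-y|>\delta\}\) (probability \(O(\Delta/\delta^2)\)), and the complement, and use continuity of \(\Gamma(\cdot,i)\). This gives \(\mathbb E[\Gamma_\Delta(Z_\Delta)]\to\gamma_{ij,g}(y)\) uniformly on \(U_x\). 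Since \((\mathbb EI_2)^2=O(\Delta^2)\), we get \(\operatorname{Var}(I_2)/\Delta\to\gamma_{ij,g}(y)\), and \eqref{eq:second-order-variance-limit} follows.

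\textbf{Second-moment limit.} Finally,
\[
\kappa_{\Delta,g}^{ij,[2]}=\Delta^{-1}\operatorname{Var}(\mathcal D)+\Delta^{-1}(\mathbb E\mathcal D)^2=\Delta^{-1}\operatorname{Var}(\mathcal D)+\Delta^3\{c_{\Delta,g}^{ij}\}^2.
\]
The second term is \(O(\Delta^3)\) by the bias step, so \(\kappa_{\Delta,g}^{ij,[2]}\to2\gamma_{ij,g}\).
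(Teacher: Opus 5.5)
Your proof is correct. The temporal-bias step, the fourth-moment step and the final \(\kappa^{[2]}\) step coincide with the paper's:
\begin{itemize}
\item order-three expansion at \(\Delta\) and \(2\Delta\) for the bias;
\item a global one-step bound from the switching probability plus BDG for the frozen-regime diffusion, then \(|a-b|^4\le8(|a|^4+|b|^4)\) and conditioning at time \(\Delta\) for the fourth moment;
\item a squared mean of order \(\Delta^4\) for \(\kappa^{[2]}\).
\end{itemize}

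The variance step takes a different route. The paper expands \(\mathbb E_z[\mathcal D^2]\) by the Markov property into
\[
P_{2\Delta}(f^2)+4P_\Delta(f^2)+f^2-4P_\Delta\{fP_\Delta f\}+2fP_{2\Delta}f-4fP_\Delta f .
\]
It writes \(P_\Delta\{fP_\Delta f\}=P_\Delta(f^2)+\Delta P_\Delta(fr_\Delta)\) with \(r_\Delta=\Delta^{-1}(P_\Delta f-f)\). It then proves \(P_\Delta(fr_\Delta)\to f\mathcal Lf\) from three facts: \(\|r_\Delta\|_\infty\le\|\mathcal Lf\|_\infty\), compact containment, and short-time (Feller) continuity applied to the fixed function \(f\mathcal Lf\). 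The coefficient \(2\{\mathcal L(f^2)-2f\mathcal Lf\}\) is finally read off from the cancellation among the first-order expansions of the six terms.

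You instead split \(\mathcal D=I_2-I_1\) and do two things.
\begin{itemize}
\item You discard the covariance by an orthogonality estimate of order \(\Delta^{3/2}\), using \(\mathbb E[I_2\mid\mathcal F_\Delta]=O(\Delta)\) uniformly and \(\mathbb E|I_1|=O(\Delta^{1/2})\).
\item You identify \(\mathbb E[I_2^2]=\Delta P_\Delta\Gamma_\Delta(z)\) by a direct short-time continuity argument (switch probability, displacement probability, uniform continuity of \(\Gamma(\cdot,i)\)) in place of Feller continuity.
\end{itemize}

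Your route makes the factor two transparent: two asymptotically uncorrelated one-step increments, each contributing \(\gamma_{ij,g}\Delta\). It also mirrors the martingale structure used later in the recovery CLT. The paper's route avoids any separate covariance estimate and obtains every leading term exactly by semigroup algebra. Its cost is the appeal to Feller continuity and the uniform convergence of \(r_\Delta\).

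One simplification in your argument: the uniform bound on \(\Gamma_\Delta\) follows at once from Dynkin's identity. Since \(\Gamma_\Delta=\Delta^{-1}\{(P_\Delta-I)(f^2)-2f(P_\Delta-I)f\}\), you get \(\|\Gamma_\Delta\|_\infty\le\|\mathcal L(f^2)\|_\infty+2\|f\|_\infty\|\mathcal Lf\|_\infty\). This avoids re-deriving a separate global second-moment coupling bound; note that Jensen applied to the fourth-moment bound would only give order \(\Delta^{1/2}\).
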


	\begin{proof}
		The order-three expansion in \Cref{prop:kth}, or \Cref{lem:cutoff-localization} when \(g\equiv1\), gives
		\[
		P_{2\Delta}^{ij}g-2P_\Delta^{ij}g+\delta_{ij}g
		=
		\Delta^2 C_{ij}g+\Delta^3A_3^{ij}g+o(\Delta^3)
		\]
		uniformly on \(\overline U_x\). Division by \(\Delta^2\) proves \eqref{eq:second-order-temporal-bias}.

		We next identify the variance. Write \(f:=f_g^{(j)}\), and let \(P_t\) denote the full semigroup. By the Markov property,
		\[
		\begin{aligned}
		\mathbb E_z\!\left[
		\left\{f(Z_{2\Delta})-2f(Z_\Delta)+f(z)\right\}^2
		\right]
		&=
		P_{2\Delta}(f^2)(z)+4P_\Delta(f^2)(z)+f(z)^2\\
		&\quad-4P_\Delta\{fP_\Delta f\}(z)
		+2f(z)P_{2\Delta}f(z)-4f(z)P_\Delta f(z).
		\end{aligned}
		\]
		Set \(r_\Delta:=\Delta^{-1}(P_\Delta f-f)\). Then
		\[
		P_\Delta\{fP_\Delta f\}
		=
		P_\Delta(f^2)+\Delta P_\Delta(fr_\Delta).
		\]
		Dynkin's formula and the boundedness of \(\mathcal Lf\) give \(\|r_\Delta\|_\infty\le\|\mathcal Lf\|_\infty\), while the order-one expansion gives \(r_\Delta\to\mathcal Lf\) locally uniformly. If \(g\) is compactly supported, \(f(r_\Delta-\mathcal Lf)\) has fixed compact support; if \(g\equiv1\), the bounded total switching rate gives the required global bound. The linear-growth bounds following from \Cref{ass:wellposedness} imply uniform compact containment for initial states in \(\overline U_x\times S\). Together with Feller continuity applied to the fixed function \(f\mathcal Lf\), this yields \(P_\Delta(fr_\Delta)\to f\mathcal Lf\) uniformly on \(\overline U_x\times S\). Combining this limit with the order-one expansion of \(P_\Delta(f^2)\) gives
		\[
		P_\Delta\{fP_\Delta f\}
		=
		f^2+\Delta\{\mathcal L(f^2)+f\mathcal Lf\}+o(\Delta)
		\]
		uniformly on \(\overline U_x\times S\). Substitution in the preceding display yields
		\[
		\mathbb E_z\!\left[
		\left\{f(Z_{2\Delta})-2f(Z_\Delta)+f(z)\right\}^2
		\right]
		=
		2\Delta\{\mathcal L(f^2)(z)-2f(z)\mathcal Lf(z)\}
		+o(\Delta).
		\]
		The expression in braces is \(\gamma_{ij,g}(y)\). The conditional mean of the second difference is \(O(\Delta^2)\), so subtracting its square does not affect the limit after division by \(\Delta\). This proves \eqref{eq:second-order-variance-limit}.

		For the fourth moment, write the second difference as the difference of the two consecutive increments of \(f(Z_t)\). The inequality
		$|a-b|^4\le8(|a|^4+|b|^4)$
		reduces the assertion to fourth moments of one-step increments. For compactly supported \(g\), the semimartingale decomposition of \(f(Z_t)\), the boundedness of \(\mathcal Lf\) and of its diffusion and jump carré-du-champ coefficients, and the Burkholder--Davis--Gundy inequality give
		\[
		\sup_{z\in E}
		\mathbb E_z\!\left[
		|f(Z_\Delta)-f(z)|^4
		\right]\lesssim\Delta.
		\]
		For \(g\equiv1\), the same estimate follows directly from the bounded total switching rate. Conditioning at time \(\Delta\) proves \eqref{eq:second-order-fourth-moment}. Finally, the squared conditional mean is \(O(\Delta^4)\); hence \eqref{eq:second-order-variance-limit} also gives the asserted limit for \(\kappa_{\Delta,g}^{ij,[2]}\).
	\end{proof}

	For \(s=0,\ldots,N_n-1\), define the nonoverlapping second-difference response
	\[
	D_{s,n}^{ij,g,[2]}
	:=
		f_g^{(j)}(Z_{2s+2,n})
		-2f_g^{(j)}(Z_{2s+1,n})
		+f_g^{(j)}(Z_{2s,n}).
	\]
	On \(\{\Lambda_{t_{2s,n}}=i\}\), this response is
	\[
	\begin{aligned}
	D_{s,n}^{ij,g,[2]}
	=
	g(Y_{2s+2,n})\mathbf 1_{\{\Lambda_{t_{2s+2,n}}=j\}}
	-2g(Y_{2s+1,n})\mathbf 1_{\{\Lambda_{t_{2s+1,n}}=j\}}
	+\delta_{ij}g(Y_{2s,n}),
	\end{aligned}
	\]
	the second forward difference of the terminal-regime block response.
	For later use, put
	\[
	v_{\Delta,g}^{ij,[2]}(y)
	:=
	\operatorname{Var}_{(y,i)}
	\left(\mathcal D_{\Delta,g}^{(j)}\right).
	\]
	Define the regime-weighted second-order innovation
	\begin{equation}\label{eq:second-order-weighted-innovation}
	\xi_{s,n}^{ij,g,[2]}
	:=
	\mathbf 1_{\{\Lambda_{t_{2s,n}}=i\}}
	\left\{
	D_{s,n}^{ij,g,[2]}
	-\mathbb E\!\left[
	D_{s,n}^{ij,g,[2]}
	\,\middle|\,\mathcal G_{s,n}
	\right]
	\right\},
	\qquad
	\mathcal G_{s,n}:=\mathcal F_{t_{2s,n}}.
	\end{equation}
	Then \(\xi_{s,n}^{ij,g,[2]}\) is \(\mathcal G_{s+1,n}\)-measurable and
	\(\mathbb E[\xi_{s,n}^{ij,g,[2]}\mid\mathcal G_{s,n}]=0\). Moreover,
	\begin{equation}\label{eq:second-order-weighted-decomposition}
	\mathbf 1_{\{\Lambda_{t_{2s,n}}=i\}}D_{s,n}^{ij,g,[2]}
	=
	\Delta_n^2\mathbf 1_{\{\Lambda_{t_{2s,n}}=i\}}
	c_{\Delta_n,g}^{ij}(Y_{2s,n})
	+\xi_{s,n}^{ij,g,[2]}.
	\end{equation}
	Define the common design matrix
	\[
	\widehat S_{n,\mathrm{nb}}^{i,(p)}(x)
	:=
	\frac{1}{N_nh_n^d}
	\sum_{s=0}^{N_n-1}
	K\!\left(\frac{Y_{2s,n}-x}{h_n}\right)
	\mathbf 1_{\{\Lambda_{t_{2s,n}}=i\}}
	\psi_p\!\left(\frac{Y_{2s,n}-x}{h_n}\right)
	\psi_p\!\left(\frac{Y_{2s,n}-x}{h_n}\right)^\top.
	\]
	On its invertibility event, set
	\begin{equation}\label{eq:second-order-nb-estimator}
	\begin{aligned}
	\widehat C_{n,\mathrm{nb}}^{ij,(p)}g(x)
	&:=
	e_0^\top
	\left(\widehat S_{n,\mathrm{nb}}^{i,(p)}(x)\right)^{-1}
	\frac{1}{N_nh_n^d\Delta_n^2}
	\sum_{s=0}^{N_n-1}
	K\!\left(\frac{Y_{2s,n}-x}{h_n}\right)
	\mathbf 1_{\{\Lambda_{t_{2s,n}}=i\}}\\
	&\qquad\qquad\times
	D_{s,n}^{ij,g,[2]}
	\psi_p\!\left(\frac{Y_{2s,n}-x}{h_n}\right).
	\end{aligned}
	\end{equation}
	This estimator uses no preliminary estimate of \(B_{ij}g\). The first-order term cancels inside every response before smoothing and normalization.

	Define
	\begin{equation}\label{eq:second-order-asymptotic-variance}
	\sigma_{C,\mathrm{nb},ij,p}^{2,g}(x)
	:=
	\frac{2\gamma_{ij,g}(x)}{\varpi_i(x)}
	e_0^\top M_p(K)^{-1}Q_p(K)M_p(K)^{-1}e_0.
	\end{equation}

	\begin{theorem}[Second-order recovery central limit theorem]\label{thm:second-order-recovery-clt}
		Fix \(p\ge0\), a regime pair \((i,j)\), a test function \(g\in C_c^6(\mathbb R^d)\) or \(g\equiv1\), and a design point \(x\). Suppose \Cref{ass:mixing}, part~\textup{(i)} of \Cref{ass:local-densities}, and \Cref{ass:second-order-smoothness} hold on \(U_x\), and suppose the coefficient hypotheses of \Cref{prop:kth} hold through order three. If
		$N_n\Delta_n^3h_n^d\to\infty$,
		then
		$\widehat C_{n,\mathrm{nb}}^{ij,(p)}g(x)
		\xrightarrow{\mathbb P}C_{ij}g(x)$.
		If, in addition,
		\begin{equation}\label{eq:second-order-clt-rates}
		\sqrt{N_n\Delta_n^3h_n^d}
		\bigl(\Delta_n+h_n^{p+1}\bigr)\to0,
		\end{equation}
		then
		\[
		\sqrt{N_n\Delta_n^3h_n^d}
		\left\{
		\widehat C_{n,\mathrm{nb}}^{ij,(p)}g(x)-C_{ij}g(x)
		\right\}
		\xrightarrow{d}
		N\!\left(0,\sigma_{C,\mathrm{nb},ij,p}^{2,g}(x)\right).
		\]
	\end{theorem}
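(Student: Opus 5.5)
The plan mirrors the proof of \Cref{thm:first-order-recovery-clt}, now carried out on the nonoverlapping sampled chain \(\widetilde Z_{s,n}:=Z_{2s,n}\), \(s=0,\ldots,N_n-1\), with filtration \(\mathcal G_{s,n}\).

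\emph{Design step.} The thinned chain is stationary with mesh \(2\Delta_n\). \Cref{lem:shrinking-localization-vnorm} applies verbatim with \(\Delta_n\) replaced by \(2\Delta_n\) and \(n\) by \(N_n\). Together with the kernel change of variables, it gives
\[
\widehat S_{n,\mathrm{nb}}^{i,(p)}(x)-\varpi_i(x)M_p(K)=o_{\mathbb P}(1),
\]
because \(N_n\Delta_nh_n^d\ge N_n\Delta_n^3h_n^d\to\infty\). Hence the design matrix is invertible with probability tending to one, and its inverse converges to \(\varpi_i(x)^{-1}M_p(K)^{-1}\).

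\emph{Bias step.} Substitute the decomposition \eqref{eq:second-order-weighted-decomposition} into \eqref{eq:second-order-nb-estimator}. The deterministic part is \(e_0^\top\widehat S^{-1}\) applied to the localized moments of \(c^{ij}_{\Delta_n,g}(Y_{2s,n})\). A Taylor expansion of \(c^{ij}_{\Delta_n,g}(x+h_nu)\) to order \(p\), using \Cref{ass:second-order-smoothness}, has intercept \(c^{ij}_{\Delta_n,g}(x)\) and a remainder bounded by \(h_n^{p+1}\). The polynomial part is reproduced exactly by the local-polynomial fit. Then \eqref{eq:second-order-temporal-bias} of \Cref{lem:second-order-short-time-moments} gives \(c^{ij}_{\Delta_n,g}(x)=C_{ij}g(x)+O(\Delta_n)\). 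This yields
\[
\widehat C-C_{ij}g(x)=e_0^\top\widehat S^{-1}\frac{1}{N_nh_n^d\Delta_n^2}\sum_s K\psi_p\,\xi^{ij,g,[2]}_{s,n}+O_{\mathbb P}(\Delta_n+h_n^{p+1}).
\]

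\emph{Martingale step.} By the Markov property at time \(t_{2s,n}\),
\[
\mathbb E\bigl[(\xi^{[2]}_{s,n})^2\mid\mathcal G_{s,n}\bigr]=\mathbf 1_{\{\Lambda_{t_{2s,n}}=i\}}\,v^{ij,[2]}_{\Delta_n,g}(Y_{2s,n}),
\]
and this is \(O(\Delta_n)\) uniformly on the kernel support by \eqref{eq:second-order-variance-limit}. Martingale orthogonality then gives second moment \(\lesssim (N_n\Delta_n^3h_n^d)^{-1}\) for the noise term, which proves consistency.

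For the central limit theorem, set
\[
\mathcal M_n:=(N_n\Delta_nh_n^d)^{-1/2}\sum_s K\psi_p\,\xi^{[2]}_{s,n},
\]
so that the noise term equals \(e_0^\top\widehat S^{-1}\mathcal M_n/\sqrt{N_n\Delta_n^3h_n^d}\). Its predictable quadratic variation is
\[
\frac{1}{N_nh_n^d}\sum_s K^2\,\mathbf 1\,\psi_p\psi_p^\top\,\frac{v^{ij,[2]}_{\Delta_n,g}(Y_{2s,n})}{\Delta_n}.
\]
The ratio \(v^{ij,[2]}_{\Delta_n,g}/\Delta_n\) converges uniformly to \(2\gamma_{ij,g}\). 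Combining this with the change of variables for the mean and with \Cref{lem:shrinking-localization-vnorm} for the fluctuation, the quadratic variation converges in probability to \(2\varpi_i(x)\gamma_{ij,g}(x)Q_p(K)\).

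The Lindeberg condition uses the fourth-moment bound \eqref{eq:second-order-fourth-moment}. The conditional fourth moments sum to order
\[
\frac{N_nh_n^d\Delta_n}{(N_n\Delta_nh_n^d)^2}=\frac{1}{N_n\Delta_nh_n^d}\to0.
\]
The conditional Lindeberg condition, the maximum-increment condition and the realized-square condition of Lemma~\ref{lem:appendix-martingale-clt} then follow as in Supplementary Section~S.4, with the shifted filtration \(\mathcal G_{s+1,n}\). The Cram\'er--Wold device and Slutsky's theorem, combined with \eqref{eq:second-order-clt-rates}, give the limit with variance \(2\gamma_{ij,g}\kappa_p(K)/\varpi_i(x)\), which is \eqref{eq:second-order-asymptotic-variance}.

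\emph{Main obstacle.} The delicate point is the variance identification. Nonoverlap ensures that each \(\xi^{[2]}_{s,n}\) is a genuine martingale difference with respect to \(\mathcal G_{s,n}\), so no cross-block covariances appear. The within-block factor two comes entirely from \Cref{lem:second-order-short-time-moments}. What I expect to need care is verifying that \(v^{ij,[2]}_{\Delta_n,g}/\Delta_n\) converges uniformly on \(x+h_n\operatorname{supp}K\) rather than only pointwise, which the lemma asserts on \(U_x\). I also need the realized-square condition \(\sum_s\xi^2-\sum_s\mathbb E[\xi^2\mid\mathcal G]\to0\). For that I would again bound its second moment by martingale orthogonality and the fourth-moment estimate, giving order \((N_n\Delta_nh_n^d)^{-1}\).
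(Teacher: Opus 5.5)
Your proposal is correct and follows essentially the same route as the paper's proof. Both arguments run through the same four steps:
\begin{itemize}
\item concentration of the design matrix for the thinned chain $(Z_{2s,n})$ via \Cref{lem:shrinking-localization-vnorm} with step $2\Delta_n$;
\item the Taylor and temporal-bias linearization from \Cref{ass:second-order-smoothness} and \eqref{eq:second-order-temporal-bias};
\item martingale orthogonality with conditional variance $O(\Delta_n)$ for consistency;
\item the predictable quadratic-variation limit $2\varpi_i(x)\gamma_{ij,g}(x)Q_p(K)$, passed through Lemma~\ref{lem:appendix-martingale-clt}, the Cram\'er--Wold device and Slutsky's theorem.
\end{itemize}

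The one difference is cosmetic. You obtain the Lindeberg condition from the fourth-moment bound. The paper instead gets both the Lindeberg and maximum-increment conditions directly from the deterministic bound $\max_s|X^{[2]}_{s,n}(a)|\lesssim(N_n\Delta_nh_n^d)^{-1/2}$, and uses the fourth moments only for the realized-square comparison.
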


	\begin{proof}
		The stationary subsample \((Z_{2s,n})_{s\ge0}\) has step size \(2\Delta_n\). Applying the proof of \Cref{lem:shrinking-localization-vnorm} with this step size coordinatewise gives
		\[
		\widehat S_{n,\mathrm{nb}}^{i,(p)}(x)
		-\mathbb E\!\left[\widehat S_{n,\mathrm{nb}}^{i,(p)}(x)\right]
		=
		O_{\mathbb P}\!\left((N_n\Delta_nh_n^d)^{-1/2}\right).
		\]
		The kernel change-of-variables formula therefore gives
		\begin{equation}\label{eq:second-order-design-limit}
		\left(\widehat S_{n,\mathrm{nb}}^{i,(p)}(x)\right)^{-1}
		\xrightarrow{\mathbb P}
		\varpi_i(x)^{-1}M_p(K)^{-1}.
		\end{equation}

		Taylor's theorem, \Cref{ass:second-order-smoothness}, and \eqref{eq:second-order-temporal-bias} give
		\begin{equation}\label{eq:second-order-linearization}
		\begin{aligned}
		\widehat C_{n,\mathrm{nb}}^{ij,(p)}g(x)-C_{ij}g(x)
		&=
		e_0^\top
		\left(\widehat S_{n,\mathrm{nb}}^{i,(p)}(x)\right)^{-1}
		\frac{1}{N_nh_n^d\Delta_n^2}
		\sum_{s=0}^{N_n-1}
		K\!\left(\frac{Y_{2s,n}-x}{h_n}\right)\\
		&\qquad\qquad\times
		\psi_p\!\left(\frac{Y_{2s,n}-x}{h_n}\right)
		\xi_{s,n}^{ij,g,[2]}
		+O_{\mathbb P}\!\left(h_n^{p+1}+\Delta_n\right).
		\end{aligned}
		\end{equation}
		By \eqref{eq:second-order-variance-limit},
		\[
		\mathbb E\!\left[
		(\xi_{s,n}^{ij,g,[2]})^2
		\,\middle|\,\mathcal G_{s,n}
		\right]
		=
		\mathbf 1_{\{\Lambda_{t_{2s,n}}=i\}}
		v_{\Delta_n,g}^{ij,[2]}(Y_{2s,n})
		=O(\Delta_n)
		\]
		uniformly on the kernel support. Martingale orthogonality and localization show that the first term in \eqref{eq:second-order-linearization} is
		$O_{\mathbb P}\!\left((N_n\Delta_n^3h_n^d)^{-1/2}\right)$,
		which proves consistency.

		For the distributional limit, put
		\[
		\mathcal M_n^{[2]}
		:=
		\frac{1}{\sqrt{N_n\Delta_nh_n^d}}
		\sum_{s=0}^{N_n-1}
		K\!\left(\frac{Y_{2s,n}-x}{h_n}\right)
		\psi_p\!\left(\frac{Y_{2s,n}-x}{h_n}\right)
		\xi_{s,n}^{ij,g,[2]}.
		\]
		Its predictable quadratic-variation matrix is
		\[
		\frac{1}{N_nh_n^d}
		\sum_{s=0}^{N_n-1}
		K\!\left(\frac{Y_{2s,n}-x}{h_n}\right)^2
		\mathbf 1_{\{\Lambda_{t_{2s,n}}=i\}}
		\psi_p\!\left(\frac{Y_{2s,n}-x}{h_n}\right)
		\psi_p\!\left(\frac{Y_{2s,n}-x}{h_n}\right)^\top
		\frac{v_{\Delta_n,g}^{ij,[2]}(Y_{2s,n})}{\Delta_n}.
		\]
		By \eqref{eq:second-order-variance-limit}, the last factor converges uniformly to \(2\gamma_{ij,g}\) on \(U_x\). Applying the localized \(V\)-norm argument to the subsampled chain makes the centered empirical average negligible, and the change-of-variables formula identifies the limit as
		$2\varpi_i(x)\gamma_{ij,g}(x)Q_p(K)$.

		Fix \(a\in\mathbb R^{q_p}\), and let \(X_{s,n}^{[2]}(a)\) denote the \(s\)-th summand of \(a^\top\mathcal M_n^{[2]}\). Supplementary Section~S.4 verifies the shifted-filtration, maximum-increment, realized-square, and zero-variance conditions for these projected summands, and also records the conditional Lindeberg check. Lemma~\ref{lem:appendix-martingale-clt} yields the limit for each Cramér--Wold projection, and the Cramér--Wold device gives
		\[
		\mathcal M_n^{[2]}
		\xrightarrow{d}
		N\!\left(0,2\varpi_i(x)\gamma_{ij,g}(x)Q_p(K)\right).
		\]
		Combining this limit with \eqref{eq:second-order-design-limit}, \eqref{eq:second-order-linearization}, and \eqref{eq:second-order-clt-rates} and applying Slutsky's theorem proves the assertion.
	\end{proof}

	Define
	\[
	\begin{aligned}
		\widehat\varpi_{i,n}^{\mathrm{nb}}(x)
		&:=
		\frac{1}{N_nh_n^d}
		\sum_{s=0}^{N_n-1}
		K\!\left(\frac{Y_{2s,n}-x}{h_n}\right)
		\mathbf 1_{\{\Lambda_{t_{2s,n}}=i\}},\\
		\widehat\gamma_{ij,g,n}^{[2]}(x)
	&:=
	e_0^\top
	\left(\widehat S_{n,\mathrm{nb}}^{i,(p)}(x)\right)^{-1}
	\frac{1}{N_nh_n^d\Delta_n}
	\sum_{s=0}^{N_n-1}
	K\!\left(\frac{Y_{2s,n}-x}{h_n}\right)
	\mathbf 1_{\{\Lambda_{t_{2s,n}}=i\}}
	\bigl(D_{s,n}^{ij,g,[2]}\bigr)^2\\
	&\quad\times
	\psi_p\!\left(\frac{Y_{2s,n}-x}{h_n}\right),\\
		\widehat\sigma_{C,\mathrm{nb},ij,p}^{2,g}(x)
		&:=
		\frac{\widehat\gamma_{ij,g,n}^{[2]}(x)}
		{\widehat\varpi_{i,n}^{\mathrm{nb}}(x)}
		e_0^\top M_p(K)^{-1}Q_p(K)M_p(K)^{-1}e_0.
	\end{aligned}
	\]
	When the last quantity is positive, let
	\(\widehat\sigma_{C,\mathrm{nb},ij,p}^{g}(x)\)
	denote its positive square root.

	\begin{corollary}[Feasible second-order studentization]\label[corollary]{cor:second-order-studentization}
		Under the assumptions of \Cref{thm:second-order-recovery-clt},
		\[
		\widehat\varpi_{i,n}^{\mathrm{nb}}(x)
		\xrightarrow{\mathbb P}\varpi_i(x),
		\qquad
		\widehat\gamma_{ij,g,n}^{[2]}(x)
		\xrightarrow{\mathbb P}2\gamma_{ij,g}(x).
		\]
		If \(\gamma_{ij,g}(x)>0\) and \eqref{eq:second-order-clt-rates} holds, then
		\[
		\frac{
		\sqrt{N_n\Delta_n^3h_n^d}
		\{\widehat C_{n,\mathrm{nb}}^{ij,(p)}g(x)-C_{ij}g(x)\}
		}{
		\widehat\sigma_{C,\mathrm{nb},ij,p}^{g}(x)
		}
		\xrightarrow{d}N(0,1).
		\]
	\end{corollary}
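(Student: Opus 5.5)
The proof parallels \Cref{cor:first-order-studentization}, with the first-order innovation replaced by its nonoverlapping two-step analogue. The argument has three parts: density consistency, infinitesimal-variance consistency, and then Slutsky's theorem.

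\textbf{Density estimator.} The subsampled chain \((Z_{2s,n})_{s\ge0}\) is stationary with mesh \(2\Delta_n\). The kernel change of variables gives \(\mathbb E[\widehat\varpi_{i,n}^{\mathrm{nb}}(x)]\to\varpi_i(x)\). Applying \Cref{lem:shrinking-localization-vnorm} to this chain, as in the proof of \Cref{thm:second-order-recovery-clt}, bounds the centered term by \(O_{\mathbb P}((N_n\Delta_nh_n^d)^{-1/2})\). This is \(o_{\mathbb P}(1)\) because \(N_n\Delta_n^3h_n^d\to\infty\).

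\textbf{Infinitesimal-variance estimator.} Define the regime-weighted innovation
\[
\eta_{s,n}^{[2]}:=\mathbf 1_{\{\Lambda_{t_{2s,n}}=i\}}\Bigl\{\Delta_n^{-1}\bigl(D_{s,n}^{ij,g,[2]}\bigr)^2-\kappa_{\Delta_n,g}^{ij,[2]}(Y_{2s,n})\Bigr\}.
\]
By the Markov property at time \(t_{2s,n}\), this is \(\mathcal G_{s+1,n}\)-measurable and satisfies \(\mathbb E[\eta_{s,n}^{[2]}\mid\mathcal G_{s,n}]=0\). The fourth-moment bound \eqref{eq:second-order-fourth-moment} gives \(\mathbb E[(\eta_{s,n}^{[2]})^2\mid\mathcal G_{s,n}]\lesssim\Delta_n^{-2}\cdot\Delta_n=\Delta_n^{-1}\) on the kernel support. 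Martingale orthogonality with localization (each summand carries the factor \(K\), so the stationary expectation contributes \(h_n^d\)) then shows that the kernel-weighted average \((N_nh_n^d)^{-1}\sum_s K\psi_p\,\eta_{s,n}^{[2]}\) is \(O_{\mathbb P}((N_n\Delta_nh_n^d)^{-1/2})=o_{\mathbb P}(1)\).

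For the remaining part, the last assertion of \Cref{lem:second-order-short-time-moments} gives uniform convergence \(\kappa_{\Delta_n,g}^{ij,[2]}\to2\gamma_{ij,g}\) on \(U_x\). The function \(\gamma_{ij,g}\) is continuous, since the coefficients, \(g\) and \(\nabla g\) are continuous. Hence
\[
\sup_{u\in\operatorname{supp}K}\left|\kappa_{\Delta_n,g}^{ij,[2]}(x+h_nu)-2\gamma_{ij,g}(x)\right|\to0.
\]
The constant-in-\(u\) replacement \(2\gamma_{ij,g}(x)\) contributes exactly \(2\gamma_{ij,g}(x)\,\widehat S_{n,\mathrm{nb}}^{i,(p)}(x)e_0\) to the local response vector, because \(\psi_p^\top e_0=1\). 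Its image under \(e_0^\top(\widehat S_{n,\mathrm{nb}}^{i,(p)})^{-1}\) is therefore exactly \(2\gamma_{ij,g}(x)\). The approximation error is \(o(1)\) times a vector with bounded entries of order \(O_{\mathbb P}(1)\), and it is controlled by \eqref{eq:second-order-design-limit}. Together these give \(\widehat\gamma_{ij,g,n}^{[2]}(x)\xrightarrow{\mathbb P}2\gamma_{ij,g}(x)\).

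\textbf{Studentization.} The two consistency results give \(\widehat\sigma_{C,\mathrm{nb},ij,p}^{2,g}(x)\to\sigma_{C,\mathrm{nb},ij,p}^{2,g}(x)\) in probability. This limit is strictly positive when \(\gamma_{ij,g}(x)>0\), because \(\varpi_i(x)>0\) and \(\kappa_p(K)>0\) (\(M_p(K)\) is positive definite and \(Q_p(K)\) is nonzero). The positivity event therefore has probability tending to one, and Slutsky's theorem combined with \Cref{thm:second-order-recovery-clt} yields the \(N(0,1)\) limit.

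\textbf{Main obstacle.} The delicate step is the conditional second-moment bound for \(\eta_{s,n}^{[2]}\). The second difference spans two steps, so the fourth moment must be controlled uniformly over starting points in \(U_x\) by conditioning at the intermediate time \(t_{2s+1,n}\), where \(Y_{2s+1,n}\) may lie outside \(U_x\). The fix is to use the global bound \(\sup_{z\in E}\) on one-step fourth moments established in the proof of \Cref{lem:second-order-short-time-moments}. For \(g\equiv1\), the switching-rate bound plays the same role.
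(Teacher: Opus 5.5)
Your proposal is correct and follows essentially the same route as the paper. Both use the same innovation $\eta_{s,n}^{[2]}$, the $\Delta_n^{-1}$ conditional-variance bound from \eqref{eq:second-order-fourth-moment}, martingale orthogonality with localization, the uniform limit of $\kappa_{\Delta_n,g}^{ij,[2]}$ combined with \eqref{eq:second-order-design-limit}, and Slutsky's theorem with \Cref{thm:second-order-recovery-clt}. One small tightening: positivity of $\kappa_p(K)=e_0^\top M_p(K)^{-1}Q_p(K)M_p(K)^{-1}e_0$ needs $Q_p(K)$ to be positive definite, not merely nonzero; this holds because $K^2>0$ on the ball $B_K$, by the same argument as for $M_p(K)$.
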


	\begin{proof}
		The density convergence follows from the change-of-variables formula and the localized \(V\)-norm covariance bound for \((Z_{2s,n})\). For the variance coefficient, set
		\[
		\eta_{s,n}^{[2]}
		:=
		\mathbf 1_{\{\Lambda_{t_{2s,n}}=i\}}
		\left\{
		\frac{(D_{s,n}^{ij,g,[2]})^2}{\Delta_n}
		-\kappa_{\Delta_n,g}^{ij,[2]}(Y_{2s,n})
		\right\}.
		\]
		By the Markov property and the definition of \(\kappa_{\Delta_n,g}^{ij,[2]}\), \(\eta_{s,n}^{[2]}\) is \(\mathcal G_{s+1,n}\)-measurable and
		$\mathbb E[\eta_{s,n}^{[2]}\mid\mathcal G_{s,n}]=0$.
		By \eqref{eq:second-order-fourth-moment},
		$
		\mathbb E\!\left[
		(\eta_{s,n}^{[2]})^2
		\,\middle|\,\mathcal G_{s,n}
		\right]\lesssim\Delta_n^{-1}
		$
		on the kernel support. Martingale orthogonality and localization therefore make the corresponding local empirical average
		\(O_{\mathbb P}((N_n\Delta_nh_n^d)^{-1/2})\). The last assertion of \Cref{lem:second-order-short-time-moments}, continuity of \(\gamma_{ij,g}\), and \eqref{eq:second-order-design-limit} yield
		\(\widehat\gamma_{ij,g,n}^{[2]}(x)\to2\gamma_{ij,g}(x)\) in probability. The result follows from \Cref{thm:second-order-recovery-clt} and Slutsky's theorem.
	\end{proof}

	If \(\gamma_{ij,g}(x)=0\), the second-order limit is again degenerate. A nondegenerate studentized limit then requires a different normalization determined by the first nonzero short-time variance coefficient.

	\endgroup

	\section{Numerical example}
	\label{sec:examples}
	{We use a smooth two-regime model to illustrate the deterministic expansion and the fixed- and shrinking-mesh distribution theories. The detailed verification of the model assumptions and the secondary numerical tables are given in Supplementary Sections~S.6--S.7, so the main text focuses on the common-design first- and second-order estimators.}

	\subsection{A two-regime model}
	{Let \(d=1\) and \(S=\{1,2\}\). The numerical model is
		\[
		\begin{aligned}
			b(x,i)&=-\beta_i x, & \sigma(x,i)&=\sigma_i,\\
			q_{12}(x)&=0.55+0.25\tanh x,
			& q_{21}(x)&=0.45-0.20\tanh x,
		\end{aligned}
		\]
		with
		\[
		(\beta_1,\beta_2)=(1,2),
		\qquad
		(\sigma_1,\sigma_2)=(1,1.5).
		\]
		Let \(\chi\in C_c^\infty(\mathbb R)\) satisfy \(0\le\chi\le1\), \(\chi=1\) on \([-1.25,1.25]\), and \(\chi=0\) outside \([-1.5,1.5]\), and set
		\[
		g_0=\chi,\qquad g_1(x)=x\chi(x),\qquad g_2(x)=x^2\chi(x).
		\]
	}

	{Supplementary Section~S.6 verifies Assumptions~A1--A6 for every configuration reported below, including the fixed-mesh density, small-ball, and variance conditions, the normalized first- and second-order smoothness bounds, and the order-three coefficient regularity required by \Cref{thm:second-order-recovery-clt}.}

	\subsection{Explicit coefficients for the numerical targets}
	For this subsection write \(q=q_{12}\) and \(r=q_{21}\). Since there are only two regimes, the intermediate-regime sum in \eqref{eq:Coff} is absent. Therefore
	\[
	B_{12}g(x)=q(x)g(x),
	\qquad
	C_{12}g
	=\mathscr L_1(qg)+q\mathscr L_2g-q(q+r)g.
	\]
	The product rule gives
	\[
	\mathscr L_1(qg)=q\mathscr L_1g+g\mathscr L_1q+\sigma_1^2q'g',
	\]
	so the cross-gradient interaction is already visible in one dimension. {The three probes defined above identify different pieces of the hybrid expansion. At each reported design point, where the cutoff equals one, they have the same pointwise coefficients as \(1,x,x^2\).} For the off-diagonal block \(1\to2\),
	\[
	C_{12}1=\mathscr L_1q-q(q+r)
	=-\beta_1xq'(x)+\frac{\sigma_1^2}{2}q''(x)-q(x)\{q(x)+r(x)\},
	\]
	\[
	C_{12}x-xC_{12}1
	=-q(x)(\beta_1+\beta_2)x+\sigma_1^2q'(x).
	\]
	For the quadratic probe,
	\[
	C_{12}(x^2)-x^2C_{12}1
	=q(x)\{-2(\beta_1+\beta_2)x^2+\sigma_1^2+\sigma_2^2\}
	+2\sigma_1^2xq'(x).
	\]
	These identities give analytic targets for the expansion and recovery diagnostics. The diagonal targets are computed from \eqref{eq:Bops} and \eqref{eq:Cdiag}.

	\subsection{Observation and computational scheme}
	Only the grid observations \((X_{k\Delta},\Lambda_{k\Delta})\) enter the estimators. Between observation times, paths are simulated exactly by combining the Ornstein--Uhlenbeck transition
	\[
	X_{t+s}=e^{-\beta_is}X_t+
	\sigma_i\left(\frac{1-e^{-2\beta_is}}{2\beta_i}\right)^{1/2}\xi,
	\qquad \xi\sim N(0,1),
	\]
	with thinning. Candidate switches are proposed at rates \(0.80\) in regime \(1\) and \(0.65\) in regime \(2\), and are accepted with probabilities \(q_{12}(X_\tau)/0.80\) and \(q_{21}(X_\tau)/0.65\), respectively. Long paths are initialized from a numerical approximation of the invariant distribution and use an additional burn-in of \(5\) time units.

	Reference block values are computed independently of the Monte Carlo samples. For fixed \(j\) and \(g\), the vector \(u(t,x)=(u_1(t,x),u_2(t,x))^\top\), with \(u_i(t,x)=P_t^{ij}g(x)\), solves
	\[
	\partial_tu_i
	=
	\mathscr L_i u_i
	+\sum_{\ell\ne i}q_{i\ell}(u_\ell-u_i),
	\qquad
	u_i(0,x)=g(x)\mathbf 1_{\{i=j\}}.
	\]
	\begingroup
	We discretize this system on \([-8,8]\) by a conservative block-tridiagonal Markov generator with spatial mesh \(0.0125\), advance it by the Crank--Nicolson scheme of \citet{cranknicolson1947} with time steps at most \(1.25\times10^{-4}\), and compute the invariant grid mass from the adjoint generator. The spatial boundary closure is reflecting in the finite-state-generator sense: at the left endpoint the outward left-jump rate is set to zero, at the right endpoint the outward right-jump rate is set to zero, and each boundary diagonal is the negative sum of the retained inward spatial rate and the switching rate. Thus no probability is lost through the truncated boundary and every row of the coupled generator sums to zero; the closure is neither absorbing nor an OU extrapolation. The discrete semigroup preserves constants to \(5.8\times10^{-15}\). Halving the spatial mesh and maximum time step changes the block values used in \Cref{tab:simulation-expansion} by at most \(1.8\times10^{-7}\).
	\endgroup

	All experiments use the Epanechnikov kernel
	$K(u)=\frac34(1-u^2)\mathbf 1_{\{|u|\le1\}}$,
	and the cutoff probes \(g_0,g_1,g_2\). For independent Monte Carlo replications \(W_1,\ldots,W_R\), we write
	\[
	\overline W_R=\frac1R\sum_{b=1}^R W_b,
	\qquad
	\widehat{\operatorname{se}}(\overline W_R)
	=
	\left\{
	\frac{1}{R(R-1)}
	\sum_{b=1}^R(W_b-\overline W_R)^2
	\right\}^{1/2}.
	\]
	Here, s.e.\ denotes this Monte Carlo standard error. For an empirical coverage \(\widehat c_R\), its Monte Carlo standard error is \(\{\widehat c_R(1-\widehat c_R)/R\}^{1/2}\).

	\subsection{Deterministic expansion benchmark}
	To resolve the expansion remainder, define
	\[
	\begin{aligned}
	D_{1,g}(\Delta)
	&:=
	\frac{P_\Delta^{12}g(0)-A_0^{12}g(0)-\Delta B_{12}g(0)}
	{\Delta^2},\\
	D_{2,g}(\Delta)
	&:=
	\frac{P_\Delta^{12}g(0)-A_0^{12}g(0)-\Delta B_{12}g(0)
	-\tfrac12\Delta^2C_{12}g(0)}
	{\Delta^2}.
	\end{aligned}
	\]
	The second-order expansion predicts \(D_{1,g}(\Delta)\to C_{12}g(0)/2\) and \(D_{2,g}(\Delta)\to0\). For \(g_0\), the limiting value is \(-0.275\). For \(g_1\), it is \(0.125\); at \(x=0\), this coefficient includes the cross-gradient contribution \(\sigma_1^2q_{12}'(0)=0.25\).

	\begin{table}[H]
		\centering
		\small
		\caption{Deterministic normalized expansion residuals at \(x=0\) for the block \(1\to2\). The limits of \(D_{1,g_0}\) and \(D_{1,g_1}\) are \(-0.275\) and \(0.125\), respectively.}
		\label{tab:simulation-expansion}
		\begin{tabular}{crrrr}
			\toprule
			\(\Delta\) & \(D_{1,g_0}(\Delta)\) & \(D_{2,g_0}(\Delta)\)
			& \(D_{1,g_1}(\Delta)\) & \(D_{2,g_1}(\Delta)\)\\
			\midrule
			\(0.10000\) & \(-0.268530\) & \(0.006470\) & \(0.100548\) & \(-0.024452\)\\
			\(0.05000\) & \(-0.270612\) & \(0.004388\) & \(0.112068\) & \(-0.012932\)\\
			\(0.02500\) & \(-0.272774\) & \(0.002226\) & \(0.118298\) & \(-0.006702\)\\
			\(0.01250\) & \(-0.273884\) & \(0.001116\) & \(0.121584\) & \(-0.003416\)\\
			\(0.00625\) & \(-0.274441\) & \(0.000559\) & \(0.123273\) & \(-0.001727\)\\
			\bottomrule
		\end{tabular}
	\end{table}
	The convergence of both \(D_{1,g}\) columns to their analytic limits verifies the second-order block-generator coefficient, including the cross-gradient term for \(g_1\). Moreover, \(D_{2,g}(\Delta)/\Delta\) stabilizes along the three finest meshes: its values range from \(0.0890\) to \(0.0895\) for \(g_0\), and from \(-0.2681\) to \(-0.2764\) for \(g_1\). This is the expected \(O(\Delta^3)\) signed remainder after the second-order correction.

	\subsection{Fixed-mesh estimation and studentization}
	The fixed-mesh experiment uses \(\Delta=0.05\), \(500\) independent replications, and
	\[
	n\in\{50000,100000,200000,400000\},
	\qquad
	h_n=
	\begin{cases}
	1.6n^{-0.40},&p=0,\\
	0.9n^{-0.25},&p=1.
	\end{cases}
	\]
	These deterministic bandwidths were fixed before simulation and satisfy
	\(nh_n\to\infty\) and \(\sqrt{nh_n}\,h_n^{p+1}\to0\).
	The deterministic target for the displayed block and probe is
	\[
	P_{0.05}^{12}g_0(0)=0.026823469,
	\qquad
	\sigma_{12,0.05,1}^{g_0}(0)=0.248794.
	\]
	For the local-linear estimator in replication \(b\), define the oracle- and plug-in-standardized statistics
	\[
	Z_b^{\mathrm{or}}
	=
	\frac{\sqrt{n h_n}\{\widehat P_{0.05,h_n,b}^{12}g_0(0)
	-P_{0.05}^{12}g_0(0)\}}
	{\sigma_{12,0.05,1}^{g_0}(0)},
	\qquad
	Z_b^{\mathrm{pl}}
	=
	\frac{\sqrt{n h_n}\{\widehat P_{0.05,h_n,b}^{12}g_0(0)
	-P_{0.05}^{12}g_0(0)\}}
	{\widehat\sigma_b}.
	\]
	\begingroup
	Supplementary Proposition~S.4 defines \(\widehat\sigma_b^2\) and proves its consistency for \(\sigma_{12,0.05,1}^{2,g_0}(0)\). All components use the same Epanechnikov kernel and bandwidth \(h_n\), the block moments use \(p=1\), and the raw variance is truncated at zero. The implementation floors both the variance estimate and the density denominator at \(10^{-12}\) to avoid division by zero. The kernel factor in \eqref{eq:fixedmesh-scalar-variance} is \(3/5\).
	\endgroup

	\begin{table}[H]
		\centering
		\scriptsize
		\caption{Fixed-mesh RMSE and studentization for \(P_{0.05}^{12}g_0(0)\), based on \(500\) replications. The two coverage columns use the nominal \(95\%\) normal interval.}
		\label{tab:simulation-fixedmesh}
		\begin{tabular}{rrrrrrrrr}
			\toprule
			\(n\) & RMSE \(p=0\) & RMSE \(p=1\)
			& \(\overline Z^{\mathrm{or}}\) & sd\((Z^{\mathrm{or}})\) & cov.\({}^{\mathrm{or}}\)
			& \(\overline Z^{\mathrm{pl}}\) & sd\((Z^{\mathrm{pl}})\) & cov.\({}^{\mathrm{pl}}\)\\
			\midrule
			\(50000\)  & \(0.007506\) & \(0.004617\) & \(-0.033\) & \(1.018\) & \(0.956\) & \(-0.124\) & \(1.044\) & \(0.948\)\\
			\(100000\) & \(0.005948\) & \(0.003362\) & \(-0.057\) & \(0.961\) & \(0.960\) & \(-0.118\) & \(0.979\) & \(0.962\)\\
			\(200000\) & \(0.004974\) & \(0.002637\) & \(-0.103\) & \(0.973\) & \(0.964\) & \(-0.153\) & \(0.983\) & \(0.954\)\\
			\(400000\) & \(0.004128\) & \(0.002057\) & \(-0.058\) & \(0.988\) & \(0.952\) & \(-0.096\) & \(0.998\) & \(0.946\)\\
			\bottomrule
		\end{tabular}
	\end{table}
	\begingroup
	The local-linear RMSE is smaller at each displayed \(n\), although the bandwidths differ across polynomial degrees. The standard deviations and coverages of the oracle and plug-in statistics are close to one and \(0.95\), respectively; the Monte Carlo standard error of a \(0.95\) coverage estimate is \(0.0097\). At \(n=400000\), the analogous checks for \(g_1,g_2\) and both blocks give oracle standard deviations in \([0.928,1.035]\), oracle coverages in \([0.936,0.972]\), and plug-in coverages in \([0.936,0.946]\).
	\endgroup

	\begin{figure}
		\centering
		\includegraphics[width=0.94\textwidth]{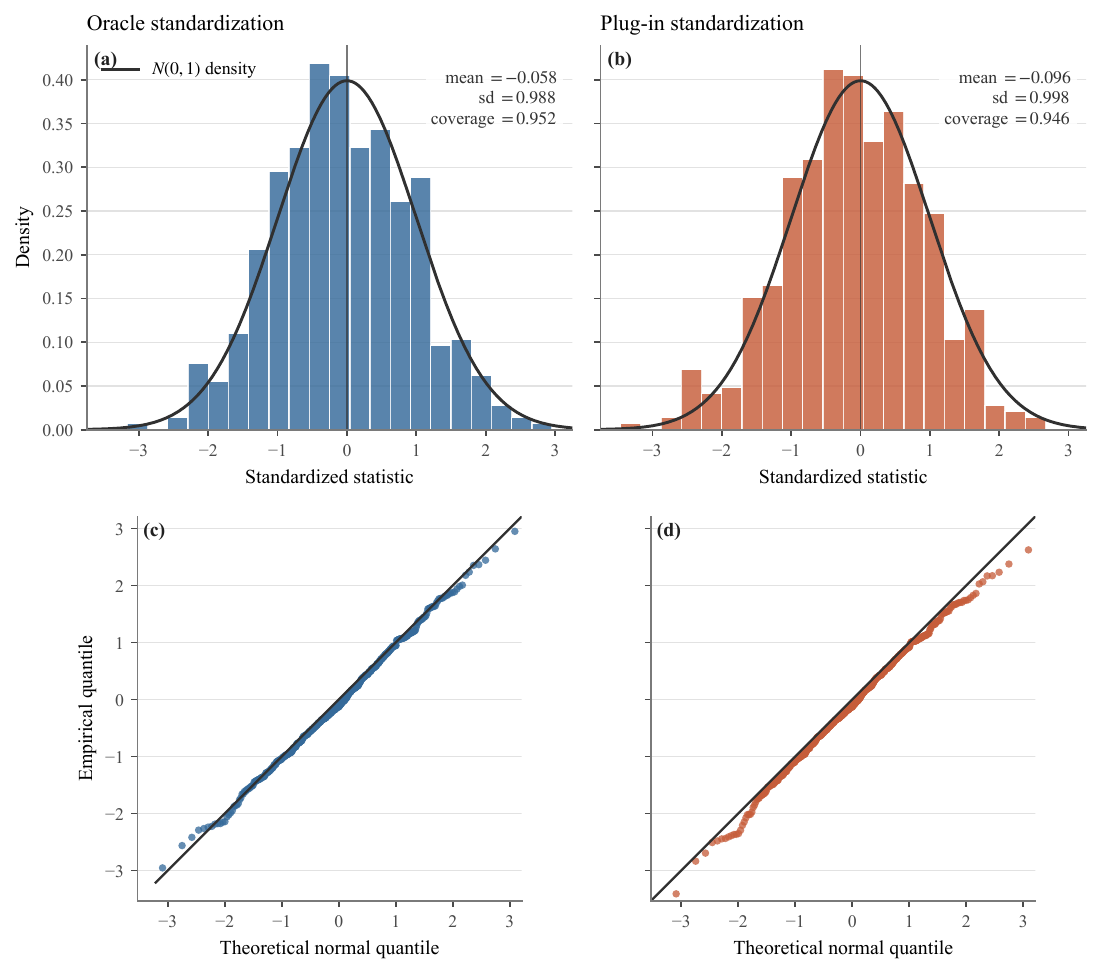}
		\caption{Distribution diagnostics for the oracle- and plug-in-standardized local-linear estimator of \(P_{0.05}^{12}g_0(0)\), based on \(500\) replications with \(n=400000\). The upper panels are density histograms with the standard-normal density superimposed; the lower panels are normal Q--Q plots.}
		\label{fig:fixedmesh-clt-diagnostics}
	\end{figure}
	\Cref{fig:fixedmesh-clt-diagnostics} also shows agreement in shape and tails: the oracle and plug-in statistics have skewness \(0.065\) and \(-0.156\), and excess kurtosis \(-0.170\) and \(-0.106\), respectively.

	\subsection{Shrinking-mesh recovery and error decomposition}
	The recovery experiment uses local-linear smoothing and \(100\) replications at each displayed triplet \((n,\Delta_n,h_n)\). For the off-diagonal block at \(x=0\), define the deterministic finite-difference oracles
	\[
	B_{\Delta}^{\mathrm{FD}}g
	:=
	\frac{P_\Delta^{12}g(0)}{\Delta},
	\qquad
	C_{\Delta}^{\mathrm{FD}}g
	:=
	\frac{P_{2\Delta}^{12}g(0)-2P_\Delta^{12}g(0)}{\Delta^2}.
	\]

	{We report the two common-design estimators; Supplementary Section~S.7.3 gives the secondary separate-lag results.}

	For \(g_0\), the coefficient targets are \(B_{12}g_0(0)=0.55\) and \(C_{12}g_0(0)=-0.55\). The first oracle satisfies
	\[
	B_{\Delta}^{\mathrm{FD}}g_0
	=
	0.55-0.275\Delta+o(\Delta),
	\]
	so its displacement below \(0.55\) is predicted truncation bias.
	\begingroup
	For the first-order distributional check, the deterministic stationary solver gives
	\(\varpi_1(0)=0.253033\). Since \(q_{12}(0)=0.55\), \(g_0(0)=1\), and the local-linear Epanechnikov design factor is \(3/5\), \eqref{eq:first-order-asymptotic-variance} gives
	\[
	\sigma_{B,12,1}^{g_0}(0)
	=
	\left\{\frac{0.55}{0.253033}\frac35\right\}^{1/2}
	=1.142007.
	\]
	For replication \(b\), define the stochastic-oracle statistic
	\[
	Z_{B,b}^{\mathrm{or}}
	:=
	\frac{\sqrt{n\Delta_nh_n}
	\{\widehat B_{n,b}^{12}g_0(0)-B_{\Delta_n}^{\mathrm{FD}}g_0\}}
	{\sigma_{B,12,1}^{g_0}(0)}.
	\]
	The spatial undersmoothing condition in \eqref{eq:first-order-clt-rates} justifies centering at \(B_{\Delta_n}^{\mathrm{FD}}g_0\); adding its temporal undersmoothing condition permits centering at \(B_{12}g_0(0)\).
	\endgroup

	\begingroup
	\begin{table}[H]

		\centering
		\scriptsize
		\caption{First-order common-design distributional check for \(g_0\), \(x=0\), block \(1\to2\), based on \(100\) replications. Parentheses contain Monte Carlo standard errors of the replication means; RMSE is computed relative to \(B_{\Delta_n}^{\mathrm{FD}}g_0\).}
		\label{tab:simulation-first-order-clt}
		\begin{tabular}{rrrrrrrr}
			\toprule
			\(n\) & \(\Delta_n\) & \(n\Delta_nh_n\)
			& \(B_{\Delta_n}^{\mathrm{FD}}\) & mean \(\widehat B_n\) (s.e.) & RMSE
			& mean/sd \(Z_B^{\mathrm{or}}\) & cov.\\
			\midrule
			\(300000\)  & \(0.15\) & \(2308.50\)  & \(0.5094\) & \(0.5131\;(0.0022)\) & \(0.0219\) & \(0.156/0.911\) & \(0.99\)\\
			\(1500000\) & \(0.12\) & \(7074.00\)  & \(0.5177\) & \(0.5190\;(0.0013)\) & \(0.0129\) & \(0.097/0.951\) & \(0.98\)\\
			\(6400000\) & \(0.10\) & \(20160.00\) & \(0.5231\) & \(0.5229\;(0.0007)\) & \(0.0070\) & \(-0.036/0.880\) & \(0.94\)\\
			\bottomrule
		\end{tabular}
	\end{table}
	The replication means track the finite-difference oracles, the RMSE decreases, and the oracle statistics are centered near zero with standard deviations near one. Their \(95\%\) coverages are \(0.99,0.98,0.94\), with Monte Carlo standard errors \(0.010,0.014,0.024\).

	For \Cref{thm:second-order-recovery-clt}, we use the nonoverlapping estimator \(\widehat C_{n,\mathrm{nb}}^{12,(1)}g_0(0)\). By \eqref{eq:second-order-asymptotic-variance},
	\[
	\sigma_{C,\mathrm{nb},12,1}^{g_0}(0)
	=
	\sqrt{2}\,\sigma_{B,12,1}^{g_0}(0)
	=1.615042.
	\]
	Define
	\[
	Z_{C,b}^{\mathrm{or}}
	:=
	\frac{
		\sqrt{N_n\Delta_n^3h_n}
		\{\widehat C_{n,\mathrm{nb},b}^{12,(1)}g_0(0)
		-C_{\Delta_n}^{\mathrm{FD}}g_0\}
	}{
		\sigma_{C,\mathrm{nb},12,1}^{g_0}(0)
	},
	\]
	and define \(Z_{C,b}^{\mathrm{pl}}\) by replacing the denominator with the squared-second-difference estimator in \Cref{cor:second-order-studentization}.

	\begin{table}[H]

		\centering
		\scriptsize
		\caption{Second-order distributional check for the nonoverlapping common-design estimator, based on \(100\) replications. The pointwise oracle \(C_{\Delta_n}^{\mathrm{FD}}g_0\) is used for centering; the remaining local-polynomial smoothing bias is asymptotically negligible under \eqref{eq:second-order-clt-rates}.}
		\label{tab:simulation-second-order-clt}
		\begin{tabular}{rrrrrrrrrr}
			\toprule
			\(n\) & \(\Delta_n\) & \(N_n\Delta_n^3h_n\)
			& \(C_{\Delta_n}^{\mathrm{FD}}\) & mean \(\widehat C_{n,\mathrm{nb}}\) (s.e.) & RMSE
			& mean/sd \(Z_C^{\mathrm{or}}\) & cov.
			& mean/sd \(Z_C^{\mathrm{pl}}\) & cov.\\
			\midrule
			\(300000\)  & \(0.15\) & \(25.97\)  & \(-0.5613\) & \(-0.5484\;(0.0276)\) & \(0.2750\)
			& \(0.041/0.871\) & \(0.97\) & \(0.047/0.863\) & \(0.97\)\\
			\(1500000\) & \(0.12\) & \(50.93\)  & \(-0.5626\) & \(-0.5853\;(0.0196)\) & \(0.1960\)
			& \(-0.100/0.865\) & \(0.98\) & \(-0.098/0.868\) & \(0.98\)\\
			\(6400000\) & \(0.10\) & \(100.80\) & \(-0.5571\) & \(-0.5360\;(0.0173)\) & \(0.1737\)
			& \(0.131/1.077\) & \(0.94\) & \(0.133/1.076\) & \(0.94\)\\
			\bottomrule
		\end{tabular}
	\end{table}
	The RMSE decreases across the three designs. Both standardized statistics are centered near zero, their standard deviations approach one, and their coverages remain close to \(0.95\), supporting the factor-two variance and feasible studentization.

	\begin{figure}[H]
		\vspace{\baselineskip}
		\centering
		\includegraphics[width=0.94\textwidth]{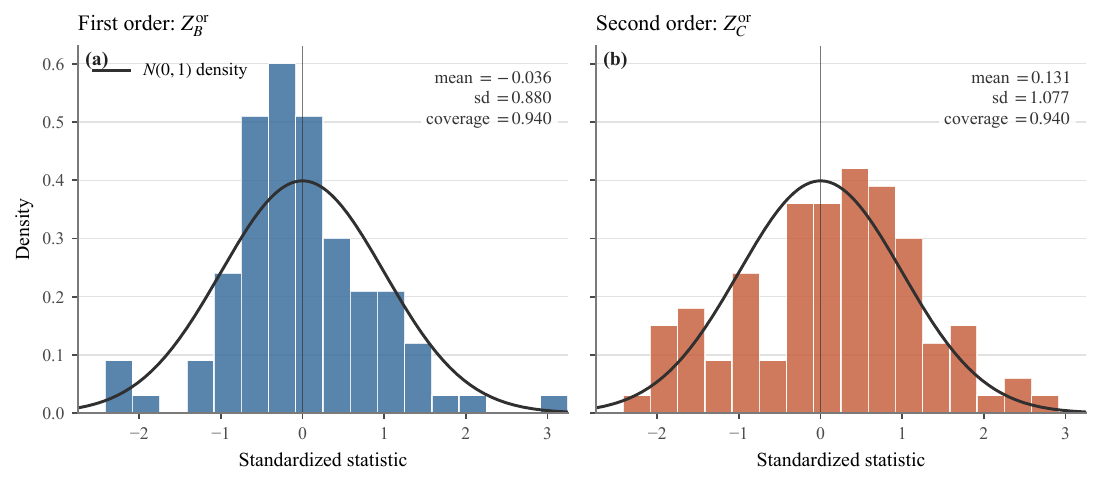}
		\caption{Shrinking-mesh distribution diagnostics for the largest design \((n,\Delta_n,h_n)=(6400000,0.10,0.0315)\), based on \(100\) replications. The panels show density histograms of the first-order oracle statistic \(Z_B^{\mathrm{or}}\) and the nonoverlapping second-order oracle statistic \(Z_C^{\mathrm{or}}\), with the standard normal density superimposed.}
		\label{fig:recovery-clt-histograms}
	\end{figure}
	\Cref{fig:recovery-clt-histograms} provides a direct shape comparison with the Gaussian limits. Both statistics are centered near zero and have \(0.94\) empirical coverage; the first-order statistic is modestly underdispersed, whereas the second-order statistic is close to unit dispersion.
	\endgroup

	{Because \(g_1(0)=g_2(0)=0\), their limits under the normalization of \Cref{thm:second-order-recovery-clt} are degenerate; hence \(g_0\) supplies the distributional check, while the expansion experiment validates the analytic terms for \(g_1,g_2\).}

	{Supplementary Section~S.7 reports all probes, both blocks, and the separate-lag experiment.}

	\section{Discussion}
	\label{sec:discussion}
	{The three targets have distinct roles: fixed-mesh blocks determine normalized conditional moments, first-order coefficients identify \(q,b,a\), and second-order coefficients give the \(O(\Delta^2)\) correction to block and regime-conditioned expectations. Because \(C_{ij}g\) contains interactions such as \((\nabla q_{ij})^\top a(\cdot,i)\nabla g\), its direct estimate also supplies a specification check against the value implied by separately fitted primitive coefficients and a target for second-order weak approximation or short-time bias correction.}

	{The common design cancels the localized level before smoothing and exposes a martingale array. Nonoverlapping second differences preserve within-block covariance, producing the factor two in \eqref{eq:second-order-asymptotic-variance}; both orders admit feasible confidence intervals. The supplementary arbitrary-order result remains an algebraic consistency benchmark.}

	\appendix
	\section{Auxiliary limit and regularity results}
	\label{sec:appendix-auxiliary-results}

	\subsection{A localized triangular-array central limit theorem}

	The following scalar result is the probabilistic input used in the
	fixed-mesh local-polynomial central limit theorem. Its proof is given
	in Supplementary Section~S.1.

	\begin{lemma}[Localized triangular-array CLT under exponential
	absolute regularity]
	\label{lem:appendix-kernel-clt}
	Let \(W=(W_k)_{k\ge1}\) be strictly stationary and absolutely regular
	with coefficients
	\[
	  \beta_W(r)\le C_\beta e^{-c_\beta r},\qquad r\ge1.
	\]
	Let \(h_n\downarrow0\) satisfy
	\[
	  n h_n^d\to\infty,
	  \qquad
	  \frac{(\log n)^2}{n h_n^d}\to0.
	\]
	For each \(n\), let \(\zeta_{k,n}=\phi_n(W_k)\), \(k\ge1\), be centered
	real random variables. Assume that, for a constant
	\(C_\zeta<\infty\) independent of \(n\),
	\[
	  |\zeta_{k,n}|\le C_\zeta,
	  \qquad
	  \mathbb E[\zeta_{1,n}^2]\le C_\zeta h_n^d,
	\]
	and, for every \(a\ge1\) and \(\ell\ge1\),
	\[
	  \operatorname{Var}\!\left(
	    \sum_{k=a}^{a+\ell-1}\zeta_{k,n}
	  \right)
	  \le C_\zeta\ell h_n^d.
	\]
	If
	\[
	  \operatorname{Var}\!\left(
	    (n h_n^d)^{-1/2}\sum_{k=1}^n\zeta_{k,n}
	  \right)
	  \to \tau^2
	\]
	for some \(\tau^2\in[0,\infty)\), then
	\[
	  (n h_n^d)^{-1/2}\sum_{k=1}^n\zeta_{k,n}
	  \xrightarrow{d}N(0,\tau^2).
	\]
	\end{lemma}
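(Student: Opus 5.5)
The plan is a Bernstein big-block/small-block argument, with Berbee's coupling used to replace the dependent big blocks by independent copies; the conclusion then follows from the Lindeberg CLT for independent triangular arrays. Write \(S_n:=\sum_{k=1}^n\zeta_{k,n}\). Choose a big-block length \(p_n\) and a small-block length \(r_n\) with \(r_n=\lceil C_0\log n\rceil\), where \(C_0\) is large enough that \(n\beta_W(r_n)\le nC_\beta e^{-c_\beta r_n}\to0\). Require \(r_n/p_n\to0\) and \(p_n^2/(nh_n^d)\to0\). This is possible because \((\log n)^2/(nh_n^d)\to0\): take for instance \(p_n:=\lfloor (r_n\,(nh_n^d)^{1/2})^{1/2}\rfloor\), so that \(r_n\ll p_n\ll (nh_n^d)^{1/2}\). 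Split \(\{1,\dots,n\}\) into \(k_n:=\lfloor n/(p_n+r_n)\rfloor\) alternating big blocks \(I_1,\dots,I_{k_n}\) of length \(p_n\), small blocks \(J_1,\dots,J_{k_n}\) of length \(r_n\), and a remainder \(R\) of length less than \(p_n+r_n\). Put \(U_m:=\sum_{k\in I_m}\zeta_{k,n}\) and \(V_m:=\sum_{k\in J_m}\zeta_{k,n}\).

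The first step shows that the small blocks and the remainder are negligible after normalization by \((nh_n^d)^{1/2}\). For the remainder, the block-variance hypothesis gives \(\operatorname{Var}(\sum_R\zeta_{k,n})\le C_\zeta(p_n+r_n)h_n^d=o(nh_n^d)\). For \(\sum_mV_m\), expand \(\operatorname{Var}(\sum_m V_m)\). The diagonal contributes \(k_nC_\zeta r_nh_n^d\lesssim (r_n/p_n)\,nh_n^d=o(nh_n^d)\). The cross terms \(\operatorname{Cov}(V_m,V_{m'})\), \(m\ne m'\), involve blocks separated by at least \(p_n\). Because the summands are bounded, the covariance inequality for absolutely regular sequences (Davydov/Ibragimov) gives \(|\operatorname{Cov}(V_m,V_{m'})|\le 4C_\zeta^2r_n^2\beta_W(p_n)\). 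Summing over pairs yields at most \(k_n^2r_n^2C_\beta e^{-c_\beta p_n}\), which is \(o(1)\) since \(p_n\gg\log n\). Hence \(\sum_mV_m=o_{\mathbb P}((nh_n^d)^{1/2})\). Applying the same estimate to \(\operatorname{Var}(S_n)\) shows that \(\operatorname{Var}\bigl((nh_n^d)^{-1/2}\sum_mU_m\bigr)\to\tau^2\). The cross covariances among big blocks are also negligible: they are separated by \(r_n\), and the bound is \(k_n^2p_n^2\beta_W(r_n)\lesssim n^2\beta_W(r_n)\), which tends to \(0\) once \(C_0\) is large. Consequently \(\sum_m\operatorname{Var}(U_m)/(nh_n^d)\to\tau^2\).

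The second step is the coupling. By Berbee's lemma, applied iteratively as in \citet{berbee1979random,bradley2007introduction}, one constructs independent random vectors \((\widetilde U_m)_{m\le k_n}\) with \(\widetilde U_m\overset{d}{=}U_m\) and \(\mathbb P(\widetilde U_m\ne U_m)\le\beta_W(r_n)\). Then
\[
\mathbb P\Bigl(\sum_m\widetilde U_m\ne\sum_mU_m\Bigr)\le k_n\beta_W(r_n)\to0,
\]
so it suffices to prove the CLT for \(\sum_m\widetilde U_m\). If \(\tau^2=0\), the claim follows from Chebyshev's inequality. Otherwise, verify the Lindeberg condition. Since \(|U_m|\le C_\zeta p_n\) and \(p_n=o((nh_n^d)^{1/2})\), for every \(\varepsilon>0\) the event \(\{|\widetilde U_m|>\varepsilon(nh_n^d)^{1/2}\}\) is empty for all large \(n\). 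Lindeberg's condition therefore holds trivially, and the Lindeberg--Feller theorem, combined with the variance limit from the first step and Slutsky's theorem, gives the assertion.

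The main obstacle is the bookkeeping in the choice of \(p_n\) and \(r_n\). The hypothesis \((\log n)^2/(nh_n^d)\to0\) is exactly what allows \(\log n\ll p_n\ll (nh_n^d)^{1/2}\), which is needed for the deterministic Lindeberg bound. The localization factor \(h_n^d\) is used only through the block-variance hypothesis, so no small-ball input is needed inside the lemma itself. A secondary point to check is that the \(\beta\)-mixing covariance inequality is applied to bounded block sums: the crude factors \(k_n^2p_n^2\) must be beaten by \(e^{-c_\beta r_n}\), and this is what forces \(r_n\) to be of order \(\log n\) rather than \(O(1)\).
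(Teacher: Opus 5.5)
Your proposal is correct and follows essentially the same route as the paper. Both arguments use Bernstein blocking with small blocks of length of order $\log n$ and big blocks of order $(\log n)^{1/2}(nh_n^d)^{1/4}$; your $p_n$ agrees with the paper's choice up to a constant factor. Both show the small blocks are negligible from the block-variance hypothesis and the bounded $\beta$-mixing covariance inequality, couple the big blocks to independent copies by Berbee's lemma, and obtain the Lindeberg condition trivially from boundedness. The only difference is cosmetic: you bound the cross-block covariances crudely by $n^2\beta_W(\cdot)$, whereas the paper sums a geometric series, so you need a slightly larger constant $C_0$ (namely $c_\beta C_0>2$).
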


	\subsection{A martingale-difference-array central limit theorem}

	The first- and second-order recovery proofs use the following scalar
	form of the martingale-difference-array central limit theorem. The
	positive-variance assertion is the realized-square formulation of
	\citet[Theorem~2.3]{mcleish1974}, and the zero-variance assertion
	follows from Lenglart's inequality. Supplementary Section~S.4 verifies
	the hypotheses for the two recovery arrays.

	\begin{lemma}[Martingale-difference-array CLT]
	\label{lem:appendix-martingale-clt}
	For each \(n\), let \(m_n\ge1\), let
	\((\mathcal H_{s,n})_{s=0}^{m_n}\) be a filtration, and let
	\((X_{s,n})_{s=0}^{m_n-1}\) be a square-integrable
	martingale-difference array.
	Suppose that, for some \(\tau^2\in(0,\infty)\),
	\[
	  \max_{0\le s<m_n}|X_{s,n}|\xrightarrow{\mathbb P}0,
	  \qquad
	  \sup_n\mathbb E\!\left[
	    \max_{0\le s<m_n}X_{s,n}^2
	  \right]<\infty, \qquad
	  \sum_{s=0}^{m_n-1}X_{s,n}^2
	  \xrightarrow{\mathbb P}\tau^2.
	\]
	Then
	\[
	  \sum_{s=0}^{m_n-1}X_{s,n}
	  \xrightarrow{d}N(0,\tau^2).
	\]
	If instead
	\[
	  \sum_{s=0}^{m_n-1}
	  \mathbb E[X_{s,n}^2\mid\mathcal H_{s,n}]
	  \xrightarrow{\mathbb P}0,
	\]
	then the sum converges to zero in probability.
	\end{lemma}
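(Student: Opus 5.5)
The statement is Lemma~\ref{lem:appendix-martingale-clt}. The plan splits into the positive-variance assertion and the zero-variance assertion.

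\emph{Positive variance.} I would reduce this assertion to McLeish's theorem \citep[Theorem~2.3]{mcleish1974}. Its hypotheses are exactly the three displayed conditions: negligible maximum increment, an $L^2$-bounded maximum square, and realized square converging to the constant $\tau^2$. So the only work is to check the formal setting. The array $(X_{s,n})$ is adapted to $(\mathcal H_{s+1,n})$ and satisfies $\mathbb E[X_{s,n}\mid\mathcal H_{s,n}]=0$. After the index shift $X_{s,n}\mapsto X_{s+1,n}$ with filtration $\mathcal H_{s+1,n}$, this matches McLeish's convention of a martingale difference with respect to the increasing $\sigma$-fields $\mathcal F_{n,s}$. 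No nesting of the filtrations across $n$ is needed, because the limit $\tau^2$ is deterministic.

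It is worth recalling the mechanism, in case the precise version cited requires it. One writes $T_n=\prod_s(1+\mathrm i tX_{s,n})$. Then $\mathbb E T_n=1$ by the martingale property. The bound $\sup_n\mathbb E\max_sX_{s,n}^2<\infty$, together with truncation at the first time the partial realized square exceeds $\tau^2+1$, gives uniform integrability of $T_n$. Finally, $\exp(\mathrm i tS_n)=T_n\exp\{-t^2\sum_s X_{s,n}^2/2+o_{\mathbb P}(1)\}$, where the remainder uses $\max_s|X_{s,n}|\to0$. Combining these yields $\mathbb E e^{\mathrm i tS_n}\to e^{-t^2\tau^2/2}$. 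I would invoke the theorem rather than redo this argument.

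\emph{Zero variance.} Put $S_n=\sum_{s<m_n}X_{s,n}$ and $M_k=\sum_{s<k}X_{s,n}$, which is a square-integrable martingale in $k$ with respect to $\mathcal H_{k,n}$. Its predictable quadratic variation is
\[
\langle M\rangle_k=\sum_{s<k}\mathbb E[X_{s,n}^2\mid\mathcal H_{s,n}],
\]
and this is predictable: $\langle M\rangle_{k+1}$ is $\mathcal H_{k,n}$-measurable. The process $M_k^2$ is then dominated by $\langle M\rangle$ in Lenglart's sense, since $\mathbb E M_\tau^2=\mathbb E\langle M\rangle_\tau$ for bounded stopping times $\tau$. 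Lenglart's inequality for a predictable dominating process gives, for all $\varepsilon,\eta>0$,
\[
\mathbb P\Bigl(\max_{k\le m_n}M_k^2\ge\varepsilon\Bigr)\le\frac{\eta}{\varepsilon}+\mathbb P\bigl(\langle M\rangle_{m_n}\ge\eta\bigr).
\]
Letting $n\to\infty$ kills the second term by hypothesis, and then $\eta\downarrow0$ kills the first, so $S_n\to0$ in probability. If one prefers to avoid Lenglart, a direct alternative is the stopping time $\tau_n=\min\{k:\langle M\rangle_{k+1}>\eta\}$. This time is valid because $\langle M\rangle_{k+1}$ is $\mathcal H_{k,n}$-measurable, and optional stopping gives $\mathbb E M_{\tau_n\wedge m_n}^2\le\eta$. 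Since $\mathbb P(\tau_n<m_n)\to0$, Chebyshev's inequality then finishes the argument.

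The main obstacle I expect is purely bookkeeping: aligning the shifted filtration convention, with $X_{s,n}$ being $\mathcal H_{s+1,n}$-measurable, with both McLeish's indexing and the predictability required in Lenglart's inequality. The analytic content is entirely contained in the cited results.
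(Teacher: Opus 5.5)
Your proposal is correct and follows the paper's argument: the paper also takes the positive-variance assertion directly from the realized-square form of McLeish's Theorem~2.3, after normalizing by $\tau$, and obtains the zero-variance assertion from Lenglart's inequality applied to the predictable quadratic variation $\sum_s \mathbb E[X_{s,n}^2\mid\mathcal H_{s,n}]$. Your additional bookkeeping is sound and only makes explicit what the paper leaves to the citations: the index shift, the observation that no cross-$n$ nesting is needed for a deterministic limit, and the optional-stopping alternative to Lenglart.
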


	\subsection{A semigroup criterion for normalized smoothness}

	For a regime-indexed function \(h=(h_\ell)_{\ell\in S}\), write
	\[
	  \|h\|_{C_b^r}
	  :=
	  \max_{\ell\in S}
	  \sum_{|\alpha|\le r}
	  \sup_{y\in\mathbb R^d}|D^\alpha h_\ell(y)|.
	\]
	Let \(BUC^{p+1}\) denote the corresponding space in which all
	derivatives through order \(p+1\) are bounded and uniformly
	continuous.

	\begin{proposition}[Semigroup criterion for normalized smoothness]
	\label{prop:appendix-normalized-smoothness}
	Fix \(p\ge0\), \(j\in S\), and a probe \(g\), and put
	\(f:=f_g^{(j)}\). Suppose that \(f\), \(\mathcal Lf\), and
	\(\mathcal L^2f\) are bounded and continuous, that Dynkin's formula
	applies to \(f\) and \(\mathcal Lf\), and that, for some \(t_0>0\),
	the maps
	\[
	  s\longmapsto P_s\mathcal L^r f,
	  \qquad r=1,2,
	\]
	are continuous from \([0,2t_0]\) into \(BUC^{p+1}\) and satisfy
	\[
	  \max_{r=1,2}
	  \sup_{0\le s\le2t_0}
	  \|P_s\mathcal L^r f\|_{C_b^{p+1}}
	  <\infty.
	\]
	Then, for every \(i\in S\) and every open set
	\(U_x\subset\mathbb R^d\),
	\[
	  \sup_{0<t\le t_0}
	  \left\|
	    \frac{P_t^{ij}g-\delta_{ij}g}{t}
	  \right\|_{C^{p+1}(U_x)}
	  <\infty, \quad
	  \sup_{0<t\le t_0}
	  \left\|
	    \frac{P_{2t}^{ij}g-2P_t^{ij}g+\delta_{ij}g}{t^2}
	  \right\|_{C^{p+1}(U_x)}
	  <\infty.
	\]
	Consequently, Assumptions~A5 and~A6 hold for every sequence
	\((\Delta_n)\) contained in \((0,t_0]\).
	\end{proposition}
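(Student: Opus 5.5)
The plan is to express both normalized blocks as time integrals of the semigroup applied to \(\mathcal Lf\) and \(\mathcal L^2f\), and then to move the \(C^{p+1}\) norm inside those integrals. First observe that \(P_t^{ij}g(y)=P_tf(y,i)\) and \(\delta_{ij}g(y)=f(y,i)\). Hence it suffices to bound the regime-\(i\) component of the corresponding functions on \(E\). That component is dominated by the \(C_b^{p+1}\) norm, which in turn dominates the \(C^{p+1}(U_x)\) norm for every open \(U_x\).

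Next, Dynkin's formula for \(f\) gives \(P_tf-f=\int_0^tP_s\mathcal Lf\,ds\). Applying Dynkin's formula to \(\mathcal Lf\) and substituting, as in the proof of \Cref{prop:kth}, gives
\[
P_tf-f=t\mathcal Lf+\int_0^t(t-s)P_s\mathcal L^2f\,ds .
\]
The second difference telescopes: \(P_{2t}f-2P_tf+f=\int_0^t(P_{t+s}\mathcal Lf-P_s\mathcal Lf)\,ds\). By Dynkin's formula applied to \(\mathcal Lf\) and the semigroup property,
\[
P_{t+s}\mathcal Lf-P_s\mathcal Lf=P_s(P_t\mathcal Lf-\mathcal Lf)=\int_0^tP_{s+u}\mathcal L^2f\,du .
\]
Therefore
\[
\frac{P_{2t}f-2P_tf+f}{t^2}=\frac1{t^2}\int_0^t\!\!\int_0^tP_{s+u}\mathcal L^2f\,du\,ds,
\]
where \(s+u\in[0,2t_0]\). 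Likewise \((P_tf-f)/t=t^{-1}\int_0^tP_s\mathcal Lf\,ds\), with \(s\in[0,t_0]\).

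The main step, and the one requiring care, is interchanging the derivatives \(D^\alpha\), \(|\alpha|\le p+1\), with these Bochner or Riemann integrals. Continuity of \(s\mapsto P_s\mathcal L^rf\) into \(BUC^{p+1}\) makes the integrals Riemann integrals in the Banach space \(BUC^{p+1}\). For each fixed \(y\), the map \(h\mapsto D^\alpha h_\ell(y)\) is a continuous linear functional on that space. Consequently the \(BUC^{p+1}\)-valued integral coincides with the pointwise integral, and its derivatives are the integrals of the derivatives. The \(C_b^{p+1}\) norm of each integral is then at most the length (or area) of the integration domain times \(\sup_{0\le s\le2t_0}\|P_s\mathcal L^rf\|_{C_b^{p+1}}\). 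This yields the bounds \(\sup_s\|P_s\mathcal Lf\|_{C_b^{p+1}}\) and \(\sup_s\|P_s\mathcal L^2f\|_{C_b^{p+1}}\), uniformly in \(0<t\le t_0\).

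Finally, restricting to regime \(i\) and to \(U_x\) proves both displayed suprema. For any sequence \(\Delta_n\subset(0,t_0]\), these bounds are exactly the uniform \(C^{p+1}(U_x)\) bounds on \(b^{ij}_{\Delta_n,g}\) and \(c^{ij}_{\Delta_n,g}\) required in Assumptions~A5 and~A6. Membership in \(C^{p+1}(U_x)\) is also immediate, since each normalized block lies in \(BUC^{p+1}\).
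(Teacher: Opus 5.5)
Your proposal is correct and follows essentially the same route as the paper: Dynkin's formula turns \((P_tf-f)/t\) and \((P_{2t}f-2P_tf+f)/t^2\) into normalized single and double integrals of \(P_s\mathcal Lf\) and \(P_{s+u}\mathcal L^2f\), these are read as \(BUC^{p+1}\)-valued integrals, and the hypothesized suprema bound them uniformly for \(0<t\le t_0\). Your telescoping derivation of the double integral is an unpacked form of the paper's factorization \((P_t-I)^2f\), and the Taylor identity you display along the way is not needed.
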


	\begin{remark}
	A sufficient model-level condition is that, for every
	\(\ell,m\in S\), the functions \(b(\cdot,\ell)\),
	\(a(\cdot,\ell)\), and \(q_{\ell m}\) belong to
	\(C_b^{p+4}(\mathbb R^d)\), each \(a(\cdot,\ell)\) is uniformly
	elliptic, and \(g\in C_b^{p+6}(\mathbb R^d)\). The proof and the
	verification for the numerical Ornstein--Uhlenbeck model are given
	in Supplementary Sections~S.5--S.6.
	\end{remark}

\providecommand{\bysame}{\leavevmode\hbox to3em{\hrulefill}\thinspace}
\providecommand{\MR}{\relax\ifhmode\unskip\space\fi MR }
\providecommand{\MRhref}[2]{%
	\href{http://www.ams.org/mathscinet-getitem?mr=#1}{#2}
}
\providecommand{\href}[2]{#2}

\clearpage
\phantomsection
\begin{center}
	{\Large\bfseries Online Supplement to\par}
	\vspace{0.5em}
	{\Large\itshape Nonparametric Inference for Semigroup Blocks of
	Switching Diffusions\par}
\end{center}
\vspace{2em}

\counterwithout{theorem}{section}
\setcounter{section}{0}
\setcounter{theorem}{0}
\setcounter{assumption}{0}
\setcounter{equation}{0}
\setcounter{figure}{0}
\setcounter{table}{0}
\mathtoolsset{showonlyrefs=false}
\renewcommand{\sectionname}{}
\renewcommand{\thesection}{S.\arabic{section}}
\renewcommand{\thetheorem}{S.\arabic{theorem}}
\renewcommand{\theassumption}{S.A\arabic{assumption}}
\renewcommand{\theequation}{S.\arabic{equation}}
\renewcommand{\thefigure}{S.\arabic{figure}}
\renewcommand{\thetable}{S.\arabic{table}}
\renewcommand{\theHsection}{supplement.\arabic{section}}
\renewcommand{\theHtheorem}{supplement.\arabic{theorem}}
\renewcommand{\theHassumption}{supplement.\arabic{assumption}}
\renewcommand{\theHequation}{supplement.\arabic{equation}}
\renewcommand{\theHfigure}{supplement.\arabic{figure}}
\renewcommand{\theHtable}{supplement.\arabic{table}}

\section{Proof of the localized triangular-array central limit theorem}
\label{sec:supp-auxiliary-clt}

This section proves Lemma~\ref{lem:appendix-kernel-clt} of the main
paper.

	\begin{proof}[Proof of Lemma~\ref{lem:appendix-kernel-clt}]
		Set \(N_n:=n h_n^d\), and define
		\[
		r_n:=\left\lfloor N_n^{1/4}(\log n)^{1/2}\right\rfloor.
		\]
		Choose \(A>0\) so large that, with \(s_n:=\lceil A\log n\rceil\),
		\[
		n\beta_W(s_n)\to0,
		\qquad
		\frac{r_n}{h_n^d}\beta_W(s_n)\to0.
		\]
		This is possible because \(N_n/(\log n)^2\to\infty\) implies
		\[
		\frac{r_n}{h_n^d}
		=\frac{nr_n}{N_n}
		\lesssim\frac{n}{\log n}
		\]
		eventually, while \(\beta_W(s_n)\lesssim n^{-Ac_\beta}\). Thus any \(A\) with \(Ac_\beta>1\) gives both displayed convergences.
		The condition \((\log n)^2/N_n\to0\) implies
		\[
		s_n=o(r_n),\qquad r_n=o(N_n^{1/2}).
		\]

		Let \(m_n:=\lfloor n/(r_n+s_n)\rfloor\). Split \(\{1,\ldots,n\}\) into \(m_n\) consecutive big blocks \(I_{\nu,n}\) of length \(r_n\), followed by small blocks \(J_{\nu,n}\) of length \(s_n\), and a terminal remainder of length at most \(r_n+s_n\). Define
		\[
		B_{\nu,n}:=\sum_{k\in I_{\nu,n}}\zeta_{k,n},
		\qquad
		S_{\nu,n}:=\sum_{k\in J_{\nu,n}}\zeta_{k,n},
		\qquad
		Q_n:=\sum_{k\in R_n}\zeta_{k,n},
		\]
		where \(R_n\) denotes the terminal remainder.

		We first prove that the normalized small-block and terminal-remainder sum is negligible in \(L^2\). By the block-variance assumption,
		\[
		\frac{1}{N_n}\sum_{\nu=1}^{m_n}\operatorname{Var}(S_{\nu,n})
		\lesssim
		\frac{m_ns_nh_n^d}{N_n}
		\lesssim \frac{s_n}{r_n}\to0,
		\]
		and
		\[
		\frac{1}{N_n}\operatorname{Var}(Q_n)
		\lesssim
		\frac{(r_n+s_n)h_n^d}{N_n}
		=\frac{r_n+s_n}{n}\to0.
		\]
		It remains only to control covariances between distinct small blocks. The bounded covariance inequality for absolutely regular sequences, in the direct coefficient form used below, is the standard consequence of the strong-mixing covariance inequality together with the comparison between strong and absolute regularity coefficients; see \citet[Sec.~1.1--1.2]{bradley2005strong}. It gives, for \(\mu>\nu\),
		\[
		\left|\operatorname{Cov}(S_{\nu,n},S_{\mu,n})\right|
		\lesssim
		s_n^2
		\beta_W\!\left(r_n+(\mu-\nu-1)(r_n+s_n)\right).
		\]
		Since \(\beta_W(r)\lesssim e^{-c_\beta r}\),
		\[
		\sum_{\ell\ge1}
		\beta_W\!\left(r_n+(\ell-1)(r_n+s_n)\right)
		\lesssim \beta_W(r_n).
		\]
		Therefore
		\[
		\begin{aligned}
			\frac{1}{N_n}\sum_{1\le\nu<\mu\le m_n}
			\left|\operatorname{Cov}(S_{\nu,n},S_{\mu,n})\right|
			&\lesssim
			\frac{m_ns_n^2}{N_n}\beta_W(r_n)\\
			&\lesssim
			\frac{s_n^2}{r_nh_n^d}\beta_W(r_n)
			\to0.
		\end{aligned}
		\]
		The last convergence follows because \(s_n=O(\log n)\), \(h_n^{-d}=n/N_n\le n\) eventually, and \(r_n/\log n\to\infty\), so \(\beta_W(r_n)\) decays faster than any negative power of \(n\). Hence
		\[
		\operatorname{Var}(A+B)
		\le 2\operatorname{Var}(A)+2\operatorname{Var}(B)
		\]
		Applied with
		\(A=N_n^{-1/2}\sum_{\nu=1}^{m_n}S_{\nu,n}\) and
		\(B=N_n^{-1/2}Q_n\), this inequality shows that no separate bound on the cross-covariance with the terminal remainder is needed. Therefore
		\[
		T_{n,S}:=N_n^{-1/2}\left(\sum_{\nu=1}^{m_n}S_{\nu,n}+Q_n\right)
		\xrightarrow{L^2}0.
		\]

		We now treat the big blocks. Distinct big blocks are separated by small blocks of length \(s_n\). The same covariance inequality gives, for \(\mu>\nu\),
		\[
		\left|\operatorname{Cov}(B_{\nu,n},B_{\mu,n})\right|
		\lesssim
		r_n^2
		\beta_W\!\left(s_n+(\mu-\nu-1)(r_n+s_n)\right).
		\]
		Therefore
		\[
		\begin{aligned}
			\frac{1}{N_n}
			\sum_{1\le \nu<\mu\le m_n}
			\left|\operatorname{Cov}(B_{\nu,n},B_{\mu,n})\right|
			&\lesssim
			\frac{m_n r_n^2}{N_n}\beta_W(s_n)\\
			&\lesssim
			\frac{r_n}{h_n^d}\beta_W(s_n)
			\to0.
		\end{aligned}
		\]
		Let \(T_{n,B}:=N_n^{-1/2}\sum_{\nu=1}^{m_n}B_{\nu,n}\).
		By the coupling lemma of \citet{berbee1979random}, possibly on an enlarged probability space, there exist independent random variables \(B_{\nu,n}^\ast\), \(1\le\nu\le m_n\), with \(B_{\nu,n}^\ast\) distributed as \(B_{\nu,n}\), such that
		\[
		\mathbb P\!\left(\exists\,\nu\le m_n:\ B_{\nu,n}\ne B_{\nu,n}^\ast\right)
		\le
		m_n\beta_W(s_n)
		\le n\beta_W(s_n)\to0.
		\]
		Thus \(T_{n,B}\) and \(N_n^{-1/2}\sum_{\nu=1}^{m_n}B_{\nu,n}^\ast\) have the same weak limits.

		The variance transfer is as follows. The original normalized sum satisfies
		\[
		N_n^{-1/2}\sum_{k=1}^n\zeta_{k,n}=T_{n,B}+T_{n,S},
		\qquad T_{n,S}\xrightarrow{L^2}0,
		\]
		so, by the Cauchy--Schwarz inequality and the assumed convergence of the variance of the original normalized sum,
		\[
		\left|\operatorname{Var}\!\left(N_n^{-1/2}\sum_{k=1}^n\zeta_{k,n}\right)
		-\operatorname{Var}(T_{n,B})\right|\to0.
		\]
		Since the variables \(B_{\nu,n}^\ast\) are independent and have the same marginal distributions as \(B_{\nu,n}\),
		\[
		\begin{aligned}
			&\left|
			\operatorname{Var}(T_{n,B})
			-\operatorname{Var}\!\left(N_n^{-1/2}\sum_{\nu=1}^{m_n}B_{\nu,n}^\ast\right)
			\right| \\
			&\qquad =
			\frac{2}{N_n}
			\left|\sum_{1\le\nu<\mu\le m_n}
			\operatorname{Cov}(B_{\nu,n},B_{\mu,n})\right|
			\to0
		\end{aligned}
		\]
		by the big-block covariance estimate. Therefore
		\[
		\operatorname{Var}\!\left(N_n^{-1/2}\sum_{\nu=1}^{m_n}B_{\nu,n}^\ast\right)
		\to \tau^2.
		\]

		Each independent big-block sum satisfies \(|B_{\nu,n}^\ast|\lesssim r_n\). Since \(r_n/N_n^{1/2}=N_n^{-1/4}(\log n)^{1/2}\to0\),
		\[
		\max_{\nu\le m_n}
		\frac{|B_{\nu,n}^\ast|}{N_n^{1/2}}
		\lesssim\frac{r_n}{N_n^{1/2}}
		\to0.
		\]
		Hence, for every \(\varepsilon>0\), the Lindeberg condition for the independent triangular array is automatic.
		Indeed, for all large \(n\),
		\[
		\sum_{\nu=1}^{m_n}
		\mathbb E\!\left[
		\left(\frac{B_{\nu,n}^\ast}{N_n^{1/2}}\right)^2
		\mathbf 1_{\{|B_{\nu,n}^\ast|>\varepsilon N_n^{1/2}\}}
		\right]
		=0.
		\]

		If \(\tau^2>0\), the Lindeberg--Feller theorem yields
		\[
		N_n^{-1/2}\sum_{\nu=1}^{m_n}B_{\nu,n}^\ast
		\xrightarrow{d}N(0,\tau^2).
		\]
		The coupling step and \(T_{n,S}\to0\) in \(L^2\) then give the asserted convergence for the original normalized sum. If \(\tau^2=0\), the assumed variance convergence and Chebyshev's inequality give \(N_n^{-1/2}\sum_{k=1}^n\zeta_{k,n}\xrightarrow{\mathbb P}0\), which is the degenerate normal law \(N(0,0)\).
	\end{proof}

\clearpage
\section{Detailed fixed-mesh proofs}
\label{sec:supp-fixedmesh}
\begingroup

This section supplies the complete arguments behind \Cref{thm:lp-fixedmesh-consistency,thm:lp-fixedmesh-clt,thm:fixedmesh-normalized-moment} of the main paper. All notation and assumption labels are those of the main paper. The presentation separates the localized law of large numbers, the scalar fluctuation limit, the deterministic smoothing bias, and the final matrix and ratio linearizations.

	\subsection{Localized law of large numbers}

		\begingroup
		\begin{lemma}[Fixed-mesh localized law of large numbers]\label[lemma]{lem:lp-fixedmesh-lln}
			Fix \(\Delta>0\), \(p\ge0\), \(g\in C_b(\mathbb R^d)\), a regime pair \((i,j)\), and \(x\in\mathbb R^d\). Suppose \Cref{ass:mixing} and part~\textup{(i)} of \Cref{ass:local-densities} hold on \(U_x\). If \(h_n\to0\), then
			\[
			\begin{aligned}
				\widehat S_{n,h_n}^{i,(p)}(x)-S_{h_n,p}^{i}(x)
				&=O_{\mathbb P}\!\left((nh_n^d)^{-1/2}\right),\\
				\widehat T_{n,\Delta,h_n}^{ij,g,(p)}(x)
				-\mathbb E[\widehat T_{n,\Delta,h_n}^{ij,g,(p)}(x)]
				&=O_{\mathbb P}\!\left((nh_n^d)^{-1/2}\right).
			\end{aligned}
			\]
		\end{lemma}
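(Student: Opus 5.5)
The plan is to prove both bounds by a second-moment (Chebyshev) argument, applied coordinatewise to the finitely many entries of the \(q_p\times q_p\) matrix and the \(q_p\)-vector. Each entry of \(\widehat S_{n,h_n}^{i,(p)}(x)\) has the form \((nh_n^d)^{-1}\sum_{k=1}^n\phi_n(Z_{k-1})\) with
\[
\phi_n(y,\ell)=K\!\left(\tfrac{y-x}{h_n}\right)\mathbf 1_{\{\ell=i\}}\left(\tfrac{y-x}{h_n}\right)^{\alpha+\alpha'} .
\]
This \(\phi_n\) is bounded uniformly in \(n\), because \(K\) is bounded with compact support \(D:=\operatorname{supp}K\). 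It also vanishes unless \(\ell=i\) and \(y\in x+h_nD\). Since the mesh \(\Delta\) is fixed, the argument of \Cref{lem:shrinking-localization-vnorm} applies verbatim with \(\Delta_n\equiv\Delta\). Its covariance bound \(\lesssim h_n^de^{-\lambda_{\rm erg}m\Delta}\) sums over lags to a constant depending only on \(\Delta\). This gives \(\operatorname{Var}\lesssim_\Delta (nh_n^d)^{-1}\), and Chebyshev's inequality yields the first claim.

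For \(\widehat T\) the summand is \(\Phi_n(Z_{k-1},Z_k)\), which depends on two consecutive states through the response \(R_k^{j,g}\). I would split it via the conditional mean:
\[
R_k^{j,g}=P_\Delta^{ij}g(Y_{k-1})+\bigl(R_k^{j,g}-P_\Delta^{ij}g(Y_{k-1})\bigr)\quad\text{on }\{\Lambda_{t_{k-1}}=i\}.
\]
The first part gives a function \(\phi_n(Z_{k-1})\) of the same localized bounded type, since \(|P_\Delta^{ij}g|\le\|g\|_\infty\). The previous paragraph therefore applies to it directly.

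The second part is a martingale-difference array with respect to \(\mathcal F_{t_k}\), by the Markov property. Its summands are bounded by \(C\,K(\cdot)\mathbf 1_{\{\cdot\}}|\psi_p|\), so each has second moment \(\lesssim h_n^d\) by part (i) of \Cref{ass:local-densities}. Orthogonality of martingale increments then gives second moment \(\lesssim n h_n^d\) for the sum. After normalization this is \(O((nh_n^d)^{-1})\).

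Combining the two parts via \(\operatorname{Var}(A+B)\le2\operatorname{Var}A+2\operatorname{Var}B\) and applying Chebyshev gives the second claim. There is no real obstacle. The only point to check is that the \(V\)-norm localization of \Cref{lem:shrinking-localization-vnorm} requires only \Cref{ass:mixing} and part (i) of \Cref{ass:local-densities}. It does, since the bounded \(V\) on \(U_x\) handles the weighted moment \(\mathbb E[|\phi_n|V]\lesssim h_n^d\). Continuity of \(g\) is not needed beyond boundedness.
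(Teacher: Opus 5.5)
Your proof is correct. For the design matrix it is the paper's argument; the paper repeats the \(V\)-norm conditioning inline instead of citing \Cref{lem:shrinking-localization-vnorm} with \(\Delta_n\equiv\Delta\). For the response vector, however, you take a different route.

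The paper never splits the response. It treats a generic coordinate \(U_{k,n}=\Phi_n(Z_{k-1},Z_k)\) of either moment directly and writes \(\operatorname{Cov}(U_{1,n},U_{1+r,n})=\mathbb E[U_{1,n}\{P_{(r-1)\Delta}d_n(Z_1)-\nu(d_n)\}]\) for \(r\ge2\), with \(d_n(z):=\mathbb E_z[\Phi_n(Z_0,Z_1)]\). It then needs the extra estimate \(P_\Delta V\lesssim V\), a consequence of \Cref{ass:mixing}, to move the weight \(V(Z_1)\) back onto the localized state \(Z_0\) and obtain \(\mathbb E[|U_{1,n}|V(Z_1)]\lesssim h_n^d\). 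The lag-one covariance is bounded crudely by \(O(h_n^d)\).

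Your conditional-mean/innovation split instead reduces the response to a one-state localized functional, to which \Cref{lem:shrinking-localization-vnorm} applies verbatim. What remains is a martingale-difference array whose variance is exactly the sum of its zero-lag second moments. This avoids both the \(P_\Delta V\) step and the separate lag-one case. It is also the same innovation decomposition the paper itself uses for the common-design estimators in \Cref{thm:first-order-recovery-clt}.

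The advantage of the paper's uniform covariance computation is that it works unchanged for any bounded localized functional of several consecutive states. That is how the overlapping \(r\)-step responses are handled in \Cref{lem:finite-lag-recovery-bounds}. There, your one-step martingale split would have to be replaced by a telescoping sum of \(r\) conditional-expectation increments.
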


		\begin{proof}
			A generic unscaled coordinate of either moment has the form
			\[
			U_{k,n}=\Phi_n(Z_{k-1},Z_k),
			\]
			where \(\Phi_n\) is uniformly bounded and vanishes unless \(Y_{k-1}\in x+h_nD_K\) and \(\Lambda_{t_{k-1}}=i\), for a fixed compact set \(D_K\). Part~\textup{(i)} of \Cref{ass:local-densities} and the local boundedness of \(V\) give
			\[
			\mathbb E[U_{1,n}^2]=O(h_n^d).
			\]
			For \(z\in E\), define \(d_n(z):=\mathbb E_z[\Phi_n(Z_0,Z_1)]\). Since \(\Phi_n\) is uniformly bounded and \(V\ge1\), one has \(\|d_n\|_{V,\infty}\lesssim1\). For \(r\ge2\), the Markov property at time \(\Delta\) and stationarity yield
			\[
			\operatorname{Cov}(U_{1,n},U_{1+r,n})
			=
			\mathbb E\!\left[
			U_{1,n}\left\{P_{(r-1)\Delta}d_n(Z_1)-\nu(d_n)\right\}
			\right].
			\]
			By \Cref{ass:mixing},
			\[
			\left|P_{(r-1)\Delta}d_n(z)-\nu(d_n)\right|
			\lesssim e^{-\lambda_{\rm erg}(r-1)\Delta}V(z).
			\]
			Moreover, \Cref{ass:mixing} implies \(P_\Delta V(z)\lesssim V(z)\). Conditioning on \(Z_0\), using the localization of \(U_{1,n}\), and then applying part~\textup{(i)} of \Cref{ass:local-densities} and the local boundedness of \(V\), we obtain
			\[
			\mathbb E[|U_{1,n}|V(Z_1)]
			\lesssim
			\mathbb E\!\left[
			\mathbf 1_{\{Y_0\in x+h_nD_K,\,\Lambda_{t_0}=i\}}V(Z_0)
			\right]
			\lesssim h_n^d.
			\]
			Consequently,
			\[
			|\operatorname{Cov}(U_{1,n},U_{1+r,n})|
			\lesssim h_n^d e^{-\lambda_{\rm erg}(r-1)\Delta},
			\qquad r\ge2.
			\]
			For \(r=1\), boundedness and localization give a covariance of order \(O(h_n^d)\). The stationary covariance decomposition therefore gives
			\[
			\operatorname{Var}\!\left(
			\frac{1}{nh_n^d}\sum_{k=1}^nU_{k,n}
			\right)
			\lesssim \frac{1}{nh_n^d}.
			\]
			Applying Chebyshev's inequality coordinatewise proves both assertions.
		\end{proof}
		\endgroup

		\subsection{Scalar fluctuation limit}
		\begin{lemma}[Scalar fixed-mesh central limit theorem]\label[lemma]{lem:lp-scalar-clt}
			Fix \(\Delta>0\), \(p\ge0\), \(g\in C_b(\mathbb R^d)\), a regime pair \((i,j)\), and \(x\in\mathbb R^d\). Suppose \Cref{ass:wellposedness,ass:mixing}, parts~\textup{(i)}--\textup{(ii)} of \Cref{ass:local-densities}, and parts~\textup{(i)} and \textup{(iii)} of \Cref{ass:fixedmesh-smoothness} hold on \(U_x\), and suppose
			\[
				h_n\to0,
				\qquad
				\frac{(\log n)^2}{nh_n^d}\to0.
			\]
			Then
			\[
			\frac{1}{\sqrt{nh_n^d}}
			\sum_{k=1}^n
			\left\{
			\zeta_{k,n}^{ij,g,(p)}(x)
			-\mathbb E[\zeta_{k,n}^{ij,g,(p)}(x)]
			\right\}
			\xrightarrow{d}
			N\!\left(0,\sigma_{ij,\Delta,p}^{2,g}(x)\right).
			\]
		\end{lemma}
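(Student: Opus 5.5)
The plan is to apply \Cref{lem:appendix-kernel-clt} to the stationary sequence \(W_k:=(Z_{k-1},Z_k)\), \(k\ge1\), with \(\phi_n(W_k):=\zeta_{k,n}^{ij,g,(p)}(x)-\mathbb E[\zeta_{k,n}^{ij,g,(p)}(x)]\). The five hypotheses of that lemma will be checked in turn, and the limit variance will be identified with \eqref{eq:fixedmesh-scalar-variance}.

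\emph{Mixing, boundedness and second moment.} The pair chain \(W\) is strictly stationary. Since \(\sigma(W_k:k\le a)=\mathcal F_0^a\) and \(\sigma(W_k:k\ge a+r)\subset\mathcal F_{a+r-1}^\infty\), its absolute-regularity coefficients satisfy \(\beta_W(r)\le\beta_\Delta(r-1)\). The bound displayed after \Cref{ass:mixing} then gives exponential decay. Because \(S_{h_n,p}^i(x)\to\varpi_i(x)M_p(K)\) is positive definite, the weight \(e_0^\top(S_{h_n,p}^i(x))^{-1}\psi_p(u)\) is bounded uniformly for \(u\in\operatorname{supp}K\). The factors \(K\), \(g\) and the indicators are also bounded, so \(|\zeta_{k,n}|\le C\), and \(\zeta_{k,n}\) vanishes unless \(Y_{k-1}\in x+h_n\operatorname{supp}K\) and \(\Lambda_{t_{k-1}}=i\). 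Part~(i) of \Cref{ass:local-densities} gives \(\mathbb E\zeta_{1,n}^2=O(h_n^d)\) and \(|\mathbb E\zeta_{1,n}|=O(h_n^d)\), and these bounds persist after centering.

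\emph{Block variance.} For \(\operatorname{Var}(\sum_{k=a}^{a+\ell-1}\zeta_{k,n})\le C\ell h_n^d\) we need the summed lag covariances to be \(O(h_n^d)\) uniformly in \(n\). I split them at a cutoff \(r_n\to\infty\), for instance \(r_n=\lceil h_n^{-d/2}\rceil\). For near lags \(1\le r\le r_n\), part~(ii) of \Cref{ass:local-densities} bounds \(|\mathbb E[\zeta_{1,n}\zeta_{1+r,n}]|\) by a constant times \(\mathbb P(Y_0,Y_r\in x+hD,\ldots)\lesssim h_n^{2d}\). The lag-one case is handled directly by boundedness, giving \(O(h_n^d)\). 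The product of means is \(O(h_n^{2d})\), so the near lags contribute \(O(r_nh_n^{2d})=o(h_n^d)\). For far lags \(r>r_n\), I use the Markov argument of \Cref{lem:lp-fixedmesh-lln}: conditioning at time \(t_1\) and applying the \(V\)-norm bound gives \(|\operatorname{Cov}|\lesssim h_n^de^{-\lambda_{\rm erg}(r-1)\Delta}\). Summed over \(r>r_n\) this is \(o(h_n^d)\). Hence the block-variance bound holds, and the full sum of nonzero-lag covariances is \(o(h_n^d)\).

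\emph{Variance identification.} By stationarity,
\[
\operatorname{Var}\!\Bigl((nh_n^d)^{-1/2}\sum_{k=1}^n\zeta_{k,n}\Bigr)=h_n^{-d}\operatorname{Var}(\zeta_{1,n})+o(1).
\]
Conditioning on \(Z_0\) and using the Markov property, \(\mathbb E[\{R_1^{j,g}-m(x)\}^2\mid Y_0=y,\Lambda_0=i]=s(y)-2m(x)m(y)+m(x)^2\). This expression tends to \(v_{\Delta,g}^{ij}(x)\) as \(y\to x\), by continuity of \(s_{\Delta,g}^{ij}\) and \(m_{\Delta,g}^{ij}\) at \(x\) (\Cref{ass:fixedmesh-smoothness}~(i),(iii)). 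The kernel change of variables then gives
\[
h_n^{-d}\mathbb E\zeta_{1,n}^2\to\varpi_i(x)\,v_{\Delta,g}^{ij}(x)\,(\varpi_i(x))^{-2}\,e_0^\top M_p^{-1}Q_pM_p^{-1}e_0=\sigma_{ij,\Delta,p}^{2,g}(x),
\]
while \((\mathbb E\zeta_{1,n})^2/h_n^d=O(h_n^d)\to0\). With \((\log n)^2/(nh_n^d)\to0\) assumed, \Cref{lem:appendix-kernel-clt} yields the claim.

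The main obstacle is the near-lag covariance step. The \(V\)-norm mixing bound alone only gives \(h_n^d\) per lag, which is not summable against \(h_n^d\). Part~(ii) of \Cref{ass:local-densities} supplies the needed \(h_n^{2d}\), but only for lags \(r\ge1\) between starting points. Lag one, where \(W_1\) and \(W_2\) share \(Z_1\), must be treated separately through boundedness; that term is only \(O(h_n^d)\) and not \(o(h_n^d)\), so I would tighten it via part~(ii) applied to \((Y_0,Y_1)\), which again gives \(O(h_n^{2d})\).
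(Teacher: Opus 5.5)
Your proposal is correct and follows the paper's proof essentially step for step. Both use the pair chain \(W_k=(Z_{k-1},Z_k)\) with \(\beta_W(r)\le\beta_\Delta(r-1)\), near-lag \(h_n^{2d}\) covariance bounds from part~(ii) of \Cref{ass:local-densities}, the same conditional-variance function with a kernel change of variables, and a final appeal to \Cref{lem:appendix-kernel-clt}. The one difference is the far-lag step: you use the \(V\)-norm Markov conditioning of \Cref{lem:lp-fixedmesh-lln}, which keeps a factor \(h_n^d\) and so allows the polynomial cutoff \(r_n=\lceil h_n^{-d/2}\rceil\), whereas the paper uses the bounded absolute-regularity covariance inequality with a logarithmic cutoff \(M_n\asymp|\log h_n|\). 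Your separate concern about lag one is unnecessary, since the small-ball condition is stated for all \(r\ge1\) and covers \((Y_0,Y_1)\) directly, exactly as the paper uses it.
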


		\begin{proof}
			Since \(U_x\) is open, \(K\) is compactly supported, and \(x\in U_x\), one has \(x+h_n\operatorname{supp}(K)\subset U_x\) for all sufficiently large \(n\). By the change of variables \(y=x+h_nu\),
			\[
			S_{h_n,p}^{i}(x)
			=\int_{\mathbb R^d}K(u)\psi_p(u)\psi_p(u)^\top \varpi_i(x+h_nu)\,du.
			\]
			By part~\textup{(i)} of \Cref{ass:local-densities}, \(\varpi_i\) is continuous at \(x\), and
			\[
			S_{h_n,p}^{i}(x)\to \varpi_i(x)M_p(K),
			\]
			as \(n\to\infty\). Part~\textup{(i)} of \Cref{ass:local-densities} also gives \(\varpi_i(x)>0\), so the limit is positive definite. Hence \(\|(S_{h_n,p}^{i}(x))^{-1}\|\lesssim1\) for all sufficiently large \(n\). By \Cref{lem:lp-fixedmesh-lln}, the empirical design and response moments satisfy the stochastic bounds used below. For the scalar central limit theorem, we sharpen the covariance estimate to show that the nonzero-lag covariances are \(o(h_n^d)\). A generic coordinate of either unscaled summand can be written as
			\[
			U_{k,n}
			=
			K\!\left(\frac{Y_{k-1}-x}{h_n}\right)
			\mathbf 1_{\{\Lambda_{t_{k-1}}=i\}}A_{k,n},
			\]
			where \(A_{k,n}\) is uniformly bounded whenever the kernel factor is nonzero. For a design coordinate, \(A_{k,n}\) is a product of two components of \(\psi_p((Y_{k-1}-x)/h_n)\); for a response coordinate, it is \(R_k^{j,g}\) times one component of this vector. Because \(K\) is compactly supported, there is a compact set \(D_K\) such that \(U_{k,n}=0\) unless \(Y_{k-1}\in x+h_nD_K\). The local stationary-density bound therefore gives the zero-lag estimate
			\[
			\mathbb E[U_{1,n}^2]
			\le
			C\int_{x+h_nD_K}\varpi_i(y)\,dy
			\le Ch_n^d.
			\]

			\begingroup
			For \(r\ge1\), both \(U_{1,n}\) and \(U_{1+r,n}\) can be nonzero only if \(Y_0,Y_r\in x+h_nD_K\) and the corresponding initial regimes equal \(i\). Hence the direct small-ball bound in Assumption~A3\textup{(ii)} yields, uniformly in \(r\ge1\),
			\[
			\mathbb E\!\left[|U_{1,n}U_{1+r,n}|\right]
			\le
			C\mathbb P\!\left(
			Y_0,Y_r\in x+h_nD_K,
			\Lambda_{t_0}=\Lambda_{t_r}=i
			\right)
			\le Ch_n^{2d}.
			\]
			\endgroup
			Since \(\mathbb E|U_{1,n}|=O(h_n^d)\), the same order holds for the covariance. This is the near-lag estimate. For the far lags, \(U_{1,n}\) is measurable with respect to \(\sigma(Z_0,Z_1)\), while \(U_{1+r,n}\) is measurable with respect to \(\sigma(Z_r,Z_{r+1})\). The bounded covariance inequality for absolutely regular sequences (see \citet[Sec.~1]{bradley2005strong}) and the exponential bound for \((Z_k)\) give constants \(C,c>0\) such that
			\[
			\left|\operatorname{Cov}(U_{1,n},U_{1+r,n})\right|
			\le Ce^{-cr},
			\qquad r\ge2.
			\]
			Set
			\[
			M_n:=\left\lfloor\frac{d+1}{c}|\log h_n|\right\rfloor.
			\]
			Then
			\[
			\sum_{r=1}^{M_n}
			\left|\operatorname{Cov}(U_{1,n},U_{1+r,n})\right|
			\lesssim M_nh_n^{2d}=o(h_n^d),
			\]
			whereas
			\[
			\sum_{r>M_n}
			\left|\operatorname{Cov}(U_{1,n},U_{1+r,n})\right|
			\lesssim e^{-cM_n}=O(h_n^{d+1})=o(h_n^d).
			\]
			Hence
			\[
				\sum_{r=1}^{\infty}
				\left|\operatorname{Cov}(U_{1,n},U_{1+r,n})\right|
				=o(h_n^d).
			\]
			This sharper estimate is applied below to the scalar array.

			We next identify the scalar variance. Put \(m:=m_{\Delta,g}^{ij}(x)\) and
			\[
			H(y)
			:=
			P_\Delta^{ij}(g^2)(y)
			-2mP_\Delta^{ij}g(y)+m^2.
			\]
			Conditional on \(Y_{k-1}=y\) and \(\Lambda_{t_{k-1}}=i\),
			\[
			\mathbb E\!\left[
			\left(R_k^{j,g}-m\right)^2
			\,\middle|\,
			Y_{k-1}=y,\Lambda_{t_{k-1}}=i
			\right]
			=H(y).
			\]
			{By parts~\textup{(i)} and \textup{(iii)} of \Cref{ass:fixedmesh-smoothness}, \(H\) is continuous at \(x\) and \(H(x)=v_{\Delta,g}^{ij}(x)\).} Define
			\[
			\ell_n(u):=
			e_0^\top\bigl(S_{h_n,p}^{i}(x)\bigr)^{-1}\psi_p(u).
			\]
			The inverse-matrix convergence above implies that \(\ell_n\) is uniformly bounded on \(D_K\). Conditioning on \((Y_0,\Lambda_{t_0})\) and changing variables \(y=x+h_nu\) give the exact zero-lag expression
			\[
			\frac{1}{h_n^d}
			\mathbb E\!\left[
			\bigl(\zeta_{1,n}^{ij,g,(p)}(x)\bigr)^2
			\right]
			=
			\int_{\mathbb R^d}
			K(u)^2\ell_n(u)^2
			H(x+h_nu)\varpi_i(x+h_nu)\,du.
			\]
			Since
			\[
			\ell_n(u)
			\to
			\varpi_i(x)^{-1}
			e_0^\top M_p(K)^{-1}\psi_p(u)
			\]
			uniformly on \(D_K\), the dominated convergence theorem yields
			\[
			\frac{1}{h_n^d}
			\mathbb E\!\left[
			\bigl(\zeta_{1,n}^{ij,g,(p)}(x)\bigr)^2
			\right]
			\to
			\sigma_{ij,\Delta,p}^{2,g}(x).
			\]
			Localization also gives \(\mathbb E[\zeta_{1,n}^{ij,g,(p)}(x)]=O(h_n^d)\). Therefore, with
			\[
			\xi_{k,n}
			:=
			\zeta_{k,n}^{ij,g,(p)}(x)
			-\mathbb E[\zeta_{k,n}^{ij,g,(p)}(x)],
			\]
			we have
			\[
			h_n^{-d}\operatorname{Var}(\xi_{1,n})
			\to
			\sigma_{ij,\Delta,p}^{2,g}(x).
			\]

			The scalar variables satisfy the same covariance estimates as the generic coordinates above:
			\[
			\left|\operatorname{Cov}(\xi_{1,n},\xi_{1+r,n})\right|
			\lesssim
			\begin{cases}
			h_n^{2d},&1\le r\le M_n,\\
			e^{-cr},&r>M_n.
			\end{cases}
			\]
			Consequently,
			\[
			\frac{1}{h_n^d}
			\sum_{r=1}^{\infty}
			\left|\operatorname{Cov}(\xi_{1,n},\xi_{1+r,n})\right|
			\to0.
			\]
			The stationary variance decomposition also gives, uniformly over all consecutive blocks of length \(\ell\),
			\[
			\operatorname{Var}\!\left(
			\sum_{k=a}^{a+\ell-1}\xi_{k,n}
			\right)
			\le
			\ell\operatorname{Var}(\xi_{1,n})
			+
			2\ell\sum_{r=1}^{\infty}
			\left|\operatorname{Cov}(\xi_{1,n},\xi_{1+r,n})\right|
			\lesssim \ell h_n^d.
			\]

			Finally, set \(W_k:=(Z_{k-1},Z_k)\). Then \(\xi_{k,n}\) is a bounded measurable function of \(W_k\), and \((W_k)\) is strictly stationary. Moreover, for \(r\ge2\), its absolute-regularity coefficients satisfy \(\beta_W(r)\le\beta_\Delta(r-1)\), and hence decay exponentially. The preceding zero- and nonzero-lag estimates imply
			\[
			\begin{aligned}
			\operatorname{Var}\!\left(
			\frac{1}{\sqrt{nh_n^d}}\sum_{k=1}^n\xi_{k,n}
			\right)
			&=
			\frac{\operatorname{Var}(\xi_{1,n})}{h_n^d}
			+
			\frac{2}{h_n^d}
			\sum_{r=1}^{n-1}
			\left(1-\frac{r}{n}\right)
			\operatorname{Cov}(\xi_{1,n},\xi_{1+r,n})\\
			&\to
			\sigma_{ij,\Delta,p}^{2,g}(x).
			\end{aligned}
			\]
			All conditions of Lemma~\ref{lem:appendix-kernel-clt} are therefore satisfied, and the asserted convergence follows.
		\end{proof}

	\subsection{Smoothing bias}
	{\begin{lemma}[Smoothing bias of the deterministic surrogate]\label[lemma]{lem:fixedmesh-bias}
		{Fix \(\Delta>0\), \(p\ge0\), \(g\in C_b(\mathbb R^d)\), a regime pair \((i,j)\), and a design point \(x\in\mathbb R^d\). Suppose part~\textup{(i)} of \Cref{ass:local-densities} and part~\textup{(ii)} of \Cref{ass:fixedmesh-smoothness} hold on \(U_x\).} Then
		\[
		R_{\Delta,h_n,p}^{ij}g(x)-P_\Delta^{ij}g(x)=O(h_n^{p+1}),
		\qquad n\to\infty.
		\]
	\end{lemma}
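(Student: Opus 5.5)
The plan is the standard local-polynomial bias computation with the regime-specific design; no probabilistic input is needed beyond stationarity.

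\textbf{Step 1: exact integral expressions.} By stationarity, conditioning on \((Y_{k-1},\Lambda_{t_{k-1}})\), and the identification relation \eqref{eq:lp-identification}, write \(\mathbb E[\widehat T_{n,\Delta,h_n}^{ij,g,(p)}(x)]\) as an integral. For large \(n\), \(x+h_n\operatorname{supp}(K)\subset U_x\), so the change of variables \(y=x+h_nu\) gives
\[
\mathbb E[\widehat T_{n,\Delta,h_n}^{ij,g,(p)}(x)]=\int K(u)\psi_p(u)\,m_{\Delta,g}^{ij}(x+h_nu)\varpi_i(x+h_nu)\,du .
\]
The same substitution gives the analogous formula for \(S_{h_n,p}^{i}(x)=\int K\psi_p\psi_p^\top\varpi_i(x+h_n\cdot)\). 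As in the proof of \Cref{lem:lp-scalar-clt}, \(S_{h_n,p}^i(x)\to\varpi_i(x)M_p(K)\), which is positive definite. Hence \(\|(S_{h_n,p}^i(x))^{-1}\|\lesssim1\) for large \(n\).

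\textbf{Step 2: Taylor expansion.} By part~(ii) of \Cref{ass:fixedmesh-smoothness}, Taylor's theorem of order \(p\) with Lagrange remainder gives, uniformly for \(u\in\operatorname{supp}(K)\),
\[
m_{\Delta,g}^{ij}(x+h_nu)=\psi_p(u)^\top\beta_n+r_n(u),\qquad |r_n(u)|\lesssim h_n^{p+1}.
\]
Here \(\beta_n=(h_n^{|\alpha|}\partial^\alpha m_{\Delta,g}^{ij}(x)/\alpha!)_{\alpha}\), and its first coordinate is \(e_0^\top\beta_n=m_{\Delta,g}^{ij}(x)=P_\Delta^{ij}g(x)\). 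The remainder bound needs the segment from \(x\) to \(x+h_nu\) to lie in \(U_x\), which holds for large \(n\). The constant depends on the uniform bound on the derivatives of order \(p+1\).

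\textbf{Step 3: reproduction and remainder.} Substituting the expansion into Step 1 gives
\[
\mathbb E[\widehat T]=S_{h_n,p}^{i}(x)\beta_n+\int K(u)\psi_p(u)r_n(u)\varpi_i(x+h_nu)\,du .
\]
Therefore
\[
R_{\Delta,h_n,p}^{ij}g(x)-P_\Delta^{ij}g(x)=e_0^\top(S_{h_n,p}^{i})^{-1}\int K\psi_p r_n\varpi_i(x+h_n\cdot)\,du .
\]
The exact cancellation of the polynomial part is the key identity: local polynomials of degree \(p\) reproduce polynomials. The integrand is bounded by \(C h_n^{p+1}\), because \(K\) is bounded with compact support, \(\psi_p\) is bounded on \(\operatorname{supp}(K)\), and \(\varpi_i\le C_{i,x}\) on \(U_x\) by part~(i) of \Cref{ass:local-densities}. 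Combined with the bound on the inverse from Step 1, this gives the \(O(h_n^{p+1})\) claim.

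\textbf{Main obstacle.} The only delicate point is Step 1: the conditional mean of \(R_k^{j,g}\) given the design must be exactly \(m_{\Delta,g}^{ij}(Y_{k-1})\) on \(\{\Lambda_{t_{k-1}}=i\}\). This follows from the Markov property and the definition of the block \eqref{eq:block}. The other point to check is uniform invertibility of the deterministic design matrix, which follows from the kernel positivity on \(B_K\). Everything else is routine.
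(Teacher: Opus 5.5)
Your proposal is correct and follows the paper's proof essentially step for step. Both Taylor-expand \(m_{\Delta,g}^{ij}(x+h_nu)\) to order \(p\), use the reproduction identity \(\mathbb E[\widehat T_{n,\Delta,h_n}^{ij,g,(p)}(x)]=S_{h_n,p}^{i}(x)\beta_n+O(h_n^{p+1})\), and read off the intercept \(e_0^\top\beta_n=P_\Delta^{ij}g(x)\); you spell out the change-of-variables representation, the segment-in-\(U_x\) condition, and the uniform boundedness of \((S_{h_n,p}^{i}(x))^{-1}\) a little more explicitly, whereas the paper uses these implicitly.
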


	\begin{proof}
		Set \(T_{\Delta,h_n,p}^{ij,g}(x):=\mathbb E[\widehat T_{n,\Delta,h_n}^{ij,g,(p)}(x)]\). For \(\alpha\in\mathcal A_p\), define
		\[
		b_{h_n,p,\alpha}(x):=\frac{h_n^{|\alpha|}}{\alpha!}D^\alpha m_{\Delta,g}^{ij}(x),
		\qquad
		b_{h_n,p}(x):=(b_{h_n,p,\alpha}(x))_{\alpha\in\mathcal A_p}.
		\]
		Taylor's theorem gives, uniformly for \(u\in\operatorname{supp}(K)\),
		\[
		m_{\Delta,g}^{ij}(x+h_nu)=\psi_p(u)^\top b_{h_n,p}(x)+h_n^{p+1}\rho_{h_n}(u;x),
		\]
		where \(\rho_{h_n}(\cdot;x)\) is uniformly bounded on \(\operatorname{supp}(K)\). Substitution yields
		\[
		T_{\Delta,h_n,p}^{ij,g}(x)=S_{h_n,p}^{i}(x)b_{h_n,p}(x)+h_n^{p+1}r_{h_n,p}(x),
		\qquad
		\|r_{h_n,p}(x)\|\lesssim1.
		\]
		Therefore
		\[
		\begin{aligned}
			R_{\Delta,h_n,p}^{ij}g(x)
			&=e_0^\top (S_{h_n,p}^{i}(x))^{-1}T_{\Delta,h_n,p}^{ij,g}(x)\\
			&=e_0^\top b_{h_n,p}(x)
			+h_n^{p+1}e_0^\top (S_{h_n,p}^{i}(x))^{-1}r_{h_n,p}(x).
		\end{aligned}
		\]
		Since \(e_0^\top b_{h_n,p}(x)=m_{\Delta,g}^{ij}(x)=P_\Delta^{ij}g(x)\), the \(O(h_n^{p+1})\) bound follows. This completes the proof.
	\end{proof}}

\subsection{Proofs of the main-text fixed-mesh results}

	\begin{proof}[Proof of \Cref{thm:lp-fixedmesh-consistency}]
			Write \(S_n:=S_{h_n,p}^{i}(x)\) and \(T_n:=\mathbb E[\widehat T_{n,\Delta,h_n}^{ij,g,(p)}(x)]\). By stationarity and the kernel change of variables,
			\[
			S_n\to\varpi_i(x)M_p(K),
			\qquad
			T_n\to\varpi_i(x)P_\Delta^{ij}g(x)\eta_p(K).
			\]
			Part~\textup{(i)} of \Cref{ass:local-densities} makes the limiting design matrix positive definite, while part~\textup{(i)} of \Cref{ass:fixedmesh-smoothness} gives the displayed limit for \(T_n\). The stochastic bounds in \Cref{lem:lp-fixedmesh-lln} and \(nh_n^d\to\infty\) imply
			\[
			\widehat S_{n,h_n}^{i,(p)}(x)\xrightarrow{\mathbb P}\varpi_i(x)M_p(K),
			\qquad
			\widehat T_{n,\Delta,h_n}^{ij,g,(p)}(x)
			\xrightarrow{\mathbb P}
			\varpi_i(x)P_\Delta^{ij}g(x)\eta_p(K).
			\]
			Therefore the design matrix is invertible with probability tending to one. Since \(M_p(K)e_0=\eta_p(K)\), direct substitution into \eqref{eq:lp-est} and the continuous mapping theorem prove consistency.
	\end{proof}

	\begin{proof}[Proof of \Cref{thm:lp-fixedmesh-clt}]
			Write \(S_n:=S_{h_n,p}^{i}(x)\), \(T_n:=\mathbb E[\widehat T_{n,\Delta,h_n}^{ij,g,(p)}(x)]\), and \(b_n:=S_n^{-1}T_n\). By \Cref{thm:lp-fixedmesh-consistency}, the design matrix is invertible with probability tending to one. On this event, the exact identity
			\[
			\begin{aligned}
				\widehat P_{\Delta,h_n}^{ij}g(x)
				-R_{\Delta,h_n,p}^{ij}g(x)
				&=
				e_0^\top
				\bigl(\widehat S_{n,h_n}^{i,(p)}(x)\bigr)^{-1}\\
				&\quad\times
				\left[
				\widehat T_{n,\Delta,h_n}^{ij,g,(p)}(x)-T_n
				-\left\{\widehat S_{n,h_n}^{i,(p)}(x)-S_n\right\}b_n
				\right]
			\end{aligned}
			\]
			holds. The vector expansion in the proof of \Cref{lem:fixedmesh-bias} gives
			\[
			b_n=P_\Delta^{ij}g(x)e_0+O(h_n).
			\]
			Moreover, \Cref{lem:lp-scalar-clt} and the inverse identity
			\(A^{-1}-B^{-1}=B^{-1}(B-A)A^{-1}\) yield
			\[
			\bigl(\widehat S_{n,h_n}^{i,(p)}(x)\bigr)^{-1}
			-S_n^{-1}
			=
			O_{\mathbb P}\!\left((nh_n^d)^{-1/2}\right).
			\]
			Substituting these two estimates into the exact identity and using the definition of \(\zeta_{k,n}^{ij,g,(p)}(x)\) gives the scalar stochastic expansion
			\[
			\begin{aligned}
				\widehat P_{\Delta,h_n}^{ij}g(x)
				-R_{\Delta,h_n,p}^{ij}g(x)
				=
				\frac{1}{nh_n^d}
				\sum_{k=1}^n
				\left\{
				\zeta_{k,n}^{ij,g,(p)}(x)
				-\mathbb E[\zeta_{k,n}^{ij,g,(p)}(x)]
				\right\}
				+
				o_{\mathbb P}\!\left((nh_n^d)^{-1/2}\right).
			\end{aligned}
			\]
			Thus \Cref{lem:lp-scalar-clt} implies
			\[
			\sqrt{nh_n^d}
			\left(
			\widehat P_{\Delta,h_n}^{ij}g(x)
			-R_{\Delta,h_n,p}^{ij}g(x)
			\right)
			\xrightarrow{d}
			N\!\left(0,\sigma_{ij,\Delta,p}^{2,g}(x)\right).
			\]
			Finally, \Cref{lem:fixedmesh-bias} and the undersmoothing condition give
			\[
			\sqrt{nh_n^d}
			\left(
			R_{\Delta,h_n,p}^{ij}g(x)-P_\Delta^{ij}g(x)
			\right)
			=o(1).
			\]
			Slutsky's theorem completes the proof.
	\end{proof}

\subsubsection{Normalized conditional moment}

	\begin{proof}
		For \(c=(c_1,c_2)^\top\in\mathbb R^2\), the linear combination of the two terminal responses is
		\[
		c_1R_k^{j,g}+c_2R_k^{j,1}
		=R_k^{j,c_1g+c_2}.
		\]
		The regression function of the combined probe is \(c_1P_\Delta^{ij}g+c_2P_\Delta^{ij}1\), and its conditional second moment is
		\[
		c_1^2P_\Delta^{ij}(g^2)
		+2c_1c_2P_\Delta^{ij}g
		+c_2^2P_\Delta^{ij}1.
		\]
		Thus the smoothness and continuity conditions required in the proof of \Cref{lem:lp-scalar-clt} hold for every fixed \(c\). That proof also covers zero limiting variance, in which case the scalar limit is degenerate. Applying its scalar linearization and \Cref{lem:fixedmesh-bias} to \(c_1g+c_2\), then using the Cram\'er--Wold device, gives the displayed bivariate limit. Direct conditional covariance calculation gives \(\Omega_{\Delta,g}^{ij}(x)\). By \Cref{thm:lp-fixedmesh-consistency} applied to the probe \(1\), \(\widehat P_{\Delta,h_n}^{ij}1(x)\xrightarrow{\mathbb P}\pi_\Delta^{ij}(x)>0\). Hence the ratio is defined with probability tending to one. The gradient of \((u,v)\mapsto u/v\) at \((\mu_{\Delta,g}^{ij}(x),\pi_\Delta^{ij}(x))\) is
		\[
		\frac{1}{\pi_\Delta^{ij}(x)}
		\begin{pmatrix}1\\-M_\Delta^{ij}g(x)\end{pmatrix}.
		\]
		The multivariate delta method therefore gives the ratio limit. Multiplication of this gradient by \(\Sigma_{\Delta,g}^{ij}(x)\) yields the displayed expression for \(\tau_{ij,\Delta,g}^2(x)\).
	\end{proof}

\subsubsection{Plug-in variance consistency}

Use the same kernel \(K\), bandwidth \(h_n\), and degree \(p\) in
\(\widehat P_{\Delta,h_n}^{ij}g(x)\) and
\(\widehat P_{\Delta,h_n}^{ij}(g^2)(x)\), and define the local stationary-density estimator
\[
\widehat\varpi_{i,n}(x)
:=
\frac{1}{nh_n^d}\sum_{k=1}^n
K\!\left(\frac{Y_{k-1}-x}{h_n}\right)
\mathbf 1_{\{\Lambda_{t_{k-1}}=i\}}.
\]
Write \([u]_+:=\max\{u,0\}\), recall
\[
\kappa_p(K)
:=
e_0^\top M_p(K)^{-1}Q_p(K)M_p(K)^{-1}e_0,
\]
and let
\[
\mathcal E_n
:=
\left\{
\widehat S_{n,h_n}^{i,(p)}(x)\ \text{is invertible},
\widehat\varpi_{i,n}(x)>0
\right\}.
\]
On \(\mathcal E_n\), define
\begin{equation}\label{eq:supp-fixedmesh-plugin-variance}
\widehat\sigma_{ij,\Delta,p,n}^{2,g}(x)
:=
\frac{
\left[
\widehat P_{\Delta,h_n}^{ij}(g^2)(x)
-\{\widehat P_{\Delta,h_n}^{ij}g(x)\}^2
\right]_+
}{\widehat\varpi_{i,n}(x)}
\kappa_p(K).
\end{equation}
Thus a negative raw conditional-variance estimate is truncated at zero.

\begin{proposition}[Fixed-mesh plug-in variance consistency]
\label{prop:supp-fixedmesh-plugin-variance}
Fix \(\Delta>0\), \(p\ge0\), \(g\in C_b(\mathbb R^d)\), a regime pair \((i,j)\), and a design point \(x\). Suppose Assumptions~A1--A2, A3\textup{(i)}, and A4\textup{(i), (iii), and (iv)} hold on \(U_x\). If \(h_n\to0\) and \(nh_n^d\to\infty\), then
\[
\widehat\sigma_{ij,\Delta,p,n}^{2,g}(x)
\xrightarrow{\mathbb P}
\sigma_{ij,\Delta,p}^{2,g}(x).
\]
Moreover, the expression inside the positive part in \eqref{eq:supp-fixedmesh-plugin-variance} is positive with probability tending to one.
\end{proposition}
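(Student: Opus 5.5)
The plan is to reduce everything to the fixed-mesh consistency argument of \Cref{thm:lp-fixedmesh-consistency}, applied to the two probes \(g\) and \(g^2\), together with the density estimator \(\widehat\varpi_{i,n}(x)\). Then conclude by continuous mapping.

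\textbf{Denominator.} The change of variables \(y=x+h_nu\) gives \(\mathbb E[\widehat\varpi_{i,n}(x)]\to\varpi_i(x)\), using continuity of \(\varpi_i\) at \(x\) and \(\int K=1\). \Cref{lem:lp-fixedmesh-lln}, whose proof covers any bounded localized coordinate, in particular the constant-monomial entry of the design matrix, makes the centered term \(O_{\mathbb P}((nh_n^d)^{-1/2})=o_{\mathbb P}(1)\). Hence \(\widehat\varpi_{i,n}(x)\xrightarrow{\mathbb P}\varpi_i(x)>0\). The same lemma and the argument of \Cref{thm:lp-fixedmesh-consistency} give invertibility of \(\widehat S_{n,h_n}^{i,(p)}(x)\) with probability tending to one, so \(\mathbb P(\mathcal E_n)\to1\).

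\textbf{Numerator.} I would apply \Cref{thm:lp-fixedmesh-consistency} twice. First to \(g\), using A4(i) for \(g\), which gives \(\widehat P_{\Delta,h_n}^{ij}g(x)\to m_{\Delta,g}^{ij}(x)\). Second to \(g^2\in C_b(\mathbb R^d)\). For the second application, the required continuity at \(x\) of \(P_\Delta^{ij}(g^2)=s_{\Delta,g}^{ij}\) is exactly A4(iii). That consistency proof uses only A1--A2, A3(i), and continuity at \(x\) of the regression function, so it applies verbatim and gives \(\widehat P_{\Delta,h_n}^{ij}(g^2)(x)\to s_{\Delta,g}^{ij}(x)\). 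Both estimators share the same design matrix, so they live on the same event. The continuous mapping theorem then gives
\[
\widehat P_{\Delta,h_n}^{ij}(g^2)(x)-\{\widehat P_{\Delta,h_n}^{ij}g(x)\}^2\xrightarrow{\mathbb P}v_{\Delta,g}^{ij}(x).
\]

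\textbf{Conclusion.} By A4(iv), \(v_{\Delta,g}^{ij}(x)>0\), so the raw difference exceeds \(v_{\Delta,g}^{ij}(x)/2>0\) with probability tending to one. This proves the positivity claim, and on that event the positive part is inactive. The map \((u,w)\mapsto [u]_+\kappa_p(K)/w\) is continuous at \((v_{\Delta,g}^{ij}(x),\varpi_i(x))\) with \(w>0\). Continuous mapping on \(\mathcal E_n\), together with the convention that estimators vanish on \(\mathcal E_n^c\), which is a set of vanishing probability, yields convergence to \(\sigma^{2,g}_{ij,\Delta,p}(x)\) as defined in \eqref{eq:fixedmesh-scalar-variance}.

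\textbf{Main obstacle.} The only delicate point is bookkeeping: confirming that the consistency theorem for the probe \(g^2\) needs only continuity of \(s_{\Delta,g}^{ij}\) at \(x\), and not A4(ii). Rates such as \((\log n)^2/(nh_n^d)\to0\) or undersmoothing are unnecessary, since no central limit theorem is involved.
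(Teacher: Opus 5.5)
Your proposal is correct and follows the paper's proof essentially step for step. Both use the localized law of large numbers (\Cref{lem:lp-fixedmesh-lln}) on the constant design coordinate for $\widehat\varpi_{i,n}(x)$, then \Cref{thm:lp-fixedmesh-consistency} applied to $g$ and to $g^2$ via A4(iii), then A4(iv) to make the positive part inactive with probability tending to one, and finally continuous mapping; your explicit change-of-variables limit for $\mathbb E[\widehat\varpi_{i,n}(x)]$ and your handling of $\mathcal E_n^c$ simply spell out what the paper leaves implicit.
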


\begin{proof}
The localized law of large numbers in \Cref{lem:lp-fixedmesh-lln}, applied to the scalar design coordinate, gives
\(\widehat\varpi_{i,n}(x)\xrightarrow{\mathbb P}\varpi_i(x)>0\).
Applying \Cref{thm:lp-fixedmesh-consistency} first to \(g\), and then to \(g^2\) using A4\textup{(iii)}, yields
\[
\widehat P_{\Delta,h_n}^{ij}g(x)
\xrightarrow{\mathbb P}P_\Delta^{ij}g(x),
\qquad
\widehat P_{\Delta,h_n}^{ij}(g^2)(x)
\xrightarrow{\mathbb P}P_\Delta^{ij}(g^2)(x).
\]
Hence the raw variance estimate converges in probability to
\(v_{\Delta,g}^{ij}(x)>0\). It follows that \(\mathbb P(\mathcal E_n)\to1\), the positive-part truncation is inactive with probability tending to one, and the continuous mapping theorem applied to \eqref{eq:supp-fixedmesh-plugin-variance} proves the assertion.
\end{proof}

\endgroup

\clearpage
\section{Arbitrary-order separate-lag recovery}
\label{sec:supp-arbitrary-recovery}
\begingroup

For an arbitrary fixed order \(k\), ordinary forward differences of separately estimated blocks give a consistent algebraic recovery principle under strong sufficient rate conditions. This section collects the separate-lag estimator, its recovery-specific assumptions, the weighted rare-switch lemma, the finite-lag stochastic and smoothing bounds, and the complete consistency proof. The common-design first- and second-order central limit theorems remain in Section~5 of the main paper.

	{A single scaled block estimator does not isolate $A_k^{ij}g(x)$, because its short-time expansion also contains the lower-order block-generator coefficients $A_0^{ij}g(x),\ldots,A_{k-1}^{ij}g(x)$. Estimating and subtracting those terms separately would require lower-order plug-in errors to remain negligible after multiplication by $\Delta_n^{-k}$. Instead, we combine block estimators at equidistant lags with the ordinary forward-difference weights: these weights annihilate every lower-order power of the lag and retain the order-$k$ block-generator coefficient. The construction below implements this cancellation directly.}

	{The recovery estimator uses the sampled blocks at the equidistant lags \(r\Delta_n\), \(r=0,\ldots,k\). For each \(r=1,\ldots,k\), let}
	$(Y_{s,n},\Lambda_{t_{s,n}})$,
	$(Y_{s+r,n},\Lambda_{t_{s+r,n}})$,
	$0\le s\le n-r$,
	be the \(r\)-step pairs. Define the corresponding local-polynomial design matrix and response vector by
	\[
	\begin{aligned}
		\widehat S_{n,r,h_n}^{i,(p)}(x)
		&:=\frac{1}{(n-r+1)h_n^d}\sum_{s=0}^{n-r}
		K\!\left(\frac{Y_{s,n}-x}{h_n}\right)
		\mathbf 1_{\{\Lambda_{t_{s,n}}=i\}}
		\psi_p\!\left(\frac{Y_{s,n}-x}{h_n}\right)
		\psi_p\!\left(\frac{Y_{s,n}-x}{h_n}\right)^\top,\\
		\widehat T_{n,r\Delta_n,h_n}^{ij,g,(p)}(x)
		&:=\frac{1}{(n-r+1)h_n^d}\sum_{s=0}^{n-r}
		K\!\left(\frac{Y_{s,n}-x}{h_n}\right)
		\mathbf 1_{\{\Lambda_{t_{s,n}}=i,\ \Lambda_{t_{s+r,n}}=j\}}
		g(Y_{s+r,n})
		\psi_p\!\left(\frac{Y_{s,n}-x}{h_n}\right).
	\end{aligned}
	\]
	Let
	$S_{0,n,r}^{i,(p)}(x):=\mathbb E[\widehat S_{n,r,h_n}^{i,(p)}(x)]$ and
	$T_{0,n,r}^{ij,g,(p)}(x):=
	\mathbb E[\widehat T_{n,r\Delta_n,h_n}^{ij,g,(p)}(x)]$.
	Whenever \(\widehat S_{n,r,h_n}^{i,(p)}(x)\) and \(S_{0,n,r}^{i,(p)}(x)\) are invertible, set
	\begin{align*}
		&\widehat P_{r\Delta_n,h_n}^{ij,(p)}g(x)
		:=
		e_0^\top\left(\widehat S_{n,r,h_n}^{i,(p)}(x)\right)^{-1}
		\widehat T_{n,r\Delta_n,h_n}^{ij,g,(p)}(x), \\
		&R_{r\Delta_n,h_n,p}^{ij}g(x)
		:=
		e_0^\top\left(S_{0,n,r}^{i,(p)}(x)\right)^{-1}
		T_{0,n,r}^{ij,g,(p)}(x).
	\end{align*}
	{Put
		\[
		\widehat P_{0,h_n}^{ij,(p)}g(x):=A_0^{ij}g(x)=\delta_{ij}g(x).
		\]}
	{For \(r=0,\ldots,k\), define
		\[
		c_{k,r}:=(-1)^{k-r}\binom{k}{r}.
		\]
		These coefficients satisfy
		\[
		\sum_{r=0}^k c_{k,r}r^m=0,
		\qquad 0\le m<k,
		\qquad
		\sum_{r=0}^k c_{k,r}r^k=k!,
		\]
		where \(0^0=1\). The moment equations form a nonsingular Vandermonde system on the distinct nodes \(0,1,\ldots,k\), and hence determine the weights uniquely; see \citet{fornberg1988}.

		\begin{definition}[Finite-difference recovery estimator]
			Define the order-\(k\) recovery estimator by the ordinary forward difference
			\begin{equation}\label{eq:forward-difference-recovery}
				\widehat A_{k,n}^{(p),ij}g(x)
				:=
				\frac{1}{\Delta_n^k}
				\sum_{r=0}^k c_{k,r}
				\widehat P_{r\Delta_n,h_n}^{ij,(p)}g(x).
			\end{equation}
		\end{definition}
	}

	{The remaining two assumptions are recovery-specific.}

	{\begin{assumption}[Local recovery regularity]\label[assumption]{ass:recovery-regularity}
			For the recovery order \(k\), polynomial degree \(p\), regime pair \((i,j)\), bounded test function \(g\), and design point \(x\), the neighborhood \(U_x\) satisfies:
			\begin{enumerate}[label=\textup{(\roman*)}]
				\item the finite-lag sampled blocks satisfy
				\[
				\sup_{1\le r\le k}\sup_{n\ge1}
				\|P_{r\Delta_n}^{ij}g\|_{C^{p+1}(U_x)}<\infty;
				\]
				\item the coefficients \(A_\ell^{ij}g\), \(\ell=0,\ldots,k\), are well defined on \(U_x\), and
				\[
				\max_{1\le r\le k}\sup_{y\in U_x}
				\left|
				P_{r\Delta_n}^{ij}g(y)
				-\sum_{\ell=0}^k\frac{(r\Delta_n)^\ell}{\ell!}A_\ell^{ij}g(y)
				\right|
				=o(\Delta_n^k).
				\]
			\end{enumerate}
	\end{assumption}}

	{Unlike the one-mesh smoothness condition A4, part~\textup{(i)} of \Cref{ass:recovery-regularity} requires derivative control uniformly over \(r\Delta_n\downarrow0\), \(1\le r\le k\).}

	{\begin{corollary}\label[corollary]{cor:model-recovery-expansion}
			Fix \(k\ge1\), \(j\in S\), and \(g\in C_c^{2k}(\mathbb R^d)\). Suppose \Cref{ass:wellposedness} and the coefficient hypotheses of \Cref{prop:kth} hold. Then, for every \(i\in S\), design point \(x\), and neighborhood \(U_x\Subset\mathbb R^d\), part~\textup{(ii)} of \Cref{ass:recovery-regularity} holds.
		\end{corollary}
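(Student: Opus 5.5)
The plan is to reduce part~\textup{(ii)} of \Cref{ass:recovery-regularity} directly to \Cref{prop:kth}. First, take a compact set $K$ that contains $\overline U_x$, which is possible because $U_x\Subset\mathbb R^d$. Since $g\in C_c^{2k}(\mathbb R^d)$ and the coefficient hypotheses of \Cref{prop:kth} hold through order $k$, the proof of that proposition shows that the iterates $\mathcal L^\ell f_g^{(j)}$, $\ell=0,\ldots,k$, are well defined. It also shows that $u_k=\mathcal L^kf_g^{(j)}$ is bounded and continuous with compact support. Hence each $A_\ell^{ij}g=(\mathcal L^\ell f_g^{(j)})(\cdot,i)$ is well defined on $U_x$, and the first clause of part~\textup{(ii)} holds.

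Second, we apply \eqref{eq:kth-expansion} uniformly on $K$. This gives a function $\varepsilon(t)\to0$ as $t\downarrow0$ such that
\[
\sup_{y\in K}\Bigl|P_t^{ij}g(y)-\sum_{\ell=0}^k\frac{t^\ell}{\ell!}A_\ell^{ij}g(y)\Bigr|\le\varepsilon(t)\,t^k .
\]
Explicitly, one can take $\varepsilon(t)=\sup_{0<s\le t}\sup_{z\in K\times S}|P_su_k(z)-u_k(z)|/k!$. This choice comes from the integral remainder in the proof of \Cref{prop:kth}, together with the compact-uniform convergence $P_su_k\to u_k$ established there.

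Third, we substitute $t=r\Delta_n$ for each fixed $r\in\{1,\ldots,k\}$. The remainder is then bounded by $\varepsilon(r\Delta_n)\,r^k\Delta_n^k\le k^k\varepsilon(k\Delta_n)\Delta_n^k$, using that $\varepsilon$ is nondecreasing in $t$. Since $\Delta_n\to0$ by \eqref{eq:recovery-mesh-bandwidth}, the maximum over the finitely many $r$ is $o(\Delta_n^k)$, which is exactly part~\textup{(ii)}.

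The only point needing care is that the remainder in \Cref{prop:kth} is uniform in $t$ near zero, not merely along a fixed sequence. This monotone uniformity is what allows the rescaling $t=r\Delta_n$ with $r\le k$ without losing the $o(\cdot)$ control. The choice of $\varepsilon$ as a supremum over $s\le t$ handles this automatically. The estimate is therefore a direct rescaling of the established expansion, and no new probabilistic input is required.
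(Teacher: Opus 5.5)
Your proposal is correct and follows the paper's proof: apply the compact-uniform expansion of \Cref{prop:kth} on \(\overline U_x\) with \(\Delta=r\Delta_n\), use \(o((r\Delta_n)^k)=o(\Delta_n^k)\) for each fixed \(r\), and take the maximum over the finitely many lags. Your explicit monotone modulus \(\varepsilon(t)\) from the integral remainder is valid but not needed, because an \(o(\Delta^k)\) statement as \(\Delta\downarrow0\) already applies along any sequence \(r\Delta_n\to0\).
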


		\begin{proof}
			Apply the compact-uniform expansion in \Cref{prop:kth} on \(\overline U_x\) with \(\Delta=r\Delta_n\). For each fixed \(r=1,\ldots,k\), its remainder is \(o((r\Delta_n)^k)=o(\Delta_n^k)\), uniformly on \(U_x\). Taking the maximum over the finite set of lags preserves this order and proves part~\textup{(ii)} of \Cref{ass:recovery-regularity}.
		\end{proof}

		The recovery theorem is therefore stated under the abstract local expansion in part~\textup{(ii)} of \Cref{ass:recovery-regularity}, while \Cref{cor:model-recovery-expansion} supplies a model-level route from smooth coefficients. Part~\textup{(i)} of \Cref{ass:recovery-regularity} remains an independent smoothing requirement.}

	{The recovery argument combines the two parts of \Cref{ass:recovery-regularity} with a single finite-lag local-polynomial bound. The binomial weights in \eqref{eq:forward-difference-recovery} then cancel all lower-order Taylor terms.}

	{\begin{assumption}\label[assumption]{ass:rare-switch}
			{Let \(V:E\to[1,\infty)\) be the Lyapunov function in \Cref{ass:mixing}. There exists a constant \(C_{\rm sw}<\infty\) such that}
			\[
			q_{\ell m}(y)V(y,m)
			\le C_{\rm sw}V(y,\ell),
			\qquad y\in\mathbb R^d,\quad \ell\ne m.
			\]
		\end{assumption}

		\begin{lemma}\label[lemma]{lem:weighted-rare-switch}
			Under \Cref{ass:wellposedness,ass:mixing} and \Cref{ass:rare-switch}, there exists \(C<\infty\) such that, for every \(t\ge0\), \(y\in\mathbb R^d\), and \(i\ne j\),
			\[
			\mathbb E_{(y,i)}\!\left[
			V(X_t,\Lambda_t)\mathbf 1_{\{\Lambda_t=j\}}
			\right]
			\le CtV(y,i).
			\]
		\end{lemma}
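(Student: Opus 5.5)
The plan is to reduce to small times and then control the terminal weight through the jumps of the regime process, conditioning at each jump time and using \Cref{ass:rare-switch} to transfer the weight back to the pre-jump regime.

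\emph{Step 1: a uniform semigroup bound for \(V\).} Apply \Cref{ass:mixing} to the test function \(f=V\), which satisfies \(|f|\le V\). This gives \(|P_uV(z)-\nu(V)|\le C_{\rm erg}e^{-\lambda_{\rm erg}u}V(z)\). Since \(V\ge1\) and \(\nu(V)<\infty\), it follows that
\[
P_uV(z)\le C_1V(z),\qquad C_1:=\nu(V)+C_{\rm erg},\qquad u\ge0,\ z\in E.
\]
In particular, for \(t\ge1\) the left side of the claim is at most \(P_tV(y,i)\le C_1V(y,i)\le C_1tV(y,i)\). It therefore suffices to treat \(0\le t\le1\), although the argument below works for every \(t\).

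\emph{Step 2: domination by a sum over jumps.} Let \(N_t\) be the number of regime jumps on \((0,t]\). Because \(j\ne i\), the event \(\{\Lambda_t=j\}\) forces \(N_t\ge1\). Hence \(V(Z_t)\mathbf 1_{\{\Lambda_t=j\}}\le V(Z_t)N_t=\sum_{s\le t,\,\Delta\Lambda_s\ne0}V(Z_t)\). Fix \(t\), and consider the martingale \(M_u:=P_{t-u}V(Z_u)\) for \(u\le t\), which is justified by the Markov property. The plan is to show
\[
\mathbb E_{(y,i)}[M_tN_t]=\mathbb E_{(y,i)}\Bigl[\sum_{s\le t,\,\Delta\Lambda_s\ne0}M_s\Bigr].
\]
To obtain this, condition each term \(V(Z_t)\) on \(\mathcal F_s\) at the jump time \(s\), which is a stopping time: enumerate the jumps as \(\tau_1<\tau_2<\cdots\) and use the strong Markov property on \(\{\tau_k\le t\}\).

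\emph{Step 3: compensation.} By the Poisson representation \eqref{eq:model}, the jump measure of \(\Lambda\) has predictable compensator \(\sum_{m\ne\Lambda_{s-}}q_{\Lambda_{s-}m}(X_s)\,ds\,\delta_m\). Therefore
\[
\mathbb E\Bigl[\sum_{s\le t,\,\Delta\Lambda_s\ne0}P_{t-s}V(Z_s)\Bigr]
=\mathbb E\int_0^t\sum_{m\ne\Lambda_s}q_{\Lambda_sm}(X_s)\,P_{t-s}V(X_s,m)\,ds.
\]
Step 1 gives \(P_{t-s}V(X_s,m)\le C_1V(X_s,m)\), and \Cref{ass:rare-switch} gives \(q_{\Lambda_sm}(X_s)V(X_s,m)\le C_{\rm sw}V(Z_s)\). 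The integrand is thus at most \((m-1)C_1C_{\rm sw}V(Z_s)\), where \(m=|S|\). Using Step 1 once more, \(\mathbb E_{(y,i)}V(Z_s)=P_sV(y,i)\le C_1V(y,i)\). Integrating over \([0,t]\) yields the bound \(CtV(y,i)\) with \(C=(m-1)C_1^2C_{\rm sw}\).

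\emph{Main obstacle.} The delicate point is rigor in Steps 2 and 3 with the unbounded weight \(V\), since \(M\) need not be bounded and the compensator identity is first proved for bounded integrands. I would first run the argument with \(V\wedge L\) in the terminal factor and with \(N_t\wedge K\), that is, summing only over the first \(K\) jumps. In that setting each conditioning step is a bounded strong-Markov computation, and the compensator identity applies to the bounded predictable integrand \(\mathbf 1_{\{s\le\tau_K\}}P_{t-s}(V\wedge L)\). I would then let \(L,K\to\infty\) by monotone convergence. The bounded total switching rate from \Cref{ass:wellposedness} ensures \(N_t<\infty\) almost surely, so the truncation in \(K\) is removed without loss.
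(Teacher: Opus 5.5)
Your proposal is correct and follows essentially the paper's route: the bound $P_uV\le(\nu(V)+C_{\rm erg})V$ from \Cref{ass:mixing}, the strong Markov property at switching times, the compensator $\sum_{m\ne\Lambda_{s-}}q_{\Lambda_{s-}m}(X_s)\,ds$ of the regime jumps, and \Cref{ass:rare-switch}. The differences are cosmetic. The paper uses only the first switching time $\tau$, since $\{\Lambda_t=j\}\subset\{\tau\le t\}$, rather than summing over all jumps via $N_t$. Your truncation in $L$ and $K$ is also unnecessary, because the strong Markov and compensator identities already hold for nonnegative, possibly unbounded, predictable integrands by monotone convergence.
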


		\begin{proof}
			First, \Cref{ass:mixing} and \(V\ge1\) imply, for every \(s\ge0\) and \(z\in E\),
			\[
			P_sV(z)
			\le \nu(V)+C_{\rm erg}e^{-\lambda_{\rm erg}s}V(z)
			\le C_VV(z),
			\qquad C_V:=\nu(V)+C_{\rm erg}.
			\]
			Thus the separate short-time semigroup bound previously used in the rare-switch condition follows already from exponential \(V\)-ergodicity.

			Let \(\tau:=\inf\{s>0:\Lambda_s\ne i\}\) be the first switching time for a process started from \(z=(y,i)\). If \(\Lambda_t=j\ne i\), then \(\tau\le t\). The strong Markov property at \(\tau\) therefore gives
			\[
			\mathbb E_z\!\left[V(Z_t)\mathbf 1_{\{\Lambda_t=j\}}\right]
			\le
			C_V\mathbb E_z\!\left[V(Z_\tau)\mathbf 1_{\{\tau\le t\}}\right].
			\]
			By the compensator formula for the first regime jump, continuity of \(X\) at \(\tau\), and \Cref{ass:rare-switch},
			\[
			\begin{aligned}
				\mathbb E_z\!\left[V(Z_\tau)\mathbf 1_{\{\tau\le t\}}\right]
				&=
				\mathbb E_z\!\left[
				\int_0^{t\wedge\tau}
				\sum_{m\ne i}q_{im}(X_s)V(X_s,m)\,ds
				\right]\\
				&\le (|S|-1)C_{\rm sw}
				\int_0^t
				\mathbb E_z\!\left[V(X_s,i)\mathbf 1_{\{s<\tau\}}\right]ds\\
				&\le (|S|-1)C_{\rm sw}\int_0^tP_sV(z)\,ds
				\le (|S|-1)C_{\rm sw}C_VtV(z).
			\end{aligned}
			\]
			Combining the last two displays proves the claim. In particular, for fixed \(k\), the estimate at \(t=r\Delta_n\), \(1\le r\le k\), is uniform in \(r\) and has order \(\Delta_nV(y,i)\).
	\end{proof}}

	{
		\begin{theorem}[Preliminary arbitrary-order recovery consistency]\label{thm:recovery}
			Fix \(k\ge1\), \(p\ge0\), a regime pair \((i,j)\), a bounded test function \(g\in C_b(\mathbb R^d)\), and a design point \(x\in\mathbb R^d\). {Suppose \Cref{ass:wellposedness,ass:mixing}, \Cref{ass:recovery-regularity,ass:rare-switch}, and only part~\textup{(i)} of \Cref{ass:local-densities} hold, with the local conditions imposed on \(U_x\).} If
			\[
			h_n^{p+1}=o(\Delta_n^k),
			\qquad
			\begin{cases}
				n\Delta_n^{2k+1}h_n^d\to\infty, & i=j,\\[1mm]
				n\Delta_n^{2k-1}h_n^d\to\infty, & i\ne j,
			\end{cases}
			\]
			then
			\[
			\widehat A_{k,n}^{(p),ij}g(x)\xrightarrow{\mathbb P}A_k^{ij}g(x).
			\]

		\end{theorem}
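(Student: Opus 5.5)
We decompose the estimation error as
\[
\widehat A_{k,n}^{(p),ij}g(x)-A_k^{ij}g(x)
=\frac{1}{\Delta_n^k}\sum_{r=1}^k c_{k,r}\bigl\{\widehat P_{r\Delta_n,h_n}^{ij,(p)}g(x)-R_{r\Delta_n,h_n,p}^{ij}g(x)\bigr\}
+\frac{1}{\Delta_n^k}\sum_{r=1}^k c_{k,r}\bigl\{R_{r\Delta_n,h_n,p}^{ij}g(x)-P_{r\Delta_n}^{ij}g(x)\bigr\}
+\Bigl\{\frac{1}{\Delta_n^k}\sum_{r=0}^k c_{k,r}P_{r\Delta_n}^{ij}g(x)-A_k^{ij}g(x)\Bigr\},
\]
with the convention \(P_0^{ij}g=\delta_{ij}g\). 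The three terms are a stochastic term, a smoothing term and a temporal term, and we treat them in reverse order.

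\emph{Temporal term.} Substitute part~(ii) of \Cref{ass:recovery-regularity} at \(y=x\). The moment identities \(\sum_r c_{k,r}r^m=0\) for \(m<k\) and \(\sum_r c_{k,r}r^k=k!\) annihilate the orders \(\ell<k\). The order-\(k\) term contributes exactly \(A_k^{ij}g(x)\). The remainder is \(o(\Delta_n^k)\Delta_n^{-k}=o(1)\).

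\emph{Smoothing term.} Repeat the argument of \Cref{lem:fixedmesh-bias} for each lag \(r\). It uses part~(i) of \Cref{ass:recovery-regularity}, which gives a \(C^{p+1}\) bound uniform in \(n\) and \(r\), together with \(S_{0,n,r}^{i,(p)}(x)\to\varpi_i(x)M_p(K)\) by change of variables. This yields \(R_{r\Delta_n,h_n,p}^{ij}g(x)-P_{r\Delta_n}^{ij}g(x)=O(h_n^{p+1})\) uniformly in \(r\le k\), so the smoothing term is \(O(h_n^{p+1}/\Delta_n^k)=o(1)\).

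\emph{Stochastic term.} Use the exact identity from the proof of \Cref{thm:lp-fixedmesh-clt},
\[
\widehat P-R=e_0^\top\widehat S^{-1}\bigl[(\widehat T-T_0)-(\widehat S-S_0)b_{n,r}\bigr],
\qquad b_{n,r}=S_0^{-1}T_0,
\]
where \(b_{n,r}\) is bounded. \Cref{lem:shrinking-localization-vnorm}, applied to the design coordinates, gives \(\widehat S-S_0=O_{\mathbb P}((n\Delta_nh_n^d)^{-1/2})\). Hence \(\widehat S^{-1}=O_{\mathbb P}(1)\), and the invertibility events have probability tending to one.

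The bracket \((\widehat T-T_0)-(\widehat S-S_0)b_{n,r}\) is a centered localized average of the response \(K(\cdot)\mathbf 1_{\{\Lambda_{t_s}=i\}}\{R_{s,r}-\psi_p^\top b_{n,r}\}\psi_p\). Its \(L^2\) size must be bounded sharply enough to survive the division by \(\Delta_n^k\). We write each summand as
\[
\bigl\{\text{martingale-type innovation } R_{s,r}-P^{ij}_{r\Delta_n}g(Y_s)\bigr\}
+\bigl\{\text{function of } Z_s \text{ of size } O(\text{local spread})\bigr\},
\]
where \(R_{s,r}=g(Y_{s+r})\mathbf 1_{\{\Lambda_{t_{s+r}}=j\}}\). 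The function-of-\(Z_s\) part has coefficient \(P_{r\Delta_n}^{ij}g(Y_s)-\psi_p^\top b_{n,r}=O(h_n^{p+1})+O(\text{oscillation})\). Its variance is controlled by \Cref{lem:shrinking-localization-vnorm}, giving \(O((n\Delta_nh_n^d)^{-1/2})\) times a vanishing factor. That factor is not needed here anyway, since \(h_n^{p+1}=o(\Delta_n^k)\).

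The innovation parts at lag \(r\) are \(r\)-dependent martingale differences. Splitting \(s\) into residue classes mod \(r\), with \(r\le k\) classes, gives orthogonal sums. The variance is then \(\lesssim(nh_n^d)^{-1}\sup_{y\in U_x}\operatorname{Var}_{(y,i)}(R_r)\), which is \(O(\Delta_n/(nh_n^d))\). For \(i=j\) this conditional variance is \(O(\Delta_n)\) by the diffusion-and-exit computation of \Cref{lem:first-order-short-time-moments}. For \(i\ne j\) it is \(O(\Delta_n)\) by the rare-switch bound \(P_{r\Delta_n}^{ij}(g^2)\lesssim\Delta_n\).

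The off-diagonal case should also exploit that the mean response is itself \(O(\Delta_n)\). Then the design fluctuation multiplies \(b_{n,r}=O(\Delta_n)\) rather than \(O(1)\). The localized covariance bound for the response coordinates, with weight \(V\), then gains a factor \(\Delta_n\). For this we redo the proof of \Cref{lem:shrinking-localization-vnorm} with \(\mathbb E[|\phi_n|V]\) replaced by a bound using \Cref{lem:weighted-rare-switch}, which is exactly where \Cref{ass:rare-switch} enters. The resulting orders are:
\begin{itemize}
\item for \(i\ne j\), a stochastic term of order \((n\Delta_n^{-1}h_n^d)^{-1/2}\Delta_n\) inside, hence after dividing by \(\Delta_n^k\) of order \((n\Delta_n^{2k-1}h_n^d)^{-1/2}\);
\item for \(i=j\), a crude order \((n\Delta_nh_n^d)^{-1/2}\Delta_n^{-k}=(n\Delta_n^{2k+1}h_n^d)^{-1/2}\), using only \Cref{lem:shrinking-localization-vnorm} for every coordinate.
\end{itemize}
Both vanish under the stated rate conditions, and consistency follows.

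\textbf{Main obstacle.} The hard step is the off-diagonal stochastic bound. One must show that the lag-\(r\) response covariances, including the cross-lag terms through the Markov property, carry the extra factor \(\Delta_n\) uniformly in the lag. That requires bounding \(\mathbb E[\mathbf 1_{\{\Lambda_{t_0}=i\}}K\,V(Z_{r})\mathbf 1_{\{\Lambda_{t_r}=j\}}]\lesssim\Delta_nh_n^d\) via \Cref{lem:weighted-rare-switch}. The \(V\)-ergodic covariance estimate must then be applied from time \(r\Delta_n\) rather than from time \(0\).
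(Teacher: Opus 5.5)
Your three-term decomposition (stochastic, smoothing, temporal) and your handling of the temporal and smoothing terms match the paper's proof. The difference is in the stochastic term. There your route is genuinely different, and in fact stronger.

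\textbf{The paper's route.} Through \Cref{lem:finite-lag-recovery-bounds}, the paper bounds $\widehat T-T_0$ and $(\widehat S^{-1}-S_0^{-1})T_0$ separately. The response coordinates are two-time functionals of $(Z_{s,n},Z_{s+r,n})$, so it treats overlapping lags $m\le r$ by Cauchy--Schwarz and separated lags $m>r$ by conditioning at time $r\Delta_n$. Off the diagonal it then invokes \Cref{lem:weighted-rare-switch}, hence \Cref{ass:rare-switch}, twice:
\begin{itemize}
\item once for $\|\varphi_{n,r,\alpha}\|_{V,\infty}\lesssim\Delta_n$;
\item once for $\mathbb E[|\bar H_{0,n,r}^{ij,\alpha}|V(Z_{r,n})]\lesssim\Delta_nh_n^d$.
\end{itemize}
So each separated-lag covariance is $O(\Delta_n^2h_n^de^{-\lambda_{\rm erg}(m-r)\Delta_n})$. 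The lag sum costs a factor $\Delta_n^{-1}$ and is still $O(\Delta_nh_n^d)$. Combined with $T_0=O(\Delta_n)$, this yields $a_{n,p}^{ij}(x)$.

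\textbf{Your route.} You observe that the bracket equals exactly $\widehat T-\widehat S b_{n,r}$, because $T_0=S_0b_{n,r}$. You split it into the innovations $R_{s,r}-P^{ij}_{r\Delta_n}g(Y_{s,n})$ plus a single-state residual. This removes the two-time covariance analysis entirely:
\begin{itemize}
\item The innovations have zero covariance beyond lag $r-1$.
\item Off the diagonal, their conditional variance is at most $\|g\|_\infty^2C_q\,r\Delta_n$ by \Cref{ass:wellposedness} alone.
\item The residual $P^{ij}_{r\Delta_n}g(Y_{s,n})-\psi_p^\top b_{n,r}$ is $O(h_n^{p+1})$ on the kernel support, by the Taylor argument of \Cref{lem:fixedmesh-bias}. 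So \Cref{lem:shrinking-localization-vnorm} applies to it directly, giving $O_{\mathbb P}(h_n^{p+1}(n\Delta_nh_n^d)^{-1/2})=o_{\mathbb P}((\Delta_n/(nh_n^d))^{1/2})$, since $h_n^{p+1}=o(\Delta_n)$.
\end{itemize}
This already proves the off-diagonal bound without \Cref{ass:rare-switch}. On the diagonal, even the crude bound $\operatorname{Var}_{(y,i)}(R_r)\le\|g\|_\infty^2$ makes the innovation part $O_{\mathbb P}((nh_n^d)^{-1/2})$, better than the paper's $(n\Delta_nh_n^d)^{-1/2}$.

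\textbf{Comparison.} The paper's route is a coordinatewise extension of its design-matrix and $V$-norm machinery. Yours exploits the martingale structure, and it buys weaker hypotheses and sharper rates.

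\textbf{Points to clean up.}

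First, your split closes the argument, so the ``main obstacle'' paragraph is unnecessary. If you do follow the $V$-weighted route instead, replacing only $\mathbb E[|\phi_n|V]$ by $O(\Delta_nh_n^d)$ gains a single factor $\Delta_n$. After summing over lags that gives only $\widehat T-T_0=O_{\mathbb P}((nh_n^d)^{-1/2})$, which would require $n\Delta_n^{2k}h_n^d\to\infty$. You would also need $\|\varphi_{n,r,\alpha}\|_{V,\infty}\lesssim\Delta_n$.

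Second, \Cref{lem:shrinking-localization-vnorm} does not literally apply to the two-time response coordinates. Your ``crude'' diagonal bound ``using only'' that lemma should be run through your split, or through conditioning at time $r\Delta_n$ as the paper does.

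Third, drop the ``$O(\text{oscillation})$'' term. The residual is $O(h_n^{p+1})$. An oscillation bound of order $h_n$ would not survive division by $\Delta_n^k$ off the diagonal when $h_n\gg\Delta_n$.

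Fourth, your appeal to \Cref{lem:first-order-short-time-moments} for an $O(\Delta_n)$ diagonal conditional variance needs $g\in C_c^4(\mathbb R^d)$ or $g\equiv1$, not merely $g\in C_b(\mathbb R^d)$. This is harmless only because you never use it.
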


		The proof of \Cref{thm:recovery} is given after the following finite-lag stochastic and smoothing bounds for the sampled-block estimators.}

	\begin{lemma}[Finite-lag bounds]\label[lemma]{lem:finite-lag-recovery-bounds}
		{Fix \(k\ge1\), a polynomial degree \(p\ge0\), a regime pair \((i,j)\), a bounded test function \(g\in C_b(\mathbb R^d)\), and a design point \(x\in\mathbb R^d\). {Suppose \Cref{ass:wellposedness,ass:mixing} and \Cref{ass:rare-switch}, part~\textup{(i)} of \Cref{ass:local-densities}, and part~\textup{(i)} of \Cref{ass:recovery-regularity} hold, with the local conditions imposed on \(U_x\).}} Then, for the \(r\)-step estimators constructed above,
		\[
		\max_{1\le r\le k}
		\left|
		\widehat P_{r\Delta_n,h_n}^{ij,(p)}g(x)
		-R_{r\Delta_n,h_n,p}^{ij}g(x)
		\right|
		=O_{\mathbb P}\!\left(a_{n,p}^{ij}(x)\right),
		\]
		and
		\[
		\max_{1\le r\le k}
		\left|
		R_{r\Delta_n,h_n,p}^{ij}g(x)
		-P_{r\Delta_n}^{ij}g(x)
		\right|
		=O(h_n^{p+1}),
		\]
		{where the displayed sufficient stochastic upper bound is
			\[
			a_{n,p}^{ij}(x):=
			\begin{cases}
				(n\Delta_n h_n^d)^{-1/2}, & i=j,\\[1mm]
				\left(\dfrac{\Delta_n}{n h_n^d}\right)^{1/2}, & i\ne j.
			\end{cases}
			\]}
		Moreover,
		\[
		\mathbb P\!\left(
		\widehat S_{n,r,h_n}^{i,(p)}(x)
		\ \text{is invertible for all }1\le r\le k
		\right)\to1.
		\]

	\end{lemma}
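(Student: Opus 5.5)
The plan is to treat each fixed lag \(r\in\{1,\ldots,k\}\) separately and then take the maximum over the finite set of lags. There are three pieces: the design matrix, the smoothing bias, and the stochastic term. The \(r\)-step sample \((Z_{s,n})_{0\le s\le n-r}\) is a stationary Markov chain with step \(\Delta_n\), so \Cref{lem:shrinking-localization-vnorm} applies coordinatewise to the design entries. This gives
\[
\widehat S_{n,r,h_n}^{i,(p)}(x)-S_{0,n,r}^{i,(p)}(x)=O_{\mathbb P}\bigl((n\Delta_nh_n^d)^{-1/2}\bigr).
\]
The kernel change of variables gives \(S_{0,n,r}^{i,(p)}(x)\to\varpi_i(x)M_p(K)\), which is positive definite. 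Hence invertibility for all \(r\le k\) holds with probability tending to one, since \(n\Delta_nh_n^d\to\infty\) is implied by either rate regime. For the bias, we will repeat the proof of \Cref{lem:fixedmesh-bias} verbatim with \(m=P_{r\Delta_n}^{ij}g\). Part~(i) of \Cref{ass:recovery-regularity} gives a Taylor remainder bounded by \(h_n^{p+1}\) uniformly in \(n\) and \(r\), and \(\|(S_{0,n,r}^{i,(p)})^{-1}\|\lesssim1\). This yields the \(O(h_n^{p+1})\) bound.

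For the stochastic term, we will use the exact identity from the proof of \Cref{thm:lp-fixedmesh-clt}:
\[
\widehat P-R=e_0^\top\widehat S^{-1}\bigl[\widehat T-T_0-(\widehat S-S_0)b_n\bigr],
\qquad b_n=S_0^{-1}T_0.
\]
We then bound \(\widehat T-T_0-(\widehat S-S_0)b_n\) coordinatewise. The natural decomposition is into a martingale-type part and a design part. Write the response as \(P_{r\Delta_n}^{ij}g(Y_{s,n})\) plus the innovation \(\mathbf 1\{\Lambda=i\}[g(Y_{s+r})\mathbf 1\{\Lambda_{s+r}=j\}-P^{ij}_{r\Delta_n}g(Y_s)]\). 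The conditional-mean part, minus \((\widehat S-S_0)b_n\), is a localized function of \(Z_{s,n}\) alone. In the case \(i=j\), this part is bounded, and \Cref{lem:shrinking-localization-vnorm} gives \(O_{\mathbb P}((n\Delta_nh_n^d)^{-1/2})\).

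In the case \(i\ne j\), the improved factor \(\Delta_n^{1/2}\) has to come from the size of the summands. Two sources give it. First, \(P_{r\Delta_n}^{ij}g=O(\Delta_n)\) uniformly on \(U_x\), using Lemma~\ref{lem:weighted-rare-switch} with \(V\ge1\). Second, \(b_n=P_{r\Delta_n}^{ij}g(x)e_0+O(h_n)\) by part~(i). So this part is \(\Delta_n\) times a bounded localized function, of order \(\Delta_n(n\Delta_nh_n^d)^{-1/2}\le(\Delta_n/(nh_n^d))^{1/2}\). The \(O(h_n)\) piece of \(b_n\) is handled the same way, using the \(C^{p+1}\) bound on \(P_{r\Delta_n}^{ij}g\) scaled by \(\Delta_n\). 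That in turn follows from the identity \(P^{ij}_{r\Delta_n}g=\) (switch-localized) and part~(i); if this scaling is not available directly, we absorb the term into the bias.

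The innovation part is not a martingale difference for \(r\ge2\), because consecutive summands overlap. We will handle this by splitting the index \(s\) into \(r\) residue classes mod \(r\). Within each class, the summands are martingale differences with respect to \(\mathcal F_{t_{s+r}}\). Orthogonality then gives a second moment of \((nh_n^d)^{-2}\cdot n\cdot h_n^d\cdot\sup\mathbb E[\text{innov}^2\mid Z_s]\). The conditional variance is bounded by \(P^{ij}_{r\Delta_n}(g^2)\). For \(i\ne j\) this is \(\lesssim\Delta_n\), by Lemma~\ref{lem:weighted-rare-switch}, which produces \((\Delta_n/(nh_n^d))^{1/2}\). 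For \(i=j\) it is bounded, and \((nh_n^d)^{-1/2}\le(n\Delta_nh_n^d)^{-1/2}\). Localization of the prefactor to \(x+h_nD_K\) in regime \(i\), through \(\varpi_i\), supplies the \(h_n^d\).

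The main obstacle is the off-diagonal design-correction term. It must carry the \(\Delta_n\) factor, so it is essential to pair \(\widehat T-T_0\) with \((\widehat S-S_0)b_n\) rather than bound them separately. This requires that \(y\mapsto P_{r\Delta_n}^{ij}g(y)-\psi_p^\top b_n\) is \(O(\Delta_n h_n)\), or at least \(O(\Delta_n)\), on the kernel support. Since \(P^{ij}_{r\Delta_n}g\) is uniformly \(O(\Delta_n)\) there, we will first try the crude bound \(\|b_n\|\lesssim\|S_0^{-1}T_0\|\lesssim\Delta_n\). This bound already suffices, because \(T_0\) itself is \(O(\Delta_n)\) entrywise.
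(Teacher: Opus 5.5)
Your proposal is correct. The design-matrix and smoothing-bias steps match the paper's; the stochastic term is handled by a genuinely different route.

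The paper bounds $\widehat T_{n,r\Delta_n,h_n}^{ij,g,(p)}(x)-T_{0,n,r}^{ij,g,(p)}(x)$ by computing stationary covariances of the raw $r$-step responses $H_{s,n,r}^{ij,\alpha}$:
\begin{itemize}
\item overlapping lags $m\le r$ are bounded crudely;
\item separated lags $m>r$ are handled by conditioning on $Z_{r,n}$ and using $V$-norm ergodicity;
\item in the off-diagonal case the extra factor $\Delta_n$ comes from \Cref{lem:weighted-rare-switch}, which gives $\|\varphi_{n,r,\alpha}\|_{V,\infty}\lesssim\Delta_n$ and $\mathbb E[|\bar H_{0,n,r}^{ij,\alpha}|\,V(Z_{r,n})]\lesssim\Delta_nh_n^d$.
\end{itemize}
The paper then bounds $e_0^\top(\widehat S^{-1}-S_0^{-1})T_0$ separately, using $T_0=O(\Delta_n)$.

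You instead split the response into its conditional mean $P_{r\Delta_n}^{ij}g(Y_{s,n})$ and an innovation. The innovations are martingale differences within each residue class mod $r$, so orthogonality handles them with no mixing. The remainder is a localized function of $Z_{s,n}$ alone, to which \Cref{lem:shrinking-localization-vnorm} applies verbatim.

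What your route buys is that the off-diagonal $\Delta_n$ enters only through unweighted bounds. These are $\sup_{U_x}|P_{r\Delta_n}^{ij}g|\lesssim\Delta_n$ and $\sup_{U_x}P_{r\Delta_n}^{ij}(g^2)\lesssim\Delta_n$. Both already follow from the bounded total switching rate in \Cref{ass:wellposedness}, via $\mathbb P_{(y,i)}(\Lambda_t\ne i)\le C_qt$. So your argument does not actually need \Cref{ass:rare-switch} or the weighted rare-switch lemma; your appeal to it can be replaced by this elementary bound. Your argument also mirrors the common-design martingale proofs of the main text. The paper's route stays inside the covariance framework of the fixed-mesh analysis, at the price of the $V$-weighted short-time estimate.

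Two cleanups. First, drop the detour through $b_n=P_{r\Delta_n}^{ij}g(x)e_0+O(h_n)$ and a ``$\Delta_n$-scaled $C^{p+1}$ bound.'' Part~(i) of \Cref{ass:recovery-regularity} gives no $\Delta_n$ scaling, and a stochastic term cannot be absorbed into a deterministic bias. Second, pairing $\widehat T-T_0$ with $(\widehat S-S_0)b_n$ is not essential. Once $\|b_n\|\le\|S_0^{-1}\|\,\|T_0\|\lesssim\Delta_n$, the design correction is $O_{\mathbb P}(\Delta_n(n\Delta_nh_n^d)^{-1/2})=O_{\mathbb P}((\Delta_n/(nh_n^d))^{1/2})$ on its own. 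That is exactly how the paper treats it.
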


	\begin{proof}
		{The proof follows the coordinatewise moment decomposition in Section~S.2.2 of the online supplement; we record the changes caused by the shrinking mesh, the fixed finite lags \(r=1,\ldots,k\), and the off-diagonal rare-switch factor. The covariance estimates below use the shrinking-mesh local \(V\)-norm conditioning argument, not the fixed-mesh two-location small-ball condition. Since \(r\le k\) and \(k\) is fixed, replacing \(n\) by \(n-r+1\) only changes constants uniformly over \(1\le r\le k\). Set}
		\[
		U_{s,n}(x):=\frac{Y_{s,n}-x}{h_n}.
		\]

		\begingroup
		We first control the design matrices. For \(\alpha,\beta\in\mathcal A_p\), let
		\[
		D_{s,n}^{\alpha\beta}
		:=
		K(U_{s,n}(x))\mathbf 1_{\{\Lambda_{t_{s,n}}=i\}}
		U_{s,n}(x)^{\alpha+\beta}.
		\]
		The \((\alpha,\beta)\)-coordinate of \(\widehat S_{n,r,h_n}^{i,(p)}(x)\) is \(((n-r+1)h_n^d)^{-1}\sum_{s=0}^{n-r}D_{s,n}^{\alpha\beta}\). The coordinate is uniformly bounded and vanishes unless
		\(Y_{s,n}\in x+h_n\operatorname{supp}(K)\) and \(\Lambda_{t_{s,n}}=i\). Thus \Cref{lem:shrinking-localization-vnorm} gives
		\[
		\operatorname{Var}(D_{0,n}^{\alpha\beta})=O(h_n^d),
		\qquad
		\sum_{m=1}^{n-1}
		\left|\operatorname{Cov}(D_{0,n}^{\alpha\beta},D_{m,n}^{\alpha\beta})\right|
		\lesssim \frac{h_n^d}{\Delta_n}.
		\]
		Here the factor \(h_n^d\) follows by integrating the \(V\)-norm mixing bound against the first localized coordinate, as shown explicitly in the proof of that lemma. The stationary variance decomposition and Chebyshev's inequality yield
		\[
		\max_{1\le r\le k}
		\left\|
		\widehat S_{n,r,h_n}^{i,(p)}(x)
		-\mathbb E[\widehat S_{n,r,h_n}^{i,(p)}(x)]
		\right\|
		=O_{\mathbb P}\!\left((n\Delta_n h_n^d)^{-1/2}\right).
		\]
		By the kernel change-of-variables formula,
		\[
		S_{0,n,r}^{i,(p)}(x)
		\to \varpi_i(x)M_p(K),
		\qquad 1\le r\le k,
		\]
		uniformly over \(1\le r\le k\). {Since \(\varpi_i(x)>0\), the limiting matrix is positive definite.} Combining this convergence with the preceding stochastic bound gives
		\[
		\mathbb P\!\left(
		\widehat S_{n,r,h_n}^{i,(p)}(x)
		\ \text{is invertible for all }1\le r\le k
		\right)\to1
		\]
		and
		\[
		\max_{1\le r\le k}
		\left\|
		\left(\widehat S_{n,r,h_n}^{i,(p)}(x)\right)^{-1}
		\right\|
		=O_{\mathbb P}(1).
		\]
		\endgroup

		We next control the response vectors. For \(\alpha\in\mathcal A_p\), define the uncentered \(r\)-step response coordinate
		\[
		H_{s,n,r}^{ij,\alpha}
		:=
		K(U_{s,n}(x))
		\mathbf 1_{\{\Lambda_{t_{s,n}}=i,\ \Lambda_{t_{s+r,n}}=j\}}
		g(Y_{s+r,n})U_{s,n}(x)^\alpha,
		\]
		and write \(\bar H_{s,n,r}^{ij,\alpha}:=H_{s,n,r}^{ij,\alpha}-\mathbb E[H_{s,n,r}^{ij,\alpha}]\). The \(\alpha\)-coordinate of \(\widehat T_{n,r\Delta_n,h_n}^{ij,g,(p)}(x)-T_{0,n,r}^{ij,g,(p)}(x)\) is \(((n-r+1)h_n^d)^{-1}\sum_{s=0}^{n-r}\bar H_{s,n,r}^{ij,\alpha}\).

		\begingroup
		Let \(D_K\subset\mathbb R^d\) be compact and contain \(\operatorname{supp}(K)\). For all large \(n\), \(x+h_nD_K\subset U_x\). Suppose first that \(i=j\). Localization gives
		\(\operatorname{Var}(H_{0,n,r}^{ii,\alpha})=O(h_n^d)\). For the overlapping lags \(1\le m\le r\), the same bound gives
		\[
		\left|\operatorname{Cov}(H_{0,n,r}^{ii,\alpha},H_{m,n,r}^{ii,\alpha})\right|
		\lesssim h_n^d.
		\]
		For \(m>r\), define
		\[
		\varphi_{n,r,\alpha}^{ii}(y,\ell)
		:=
		K\!\left(\frac{y-x}{h_n}\right)
		\mathbf 1_{\{\ell=i\}}
		\left(\frac{y-x}{h_n}\right)^\alpha
		P_{r\Delta_n}^{ii}g(y).
		\]
		The function \(\varphi_{n,r,\alpha}^{ii}\) is uniformly bounded and satisfies
		\[
		\operatorname{supp}(\varphi_{n,r,\alpha}^{ii})
		\subset (x+h_nD_K)\times\{i\},
		\qquad
		\|\varphi_{n,r,\alpha}^{ii}\|_{V,\infty}\lesssim1.
		\]
		Conditioning at time \(r\Delta_n\), applying the \(V\)-norm estimate in \Cref{ass:mixing}, and using the Markov property yield
		\[
		\left|
		\mathbb E\!\left[\bar H_{m,n,r}^{ii,\alpha}\mid Z_{r,n}\right]
		\right|
		\lesssim
		e^{-\lambda_{\rm erg}(m-r)\Delta_n}V(Z_{r,n}).
		\]
		Moreover, \(P_{r\Delta_n}V(z)\lesssim V(z)\) by \Cref{ass:mixing}. Hence the kernel localization, the local boundedness of \(V\), and the stationary density give
		\[
		\mathbb E\!\left[|H_{0,n,r}^{ii,\alpha}|V(Z_{r,n})\right]
		\lesssim
		\int_{x+h_nD_K}\varpi_i(y)P_{r\Delta_n}V(y,i)\,dy
		\lesssim h_n^d.
		\]
		Also \(|\mathbb E[H_{0,n,r}^{ii,\alpha}]|\lesssim h_n^d\), while stationarity implies \(\mathbb E[V(Z_{r,n})]=\nu(V)<\infty\). Therefore centering gives
		\[
		\mathbb E\!\left[|\bar H_{0,n,r}^{ii,\alpha}|V(Z_{r,n})\right]
		\lesssim h_n^d.
		\]
		The tower property now implies
		\[
		\left|\operatorname{Cov}(H_{0,n,r}^{ii,\alpha},H_{m,n,r}^{ii,\alpha})\right|
		\lesssim
		h_n^d e^{-\lambda_{\rm erg}(m-r)\Delta_n},
		\qquad m>r.
		\]
		Since \(r\le k\) and \(k\) is fixed, summing the overlapping and separated bounds gives
		\[
		\sum_{m=1}^{n-1}
		\left|\operatorname{Cov}(H_{0,n,r}^{ii,\alpha},H_{m,n,r}^{ii,\alpha})\right|
		\lesssim \frac{h_n^d}{\Delta_n}.
		\]
		The stationary variance decomposition gives
		\[
		\max_{1\le r\le k}
		\left\|
		\widehat T_{n,r\Delta_n,h_n}^{ii,g,(p)}(x)
		-\mathbb E[\widehat T_{n,r\Delta_n,h_n}^{ii,g,(p)}(x)]
		\right\|
		=O_{\mathbb P}\!\left((n\Delta_n h_n^d)^{-1/2}\right),
		\]
		because the number of response coordinates is fixed.
		\endgroup

		Now suppose \(i\ne j\). This is the only part not present in the fixed-mesh proof: the response coordinate contains a short-time transition from \(i\) to \(j\), which contributes an additional factor of order \(\Delta_n\). {The local boundedness in \Cref{ass:mixing} and part~\textup{(i)} of \Cref{ass:local-densities}, together with \Cref{lem:weighted-rare-switch}, gives}
		\[
		\mathbb E[(H_{0,n,r}^{ij,\alpha})^2]
		\lesssim
		\int_{x+h_nD_K}
		\varpi_i(y)\mathbb P_{(y,i)}(\Lambda_{r\Delta_n}=j)\,dy
		\lesssim \Delta_n h_n^d.
		\]
		For the overlapping lags \(1\le m\le r\), the same estimate yields
		\[
		\left|
		\operatorname{Cov}(H_{0,n,r}^{ij,\alpha},H_{m,n,r}^{ij,\alpha})
		\right|
		\lesssim \Delta_n h_n^d.
		\]
		Since \(r\le k\) and \(k\) is fixed, the overlapping lags contribute \(O(\Delta_n h_n^d)\).

		{For the separated lags \(m>r\), we use the Markov-conditioning step in \Cref{lem:shrinking-localization-vnorm}. Define, for \(z=(y,\ell)\in E\),}
		\[
		\varphi_{n,r,\alpha}(z)
		:=
		\mathbb E_z\!\left[
		K\!\left(\frac{X_0-x}{h_n}\right)
		\mathbf 1_{\{\Lambda_0=i,\ \Lambda_{r\Delta_n}=j\}}
		g(X_{r\Delta_n})
		\left(\frac{X_0-x}{h_n}\right)^\alpha
		\right].
		\]
		Conditioning on \(Z_{r,n}\) and using the Markov property gives
		\[
		\mathbb E\!\left[
		\bar H_{m,n,r}^{ij,\alpha}
		\mid Z_{r,n}
		\right]
		=
		P_{(m-r)\Delta_n}\varphi_{n,r,\alpha}(Z_{r,n})
		-\nu(\varphi_{n,r,\alpha}).
		\]
		For a measurable function \(f\), write
		\[
		\|f\|_{V,\infty}:=\sup_{z\in E}\frac{|f(z)|}{V(z)}.
		\]
		If \(z=(y,\ell)\) with \(\ell\ne i\), then \(\varphi_{n,r,\alpha}(z)=0\). If \(z=(y,i)\), then \(K((y-x)/h_n)\) vanishes unless \((y-x)/h_n\in\operatorname{supp}(K)\). On this set the kernel and polynomial factor are bounded, and \(V\ge1\) gives
		\[
		|\varphi_{n,r,\alpha}(y,i)|
		\lesssim
		\mathbb E_{(y,i)}
		\!\left[
		V(X_{r\Delta_n},\Lambda_{r\Delta_n})
		\mathbf 1_{\{\Lambda_{r\Delta_n}=j\}}
		\right].
		\]
		After division by \(V(y,i)\), {\Cref{lem:weighted-rare-switch}} yields
		\[
		\|\varphi_{n,r,\alpha}\|_{V,\infty}\lesssim \Delta_n.
		\]
		For any finite signed measure \(\mu\), the definition of the measure norm in \Cref{ass:mixing} implies
		\[
		|\mu(f)|\le \|f\|_{V,\infty}\|\mu\|_V.
		\]
		Indeed, if \(a=\|f\|_{V,\infty}\), then \(|f|\le aV\), and the assertion follows from the homogeneity of \(\|\mu\|_V=\sup_{|\psi|\le V}|\mu(\psi)|\).
		Applying this with \(f=\varphi_{n,r,\alpha}\) and
		\(\mu=P_{(m-r)\Delta_n}(z,\cdot)-\nu\), and then using \Cref{ass:mixing}, gives
		\[
		\left|
		P_{(m-r)\Delta_n}\varphi_{n,r,\alpha}(z)
		-\nu(\varphi_{n,r,\alpha})
		\right|
		\lesssim
		\Delta_n e^{-\lambda_{\mathrm{erg}}(m-r)\Delta_n}V(z).
		\]
		{The same estimate from \Cref{lem:weighted-rare-switch} controls the first response coordinate.} Indeed,
		\[
		\begin{aligned}
			\mathbb E\!\left[
			|H_{0,n,r}^{ij,\alpha}|\,V(Z_{r,n})
			\right]
			&\lesssim
			\int_{x+h_nD_K}\varpi_i(y)
			\mathbb E_{(y,i)}\!\left[
			V(X_{r\Delta_n},\Lambda_{r\Delta_n})
			\mathbf 1_{\{\Lambda_{r\Delta_n}=j\}}
			\right]dy  \\
			&\lesssim \Delta_n h_n^d.
		\end{aligned}
		\]
		Since \(V\ge1\) and stationarity gives \(\mathbb E[V(Z_{r,n})]=\nu(V)<\infty\), centering yields
		\[
		\mathbb E\!\left[
		|\bar H_{0,n,r}^{ij,\alpha}|\,V(Z_{r,n})
		\right]
		\lesssim \Delta_n h_n^d.
		\]
		For \(m>r\), \(H_{0,n,r}^{ij,\alpha}\) is measurable with respect to \(\sigma(Z_{u,n}:0\le u\le r)\). Hence the tower property, the Markov property, and the preceding conditional bound imply
		\[
		\begin{aligned}
			\left|
			\operatorname{Cov}(H_{0,n,r}^{ij,\alpha},H_{m,n,r}^{ij,\alpha})
			\right|
			&=
			\left|
			\mathbb E\!\left[
			\bar H_{0,n,r}^{ij,\alpha}
			\mathbb E\!\left[
			\bar H_{m,n,r}^{ij,\alpha}
			\mid \sigma(Z_{u,n}:0\le u\le r)
			\right]
			\right]
			\right|  \\
			&\lesssim
			\Delta_n e^{-\lambda_{\mathrm{erg}}(m-r)\Delta_n}
			\mathbb E\!\left[
			|\bar H_{0,n,r}^{ij,\alpha}|\,V(Z_{r,n})
			\right]  \\
			&\lesssim
			\Delta_n^2h_n^d
			e^{-\lambda_{\mathrm{erg}}(m-r)\Delta_n}.
		\end{aligned}
		\]
		Summing over \(m>r\) yields
		\[
		\sum_{m=r+1}^{n-1}
		\left|
		\operatorname{Cov}(H_{0,n,r}^{ij,\alpha},H_{m,n,r}^{ij,\alpha})
		\right|
		\lesssim
		\Delta_n^2h_n^d
		\sum_{m=r+1}^{n-1}e^{-\lambda_{\mathrm{erg}}(m-r)\Delta_n}
		\lesssim \Delta_n h_n^d.
		\]
		Combining the overlapping and separated lag estimates,
		\[
		\sum_{m=1}^{n-1}
		\left|
		\operatorname{Cov}(H_{0,n,r}^{ij,\alpha},H_{m,n,r}^{ij,\alpha})
		\right|
		\lesssim \Delta_n h_n^d.
		\]
		Set \(N_{n,r}:=n-r+1\). The stationary variance decomposition gives, for each \(\alpha\in\mathcal A_p\),
		\[
		\begin{aligned}
			&\operatorname{Var}\!\left(
			\frac{1}{N_{n,r}h_n^d}
			\sum_{s=0}^{N_{n,r}-1}\bar H_{s,n,r}^{ij,\alpha}
			\right)\\
			&\quad\lesssim
			\frac{1}{N_{n,r}h_n^{2d}}
			\left[
			\operatorname{Var}(H_{0,n,r}^{ij,\alpha})
			+\sum_{m=1}^{N_{n,r}-1}
			\left|
			\operatorname{Cov}(H_{0,n,r}^{ij,\alpha},H_{m,n,r}^{ij,\alpha})
			\right|
			\right]
			\lesssim \frac{\Delta_n}{nh_n^d}.
		\end{aligned}
		\]
		Chebyshev's inequality then gives
		\[
		\max_{1\le r\le k}
		\left\|
		\widehat T_{n,r\Delta_n,h_n}^{ij,g,(p)}(x)
		-\mathbb E[\widehat T_{n,r\Delta_n,h_n}^{ij,g,(p)}(x)]
		\right\|
		=O_{\mathbb P}\!\left(
		\left(\frac{\Delta_n}{nh_n^d}\right)^{1/2}
		\right),
		\qquad i\ne j.
		\]

		It remains to transfer these moment bounds to the local-polynomial estimator. The identity
		\[
		\begin{aligned}
			&\widehat P_{r\Delta_n,h_n}^{ij,(p)}g(x)
			-R_{r\Delta_n,h_n,p}^{ij}g(x)\\
			&\quad
			=e_0^\top\left(\widehat S_{n,r,h_n}^{i,(p)}(x)\right)^{-1}
			\left(\widehat T_{n,r\Delta_n,h_n}^{ij,g,(p)}(x)-T_{0,n,r}^{ij,g,(p)}(x)\right)\\
			&\qquad
			+e_0^\top\left[
			\left(\widehat S_{n,r,h_n}^{i,(p)}(x)\right)^{-1}
			-\left(S_{0,n,r}^{i,(p)}(x)\right)^{-1}
			\right]T_{0,n,r}^{ij,g,(p)}(x),
		\end{aligned}
		\]
		holds on the event on which all finite-lag design matrices are invertible; by the design-matrix bound above, the probability of the complement tends to zero. {This is the same inverse-matrix expansion used in the proof of \Cref{thm:lp-fixedmesh-clt}.} Using \(A^{-1}-B^{-1}=B^{-1}(B-A)A^{-1}\) together with the \(O_{\mathbb P}(1)\) inverse bound gives
		\[
		\max_{1\le r\le k}
		\left\|
		\left(\widehat S_{n,r,h_n}^{i,(p)}(x)\right)^{-1}
		-\left(S_{0,n,r}^{i,(p)}(x)\right)^{-1}
		\right\|
		=O_{\mathbb P}\!\left((n\Delta_n h_n^d)^{-1/2}\right).
		\]
		Moreover, the same localization argument used above gives \(T_{0,n,r}^{ii,g,(p)}(x)=O(1)\), while {its off-diagonal version together with \Cref{lem:weighted-rare-switch}} gives \(T_{0,n,r}^{ij,g,(p)}(x)=O(\Delta_n)\) for \(i\ne j\), uniformly over \(1\le r\le k\). Combining these estimates with the response-vector bounds yields
		\[
		\max_{1\le r\le k}
		\left|
		\widehat P_{r\Delta_n,h_n}^{ij,(p)}g(x)
		-R_{r\Delta_n,h_n,p}^{ij}g(x)
		\right|
		=O_{\mathbb P}\!\left(a_{n,p}^{ij}(x)\right).
		\]

		{We now prove the smoothing bound. For each \(r\), set \(m_{r,n}:=P_{r\Delta_n}^{ij}g\). Applying the Taylor argument in Section~S.2.3 of the online supplement with \(h=h_n\) and \(m_{\Delta,g}^{ij}\) replaced by \(m_{r,n}\) gives}
		\[
		R_{r\Delta_n,h_n,p}^{ij}g(x)-P_{r\Delta_n}^{ij}g(x)
		=O(h_n^{p+1}).
		\]
		By {part~\textup{(i)} of \Cref{ass:recovery-regularity}}, the \(C^{p+1}(U_x)\)-norms of \(m_{r,n}\) are uniformly bounded over \(1\le r\le k\) and \(n\). Hence the Taylor remainders in the local-polynomial bias argument are uniform over \(r\) and \(n\). The population design matrices satisfy \(S_{0,n,r}^{i,(p)}(x)\to\varpi_i(x)M_p(K)\) uniformly over \(1\le r\le k\), and their inverses are uniformly bounded for all large \(n\). Since \(k\) is fixed, taking the maximum over \(1\le r\le k\) preserves the order
		\[
		\max_{1\le r\le k}
		\left|
		R_{r\Delta_n,h_n,p}^{ij}g(x)
		-P_{r\Delta_n}^{ij}g(x)
		\right|
		=O(h_n^{p+1}).
		\]
		This completes the proof.
	\end{proof}

	{
		\begin{remark}[Sufficient diagonal and off-diagonal stochastic bounds]\label[remark]{rem:recovery-rates}
			The displayed diagonal and off-diagonal rates are sufficient upper bounds for the general blockwise order-\(k\) construction, obtained by controlling the local-polynomial design and response moments separately. The off-diagonal bound exploits the short-time factor in \Cref{lem:weighted-rare-switch}, derived from \Cref{ass:mixing} and \Cref{ass:rare-switch}. No optimality claim is made for these separate-block bounds. For \(k=1\), the common-design forward difference in the main paper realizes the diagonal cancellation exactly and yields the rate in \Cref{thm:first-order-recovery-clt}.
	\end{remark}}

	{
		\begin{proof}[Proof of \Cref{thm:recovery}]
			By the moment identities for \(c_{k,r}\),
			\[
			\frac{1}{\Delta_n^k}
			\sum_{r=0}^k c_{k,r}
			\sum_{\ell=0}^k
			\frac{(r\Delta_n)^\ell}{\ell!}A_\ell^{ij}g(x)
			=A_k^{ij}g(x).
			\]
			Consequently,
			\[
			\begin{aligned}
				\widehat A_{k,n}^{(p),ij}g(x)-A_k^{ij}g(x)
				&=
				\frac{1}{\Delta_n^k}
				\sum_{r=1}^k c_{k,r}
				\left[
				\widehat P_{r\Delta_n,h_n}^{ij,(p)}g(x)
				-R_{r\Delta_n,h_n,p}^{ij}g(x)
				\right]\\
				&\quad+
				\frac{1}{\Delta_n^k}
				\sum_{r=1}^k c_{k,r}
				\left[
				R_{r\Delta_n,h_n,p}^{ij}g(x)
				-P_{r\Delta_n}^{ij}g(x)
				\right]\\
				&\quad+
				\frac{1}{\Delta_n^k}
				\sum_{r=0}^k c_{k,r}
				\left[
				P_{r\Delta_n}^{ij}g(x)
				-\sum_{\ell=0}^k
				\frac{(r\Delta_n)^\ell}{\ell!}A_\ell^{ij}g(x)
				\right].
			\end{aligned}
			\]
			Let \(a_{n,p}^{ij}(x)\) be the sufficient stochastic upper bound in \Cref{lem:finite-lag-recovery-bounds}. The two sample-size conditions in the theorem are equivalent to
			$a_{n,p}^{ij}(x)=o(\Delta_n^k)$.
			Since \(k\) is fixed, \Cref{lem:finite-lag-recovery-bounds} shows that the first term is
			\[
			O_{\mathbb P}\!\left(
			\frac{a_{n,p}^{ij}(x)}{\Delta_n^k}
			\right)=o_{\mathbb P}(1),
			\]
			and that the second term is
			\[
			O\!\left(\frac{h_n^{p+1}}{\Delta_n^k}\right)=o(1).
			\]
			For the third term, the residual at \(r=0\) is zero because \(P_0^{ij}g(x)=A_0^{ij}g(x)\). By part~\textup{(ii)} of \Cref{ass:recovery-regularity}, the maximum of the remaining \(k\) residuals is \(o(\Delta_n^k)\). The third term is therefore \(o(1)\). Combining the three bounds proves the claimed convergence in probability.
		\end{proof}

		\begingroup
		\begin{corollary}[Blockwise first- and second-order recovery]\label[corollary]{cor:low-order-fd-recovery}
			Assume \Cref{ass:wellposedness}. Fix \(p\ge0\), a regime pair \((i,j)\), a test function \(g\in C_c^4(\mathbb R^d)\), and a design point \(x\in\mathbb R^d\).
			\begin{enumerate}[label=(\roman*)]
				\item If the assumptions and rate conditions of \Cref{thm:recovery} hold with \(k=1\), then
				\[
				\widehat A_{1,n}^{(p),ij}g(x)
				=
				\frac{\widehat P_{\Delta_n,h_n}^{ij,(p)}g(x)-\delta_{ij}g(x)}{\Delta_n}
				\xrightarrow{\mathbb P}B_{ij}g(x).
				\]
				\item If the assumptions and rate conditions of \Cref{thm:recovery} hold with \(k=2\), then
				\[
				\widehat C_{n,\mathrm{sep}}^{ij,(p)}g(x)
				:=
				\frac{
					\widehat P_{2\Delta_n,h_n}^{ij,(p)}g(x)
					-2\widehat P_{\Delta_n,h_n}^{ij,(p)}g(x)
					+\delta_{ij}g(x)}{\Delta_n^2}
				\xrightarrow{\mathbb P}C_{ij}g(x).
				\]
			\end{enumerate}
		\end{corollary}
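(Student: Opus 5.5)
The corollary is a specialization of \Cref{thm:recovery} to \(k=1\) and \(k=2\), combined with the coefficient identification \eqref{eq:coeff-through-two}. The plan has three steps.

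\emph{Step 1 (algebraic form of the estimator).} Write out \eqref{eq:forward-difference-recovery} with the weights \(c_{k,r}=(-1)^{k-r}\binom{k}{r}\) and the convention \(\widehat P_{0,h_n}^{ij,(p)}g(x)=\delta_{ij}g(x)\).
\begin{itemize}
\item For \(k=1\), the weights are \(c_{1,0}=-1\) and \(c_{1,1}=1\). This gives exactly the displayed quotient \(\{\widehat P_{\Delta_n,h_n}^{ij,(p)}g(x)-\delta_{ij}g(x)\}/\Delta_n\).
\item For \(k=2\), the weights are \(c_{2,0}=1\), \(c_{2,1}=-2\) and \(c_{2,2}=1\). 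This gives \(\widehat A_{2,n}^{(p),ij}g(x)=\widehat C_{n,\mathrm{sep}}^{ij,(p)}g(x)\).
\end{itemize}
Both are identities on the invertibility events. By \Cref{lem:finite-lag-recovery-bounds}, those events have probability tending to one, so the zero-convention off these events is harmless.

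\emph{Step 2 (invoke the theorem).} Under the stated hypotheses for \(k=1\), respectively \(k=2\), \Cref{thm:recovery} yields \(\widehat A_{k,n}^{(p),ij}g(x)\xrightarrow{\mathbb P}A_k^{ij}g(x)\). Note that \(g\in C_c^4\subset C_b\), as that theorem requires. For \(k=2\), part~(ii) of \Cref{ass:recovery-regularity} is included among the assumed hypotheses of the theorem. It would also follow from \Cref{cor:model-recovery-expansion}, since \(g\in C_c^{4}=C_c^{2k}\) and \Cref{ass:wellposedness} supplies the coefficient hypotheses of \Cref{prop:kth} through order two. It is therefore not an extra burden.

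\emph{Step 3 (identify the limits).} By \Cref{prop:block-domain}, which applies under \Cref{ass:wellposedness} and \(g\in C_c^4\), the iterates \(\mathcal Lf_g^{(j)}\) and \(\mathcal L^2f_g^{(j)}\) are well defined. Equation \eqref{eq:coeff-through-two} then gives \(A_1^{ij}g(x)=B_{ij}g(x)\) and \(A_2^{ij}g(x)=C_{ij}g(x)\), with \(C_{ij}\) from \eqref{eq:Coff}--\eqref{eq:Cdiag}. Substituting these into Step 2 completes both parts.

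The only point needing care is bookkeeping: checking that the hypotheses of \Cref{thm:recovery} at \(k=2\) are compatible with \(g\in C_c^4\). The well-definedness of \(A_\ell^{ij}g\) for \(\ell\le2\) in part~(ii) of \Cref{ass:recovery-regularity} is guaranteed by \Cref{prop:block-domain}. No new probabilistic estimate is required.
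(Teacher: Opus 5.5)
Your proposal is correct and matches the paper's proof, which likewise applies \Cref{thm:recovery} with $k=1$ and $k=2$ and identifies the limits through \eqref{eq:coeff-through-two}. Your extra bookkeeping is accurate and harmless: the explicit binomial weights, the remark that part~(ii) of the recovery regularity also follows from \Cref{cor:model-recovery-expansion}, and the appeal to \Cref{prop:block-domain}.
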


		\begin{proof}
			Apply \Cref{thm:recovery} with \(k=1\) and \(k=2\), respectively, and use \eqref{eq:coeff-through-two}.
		\end{proof}
		\endgroup
	}

For the numerical model in Section~6 of the main paper, the derivative bound in \eqref{eq:numerical-semigroup-derivative-bound} verifies part~\textup{(i)} of \Cref{ass:recovery-regularity}, and \Cref{cor:model-recovery-expansion} verifies part~\textup{(ii)} for the implemented orders \(k=1,2\). Since the Lyapunov function is regime-independent and the switching rates are bounded above by \(0.80\),
\[
q_{\ell m}(y)V(y,m)\le0.80V(y,\ell),
\qquad \ell\ne m,
\]
which verifies \Cref{ass:rare-switch}.

\endgroup

\clearpage
\section{Verification of the martingale-array CLT conditions}
\label{sec:supp-recovery-martingale-clt}
\begingroup

This section verifies the hypotheses of
Lemma~\ref{lem:appendix-martingale-clt} for the Cram\'er--Wold
projections of the first- and second-order recovery arrays. Let
\(X_{s,n}(a)\) and \(X_{s,n}^{[2]}(a)\) denote the projected summands
defined in the respective main-text proofs.

\begin{enumerate}[label=\textup{(\roman*)}]
\item \emph{Filtration and measurability.}
For first order, put \(\mathcal H_{s,n}^{(1)}:=\mathcal F_{t_{s,n}}\). By \eqref{eq:first-order-weighted-innovation}, \(X_{s,n}(a)\) is \(\mathcal H_{s+1,n}^{(1)}\)-measurable and has conditional mean zero given \(\mathcal H_{s,n}^{(1)}\). For second order, put \(\mathcal H_{s,n}^{(2)}:=\mathcal G_{s,n}=\mathcal F_{t_{2s,n}}\). By \eqref{eq:second-order-weighted-innovation}, \(X_{s,n}^{[2]}(a)\) is \(\mathcal H_{s+1,n}^{(2)}\)-measurable and has conditional mean zero given \(\mathcal H_{s,n}^{(2)}\).

\item \emph{Predictable quadratic variation.}
The conditional-variance calculations following \eqref{eq:first-order-linearization} and \eqref{eq:second-order-linearization}, together with the localized \(V\)-norm law of large numbers, give
\[
\sum_s\mathbb E\!\left[X_{s,n}(a)^2\mid\mathcal H_{s,n}^{(1)}\right]
\xrightarrow{\mathbb P}
a^\top\Gamma_1(x)a,
\]
and
\[
\sum_s\mathbb E\!\left[\{X_{s,n}^{[2]}(a)\}^2\mid\mathcal H_{s,n}^{(2)}\right]
\xrightarrow{\mathbb P}
a^\top\Gamma_2(x)a,
\]
where
\[
\Gamma_1(x):=\varpi_i(x)\gamma_{ij,g}(x)Q_p(K),
\qquad
\Gamma_2(x):=2\varpi_i(x)\gamma_{ij,g}(x)Q_p(K).
\]

\item \emph{Conditional Lindeberg condition and realized-square comparison.}
Boundedness of \(K\), \(\psi_p\), and the responses gives
\[
\max_s|X_{s,n}(a)|
\le \frac{C_a}{\sqrt{n\Delta_nh_n^d}}=o(1),
\qquad
\max_s|X_{s,n}^{[2]}(a)|
\le \frac{C_a}{\sqrt{N_n\Delta_nh_n^d}}=o(1).
\]
The second limit follows from \(N_n\Delta_n^3h_n^d\to\infty\) and \(\Delta_n\to0\). These deterministic bounds give the first two positive-variance conditions in Lemma~\ref{lem:appendix-martingale-clt}; they also make the conditional Lindeberg indicators vanish for every fixed \(\varepsilon>0\) and all sufficiently large \(n\).

For either array, write \(U_{s,n}\) for the corresponding projected summand and \(\mathcal H_{s,n}\) for its conditioning \(\sigma\)-field. The relevant fourth-moment bound, conditional Jensen's inequality, and stationary localization give
\[
\sum_{s=0}^{n-1}\mathbb E|X_{s,n}(a)|^4
\lesssim\frac{1}{n\Delta_nh_n^d}\to0,
\qquad
\sum_{s=0}^{N_n-1}\mathbb E|X_{s,n}^{[2]}(a)|^4
\lesssim\frac{1}{N_n\Delta_nh_n^d}\to0.
\]
The centered squares are martingale differences, so orthogonality yields
\[
\mathbb E\!\left[
\left\{
\sum_s\left(U_{s,n}^2-
\mathbb E[U_{s,n}^2\mid\mathcal H_{s,n}]
\right)
\right\}^2
\right]
\le \sum_s\mathbb E|U_{s,n}|^4
\to0.
\]
Together with the preceding predictable quadratic-variation limits, this proves the realized-square condition in Lemma~\ref{lem:appendix-martingale-clt}. Thus the fourth moments are used for the realized-versus-predictable square comparison, rather than for the immediate maximum-increment and Lindeberg checks.

\item \emph{Degenerate variance.}
If \(\gamma_{ij,g}(x)=0\), then \(a^\top\Gamma_r(x)a=0\), \(r=1,2\), for every \(a\). The predictable quadratic variation therefore converges to zero in probability, and Lenglart's inequality yields convergence of each martingale projection to zero in probability. This is the degenerate normal law \(N(0,0)\). If \(\gamma_{ij,g}(x)>0\), apply Lemma~\ref{lem:appendix-martingale-clt} to each nonzero projection; the Cram\'er--Wold device then gives the vector limits stated in the main paper.
\end{enumerate}

\endgroup

\section{Proof of the normalized-smoothness criterion}
\label{sec:supp-normalized-smoothness}
\begingroup

\begin{proof}[Proof of
Proposition~\ref{prop:appendix-normalized-smoothness}]
By Dynkin's formula applied to \(f\),
\[
P_tf-f=\int_0^tP_s\mathcal Lf\,ds.
\]
The assumed \(BUC^{p+1}\)-continuity makes this a Bochner integral in
\(BUC^{p+1}\). Hence
\[
\left\|\frac{P_tf-f}{t}\right\|_{C_b^{p+1}}
\le
\sup_{0\le s\le t}\|P_s\mathcal Lf\|_{C_b^{p+1}},
\]
which proves the first asserted bound after taking the \(i\)-th regime
component.

For the second difference, the semigroup property gives
\[
P_{2t}f-2P_tf+f=(P_t-I)^2f.
\]
Applying Dynkin's formula first to \(f\) and then to \(\mathcal Lf\), and using the commutation of the semigroup operators, yields
\begin{align*}
(P_t-I)^2f
&=(P_t-I)\int_0^tP_u\mathcal Lf\,du\\
&=\int_0^tP_u(P_t-I)\mathcal Lf\,du\\
&=\int_0^t\int_0^tP_{u+v}\mathcal L^2f\,dv\,du.
\end{align*}
Therefore
\[
\left\|
\frac{P_{2t}f-2P_tf+f}{t^2}
\right\|_{C_b^{p+1}}
\le
\sup_{0\le s\le2t}
\|P_s\mathcal L^2f\|_{C_b^{p+1}},
\]
which proves the second asserted bound.

It remains to verify the stated model-level sufficient condition.
Direct differentiation of the generator gives
\[
\mathcal Lf,\ \mathcal L^2f
\in C_b^{p+2}(\mathbb R^d\times S)
\subset BUC^{p+1}(\mathbb R^d\times S).
\]
The standard derivative estimates and strong continuity of the diagonal
diffusion semigroups on \(BUC^{p+1}\), combined with the Duhamel formula
for the switching part and Gronwall's inequality, give, for
\(h\in\{\mathcal Lf,\mathcal L^2f\}\),
\[
\sup_{0\le s\le T}\|P_sh\|_{C_b^{p+1}}
\le C_T\|h\|_{C_b^{p+1}},
\qquad
\|P_sh-h\|_{C_b^{p+1}}\to0
\quad\text{as }s\downarrow0,
\]
for every \(T<\infty\); see \citet{friedman1967parabolic} for the diagonal
diffusion estimates. The semigroup property and the uniform bound extend
continuity at zero to continuity on every finite time interval. These
estimates verify the derivative-preservation and continuity hypotheses of
the proposition. Boundedness of \(f\), \(\mathcal Lf\), and
\(\mathcal L^2f\) justifies the two applications of Dynkin's formula.
This completes the proof.
\end{proof}

\endgroup

\clearpage
\section{Verification for the numerical model}
\label{sec:supp-numerical-model-verification}
\begingroup

The main paper states the numerical model and the reported configurations. For reference, \(d=1\), \(S=\{1,2\}\),
\[
\begin{aligned}
b(x,i)&=-\beta_i x, & \sigma(x,i)&=\sigma_i,\\
q_{12}(x)&=0.55+0.25\tanh x,
&q_{21}(x)&=0.45-0.20\tanh x,
\end{aligned}
\]
where \((\beta_1,\beta_2)=(1,2)\) and \((\sigma_1,\sigma_2)=(1,1.5)\). The reported probes are \(g_0=\chi\), \(g_1(x)=x\chi(x)\), and \(g_2(x)=x^2\chi(x)\), where \(\chi\in C_c^\infty(\mathbb R)\), \(\chi=1\) on \([-1.25,1.25]\), and \(\chi=0\) outside \([-1.5,1.5]\). The following proposition supplies the detailed density, small-ball, semigroup-derivative, variance-positivity, and coefficient-regularity verification.

		\begingroup
		\begin{proposition}[Verification for the numerical model]\label[proposition]{prop:supp-numerical-assumptions}
			{For each reported design point and a bounded open neighborhood \(U_x\Subset\mathbb R\) of that point, the two-regime model above satisfies Assumptions~A1--A6 in the forms invoked by the main-paper numerical experiments: the fixed mesh is \(\Delta=0.05\), the reported blocks are \(1\to1\) and \(1\to2\), the probes are \(g_0,g_1,g_2\), the implemented polynomial degrees are \(p\in\{0,1\}\), and the recovery orders are \(k\in\{1,2\}\). In particular, the fixed-mesh variance condition A4\textup{(iv)} holds for both the diagonal and off-diagonal reported blocks. The coefficient hypotheses of \Cref{prop:kth} also hold through order three, as required by \Cref{thm:second-order-recovery-clt}.}
		\end{proposition}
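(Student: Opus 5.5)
The proof is an assumption-by-assumption check, exploiting three features of the model. Within each regime the diffusion is a one-dimensional Ornstein--Uhlenbeck process with constant nondegenerate volatility. The switching rates are smooth, bounded and bounded away from zero, with $q_{12}\in[0.30,0.80]$ and $q_{21}\in[0.25,0.65]$. The probes are compactly supported smooth functions.

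\emph{Assumption A1 and the order-three hypotheses of \Cref{prop:kth}.} The drift $-\beta_i x$ is linear, $\sigma_i$ is constant, and $\tanh$ has bounded derivatives of every order. All derivatives of order $\ge1$ of $b$, $\sigma$ and $q_{ij}$ are therefore bounded through order $6$, and $q_i\le0.80$. This gives A1 and the coefficient hypotheses of \Cref{prop:kth} through order three. The probes $g_0,g_1,g_2$ belong to $C_c^\infty\subset C_c^6$.

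\emph{Assumption A2.} I would use the regime-independent Lyapunov function $V(x,\ell)=1+x^2$. It satisfies $\mathcal LV\le -2\min_i\beta_i\,V+c$, and the switching part of $\mathcal L V$ vanishes because $V$ does not depend on the regime. The process is irreducible, since both rates are bounded below and the diffusion is uniformly elliptic. Compact sets are petite: the transition kernel from a compact set dominates $e^{-0.80t}$ times a Gaussian OU kernel, which gives a minorization. The Foster--Lyapunov criterion cited in the paper \citep{xiyin2011uniform,cloezhairer2015} then yields exponential $V$-ergodicity, and $V$ is clearly locally bounded. The same $V$ verifies the rare-switch condition, as already noted after \Cref{cor:low-order-fd-recovery}.

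\emph{Assumption A3.} For part (i), the invariant sublaws $\varpi_i$ solve the stationary coupled Fokker--Planck system $(\mathscr L_i^*-q_i)\varpi_i+q_{\ell i}\varpi_\ell=0$. This system is uniformly elliptic with smooth coefficients, so hypoellipticity gives smooth densities. Positivity follows from the strong maximum principle, or equivalently from the Gaussian lower bound on the kernel integrated against $\nu$. For part (ii), I would use \Cref{rem:mixing-density-distinct}. For $r\ge1$, the time-$r\Delta$ transition density of $(X,\Lambda)$ is bounded by $\sup_{z}p_{r\Delta}(z,\cdot)$. Conditioning on the regime path, $X$ is a Gaussian process with variance at least $\min_i\sigma_i^2(1-e^{-2\max\beta\, \Delta})/(2\max\beta)$ at time $\Delta$, uniformly in the path. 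Hence the density is bounded uniformly in $r\ge1$ and in the initial state. Combining this with the bounded one-point density $\varpi_a$ bounds the two-point density on $U_x\times U_x$.

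\emph{Assumptions A4--A6.} Regularity of $P_\Delta^{ij}g$ for $g\in C_b$, and in particular for the probes, follows from the remark after \Cref{prop:appendix-normalized-smoothness}. That remark requires $b,a,q\in C_b^{p+4}$ (up to linear growth of $b$), uniform ellipticity, and $g\in C_b^{p+6}$. The linear drift is the one mismatch with that remark, and it is the step I expect to be the main obstacle. I would handle it directly by differentiating the Duhamel representation $P_t f=T_tf+\int_0^tT_{t-s}(Q P_s f)\,ds$, where $T$ is the diagonal OU semigroup and $Q$ is the switching part. The OU semigroup satisfies $\partial_x^m T^{(i)}_t h=e^{-m\beta_i t}T^{(i)}_t\partial_x^m h$. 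This preserves $C_b^{p+1}$ norms, and continuity in $BUC^{p+1}$ is explicit. Gronwall's inequality then gives the bounds required by \Cref{prop:appendix-normalized-smoothness}, and hence A5, A6 and A4(ii). Parts (i) and (iii) of A4 follow from Feller continuity.

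\emph{Variance positivity A4(iv).} Conditional on $(Y_0,\Lambda_0)=(x,1)$, the law of $X_\Delta$ on $\{\Lambda_\Delta=j\}$ has a strictly positive density, and $\mathbb P(\Lambda_\Delta=j)\in(0,1)$. The variable $g(X_\Delta)\mathbf 1_{\{\Lambda_\Delta=j\}}$ therefore cannot be almost surely constant unless $g$ vanishes identically, which the probes do not. This gives $v>0$ at every reported design point, for both blocks.
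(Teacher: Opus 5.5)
Most of your plan matches the paper's proof: A1, the drift bound for $V(x,\ell)=1+x^2$ with irreducibility and petite compact sets for A2, the Duhamel/OU-commutation/Gronwall argument for A4--A6, and the non-constancy argument for A4\textup{(iv)}. The genuine gap is in A3\textup{(ii)}. Your claim that, conditionally on the regime path, $X$ is a Gaussian process is false here, because the rates $q_{12}(x)=0.55+0.25\tanh x$ and $q_{21}(x)=0.45-0.20\tanh x$ depend on the state. The regime path is a functional of the Brownian motion through $X$. Conditioning on it therefore reweights the Gaussian skeleton law by the likelihood $\prod_k q_{\ell(\tau_k-)\ell(\tau_k)}(X_{\tau_k})\exp\{-\int_0^t q_{\ell(s)}(X_s)\,ds\}$. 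For instance, conditioning on no switch before $t$ in regime $1$ tilts $X$ toward $x<0$, where $q_{12}$ is smaller. Your argument is valid only for Markovian (state-independent) switching. Your uniformity over $r\ge1$ also rests on the monotone growth in $t$ of a Gaussian variance, so it falls with the same claim.

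The repair is the switching-path expansion the paper uses. The $m$-switch contribution to the sub-transition density $\rho_t^{ab}(y,z)$ is the Gaussian skeleton density multiplied by survival factors (at most one) and $m$ intensities (at most $\bar q=0.80$), integrated over ordered switch times (volume $t^m/m!$). With your variance lower bound, this gives $\sup_{a,b,y,z}\rho_t^{ab}(y,z)\le C_t<\infty$. However, $C_t$ grows with $t$, so uniformity over lags must come from Chapman--Kolmogorov and conservativeness: $\rho_{r\Delta}^{ab}(y,z)=\sum_c\int\rho_{(r-1)\Delta}^{ac}(y,w)\rho_\Delta^{cb}(w,z)\,dw\le C_\Delta$ for $r\ge2$. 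The same expansion also supplies the strict positivity you invoke in A4\textup{(iv)}: the zero-switch term for $a=b$, and, for the off-diagonal block $1\to2$, the one-switch term using $q_{12}\ge0.30$. Your zero-switch minorization from A2 does not cover $\{\Lambda_\Delta=2\}$.

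A smaller imprecision: the Ornstein--Uhlenbeck semigroup is not strongly continuous on $BUC^{p+1}(\mathbb R)$ (test $h=\sin$), so continuity there is not ``explicit'' in general. It does hold along the orbits $s\mapsto P_s\mathcal L^rf$ you need, because their components vanish at infinity together with their derivatives. The paper sidesteps the issue by taking derivatives in the exact identities $t^{-1}(P_tf-f)=t^{-1}\int_0^tP_s\mathcal Lf\,ds$ and $t^{-2}(P_{2t}f-2P_tf+f)=t^{-2}\int_0^t\int_0^tP_{u+v}\mathcal L^2f\,du\,dv$, using only the uniform $C_b^m$ bound.
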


		\begin{proof}
			The linear drifts, nonzero constant diffusions, and bounded smooth switching rates satisfy Assumption~A1. Since \(0.30\le q_{12}\le0.80\), \(0.25\le q_{21}\le0.65\), and, for \(V(x,i)=1+x^2\),
			\[
			\mathcal LV(x,i)=-2\beta_i x^2+\sigma_i^2\le-cV(x,i)+C.
			\]
			Uniform ellipticity and the strictly positive switching rates provide the irreducibility and petite-set conditions in \citet[Theorem~6.3]{xi2009asymptotic}. Hence Assumption~A2 holds.

			We next verify the density assertions used in Assumption~A3. Write \(\rho_t^{ab}(y,z)\) for the sub-transition density from \((y,a)\) to \((z,b)\), and put \(\bar q:=0.80\). For a deterministic regime skeleton \(\ell:[0,t]\to\{1,2\}\), the associated linear diffusion has Gaussian variance
			\[
			s_{\ell,t}^2
			=
			\int_0^t
			\exp\!\left\{-2\int_u^t\beta_{\ell(v)}\,dv\right\}
			\sigma_{\ell(u)}^2\,du
			\ge e^{-4t}t.
			\]
			In the switching-path expansion, the term with \(m\) switches is the corresponding Gaussian path density multiplied by the survival factors and by \(m\) switching intensities, and integrated over the ordered switch times. The survival factors are at most one, the intensities are at most \(\bar q\), and the ordered simplex has volume \(t^m/m!\). Consequently, for every fixed \(t>0\),
			\begin{equation}\label{eq:numerical-density-bound}
			\sup_{a,b,y,z}\rho_t^{ab}(y,z)
			\le
			\frac{e^{2t}}{\sqrt{2\pi t}}
			\sum_{m=0}^\infty\frac{(\bar q t)^m}{m!}
			=:
			C_t<\infty.
			\end{equation}
			The same expansion converges locally uniformly and gives continuity of \(\rho_t^{ab}\). Its zero-switch term is strictly positive when \(a=b\), and its one-switch term is strictly positive when \(a\ne b\), because both intensities are bounded away from zero. Thus \(\rho_t^{ab}(y,z)>0\) for every \(t>0\).

			For a fixed sampling mesh \(\Delta>0\), the regime-summed Chapman--Kolmogorov identity makes the uniform-in-lag step explicit. For \(r\ge2\),
			\begin{align*}
			\rho_{r\Delta}^{ab}(y,z)
			&=
			\sum_{c=1}^2\int_{\mathbb R}
			\rho_{(r-1)\Delta}^{ac}(y,w)
			\rho_\Delta^{cb}(w,z)\,dw\\
			&\le
			C_\Delta
			\sum_{c=1}^2\int_{\mathbb R}
			\rho_{(r-1)\Delta}^{ac}(y,w)\,dw
			=C_\Delta,
			\end{align*}
			where the last sum is one because the full switching diffusion is conservative. Invariance now gives
			\[
			\varpi_b(z)
			=
			\sum_{a=1}^2\int_{\mathbb R}
			\rho_t^{ab}(y,z)\,\nu(dy,\{a\}).
			\]
			The preceding continuity, positivity, and bound imply that \(\varpi_b\) is continuous, strictly positive, and bounded. Hence it is bounded above and away from zero on \(\overline U_x\). Finally, the stationary two-point sublaw has density
			\[
			p_{r,\Delta}^{ab}(y,z)
			=
			\varpi_a(y)\rho_{r\Delta}^{ab}(y,z),
			\]
			which is uniformly bounded over \(r\ge1\) on \(U_x\times U_x\). Integration over \((x+hD)^2\) gives the small-ball estimate in A3\textup{(ii)}.

			We give a separate argument for the derivative bounds down to \(t=0\). Let \(T_t^i\) be the Ornstein--Uhlenbeck semigroup in regime \(i\), let \(\mathcal T_t=\operatorname{diag}(T_t^1,T_t^2)\), and let \(Q(x)\) be the \(2\times2\) switching-rate matrix. For \(\mathbf f^{(j)}=(g\mathbf 1_{\{1=j\}},g\mathbf 1_{\{2=j\}})^\top\), the vector \(\mathbf U(t)=(P_t^{1j}g,P_t^{2j}g)^\top\) satisfies
			\[
			\mathbf U(t)
			=
			\mathcal T_t\mathbf f^{(j)}
			+
			\int_0^t
			\mathcal T_{t-s}\{Q\mathbf U(s)\}\,ds.
			\]
			For every integer \(m\ge0\),
			\[
			\partial_x^m T_t^i\phi
			=e^{-m\beta_i t}T_t^i(\partial_x^m\phi).
			\]
			All derivatives of \(Q\) are bounded. Differentiating the preceding Duhamel identity and applying Gronwall's inequality therefore gives, for every integer \(m\ge0\) and every \(T<\infty\),
			\begin{equation}\label{eq:numerical-semigroup-derivative-bound}
			\sup_{0\le t\le T}
			\|\mathbf U(t)\|_{C_b^m(\mathbb R)}<\infty.
			\end{equation}
			{The same conclusion holds with \(g^2\) in place of \(g\). This proves A4\textup{(i)--(iii)} for \(p\in\{0,1\}\), without appealing to an estimate that is stated only for times bounded away from zero.}

			For completeness, the normalized first and second differences follow from exact semigroup identities. Writing \(P_t\) for the full hybrid semigroup and \(\mathcal L\) for its generator,
			\begin{align*}
			\frac{P_t\mathbf f^{(j)}-\mathbf f^{(j)}}{t}
			&=
			\frac1t\int_0^tP_s\mathcal L\mathbf f^{(j)}\,ds,\\
			\frac{P_{2t}\mathbf f^{(j)}-2P_t\mathbf f^{(j)}+\mathbf f^{(j)}}{t^2}
			&=
			\frac1{t^2}\int_0^t\int_0^t
			P_{u+v}\mathcal L^2\mathbf f^{(j)}\,du\,dv.
			\end{align*}
			{Because the probes are smooth and compactly supported, \eqref{eq:numerical-semigroup-derivative-bound} applies to the functions on the right-hand sides. Taking the \(i\)-th component gives the uniform \(C^{p+1}(U_x)\) bounds in Assumptions~A5 and A6.}

			It remains to verify the variance condition, including the diagonal blocks. For \(i=1\), \(j\in\{1,2\}\), and any reported probe, let
			\[
			R_{j,g}:=g(X_\Delta)\mathbf 1_{\{\Lambda_\Delta=j\}}
			\quad\text{under }\mathbb P_{(x,1)}.
			\]
			The zero- and one-switch paths show that both events \(\{\Lambda_\Delta=j\}\) and \(\{\Lambda_\Delta\ne j\}\) have positive probability. Moreover, strict positivity of \(\rho_\Delta^{1j}(x,z)\) and the fact that each \(g_r\) is nonzero on some open interval imply
			\(\mathbb P_{(x,1)}(\Lambda_\Delta=j,\ g_r(X_\Delta)\ne0)>0\).
			Thus \(R_{j,g_r}\) is zero with positive probability and nonzero with positive probability, so it is not almost surely constant. Hence
			\[
			v_{\Delta,g_r}^{1j}(x)=\operatorname{Var}_{(x,1)}(R_{j,g_r})>0,
			\qquad j=1,2,\quad r=0,1,2.
			\]
			\enlargethispage{2\baselineskip}

			{The coefficient hypotheses hold through order three, as required for the temporal-bias expansion in \Cref{thm:second-order-recovery-clt}. Supplementary Section~S.3 verifies the additional assumptions for arbitrary-order separate-lag recovery.}
		\end{proof}
		\endgroup

\endgroup

\clearpage
\section{Secondary numerical results}
\label{sec:supp-secondary-numerics}
\begingroup

The main paper reports representative configurations that directly illustrate the principal theoretical conclusions. The tables below give the complete deterministic expansion, fixed-mesh estimation, and shrinking-mesh recovery summaries. In particular, the earlier separate-lag second-order estimator is reported only here.

\begingroup
\setlength{\tabcolsep}{3.5pt}
\renewcommand{\arraystretch}{1.08}
\scriptsize
\subsection{Deterministic expansion benchmark}
Table~\ref{tab:supp-expansion} reports every combination of block, design point, probe, and mesh used in the deterministic benchmark. Here \(R_1/\Delta^2=\{P_\Delta-A_0-\Delta B\}/\Delta^2\), \(R_2/\Delta^2=\{P_\Delta-A_0-\Delta B-\Delta^2C/2\}/\Delta^2\), and the last column reports \(R_2/\Delta^3\).
\begin{longtable}{cccrrrrr}
\caption{Complete deterministic expansion results.}\label{tab:supp-expansion}\\
\toprule
block & $x$ & probe & $\Delta$ & $R_1/\Delta^2$ & $C/2$ & $R_2/\Delta^2$ & $R_2/\Delta^3$\\
\midrule
\endfirsthead
\multicolumn{8}{c}{\tablename\ \thetable{} continued}\\
\toprule
block & $x$ & probe & $\Delta$ & $R_1/\Delta^2$ & $C/2$ & $R_2/\Delta^2$ & $R_2/\Delta^3$\\
\midrule
\endhead
\midrule \multicolumn{8}{r}{continued on next page}\\
\endfoot
\bottomrule
\endlastfoot
\(1\to 1\) & $-0.50$ & $g_{0}$ & $0.200$ & $-0.0865$ & $0.1176$ & $-0.2041$ & $-1.0206$\\
\(1\to 1\) & $-0.50$ & $g_{1}$ & $0.200$ & $-0.2397$ & $-0.6244$ & $0.3846$ & $1.9231$\\
\(1\to 1\) & $-0.50$ & $g_{2}$ & $0.200$ & $-0.9953$ & $-0.5895$ & $-0.4058$ & $-2.0291$\\
\(1\to 1\) & $-0.50$ & $g_{0}$ & $0.100$ & $0.0056$ & $0.1176$ & $-0.1120$ & $-1.1204$\\
\(1\to 1\) & $-0.50$ & $g_{1}$ & $0.100$ & $-0.4215$ & $-0.6244$ & $0.2028$ & $2.0282$\\
\(1\to 1\) & $-0.50$ & $g_{2}$ & $0.100$ & $-0.7949$ & $-0.5895$ & $-0.2053$ & $-2.0534$\\
\(1\to 1\) & $-0.50$ & $g_{0}$ & $0.050$ & $0.1097$ & $0.1176$ & $-0.0079$ & $-0.1587$\\
\(1\to 1\) & $-0.50$ & $g_{1}$ & $0.050$ & $-0.5923$ & $-0.6244$ & $0.0321$ & $0.6412$\\
\(1\to 1\) & $-0.50$ & $g_{2}$ & $0.050$ & $-0.5906$ & $-0.5895$ & $-0.0011$ & $-0.0216$\\
\(1\to 1\) & $-0.50$ & $g_{0}$ & $0.025$ & $0.1188$ & $0.1176$ & $0.0011$ & $0.0449$\\
\(1\to 1\) & $-0.50$ & $g_{1}$ & $0.025$ & $-0.6153$ & $-0.6244$ & $0.0090$ & $0.3618$\\
\(1\to 1\) & $-0.50$ & $g_{2}$ & $0.025$ & $-0.5802$ & $-0.5895$ & $0.0093$ & $0.3725$\\
\(1\to 1\) & $-0.50$ & $g_{0}$ & $0.013$ & $0.1182$ & $0.1176$ & $5.935\times 10^{-4}$ & $0.0475$\\
\(1\to 1\) & $-0.50$ & $g_{1}$ & $0.013$ & $-0.6199$ & $-0.6244$ & $0.0045$ & $0.3605$\\
\(1\to 1\) & $-0.50$ & $g_{2}$ & $0.013$ & $-0.5848$ & $-0.5895$ & $0.0047$ & $0.3753$\\
\(1\to 1\) & $-0.50$ & $g_{0}$ & $6.25\times 10^{-3}$ & $0.1179$ & $0.1176$ & $3.022\times 10^{-4}$ & $0.0484$\\
\(1\to 1\) & $-0.50$ & $g_{1}$ & $6.25\times 10^{-3}$ & $-0.6221$ & $-0.6244$ & $0.0023$ & $0.3606$\\
\(1\to 1\) & $-0.50$ & $g_{2}$ & $6.25\times 10^{-3}$ & $-0.5872$ & $-0.5895$ & $0.0023$ & $0.3748$\\
\(1\to 1\) & $0$ & $g_{0}$ & $0.200$ & $0.2406$ & $0.2750$ & $-0.0344$ & $-0.1721$\\
\(1\to 1\) & $0$ & $g_{1}$ & $0.200$ & $-0.0873$ & $-0.1250$ & $0.0377$ & $0.1886$\\
\(1\to 1\) & $0$ & $g_{2}$ & $0.200$ & $-1.3214$ & $-1.5500$ & $0.2286$ & $1.1432$\\
\(1\to 1\) & $0$ & $g_{0}$ & $0.100$ & $0.2657$ & $0.2750$ & $-0.0093$ & $-0.0935$\\
\(1\to 1\) & $0$ & $g_{1}$ & $0.100$ & $-0.1043$ & $-0.1250$ & $0.0207$ & $0.2072$\\
\(1\to 1\) & $0$ & $g_{2}$ & $0.100$ & $-1.4082$ & $-1.5500$ & $0.1418$ & $1.4179$\\
\(1\to 1\) & $0$ & $g_{0}$ & $0.050$ & $0.2706$ & $0.2750$ & $-0.0044$ & $-0.0885$\\
\(1\to 1\) & $0$ & $g_{1}$ & $0.050$ & $-0.1140$ & $-0.1250$ & $0.0110$ & $0.2207$\\
\(1\to 1\) & $0$ & $g_{2}$ & $0.050$ & $-1.4758$ & $-1.5500$ & $0.0742$ & $1.4846$\\
\(1\to 1\) & $0$ & $g_{0}$ & $0.025$ & $0.2728$ & $0.2750$ & $-0.0022$ & $-0.0890$\\
\(1\to 1\) & $0$ & $g_{1}$ & $0.025$ & $-0.1193$ & $-0.1250$ & $0.0057$ & $0.2284$\\
\(1\to 1\) & $0$ & $g_{2}$ & $0.025$ & $-1.5122$ & $-1.5500$ & $0.0378$ & $1.5134$\\
\(1\to 1\) & $0$ & $g_{0}$ & $0.013$ & $0.2739$ & $0.2750$ & $-0.0011$ & $-0.0893$\\
\(1\to 1\) & $0$ & $g_{1}$ & $0.013$ & $-0.1221$ & $-0.1250$ & $0.0029$ & $0.2326$\\
\(1\to 1\) & $0$ & $g_{2}$ & $0.013$ & $-1.5309$ & $-1.5500$ & $0.0191$ & $1.5283$\\
\(1\to 1\) & $0$ & $g_{0}$ & $6.25\times 10^{-3}$ & $0.2744$ & $0.2750$ & $-5.591\times 10^{-4}$ & $-0.0895$\\
\(1\to 1\) & $0$ & $g_{1}$ & $6.25\times 10^{-3}$ & $-0.1235$ & $-0.1250$ & $0.0015$ & $0.2352$\\
\(1\to 1\) & $0$ & $g_{2}$ & $6.25\times 10^{-3}$ & $-1.5404$ & $-1.5500$ & $0.0096$ & $1.5360$\\
\(1\to 1\) & $0.50$ & $g_{0}$ & $0.200$ & $0.1981$ & $0.4350$ & $-0.2369$ & $-1.1847$\\
\(1\to 1\) & $0.50$ & $g_{1}$ & $0.200$ & $0.3242$ & $0.7020$ & $-0.3778$ & $-1.8889$\\
\(1\to 1\) & $0.50$ & $g_{2}$ & $0.200$ & $-1.0801$ & $-0.8223$ & $-0.2578$ & $-1.2889$\\
\(1\to 1\) & $0.50$ & $g_{0}$ & $0.100$ & $0.3015$ & $0.4350$ & $-0.1335$ & $-1.3355$\\
\(1\to 1\) & $0.50$ & $g_{1}$ & $0.100$ & $0.4954$ & $0.7020$ & $-0.2066$ & $-2.0657$\\
\(1\to 1\) & $0.50$ & $g_{2}$ & $0.100$ & $-0.9536$ & $-0.8223$ & $-0.1313$ & $-1.3130$\\
\(1\to 1\) & $0.50$ & $g_{0}$ & $0.050$ & $0.4139$ & $0.4350$ & $-0.0211$ & $-0.4219$\\
\(1\to 1\) & $0.50$ & $g_{1}$ & $0.050$ & $0.6646$ & $0.7020$ & $-0.0374$ & $-0.7474$\\
\(1\to 1\) & $0.50$ & $g_{2}$ & $0.050$ & $-0.7880$ & $-0.8223$ & $0.0343$ & $0.6856$\\
\(1\to 1\) & $0.50$ & $g_{0}$ & $0.025$ & $0.4294$ & $0.4350$ & $-0.0057$ & $-0.2273$\\
\(1\to 1\) & $0.50$ & $g_{1}$ & $0.025$ & $0.6899$ & $0.7020$ & $-0.0120$ & $-0.4813$\\
\(1\to 1\) & $0.50$ & $g_{2}$ & $0.025$ & $-0.7949$ & $-0.8223$ & $0.0274$ & $1.0977$\\
\(1\to 1\) & $0.50$ & $g_{0}$ & $0.013$ & $0.4322$ & $0.4350$ & $-0.0028$ & $-0.2276$\\
\(1\to 1\) & $0.50$ & $g_{1}$ & $0.013$ & $0.6959$ & $0.7020$ & $-0.0061$ & $-0.4846$\\
\(1\to 1\) & $0.50$ & $g_{2}$ & $0.013$ & $-0.8084$ & $-0.8223$ & $0.0139$ & $1.1143$\\
\(1\to 1\) & $0.50$ & $g_{0}$ & $6.25\times 10^{-3}$ & $0.4336$ & $0.4350$ & $-0.0014$ & $-0.2287$\\
\(1\to 1\) & $0.50$ & $g_{1}$ & $6.25\times 10^{-3}$ & $0.6989$ & $0.7020$ & $-0.0030$ & $-0.4875$\\
\(1\to 1\) & $0.50$ & $g_{2}$ & $6.25\times 10^{-3}$ & $-0.8153$ & $-0.8223$ & $0.0070$ & $1.1223$\\
\(1\to 2\) & $-0.50$ & $g_{0}$ & $0.200$ & $-0.1628$ & $-0.1176$ & $-0.0451$ & $-0.2257$\\
\(1\to 2\) & $-0.50$ & $g_{1}$ & $0.200$ & $0.4544$ & $0.4830$ & $-0.0286$ & $-0.1430$\\
\(1\to 2\) & $-0.50$ & $g_{2}$ & $0.200$ & $0.0926$ & $0.2524$ & $-0.1598$ & $-0.7992$\\
\(1\to 2\) & $-0.50$ & $g_{0}$ & $0.100$ & $-0.1462$ & $-0.1176$ & $-0.0286$ & $-0.2857$\\
\(1\to 2\) & $-0.50$ & $g_{1}$ & $0.100$ & $0.4742$ & $0.4830$ & $-0.0088$ & $-0.0876$\\
\(1\to 2\) & $-0.50$ & $g_{2}$ & $0.100$ & $0.1624$ & $0.2524$ & $-0.0900$ & $-0.9000$\\
\(1\to 2\) & $-0.50$ & $g_{0}$ & $0.050$ & $-0.1255$ & $-0.1176$ & $-0.0079$ & $-0.1570$\\
\(1\to 2\) & $-0.50$ & $g_{1}$ & $0.050$ & $0.4679$ & $0.4830$ & $-0.0151$ & $-0.3024$\\
\(1\to 2\) & $-0.50$ & $g_{2}$ & $0.050$ & $0.2228$ & $0.2524$ & $-0.0297$ & $-0.5934$\\
\(1\to 2\) & $-0.50$ & $g_{0}$ & $0.025$ & $-0.1190$ & $-0.1176$ & $-0.0014$ & $-0.0554$\\
\(1\to 2\) & $-0.50$ & $g_{1}$ & $0.025$ & $0.4715$ & $0.4830$ & $-0.0114$ & $-0.4574$\\
\(1\to 2\) & $-0.50$ & $g_{2}$ & $0.025$ & $0.2429$ & $0.2524$ & $-0.0095$ & $-0.3814$\\
\(1\to 2\) & $-0.50$ & $g_{0}$ & $0.013$ & $-0.1182$ & $-0.1176$ & $-5.940\times 10^{-4}$ & $-0.0475$\\
\(1\to 2\) & $-0.50$ & $g_{1}$ & $0.013$ & $0.4771$ & $0.4830$ & $-0.0059$ & $-0.4725$\\
\(1\to 2\) & $-0.50$ & $g_{2}$ & $0.013$ & $0.2479$ & $0.2524$ & $-0.0046$ & $-0.3659$\\
\(1\to 2\) & $-0.50$ & $g_{0}$ & $6.25\times 10^{-3}$ & $-0.1179$ & $-0.1176$ & $-3.022\times 10^{-4}$ & $-0.0484$\\
\(1\to 2\) & $-0.50$ & $g_{1}$ & $6.25\times 10^{-3}$ & $0.4800$ & $0.4830$ & $-0.0030$ & $-0.4735$\\
\(1\to 2\) & $-0.50$ & $g_{2}$ & $6.25\times 10^{-3}$ & $0.2502$ & $0.2524$ & $-0.0023$ & $-0.3663$\\
\(1\to 2\) & $0$ & $g_{0}$ & $0.200$ & $-0.2735$ & $-0.2750$ & $0.0015$ & $0.0073$\\
\(1\to 2\) & $0$ & $g_{1}$ & $0.200$ & $0.0793$ & $0.1250$ & $-0.0457$ & $-0.2287$\\
\(1\to 2\) & $0$ & $g_{2}$ & $0.200$ & $0.5700$ & $0.8938$ & $-0.3238$ & $-1.6188$\\
\(1\to 2\) & $0$ & $g_{0}$ & $0.100$ & $-0.2685$ & $-0.2750$ & $0.0065$ & $0.0647$\\
\(1\to 2\) & $0$ & $g_{1}$ & $0.100$ & $0.1005$ & $0.1250$ & $-0.0245$ & $-0.2445$\\
\(1\to 2\) & $0$ & $g_{2}$ & $0.100$ & $0.7280$ & $0.8938$ & $-0.1657$ & $-1.6571$\\
\(1\to 2\) & $0$ & $g_{0}$ & $0.050$ & $-0.2706$ & $-0.2750$ & $0.0044$ & $0.0878$\\
\(1\to 2\) & $0$ & $g_{1}$ & $0.050$ & $0.1121$ & $0.1250$ & $-0.0129$ & $-0.2586$\\
\(1\to 2\) & $0$ & $g_{2}$ & $0.050$ & $0.8083$ & $0.8938$ & $-0.0854$ & $-1.7086$\\
\(1\to 2\) & $0$ & $g_{0}$ & $0.025$ & $-0.2728$ & $-0.2750$ & $0.0022$ & $0.0890$\\
\(1\to 2\) & $0$ & $g_{1}$ & $0.025$ & $0.1183$ & $0.1250$ & $-0.0067$ & $-0.2681$\\
\(1\to 2\) & $0$ & $g_{2}$ & $0.025$ & $0.8497$ & $0.8938$ & $-0.0440$ & $-1.7604$\\
\(1\to 2\) & $0$ & $g_{0}$ & $0.013$ & $-0.2739$ & $-0.2750$ & $0.0011$ & $0.0893$\\
\(1\to 2\) & $0$ & $g_{1}$ & $0.013$ & $0.1216$ & $0.1250$ & $-0.0034$ & $-0.2733$\\
\(1\to 2\) & $0$ & $g_{2}$ & $0.013$ & $0.8714$ & $0.8938$ & $-0.0223$ & $-1.7880$\\
\(1\to 2\) & $0$ & $g_{0}$ & $6.25\times 10^{-3}$ & $-0.2744$ & $-0.2750$ & $5.591\times 10^{-4}$ & $0.0895$\\
\(1\to 2\) & $0$ & $g_{1}$ & $6.25\times 10^{-3}$ & $0.1233$ & $0.1250$ & $-0.0017$ & $-0.2764$\\
\(1\to 2\) & $0$ & $g_{2}$ & $6.25\times 10^{-3}$ & $0.8825$ & $0.8938$ & $-0.0113$ & $-1.8023$\\
\(1\to 2\) & $0.50$ & $g_{0}$ & $0.200$ & $-0.4629$ & $-0.4350$ & $-0.0278$ & $-0.1391$\\
\(1\to 2\) & $0.50$ & $g_{1}$ & $0.200$ & $-0.6102$ & $-0.6184$ & $0.0081$ & $0.0407$\\
\(1\to 2\) & $0.50$ & $g_{2}$ & $0.200$ & $0.1693$ & $0.5719$ & $-0.4026$ & $-2.0129$\\
\(1\to 2\) & $0.50$ & $g_{0}$ & $0.100$ & $-0.4564$ & $-0.4350$ & $-0.0214$ & $-0.2140$\\
\(1\to 2\) & $0.50$ & $g_{1}$ & $0.100$ & $-0.6216$ & $-0.6184$ & $-0.0032$ & $-0.0323$\\
\(1\to 2\) & $0.50$ & $g_{2}$ & $0.100$ & $0.3423$ & $0.5719$ & $-0.2296$ & $-2.2962$\\
\(1\to 2\) & $0.50$ & $g_{0}$ & $0.050$ & $-0.4334$ & $-0.4350$ & $0.0016$ & $0.0321$\\
\(1\to 2\) & $0.50$ & $g_{1}$ & $0.050$ & $-0.6002$ & $-0.6184$ & $0.0182$ & $0.3631$\\
\(1\to 2\) & $0.50$ & $g_{2}$ & $0.050$ & $0.4810$ & $0.5719$ & $-0.0909$ & $-1.8178$\\
\(1\to 2\) & $0.50$ & $g_{0}$ & $0.025$ & $-0.4298$ & $-0.4350$ & $0.0053$ & $0.2110$\\
\(1\to 2\) & $0.50$ & $g_{1}$ & $0.025$ & $-0.6026$ & $-0.6184$ & $0.0158$ & $0.6317$\\
\(1\to 2\) & $0.50$ & $g_{2}$ & $0.025$ & $0.5344$ & $0.5719$ & $-0.0375$ & $-1.4999$\\
\(1\to 2\) & $0.50$ & $g_{0}$ & $0.013$ & $-0.4322$ & $-0.4350$ & $0.0028$ & $0.2275$\\
\(1\to 2\) & $0.50$ & $g_{1}$ & $0.013$ & $-0.6101$ & $-0.6184$ & $0.0083$ & $0.6602$\\
\(1\to 2\) & $0.50$ & $g_{2}$ & $0.013$ & $0.5532$ & $0.5719$ & $-0.0187$ & $-1.4973$\\
\(1\to 2\) & $0.50$ & $g_{0}$ & $6.25\times 10^{-3}$ & $-0.4336$ & $-0.4350$ & $0.0014$ & $0.2287$\\
\(1\to 2\) & $0.50$ & $g_{1}$ & $6.25\times 10^{-3}$ & $-0.6142$ & $-0.6184$ & $0.0042$ & $0.6645$\\
\(1\to 2\) & $0.50$ & $g_{2}$ & $6.25\times 10^{-3}$ & $0.5624$ & $0.5719$ & $-0.0094$ & $-1.5115$\\
\end{longtable}

\subsection{Fixed-mesh estimation}
The next table contains the complete fixed-mesh summaries. The reported standard-deviation ratio is the empirical standard deviation divided by the theorem's asymptotic standard error; the two coverage columns correspond to oracle and plug-in studentization.
\begin{longtable}{rrccrrrrrr}
\caption{Complete fixed-mesh summaries from 500 replications.}\label{tab:supp-fixed}\\
\toprule
$n$ & $p$ & block & probe & target & bias & RMSE & std.\ sd & cov.\ or. & cov.\ pl.\\
\midrule
\endfirsthead
\multicolumn{10}{c}{\tablename\ \thetable{} continued}\\
\toprule
$n$ & $p$ & block & probe & target & bias & RMSE & std.\ sd & cov.\ or. & cov.\ pl.\\
\midrule
\endhead
\midrule \multicolumn{10}{r}{continued on next page}\\
\endfoot
\bottomrule
\endlastfoot
$50000$ & $0$ & \(1\to 1\) & $g_{0}$ & $0.97318$ & $1.673\times 10^{-5}$ & $0.0075$ & $0.981$ & $0.962$ & $0.948$\\
$50000$ & $0$ & \(1\to 1\) & $g_{1}$ & $-0.00028$ & $4.168\times 10^{-4}$ & $0.0100$ & $0.981$ & $0.962$ & $0.964$\\
$50000$ & $0$ & \(1\to 1\) & $g_{2}$ & $0.04631$ & $1.804\times 10^{-4}$ & $0.0032$ & $1.011$ & $0.948$ & $0.932$\\
$50000$ & $0$ & \(1\to 2\) & $g_{0}$ & $0.02682$ & $-1.664\times 10^{-5}$ & $0.0075$ & $0.981$ & $0.962$ & $0.948$\\
$50000$ & $0$ & \(1\to 2\) & $g_{1}$ & $0.00028$ & $-5.121\times 10^{-5}$ & $0.0020$ & $0.916$ & $0.962$ & $0.970$\\
$50000$ & $0$ & \(1\to 2\) & $g_{2}$ & $0.00202$ & $-1.674\times 10^{-5}$ & $0.0010$ & $0.979$ & $0.958$ & $0.834$\\
$50000$ & $1$ & \(1\to 1\) & $g_{0}$ & $0.97318$ & $1.495\times 10^{-4}$ & $0.0046$ & $1.018$ & $0.956$ & $0.948$\\
$50000$ & $1$ & \(1\to 1\) & $g_{1}$ & $-0.00028$ & $5.144\times 10^{-5}$ & $0.0059$ & $0.974$ & $0.952$ & $0.954$\\
$50000$ & $1$ & \(1\to 1\) & $g_{2}$ & $0.04631$ & $7.593\times 10^{-4}$ & $0.0020$ & $0.973$ & $0.930$ & $0.938$\\
$50000$ & $1$ & \(1\to 2\) & $g_{0}$ & $0.02682$ & $-1.494\times 10^{-4}$ & $0.0046$ & $1.018$ & $0.956$ & $0.948$\\
$50000$ & $1$ & \(1\to 2\) & $g_{1}$ & $0.00028$ & $-1.834\times 10^{-5}$ & $0.0011$ & $0.898$ & $0.968$ & $0.978$\\
$50000$ & $1$ & \(1\to 2\) & $g_{2}$ & $0.00202$ & $-1.589\times 10^{-5}$ & $5.793\times 10^{-4}$ & $0.950$ & $0.970$ & $0.900$\\
$100000$ & $0$ & \(1\to 1\) & $g_{0}$ & $0.97318$ & $1.423\times 10^{-4}$ & $0.0059$ & $0.957$ & $0.966$ & $0.936$\\
$100000$ & $0$ & \(1\to 1\) & $g_{1}$ & $-0.00028$ & $4.017\times 10^{-4}$ & $0.0081$ & $0.974$ & $0.958$ & $0.958$\\
$100000$ & $0$ & \(1\to 1\) & $g_{2}$ & $0.04631$ & $5.518\times 10^{-6}$ & $0.0025$ & $0.982$ & $0.956$ & $0.948$\\
$100000$ & $0$ & \(1\to 2\) & $g_{0}$ & $0.02682$ & $-1.422\times 10^{-4}$ & $0.0059$ & $0.957$ & $0.966$ & $0.936$\\
$100000$ & $0$ & \(1\to 2\) & $g_{1}$ & $0.00028$ & $5.926\times 10^{-5}$ & $0.0016$ & $0.952$ & $0.954$ & $0.954$\\
$100000$ & $0$ & \(1\to 2\) & $g_{2}$ & $0.00202$ & $-3.595\times 10^{-5}$ & $7.909\times 10^{-4}$ & $0.945$ & $0.970$ & $0.888$\\
$100000$ & $1$ & \(1\to 1\) & $g_{0}$ & $0.97318$ & $1.981\times 10^{-4}$ & $0.0034$ & $0.961$ & $0.960$ & $0.962$\\
$100000$ & $1$ & \(1\to 1\) & $g_{1}$ & $-0.00028$ & $7.500\times 10^{-5}$ & $0.0047$ & $1.015$ & $0.940$ & $0.944$\\
$100000$ & $1$ & \(1\to 1\) & $g_{2}$ & $0.04631$ & $5.049\times 10^{-4}$ & $0.0016$ & $1.027$ & $0.926$ & $0.938$\\
$100000$ & $1$ & \(1\to 2\) & $g_{0}$ & $0.02682$ & $-1.980\times 10^{-4}$ & $0.0034$ & $0.961$ & $0.960$ & $0.962$\\
$100000$ & $1$ & \(1\to 2\) & $g_{1}$ & $0.00028$ & $9.031\times 10^{-6}$ & $8.958\times 10^{-4}$ & $0.921$ & $0.966$ & $0.958$\\
$100000$ & $1$ & \(1\to 2\) & $g_{2}$ & $0.00202$ & $-3.300\times 10^{-5}$ & $4.381\times 10^{-4}$ & $0.930$ & $0.972$ & $0.938$\\
$200000$ & $0$ & \(1\to 1\) & $g_{0}$ & $0.97318$ & $5.758\times 10^{-5}$ & $0.0050$ & $0.986$ & $0.948$ & $0.942$\\
$200000$ & $0$ & \(1\to 1\) & $g_{1}$ & $-0.00028$ & $4.071\times 10^{-4}$ & $0.0068$ & $1.004$ & $0.954$ & $0.950$\\
$200000$ & $0$ & \(1\to 1\) & $g_{2}$ & $0.04631$ & $3.938\times 10^{-5}$ & $0.0021$ & $1.000$ & $0.964$ & $0.958$\\
$200000$ & $0$ & \(1\to 2\) & $g_{0}$ & $0.02682$ & $-5.849\times 10^{-5}$ & $0.0050$ & $0.985$ & $0.948$ & $0.942$\\
$200000$ & $0$ & \(1\to 2\) & $g_{1}$ & $0.00028$ & $5.230\times 10^{-5}$ & $0.0014$ & $0.983$ & $0.958$ & $0.962$\\
$200000$ & $0$ & \(1\to 2\) & $g_{2}$ & $0.00202$ & $-5.352\times 10^{-5}$ & $6.675\times 10^{-4}$ & $0.980$ & $0.966$ & $0.884$\\
$200000$ & $1$ & \(1\to 1\) & $g_{0}$ & $0.97318$ & $2.772\times 10^{-4}$ & $0.0026$ & $0.973$ & $0.964$ & $0.954$\\
$200000$ & $1$ & \(1\to 1\) & $g_{1}$ & $-0.00028$ & $2.448\times 10^{-4}$ & $0.0036$ & $1.013$ & $0.944$ & $0.948$\\
$200000$ & $1$ & \(1\to 1\) & $g_{2}$ & $0.04631$ & $4.253\times 10^{-4}$ & $0.0012$ & $0.983$ & $0.942$ & $0.946$\\
$200000$ & $1$ & \(1\to 2\) & $g_{0}$ & $0.02682$ & $-2.777\times 10^{-4}$ & $0.0026$ & $0.973$ & $0.964$ & $0.954$\\
$200000$ & $1$ & \(1\to 2\) & $g_{1}$ & $0.00028$ & $-3.050\times 10^{-6}$ & $6.950\times 10^{-4}$ & $0.927$ & $0.966$ & $0.970$\\
$200000$ & $1$ & \(1\to 2\) & $g_{2}$ & $0.00202$ & $-3.672\times 10^{-5}$ & $3.478\times 10^{-4}$ & $0.954$ & $0.964$ & $0.932$\\
$400000$ & $0$ & \(1\to 1\) & $g_{0}$ & $0.97318$ & $8.122\times 10^{-6}$ & $0.0041$ & $1.007$ & $0.948$ & $0.940$\\
$400000$ & $0$ & \(1\to 1\) & $g_{1}$ & $-0.00028$ & $1.208\times 10^{-4}$ & $0.0056$ & $1.019$ & $0.950$ & $0.944$\\
$400000$ & $0$ & \(1\to 1\) & $g_{2}$ & $0.04631$ & $6.217\times 10^{-5}$ & $0.0017$ & $1.005$ & $0.942$ & $0.946$\\
$400000$ & $0$ & \(1\to 2\) & $g_{0}$ & $0.02682$ & $-8.532\times 10^{-6}$ & $0.0041$ & $1.007$ & $0.948$ & $0.940$\\
$400000$ & $0$ & \(1\to 2\) & $g_{1}$ & $0.00028$ & $4.646\times 10^{-5}$ & $0.0012$ & $1.015$ & $0.930$ & $0.942$\\
$400000$ & $0$ & \(1\to 2\) & $g_{2}$ & $0.00202$ & $-3.028\times 10^{-5}$ & $5.353\times 10^{-4}$ & $0.969$ & $0.954$ & $0.910$\\
$400000$ & $1$ & \(1\to 1\) & $g_{0}$ & $0.97318$ & $1.201\times 10^{-4}$ & $0.0021$ & $0.989$ & $0.952$ & $0.946$\\
$400000$ & $1$ & \(1\to 1\) & $g_{1}$ & $-0.00028$ & $1.846\times 10^{-4}$ & $0.0029$ & $1.035$ & $0.936$ & $0.936$\\
$400000$ & $1$ & \(1\to 1\) & $g_{2}$ & $0.04631$ & $3.004\times 10^{-4}$ & $8.959\times 10^{-4}$ & $0.982$ & $0.936$ & $0.946$\\
$400000$ & $1$ & \(1\to 2\) & $g_{0}$ & $0.02682$ & $-1.205\times 10^{-4}$ & $0.0021$ & $0.988$ & $0.952$ & $0.946$\\
$400000$ & $1$ & \(1\to 2\) & $g_{1}$ & $0.00028$ & $7.867\times 10^{-7}$ & $5.641\times 10^{-4}$ & $0.976$ & $0.946$ & $0.944$\\
$400000$ & $1$ & \(1\to 2\) & $g_{2}$ & $0.00202$ & $-2.380\times 10^{-5}$ & $2.605\times 10^{-4}$ & $0.928$ & $0.972$ & $0.942$\\
\end{longtable}

\subsection{Shrinking-mesh recovery}
{\Cref{tab:supp-recovery-B,tab:supp-recovery-C} report every separate-lag recovery run. \Cref{tab:supp-recovery-C-nb} reports the nonoverlapping second-order distributional check used in the revised manuscript. Parentheses give the Monte Carlo standard error of the replication mean. The RMSE is computed relative to the deterministic finite-difference oracle.}
\begin{table}[htbp]

\centering\scriptsize
\caption{Complete first-order recovery results from 100 replications.}\label{tab:supp-recovery-B}
\begin{tabular}{rrrcrrrrr}
\toprule
$n$ & $\Delta$ & $h$ & probe & $B$ & $B_\Delta^{\rm FD}$ & mean (s.e.) & RMSE$^{\rm FD}$ & $\Pp(|\widehat B-B|>0.05)$\\
\midrule
$300000$ & $0.15$ & $0.0513$ & $g_{0}$ & $0.5500$ & $0.5094$ & $0.5131\;(0.0022)$ & $0.0219$ & $0.31$\\
$300000$ & $0.15$ & $0.0513$ & $g_{1}$ & $0$ & $0.0134$ & $0.0124\;(0.0010)$ & $0.0100$ & $0$\\
$300000$ & $0.15$ & $0.0513$ & $g_{2}$ & $0$ & $0.0970$ & $0.0983\;(7.973\times 10^{-4})$ & $0.0080$ & $1.00$\\
$1500000$ & $0.12$ & $0.0393$ & $g_{0}$ & $0.5500$ & $0.5177$ & $0.5190\;(0.0013)$ & $0.0129$ & $5.0\times 10^{-2}$\\
$1500000$ & $0.12$ & $0.0393$ & $g_{1}$ & $0$ & $0.0115$ & $0.0115\;(5.411\times 10^{-4})$ & $0.0054$ & $0$\\
$1500000$ & $0.12$ & $0.0393$ & $g_{2}$ & $0$ & $0.0835$ & $0.0838\;(4.023\times 10^{-4})$ & $0.0040$ & $1.00$\\
$6400000$ & $0.10$ & $0.0315$ & $g_{0}$ & $0.5500$ & $0.5231$ & $0.5229\;(7.079\times 10^{-4})$ & $0.0070$ & $0$\\
$6400000$ & $0.10$ & $0.0315$ & $g_{1}$ & $0$ & $0.0101$ & $0.0102\;(3.197\times 10^{-4})$ & $0.0032$ & $0$\\
$6400000$ & $0.10$ & $0.0315$ & $g_{2}$ & $0$ & $0.0728$ & $0.0728\;(1.921\times 10^{-4})$ & $0.0019$ & $1.00$\\
\bottomrule
\end{tabular}
\end{table}
\begingroup
\begin{table}[htbp]

\centering\scriptsize
\caption{Complete second-order recovery results from 100 replications.}\label{tab:supp-recovery-C}
\begin{tabular}{rrrcrrrrr}
\toprule
$n$ & $\Delta$ & $h$ & probe & $C$ & $C_\Delta^{\rm FD}$ & mean (s.e.) & RMSE$^{\rm FD}$ & $\Pp(|\widehat C-C|>0.10)$\\
\midrule
$300000$ & $0.15$ & $0.0513$ & $g_{0}$ & $-0.5500$ & $-0.5613$ & $-0.5580\;(0.0195)$ & $0.1936$ & $0.63$\\
$300000$ & $0.15$ & $0.0513$ & $g_{1}$ & $0.2500$ & $0.0681$ & $0.0816\;(0.0134)$ & $0.1335$ & $0.70$\\
$300000$ & $0.15$ & $0.0513$ & $g_{2}$ & $1.7875$ & $0.4796$ & $0.4712\;(0.0102)$ & $0.1014$ & $1.00$\\
$1500000$ & $0.12$ & $0.0393$ & $g_{0}$ & $-0.5500$ & $-0.5626$ & $-0.5798\;(0.0153)$ & $0.1534$ & $0.54$\\
$1500000$ & $0.12$ & $0.0393$ & $g_{1}$ & $0.2500$ & $0.0947$ & $0.0952\;(0.0085)$ & $0.0849$ & $0.74$\\
$1500000$ & $0.12$ & $0.0393$ & $g_{2}$ & $1.7875$ & $0.6680$ & $0.6694\;(0.0068)$ & $0.0681$ & $1.00$\\
$6400000$ & $0.10$ & $0.0315$ & $g_{0}$ & $-0.5500$ & $-0.5571$ & $-0.5476\;(0.0108)$ & $0.1083$ & $0.30$\\
$6400000$ & $0.10$ & $0.0315$ & $g_{1}$ & $0.2500$ & $0.1159$ & $0.1085\;(0.0062)$ & $0.0622$ & $0.74$\\
$6400000$ & $0.10$ & $0.0315$ & $g_{2}$ & $1.7875$ & $0.8238$ & $0.8238\;(0.0038)$ & $0.0376$ & $1.00$\\
\bottomrule
\end{tabular}
\end{table}
\begin{table}[htbp]

\centering\scriptsize
\caption{Distributional check for the nonoverlapping common-design second-order estimator for \(g_0\), block \(1\to2\), at \(x=0\).}\label{tab:supp-recovery-C-nb}
\begin{tabular}{rrrrrrrrrr}
\toprule
$n$ & $\Delta$ & $N\Delta^3h$ & $C_\Delta^{\rm FD}$ & mean (s.e.) & RMSE$^{\rm FD}$ & mean/sd $Z^{\rm or}$ & cov. & mean/sd $Z^{\rm pl}$ & cov.\\
\midrule
$300000$ & $0.15$ & $25.97$ & $-0.5613$ & $-0.5484\;(0.0276)$ & $0.2750$ & $0.041/0.871$ & $0.97$ & $0.047/0.863$ & $0.97$\\
$1500000$ & $0.12$ & $50.93$ & $-0.5626$ & $-0.5853\;(0.0196)$ & $0.1960$ & $-0.100/0.865$ & $0.98$ & $-0.098/0.868$ & $0.98$\\
$6400000$ & $0.10$ & $100.80$ & $-0.5571$ & $-0.5360\;(0.0173)$ & $0.1737$ & $0.131/1.077$ & $0.94$ & $0.133/1.076$ & $0.94$\\
\bottomrule
\end{tabular}
\end{table}
\endgroup

\endgroup

\end{document}